\documentclass[11pt]{article}
\usepackage[T1]{fontenc}
\usepackage{lmodern}
\usepackage[a4paper,margin=1in]{geometry}
\usepackage{amsmath,amssymb,amsthm,mathtools,amsfonts}
\usepackage{enumitem}
\usepackage{microtype}
\usepackage{graphicx}
\usepackage{subcaption}
\usepackage{tikz-cd}
\usepackage[colorlinks=true,linkcolor=blue,citecolor=blue,urlcolor=blue]{hyperref}
\usepackage{tikz}
\usetikzlibrary{calc}
\newtheorem{theorem}{Theorem}[section]
\newtheorem{lemma}[theorem]{Lemma}
\newtheorem{proposition}[theorem]{Proposition}
\newtheorem{corollary}[theorem]{Corollary}

\theoremstyle{definition}
\newtheorem{definition}[theorem]{Definition}
\newtheorem{remark}[theorem]{Remark}
\newtheorem{example}[theorem]{Example} 

\newcommand{\hor}{\mathrm{hor}}
\newcommand{\diam}{\mathrm{diam}}
\newcommand{\CD}{\mathrm{CD}}
\newcommand{\Del}{\Delta}
\newcommand{\R}{\mathbb R}

\newcommand{\Spec}{\operatorname{Spec}}

\newcommand{\one}{\mathbf 1}

\newcommand{\card}[1]{\left|#1\right|}

\newcommand{\vis}{\operatorname{vis}}
\newcommand{\col}{\operatorname{col}}

\newcommand{\Ric}{\operatorname{Ric}}
\title{Regular Lichnerowicz-sharp graphs are hypercube bundles}
\author{%
Yanlong Ding\thanks{School of Mathematical Sciences, University of Science and Technology of China, Hefei 230026, China. Email address: \texttt{dylustc@mail.ustc.edu.cn}.}%
\and
Shiping Liu\thanks{School of Mathematical Sciences, University of Science and Technology of China, Hefei 230026, China. Email address: \texttt{spliu@ustc.edu.cn}.}%
\and
Chiyu Zhou\thanks{School of Mathematical Sciences, University of Science and Technology of China, Hefei 230026, China. Email address: \texttt{dovong@mail.ustc.edu.cn}.}%
}
\date{\today}

\begin{document}
\maketitle

\begin{abstract}
Hypercube graphs are fundamental model spaces of positive curvature in discrete comparison geometry. Let $G$ be a finite, connected, simple, unweighted graph with Bakry--\'Emery curvature bounded below by \(K\). 
We call $G$ Lichnerowicz-sharp if its first non-zero non-normalized Laplacian eigenvalue $\lambda_1=K$. We prove that all regular Lichnerowicz-sharp graphs are hypercube bundles with constant Bakry-\'Emery curvature $2$. A hypercube bundle is a graph bundle whose fiber graphs are hypercubes. As applications, we show that any $d$-regular Lichnerowicz-sharp graph with the multiplicity $m_K(G)$ of $\lambda_1=K$ at least $d/2$ can split off a hypercube of certain dimension. Moreover, we characterize all $d$-regular Lichnerowicz-sharp graphs with $m_{K}(G)\geq d-3$. 

Interestingly, our result leads to the following spectral rigidity theorem of hypercubes. For a graph $G$ with maximum degree $\Delta$, if the multiplicity $m_K(G)\geq \Delta-1$, then $G$ is a $\Delta$-dimensional hypercube. This improves, in the unweighted setting, the multiplicity condition
\(m_K(G)\geq \Delta\) appearing in the hypercube rigidity theorem of Liu,
M\"unch, and Peyerimhoff. This improvement is optimal. 

\end{abstract}

\section{Introduction}
Hypercube graphs are fundamental discrete structures that have been widely studied in geometry \cite{Gromov1999}, probability theory \cite{Bobkov1997}, graph theory \cite{Harary1988}, and other related fields. In this paper, motivated by the role of hypercubes as model spaces in discrete comparison geometry, we investigate the equality cases in the Bakry--\'Emery Lichnerowicz eigenvalue estimate for graphs and ask
what global structures are enforced by such sharpness.

\subsection{Continuous and discrete spectral comparison and rigidity}
The Lichnerowicz theorem \cite{Lichnerowicz1958} is a classical result in
comparison geometry. It asserts that if a \(d\)-dimensional closed Riemannian
manifold \(M\) has Ricci curvature bounded below by that of the round sphere
\(\mathbb{S}^d\), then its first non-zero Laplace--Beltrami eigenvalue satisfies
\(
        \lambda_1(M)\geq \lambda_1(\mathbb{S}^d).
\)
Obata \cite{Obata1962} proved the corresponding rigidity statement: equality
holds in the Lichnerowicz estimate if and only if \(M\) is isometric to
\(\mathbb{S}^d\). 
Related rigidity and pinching results were further developed by Cheng,
Petersen, and Aubry \cite{Cheng1975,Petersen1999,Aubry2005}.  

In this paper, we study discrete analogues of Obata's rigidity theorem within the framework of Bakry--\'Emery curvature. The \(\Gamma\)-calculus and curvature dimension conditions of Bakry and \'Emery \cite{BE1985} provide a
powerful synthetic notion of Ricci curvature, which has been developed extensively; see, for example, \cite{BGL2014}. In the graph setting, discrete Bakry--\'Emery theory was initiated in \cite{elworthy,michael,riccicurlinyau}
and has since become an effective tool for studying geometric, analytic, and combinatorial aspects of graphs, see,
for instance, \cite{BHLLMY2015,FathiShu2018,HornJGT,Hua2019,HM2024, MM2024, KMY2021,LMP2018,MuenchRose2020,SalezGAFA,SalezJEMS} and the references therein.

Unless otherwise stated, all graphs considered in this paper are simple, finite, connected, and unweighted. For a graph $G=(V,E)$ with vertex set $V$ and edge set $E$, we list the eigenvalues of its non-normalized Laplacian as below:
\[0=\lambda_0<\lambda_1\leq \cdots\leq\lambda_{|V|-1}.\]
If $G$ satisfies the Bakry--\'Emery curvature dimension condition $\CD(K,\infty)$ for some $K>0$, then we have the following Lichnerowicz type eigenvalue estimate (see e.g. \cite{curvatureaspectsofgraphs,curvatureandhigherorder}),
\begin{equation}\label{eq:Lich}\lambda_1\geq K.\end{equation}
For the definition of the Bakry--\'Emery curvature dimension condition $\CD(K,\infty)$, we refer to Definition \ref{def:CD} below.
Let $K_2$ be the complete graph with two vertices. The $d$-dimensional hypercube graph $H_d$ is isomorphic to the Cartesian product of $d$ copies of $K_2$. Notice that $H_d$ is $d$-regular and its eigenvalues satisfy 
\[
        2=\lambda_1(H_d)=\cdots=\lambda_d(H_d)<\lambda_{d+1}(H_d).
\]
 Moreover, $H_d$ satisfies $\CD(2,\infty)$ (see \cite[Example 7.15]{CLP2020}). That is, for the hypercube graph $H_d$, the equality in \eqref{eq:Lich} holds.
Under a high-multiplicity assumption, Liu, M\"unch, and Peyerimhoff \cite{LMP} proved the following discrete Obata-type rigidity theorem.
\begin{theorem}[{\cite[Theorem~1.4]{LMP}}]\label{lem:LMP-rigidity}
Suppose that $G$ satisfies $\CD(K,\infty)$ with $K>0$ and
\begin{equation}\label{eq:LMP}
     \lambda_{\Delta}=K,
\end{equation}
where $\Del=\max_x\deg(x)$ is the maximum degree of $G$. Then $G\cong H_\Del$ and, in particular, $K=2$.
\end{theorem}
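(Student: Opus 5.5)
The plan is to exploit the equality case of the Bakry--\'Emery Lichnerowicz argument for each eigenfunction, and then use the $\Del$-dimensional eigenspace to reconstruct the hypercube coordinates.

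\textbf{Step 1: equality forces a pointwise identity.} Let $f$ satisfy $\Delta f=-Kf$ with $\Delta$ the non-normalized Laplacian (sign conventions as in Definition~\ref{def:CD}). Summing $\Gamma_2(f)\ge K\Gamma(f)$ over $V$ gives $\sum\Gamma_2(f)=\sum(\Delta f)^2=K\sum\Gamma(f)$. Hence equality $\Gamma_2(f)(x)=K\Gamma(f)(x)$ holds at every vertex $x$. Since this holds on the whole eigenspace $E_K$, which has dimension at least $\Del$ by \eqref{eq:LMP}, the quadratic form $\Gamma_2-K\Gamma$ vanishes on $E_K$ at every vertex.

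\textbf{Step 2: local structure at a vertex.} At a fixed vertex $x$, the $\Gamma_2$ expression is a local quadratic form in the values of $f$ on the ball $B_2(x)$. Vanishing of the curvature form means $f$ lies in the kernel of a local curvature matrix (in the sense of Cushing--Liu--Peyerimhoff). Consider the linear map $E_K\to\R^{\deg x}$, $f\mapsto (f(y)-f(x))_{y\sim x}$. I expect to show it is injective: if all gradients at $x$ vanish, propagating the equality conditions (the $\Gamma_2$ equality controls second differences) forces $f$ to be constant on larger and larger balls, so $f\equiv0$. Then $\Del\le\dim E_K\le\deg x$. Thus $G$ is $\Del$-regular, $\dim E_K=\Del$, and the gradient map is an isomorphism at every vertex.

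\textbf{Step 3: extract combinatorics.} Write out the equality conditions explicitly. The $\Gamma_2$ formula contains a sum over 2-paths $x\sim y\sim z$ of squared second differences $(f(z)-2f(y)+f(x))^2$, together with terms in $\deg(x)$, triangles, and so on. Because every gradient vector in $\R^{\Del}$ is realized by some $f\in E_K$, each coefficient must match exactly. This forces the following:
\begin{itemize}
\item there are no triangles;
\item each pair of neighbours $y_1,y_2$ of $x$ has exactly two common neighbours ($x$ and one more vertex, giving a 4-cycle);
\item each 2-step neighbour $z$ is reached through exactly two paths;
\item $K=2$.
\end{itemize}
These are the local conditions of the hypercube. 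For this I will use the explicit computation of $\Gamma_2$ for unweighted graphs, as in the CD$(2,\infty)$ computation for $H_d$ in \cite{CLP2020}.

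\textbf{Step 4: global identification.} I expect Step 3 to be the main obstacle, because extracting exact combinatorics from a vanishing quadratic form requires careful bookkeeping of the $(f(z)-2f(y)+f(x))$ terms. Once it is done, I would finish in one of two ways. The first is to note that a $\Del$-regular, triangle-free graph in which every 2-path lies in a unique 4-cycle, with $\lambda_1=2$ of multiplicity $\Del$, is a hypercube, by a known characterization (Mulder's results on $(0,2)$-graphs together with the spectrum). The second, more directly, is to use a basis of $E_K$ to build coordinates: choose $f_i$ whose gradient at a base vertex is $e_i$. Then show that the equality conditions force $f_i$ to take two values, with each edge changing exactly one $f_i$. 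This gives an injective graph homomorphism to $H_\Del$, which is an isomorphism by regularity and a vertex count.
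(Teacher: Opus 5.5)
Your strategy works, and it takes a genuinely different and much shorter route than the paper does for this statement. The paper does not reprove this theorem; it recovers it from its own results. Theorem~\ref{thm:first-sharp-K2} gives $K=2$ by testing the degenerate form $\Gamma_2-K\Gamma$ against an indicator function. Lemma~\ref{lem:mult} gives regularity. Part (i) of Theorem~\ref{tm:Bundle-structure2} then runs the whole bundle machinery: a generic eigenfunction, exact square incidence via the inductive AM--HM argument of Proposition~\ref{prop:residual-visible-exactness}, and the $\nu$-counting plus Myers rigidity of Theorem~\ref{thm:HypercubeRigidity}. That machinery is built for hypotheses ($\lambda_1=2$, or $m_2\ge d-1$) under which gradients of eigenfunctions do not span $\R^d$. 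Your observation bypasses all of it. Under $m_K\ge\Delta$, the gradient map $E_K\to\R^{\deg x}$ is injective (by midpoint propagation, as in Lemma~\ref{lem:OneVisibleEdge}) and hence bijective at every vertex. Lemma~\ref{lem:CKLP} then forces $A_\infty(x)=KI$. The off-diagonal entries give $\varepsilon_{ij}=0$ and $\omega_{ij}=\frac12$, and the row sums \eqref{CKLP-row} give $K=2$.

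Two steps, however, are asserted rather than argued.

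\textbf{Exact squares.} $\omega_{ij}=\frac12$ alone does not give exact squares; a priori it is also consistent with, say, two completions of back-degree $4$. You need either the counting-and-neighbouring-vertex argument of Theorem~\ref{thm:CharacterizationOfLocallyCubical}, or, more simply here, the eigenfunction $\phi\in E_2$ with gradient $(1,\dots,1)$ at $o$. Midpoint propagation identifies $\phi$ with $d(o,\cdot)-\Delta/2$, as in Lemma~\ref{lem:distance}. Constancy of $\Gamma(\phi)$ together with $L\phi=-2\phi$ then gives no edges inside spheres about $o$ and $d_o^-(v)=d(o,v)$, in particular back-degree $2$ on $S_2(o)$.

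\textbf{Injectivity in option (b).} The map $F=(f_1,\dots,f_\Delta)$ is a priori only a local bijection on stars, i.e.\ a covering of $H_\Delta$, and coverings need not be injective: $C_8\square K_2\to C_4\square K_2\cong H_3$ is one. Regularity and counting give surjectivity, not injectivity, unless you import a bound $|V|\le 2^\Delta$ such as Mulder's. The same $\phi$ closes the gap. $\sum_i f_i$ and $\phi$ agree on $B_1(o)$, hence everywhere, which says $d_{H_\Delta}(F(x),F(o))=d(x,o)$. So the fibre over $F(o)$ is $\{o\}$, and since fibres of a covering of a connected graph have constant size, $F$ is bijective. Alternatively, $\phi$ shows $\diam(G)\ge\Delta=2\Delta/K$, and Theorem~\ref{thm:MyersBound} finishes; this is essentially the original Liu--M\"unch--Peyerimhoff route.

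The two-valuedness of $f_i$ is fine once squares are exact: $f_i(z)=f_i(y)+f_i(y')-f_i(u)\in\{\pm\frac12,\pm\frac32\}$, and $2\Gamma(f_i)\equiv1$ excludes $\pm\frac32$. For option (a), local cubicality plus $\lambda_1=2$ is not enough (Example~\ref{ex:H-2n-plus-5-quotient}). So the multiplicity must enter through exactly such a diameter or vertex-count argument.
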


In this paper, we study what happens under weaker assumptions, such as 
\(\lambda_{\Delta-1}=K\), \(\lambda_{\Delta-2}=K\), or simply \(\lambda_1=K\),
for graphs satisfying \(\CD(K,\infty)\).

\subsection{Main results}
We call a graph \emph{Lichnerowicz-sharp} if it satisfies
\(\CD(K,\infty)\) and \(\lambda_1=K\).

Unlike in the continuous setting, where sharpness can occur for any
\(K>0\), the unweighted graph setting forces the curvature parameter to be
uniquely determined. We first prove that the conclusion \(K = 2\) in Theorem \ref{lem:LMP-rigidity} already follows from the weaker
assumption \(\lambda_1 = K\). To make this precise, let
\[
\mathcal{K}_{\infty}(x)
=
\sup\{K:\ G \text{ satisfies } \CD(K,\infty)\text{ at }x\}
\]
denote the \(\infty\)-Bakry--Émery curvature at \(x\).
Theorem~\ref{thm:first-sharp-K2} shows that every Lichnerowicz-sharp graph
satisfies
\[
\mathcal{K}_{\infty}\equiv2.
\]

In contrast to Obata's rigidity in the continuous setting, where the round sphere is the unique equality case, Lichnerowicz sharpness in the graph setting admits a much richer class of examples. Indeed, if a graph \(G\) is Lichnerowicz-sharp and another graph \(B\) satisfies \(\CD(2,\infty)\), then their Cartesian product \(G\square B\) is again Lichnerowicz-sharp.

However, not all Lichnerowicz-sharp graphs are decomposable as nontrivial Cartesian products. Indeed, Examples~\ref{ex:noncartesian-k4-h2-cd2-lambda2} and
\ref{ex:H-2n-plus-5-quotient} provide Lichnerowicz-sharp graphs that admit no
nontrivial Cartesian product decomposition.
\begin{figure}[htpb]
  \centering
  \begin{subfigure}[b]{0.47\textwidth}
    \centering
    \resizebox{\linewidth}{!}{\begin{tikzpicture}[
  scale=1.35,
  line cap=round,
  line join=round,
  bundle/.style={black, line width=0.72pt},
  hidden/.style={black, line width=0.48pt,
    dash pattern=on 4.2pt off 3.2pt},
  fiber/.style={line width=1.25pt},
  every node/.style={font=\large}
]

\coordinate (f1-00) at (0.00, 0.00);
\coordinate (f0-00) at (1.64, 1.96);
\coordinate (f2-00) at (4.52,-0.45);
\coordinate (f3-00) at (6.18, 1.55);

\foreach \i in {0,1,2,3} {
  \coordinate (f\i-01) at ($(f\i-00)+(0.00,1.20)$);
  \coordinate (f\i-10) at ($(f\i-00)+(0.95,0.53)$);
  \coordinate (f\i-11) at ($(f\i-00)+(0.95,1.73)$);
}

\foreach \i/\j in {0/1,0/2,0/3,1/2,1/3,2/3} {
  \foreach \p in {01,10,11} {
    \draw[hidden] (f\i-\p) -- (f\j-\p);
  }
}

\foreach \i/\j in {0/1,0/2,0/3,1/2,1/3,2/3} {
  \draw[bundle] (f\i-00) -- (f\j-00);
}

\draw[fiber,red!85!black]
  (f1-00) -- (f1-01) -- (f1-11) -- (f1-10) -- cycle;

\draw[fiber,green!55!black]
  (f0-00) -- (f0-01) -- (f0-11) -- (f0-10) -- cycle;

\draw[fiber,blue!80!black]
  (f2-00) -- (f2-01) -- (f2-11) -- (f2-10) -- cycle;

\draw[fiber,orange!90!black]
  (f3-00) -- (f3-01) -- (f3-11) -- (f3-10) -- cycle;

\node[red!85!black,anchor=south east]
  at ($(f1-01)+(-0.12,0.02)$) {$V_1$};
\node[green!55!black,anchor=south east]
  at ($(f0-11)+(0.16,-0.02)$) {$V_0$};
\node[blue!80!black,anchor=north west]
  at ($(f2-11)+(0.12,-0.10)$) {$V_2$};
\node[orange!90!black,anchor=south east]
  at ($(f3-11)+(0.20,-0.02)$) {$V_3$};

\end{tikzpicture}}
    \caption{Examples~\ref{ex:noncartesian-k4-h2-cd2-lambda2}}
    \label{fig:twisted-K4-H2-bundle}
  \end{subfigure}
  \hfill
  \begin{subfigure}[b]{0.47\textwidth}
    \centering
    \resizebox{0.66\linewidth}{!}{\begin{tikzpicture}[
  x={(1cm,0cm)},
  y={(0cm,1cm)},
  z={(0.56cm,0.34cm)},
  vertex/.style={circle, fill=red, inner sep=1.25pt},
  fiberedge/.style={red, line width=0.45pt, line cap=round},
  bundleedge/.style={black, line width=0.55pt,
    dash pattern=on 5pt off 3pt, line cap=round}
]


\def\fx{0.32}
\def\fy{0.27}
\def\innerR{2.20}
\def\outerR{3.95}

\foreach \base/\rad/\ang in {0000/\innerR/67.5,0001/\innerR/22.5,0011/\innerR/-22.5,0010/\innerR/-67.5,0110/\innerR/-112.5,0111/\innerR/-157.5,0101/\innerR/157.5,0100/\innerR/112.5,1000/\outerR/90,1001/\outerR/45,1011/\outerR/0,1010/\outerR/-45,1110/\outerR/-90,1111/\outerR/-135,1101/\outerR/180,1100/\outerR/135} {
  \coordinate (F-\base) at ({\rad*cos(\ang)},{\rad*sin(\ang)},0);
  \coordinate (v-\base-00) at ({\rad*cos(\ang)+\fx*sin(\ang)-\fy*cos(\ang)},{\rad*sin(\ang)-\fx*cos(\ang)-\fy*sin(\ang)},0);
  \coordinate (v-\base-10) at ({\rad*cos(\ang)-\fx*sin(\ang)-\fy*cos(\ang)},{\rad*sin(\ang)+\fx*cos(\ang)-\fy*sin(\ang)},0);
  \coordinate (v-\base-01) at ({\rad*cos(\ang)+\fx*sin(\ang)+\fy*cos(\ang)},{\rad*sin(\ang)-\fx*cos(\ang)+\fy*sin(\ang)},0);
  \coordinate (v-\base-11) at ({\rad*cos(\ang)-\fx*sin(\ang)+\fy*cos(\ang)},{\rad*sin(\ang)+\fx*cos(\ang)+\fy*sin(\ang)},0);
}

\foreach \a/\b in {0000/1000,0001/1001,0010/1010,0011/1011,0100/1100,0101/1101,0110/1110,0111/1111,0000/0100,0001/0101,0010/0110,0011/0111,1000/1100,1001/1101,1010/1110,1011/1111,0000/0010,0001/0011,0100/0110,0101/0111,1000/1010,1001/1011,1100/1110,1101/1111,0000/0001,0010/0011,0100/0101,0110/0111,1000/1001,1010/1011,1100/1101,1110/1111} {
  \foreach \p in {00,10,01,11} {
    \draw[bundleedge] (v-\a-\p) -- (v-\b-\p);
  }
}

\foreach \a/\b in {0000/1111,0001/1110,0010/1101,0011/1100,0100/1011,0101/1010,0110/1001,0111/1000} {
  \draw[bundleedge] (v-\a-00) -- (v-\b-00);
  \draw[bundleedge] (v-\a-10) -- (v-\b-01);
  \draw[bundleedge] (v-\a-01) -- (v-\b-10);
  \draw[bundleedge] (v-\a-11) -- (v-\b-11);
}

\foreach \base in {0000,0001,0010,0011,0100,0101,0110,0111,1000,1001,1010,1011,1100,1101,1110,1111} {
  \draw[fiberedge] (v-\base-00) -- (v-\base-10);
  \draw[fiberedge] (v-\base-00) -- (v-\base-01);
  \draw[fiberedge] (v-\base-10) -- (v-\base-11);
  \draw[fiberedge] (v-\base-01) -- (v-\base-11);
}

\foreach \base in {0000,0001,0010,0011,0100,0101,0110,0111,1000,1001,1010,1011,1100,1101,1110,1111} {
  \foreach \p in {00,10,01,11} {
    \node[vertex] at (v-\base-\p) {};
  }
}

\end{tikzpicture}}
    \caption{Example~\ref{ex:H-2n-plus-5-quotient}}
    \label{fig:G1-noncartesian}
  \end{subfigure}
  \label{fig:noncartesian-examples}
\end{figure}

Nevertheless, these examples still exhibit a remarkable fibred structure.

Recall that a classical fibre bundle is locally a product of a base space and a fixed fibre, with transition maps gluing these local product structures. A graph bundle is a discrete analogue of this construction. It was introduced as a common generalization of graph coverings and Cartesian products, and has since been extensively studied from structural, topological, and algorithmic perspectives; see, for example, \cite{PTSJVJ1983,MPTSM1988,KJL1990,SohnLee1994,ChaeKwakLee1993,ImrichPisanskiZerovnik1997,HongKwakLee1999,KlavzarMohar1995,KwakLeeSohn1996,ZmazekZerovnik2000,ZmazekZerovnik2002,ZmazekZerovnik2002a,ZmazekZerovnik2006,Zerovnik2000,BanicErvesZerovnik2009,BanicZerovnik2010,ErvesZerovnik2013,FengKwak2006,KwakKwon2001,PisanskiZerovnik2009}; see also \cite{LL2024} for recent developments.

Our main structural result shows that graph bundles provide precisely the right framework for regular Lichnerowicz-sharp graphs: every regular Lichnerowicz-sharp graph is a graph bundle whose fibre is a hypercube. Thus, although global Cartesian product decompositions fail in general, a strong fibred rigidity persists.

We now briefly recall the definition of a graph bundle.

\begin{definition}[{\cite[Definiation 1]{LL2024}}]
    Let $G$ and $F$ be two unweighted graphs. Let $E^{\mathrm{ori}}_{G}=\{(x,y): x\sim y \text{ in } G\}$ be the set of oriented edges of $G$. Let $\sigma: E^{\mathrm{ori}}_G\to \operatorname{Aut}(F), (x,y)\to \sigma_{xy}$ such that $\sigma_{yx}=\sigma_{xy}^{-1}$. Construct the graph $G\square_\sigma F$ by setting $V(G\square_\sigma F)=V(G)\times V(F)$ and $E(G\square_\sigma F)$
    \begin{enumerate}[label=\textup{(\roman*)}]
        \item $(x,u)\sim (y,\sigma_{xy}(u))$ if $x\sim y$ in $G$;
        \item $(x,u)\sim (x,v)$ if $u\sim v$ in $F$.
    \end{enumerate}
    We call $G\square_\sigma F$ the $F$-bundle over $G$ and refer to $G$ and $F$ as the base graph and the fiber graph, respectively.
\end{definition}

Through the notation of graph bundle, we can state our main theorem.

\begin{theorem}
  \label{tm:Bundle-structure1}
  Let $G$ be a connected \(d\)-regular Lichnerowicz-sharp graph. Let $m_2(G)$ denote the multiplicity of the $2$-eigenvalue. Then there exists a graph \(G^\prime\) and an integer \(r\geq m_2(G)\) such that \(G\) is an \(H_r\)-bundle over \(G^\prime\). The base graph \(G^\prime\) satisfies \(\CD(2,\infty)\), and if it is nontrivial, then \(\lambda_1(G^\prime)>2\). 
\end{theorem}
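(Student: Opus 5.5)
We first use Theorem~\ref{thm:first-sharp-K2}, so that $K=2$ and $\mathcal K_\infty\equiv 2$. The idea is that each eigenfunction $f$ with $\Delta f=-2f$ satisfies equality in the Bochner argument behind \eqref{eq:Lich}. Integrating $\Gamma_2(f)\ge 2\Gamma(f)$ against the uniform measure gives $\sum_x\Gamma_2(f)(x)=\lambda_1\sum_x\Gamma(f)(x)=2\sum_x\Gamma(f)(x)$, so $\Gamma_2(f)=2\Gamma(f)$ holds pointwise. In the local curvature matrix picture (the $\Gamma_2$ quadratic form on the $2$-ball at $x$, minus $2\Gamma$), the local vector of $f$ is then a minimizer of a positive semidefinite form. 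This yields linear identities: a "Hessian-vanishing" condition on the $2$-ball. On a $d$-regular graph I expect these identities to force two things. For each edge $xy$ with $f(y)\neq f(x)$ we should have $f(y)-f(x)=\pm c_x$, a value independent of the edge, with $f(y)=-f(x)$ in suitable normalization. Also $f$ should be affine along $4$-cycles: every $2$-path $x\sim y\sim z$ with $z\neq x$ lies in enough $4$-cycles, and $f(x)-f(y)+f(z)-f(w)=0$ on each. In short, each eigenfunction behaves like a coordinate function $\pm1$ of a hypercube, up to scaling.

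Next we build the fibre. Let $W$ be the $2$-eigenspace, of dimension $m_2(G)$. Call an edge $xy$ \emph{vertical} if $f(x)\neq f(y)$ for some $f\in W$, and \emph{horizontal} otherwise. The plan is to show three things.
\begin{enumerate}[label=\textup{(\arabic*)}]
\item The vertical edges at $x$ number some $r$ independent of $x$. Independence comes from connectivity plus the $4$-cycle transport identities.
\item Each connected component of the vertical subgraph is isomorphic to $H_r$. For this, restrict the pointwise equality $\Gamma_2(f)=2\Gamma(f)$ to the vertical subgraph, and show that the local structure there is that of a graph with all vertical $2$-paths completing uniquely to squares and no triangles. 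Then apply Theorem~\ref{lem:LMP-rigidity}, or its local argument, to the component, since the restricted functions give $r$ independent eigenfunctions of eigenvalue $2$ on a graph of degree $r$. We get $r\ge m_2(G)$ because the restrictions of $W$ to a component must be injective. A function of $W$ vanishing on every vertical edge is constant on fibres, and the horizontal equality conditions then propagate it to zero.
\item Horizontal edges induce fibre isomorphisms. If $x\sim y$ is horizontal and $x\sim x'$ is vertical, then the square-completion identities should give a unique $y'\sim y$ with $y'\sim x'$ and $yy'$ vertical. This defines a bijection between fibres preserving adjacency, that is, some $\sigma_{xy}\in\operatorname{Aut}(H_r)$. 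We must also show each vertex of one fibre has exactly one horizontal neighbour in an adjacent fibre, which again uses regularity and the equality case.
\end{enumerate}
With these, $G\cong G'\square_\sigma H_r$, where $G'$ is the quotient by fibres.

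For the base, we show $\CD(2,\infty)$ for $G'$ by lifting. Given $g$ on $G'$, pull it back to $\tilde g$, constant on fibres. Local fibre triviality should identify $\Gamma(\tilde g)$ and $\Gamma_2(\tilde g)$ with $\Gamma(g)$ and $\Gamma_2(g)$ at the projected vertex, because vertical edges contribute nothing. Similarly, eigenfunctions of $G'$ lift to eigenfunctions of $G$ that are constant on fibres. So $\lambda_1(G')=2$ would produce an element of $W$ vanishing on vertical edges, contradicting the definition of vertical. Hence $\lambda_1(G')>2$ whenever $G'$ is nontrivial.

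The main obstacle is the first step: extracting from $\Gamma_2(f)=2\Gamma(f)$ the precise combinatorial rigidity, meaning constant gradient magnitude and exact square completion, uniformly for \emph{all} of $W$ at once. Step (3) then depends on this. I would attack it by writing out the curvature matrix of $x$ explicitly for $d$-regular graphs, and using $\mathcal K_\infty(x)=2$ to show that its kernel is spanned by vectors supported on "square-like" neighbour pairs.
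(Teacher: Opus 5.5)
Your architecture matches the paper's: vertical $=$ visible, horizontal $=$ collapsed, fibres are hypercubes, collapsed edges induce fibre isomorphisms, and lifting from $G^\prime$ gives $\CD(2,\infty)$ and $\lambda_1(G^\prime)>2$. Your final lifting step is essentially the paper's argument. \textbf{There is a genuine gap:} the step you postpone as ``the main obstacle'' is the real content of the theorem, and the plan you sketch for it would not work.

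The identities you expect are false. On $H_2$, the $2$-eigenfunction $f=\chi_1+2\chi_2$ has differences $-2$ and $-4$ along the two edges at $00$, so there is no constant gradient magnitude $c_x$. Only $\{\pm1\}$-valued eigenfunctions behave like coordinates (Lemma~\ref{lem:Binary2Eigenvector}).

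Nor can exact square incidence be read off from $\ker(A_\infty(x)-2I)$ at a single vertex. In the bundle $X_{n,r}$ over $K_n$ with $n\geq5$, take $P=(i,u)$ with $\tau(u)\neq u$. The all-ones vector on the horizontal neighbours lies in this kernel, although any two of those neighbours are completed only through vertices of back-degree $n-2\geq3$.

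The paper obtains exact squares (Proposition~\ref{prop:residual-visible-exactness}) by a non-local argument with three ingredients:
\begin{enumerate}[label=\textup{(\alph*)}]
\item It fixes one generic $f\in E_2$ that separates every visible edge and is constant on collapsed ones, rather than treating all of $W$ at once.
\item It excludes triangles through visible edges (Proposition~\ref{prop:no-visible-triangle}) by maximising $f$ over the endpoints of such edges and using $\sum_i t_i(f(y_i)-f(x))=0$.
\item It inducts along the order of $f$-values. Pairs containing a lower neighbour are handled by transporting goodness (Lemma~\ref{lem:visible-star-transfer}). Pairs of upper or level neighbours need a counting argument that combines the midpoint formula, the diagonal bound $(A_\infty(x))_{pp}\geq2$, and equality in AM--HM; this forces a unique completion with back-degree exactly $2$.
\end{enumerate}
None of this appears in your plan.

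Step (2) also fails as written. Restricting $W$ to a vertical component is injective, but it gives only $m_2(G)$ independent eigenfunctions there. This can be far below $r$: in Example~\ref{ex:H-2n-plus-5-quotient}, $r=2n$ while $m_2(G_n)=n$. So Theorem~\ref{lem:LMP-rigidity} is not available. The local square structure alone does not identify $H_r$ either, since locally cubical non-hypercubes such as $G_n$ exist.

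The paper's substitute is Theorem~\ref{thm:HypercubeRigidity}, which uses one eigenfunction nonzero on every edge of the component. Exact squares show that the multiset of differences at $y$ is the multiset at $x$ with $f(y)-f(x)$ replaced by $f(x)-f(y)$. Hence the number of lower neighbours changes by exactly one along each edge, the diameter is at least $r$, and the equality case of the Myers bound (Theorem~\ref{thm:MyersBound}) gives $H_r$.

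Finally, in step (3) you assert, but do not prove, that each vertex has exactly one collapsed neighbour in an adjacent fibre and none in its own fibre. The paper first shows by square transport that the count $m_{CD}$ is constant on $C$. It then uses that a generic $2$-eigenfunction on $H_r$ has a unique local maximum (Lemma~\ref{lem:hypercube-unique-strict-local-maximum}). Therefore every collapsed neighbour in $D$ of the $f$-maximiser of $C$ must be the $f$-maximiser of $D$, and $m_{CC}=0$.
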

In particular, if the base graph \(G^\prime\) is trivial, then \(G\cong H_r\).

We next study the converse problem: when is a hypercube bundle
\(G^\prime\square_\sigma H_r\) Lichnerowicz-sharp?  Cartesian products provide
the basic examples.  If \(G^\prime\) satisfies \(\CD(2,\infty)\) and
\(\lambda_1(G^\prime)>2\), then \(G^\prime\square H_r\)  is Lichnerowicz
sharp.  For general bundles over a base with \(\lambda_1(G^\prime)>2\), the
\(2\)-eigenspace is controlled by how the transition maps act on the
\(2\)-eigenspace of \(H_r\).  This leads to splitting and triviality criteria:
large families of \(\{-1,1\}\)-valued \(2\)-eigenfunctions detect product
hypercube factors, and the dimension of $2$-eigenspace equal to $r$ is equivalent to the bundle being \(H_r\)-bundle isomorphic to the trivial
product \(G^\prime\square H_r\).

We also analyze curvature on hypercube bundles.  Under the section condition
arising from the structural theorem, the curvature matrix decomposes into a
fiber part and a horizontal part.  This direct-sum decomposition reduces
several curvature questions for bundles to curvature questions on the base and
on the hypercube fiber.  As an application, we characterize locally cubical
graphs by the identity \(A_\infty(x)=2I\) at every vertex, and for locally
cubical base graphs determine when a hypercube bundle satisfies
\(\CD(2,\infty)\).  The converse analysis also produces nontrivial examples:
for every \(n\geq4\) and \(r\geq2\), there are nontrivial Lichnerowicz-sharp
\(H_r\)-bundles over \(K_n\), whereas every Lichnerowicz-sharp \(H_r\)-bundle
over \(K_3\) is trivial.

Finally, we derive rigidity and splitting consequences from the multiplicity
\(m_2(G)\). Assume $G$ is a $H_r$-bundle over a base graph $G'$.  The observation $m_2(G)\leq r$ yields the following rigidity results.
\begin{theorem}
  \label{tm:Bundle-structure2}
  Let \(G\) be a connected \(d\)-regular Lichnerowicz-sharp graph. We have the following rigidity consequences:
  \begin{enumerate}[label=\textup{(\roman*)}]
  \item\label{item1: d-1} If \(m_2(G)\geq d-1\), then \(G\cong H_d\).
  \item\label{item2: d-2} If \(m_2(G)=d-2\), then \(G\cong H_{d-2}\square K_3\).
  \item\label{item3: d-3} If \(m_2(G)=d-3\), then
    \[
      G\cong H_{d-3}\square K_4
      \quad\text{or}\quad
      G\cong H_{d-3}\square K_{3,3}.
    \]
  \end{enumerate}
\end{theorem}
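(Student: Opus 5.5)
We derive all three items from Theorem~\ref{tm:Bundle-structure1}. Write $G\cong G'\square_\sigma H_r$ with $r\ge m_2(G)$, where $G'$ satisfies $\CD(2,\infty)$ and, when nontrivial, $\lambda_1(G')>2$. Since $G$ is $d$-regular and every vertex of $G$ has exactly $r$ fiber neighbours and $\deg_{G'}(x)$ horizontal neighbours, $G'$ is $(d-r)$-regular. Thus $d-r\le d-m_2(G)$, and the hypothesis on $m_2(G)$ in each item bounds the degree of the base. Our plan is therefore to classify connected $k$-regular graphs $G'$ with $k\le 3$ that satisfy $\CD(2,\infty)$ and $\lambda_1(G')>2$. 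Then, for each such base, we show the bundle must be the trivial product.

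\emph{Classification of the base.} If $k=0$, then $G'$ is a single vertex and $G\cong H_d$. If $k=1$, then $G'=K_2$, which has $\lambda_1=2$; this is excluded, since a nontrivial base must have $\lambda_1>2$. If $k=2$, then $G'$ is a cycle $C_n$. For $n\ge 4$, either $\lambda_1(C_n)\le 2$ or the cycle has nonpositive curvature, so only $C_3=K_3$ survives, with $\lambda_1=3$. If $k=3$, we use the $\CD(2,\infty)$ condition together with the known local computation of Bakry--\'Emery curvature for cubic graphs: curvature at least $2$ forces many short cycles through every vertex. Combined with $\lambda_1>2$, which excludes $H_3$ (its $\lambda_1$ equals $2$) and also the prism $K_3\square K_2$, this should leave only $K_4$ and $K_{3,3}$. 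We will need to check $K_{3,3}$ directly: it has $\lambda_1=3$ and curvature $2$. This step, a finite case analysis of the $1$-ball and $2$-ball structures, is the main obstacle. We would carry it out using the curvature matrix decomposition and the known list of cubic graphs with $\mathcal K_\infty\ge 2$. Item (i) then follows: $m_2\ge d-1$ gives $k\le 1$, hence $k=0$.

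\emph{Triviality of the bundle.} It remains to show that $G\cong G'\square H_r$, with $r=m_2(G)$, in cases (ii) and (iii). Here we invoke the triviality criterion discussed before the statement: the $2$-eigenspace has dimension exactly $r$ if and only if the bundle is isomorphic to the trivial product. Since $m_2(G)\le r$ always holds and $m_2(G)\ge d-k=r$ in the relevant case, we get equality, and hence the bundle is trivial. The cases split as follows. When $m_2=d-2$, we must have $k\le 2$, hence $k=2$ and $G'=K_3$. When $m_2=d-3$, we must have $k=3$, since smaller $k$ would force $m_2\ge d-2$ as computed above; hence $G'\in\{K_4,K_{3,3}\}$. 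In either case the product decomposition follows. For $K_3$, this is also consistent with the stated fact that every Lichnerowicz-sharp $H_r$-bundle over $K_3$ is trivial.
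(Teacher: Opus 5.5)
\textbf{Gap in item (iii): excluding a $K_3$ base.} Your overall route is the paper's: write $G\cong G'\square_\sigma H_r$, observe that $G'$ is $k$-regular with $k=d-r\le d-m_2(G)$, exclude $k=1$ via $\lambda_1(K_2)=2$, reduce $k=2$ to $K_3$, classify the cubic bases, and get triviality from $m_2(G)=r$. Items (i) and (ii) go through as you describe. The gap is in (iii), at the sentence ``smaller $k$ would force $m_2\ge d-2$ as computed above.'' Nothing above establishes this for $k=2$.

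In that subcase $G'=K_3$ and $r=d-2$, so the hypothesis reads $m_2(G)=d-3=r-1$. Your only general tools are $m_2(G)\le r$ and the criterion ``$m_2=r$ iff the bundle is trivial.'' Neither says anything when $m_2<r$. The subcase also cannot be dismissed on general grounds. Over $K_n$ with $n\ge 4$ there are nontrivial Lichnerowicz-sharp $H_r$-bundles with $m_2=r-1$ (take every transition map to be the swap of two coordinates). So something specific to $K_3$ is required.

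What is needed is exactly the fact you cite only as a consistency remark in (ii), where it is not needed: every Lichnerowicz-sharp $H_r$-bundle over $K_3$ is trivial. The paper proves this by a curvature computation.
\begin{enumerate}[label=\textup{(\alph*)}]
\item The section gives a vertex $u_0$ fixed by the triangle holonomy $\sigma_{0120}$.
\item If $\sigma_{0120}\neq\mathrm{id}$, it moves some $v_0\sim u_0$.
\item At $P=(0,v_0)$, the two horizontal neighbours $(1,\sigma_{01}(v_0))$ and $(2,\sigma_{02}(v_0))$ are then non-adjacent, and their only common horizontal neighbour is $P$.
\item Hence the horizontal block of $A_\infty(P)$ in the direct-sum decomposition has diagonal entry $3-2=1<2$, contradicting $\CD(2,\infty)$.
\end{enumerate}
So $\Gamma_0$ is trivial, $G\cong K_3\square H_{d-2}$, and $m_2(G)=d-2$. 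This is what excludes $k=2$ in (iii).

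\textbf{The cubic classification.} This step is still only a plan in your proposal (``should leave only $K_4$ and $K_{3,3}$''). No $2$-ball case analysis is needed. The paper cites the classification of connected cubic graphs satisfying $\CD(0,\infty)$: they are the prisms $C_n\square K_2$ and the M\"obius ladders $M_n$. It then uses spectra. One always has $\lambda_1(C_n\square K_2)\le 2$, and $\lambda_1(M_n)\le 2-2\cos(2\pi/n)\le 2$ for $n\ge 4$. This leaves only $M_2\cong K_4$ and $M_3\cong K_{3,3}$.
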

These results show how the bundle theorem connects hypercube rigidity with
product examples involving lower-dimensional hypercube factors.

We also obtain an optimal spectral rigidity theorem without assuming
regularity in advance.  If \(G\) is connected, satisfies \(\CD(2,\infty)\),
and \(\Delta=\max_x\deg(x)\), then
\[
  \lambda_{\Delta-1}=2
  \quad\Longrightarrow\quad
  G\cong H_\Delta .
\]
The key additional ingredient is a local regularity theorem: under the
appropriate sharp spectral hypothesis, the curvature condition forces all
degrees to be equal.  

The paper ends with a general splitting result:
\begin{theorem}
  \label{thm:Lichnerowicz-sharpK_2Factor}
    Let $G$ be a connected $d$-regular Lichnerowicz-sharp graph. Assume $m_2(G)\geq d-m$ for some positive integer $m\leq \frac{d}{2}$. Then $G\cong B\square H_{d-2m+1}$ for some connected $(2m-1)$-regular graph $B$. Moreover, $B$ is also $\CD(2,\infty)$ and $m_2(B)\geq m-1$.
\end{theorem}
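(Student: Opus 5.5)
Our plan is to combine Theorem~\ref{tm:Bundle-structure1} with the splitting criterion for hypercube bundles mentioned in the introduction: a large family of $\{-1,1\}$-valued $2$-eigenfunctions detects product $K_2$ factors. By Theorem~\ref{tm:Bundle-structure1}, $G\cong G'\square_\sigma H_r$ with $r\ge m_2(G)\ge d-m$. Here $G'$ is $(d-r)$-regular, satisfies $\CD(2,\infty)$, and $\lambda_1(G')>2$ when it is nontrivial. Since $\lambda_1(G')>2$, every $2$-eigenfunction of $G$ is fiberwise a $2$-eigenfunction of $H_r$, that is, a linear combination of the coordinate functions $x\mapsto(-1)^{u_i}$. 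The transition maps $\sigma_{xy}\in\operatorname{Aut}(H_r)$ act on the $r$ coordinate directions by a signed permutation. Transporting a coordinate function consistently across the base therefore amounts to the monodromy of this signed-permutation representation of $\pi_1(G')$ fixing a vector. The $2$-eigenspace of $G$ is then the fixed space $W$ of the monodromy group $\Gamma\le\{\pm1\}^r\rtimes S_r$ acting on $\R^r$, and $m_2(G)=\dim W\ge d-m$.

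The key combinatorial step is to show that $\Gamma$ has at least $d-2m+1$ orbits on coordinates that are trivial singletons, meaning coordinates $i$ fixed by every permutation and never sign-flipped. Decompose $\{1,\dots,r\}$ into $\Gamma$-orbits. A fixed vector restricted to an orbit $O$ is constant up to a consistent sign, so each orbit contributes at most $1$ to $\dim W$. Hence $\dim W\le\#\{\text{orbits with a consistent sign structure}\}$. Each nontrivial orbit, of size at least $2$, or each singleton with nontrivial sign, loses at least $1$ relative to $r$. So $r-\dim W\ge \#\{\text{non-trivial coordinates}\}/2$. We have $r-\dim W\le r-(d-m)$. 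Combined with $r\le d$ this gives at most $2(r-d+m)$ nontrivial coordinates. So the number of trivial coordinates is at least $r-2(r-d+m)=2d-2m-r\ge d-2m$. To reach $d-2m+1$ we must also use $\lambda_1(G')>2$ and the degree count. If $r=d$, then $G'$ is trivial and $G\cong H_d$, which is fine. If $r<d$, the base has degree $d-r\ge1$, and we gain one extra from the strict inequality $2d-2m-r\ge d-2m+1$. We expect the careful bookkeeping here, including the case of sign-twisted singletons and orbits of size $2$ carrying an antisymmetric fixed vector, to be the main obstacle. In particular, one must verify that an orbit of size $k\ge2$ really contributes at most one dimension, not more, and we plan to use transitivity of the permutation part for this.

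Given a set $T$ of at least $t=d-2m+1$ trivial coordinates, the transition maps preserve those coordinates exactly. Hence $\sigma$ restricts to automorphisms of the complementary subcube, and $G\cong(G'\square_{\sigma'}H_{r-|T|})\square H_{|T|}$. Absorb any extra trivial coordinates into the first factor so that exactly $H_{d-2m+1}$ splits off. Set $B=G'\square_{\sigma'}H_{r-t}$. It is connected and $(d-t)=(2m-1)$-regular. For the ``moreover'' part, we use the fact that Bakry--\'Emery curvature of a Cartesian product equals the minimum of the factors' curvatures, which the paper uses for products with hypercubes. Then $B$ satisfies $\CD(2,\infty)$ because $G$ does. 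Finally, the spectrum of $B\square H_t$ consists of the sums of the spectra of the factors, and $\lambda_1(B)\ge2$. So the $2$-eigenspace of $G$ decomposes as $m_2(G)=m_2(B)+t$, giving $m_2(B)\ge d-m-(d-2m+1)=m-1$.
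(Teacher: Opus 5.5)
\paragraph{Gap in the splitting step.} Your counting is sound. $E_2(G)$ is the fixed space of $\Gamma_{x_0}$ acting on $W_1(H_r)$ (Proposition~\ref{prop:EigenspaceOfBundle}), and each orbit contributes at most one dimension. Your bookkeeping therefore gives at least $2m_2(G)-r\ge 2d-2m-r$ coordinates $i$ with $\chi_i\circ\sigma_\gamma=\chi_i$ for every $\sigma_\gamma\in\Gamma_{x_0}$. The gap is the next claim, that ``the transition maps preserve those coordinates exactly.'' Triviality of $i$ is a condition on the holonomy, i.e.\ on composites $\sigma_\gamma$ along closed walks at $x_0$. It is not a condition on the individual $\sigma_{xy}$. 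For example, for arbitrary $\theta_x\in\operatorname{Aut}(H_r)$ the choice $\sigma_{xy}=\theta_y\theta_x^{-1}$ has trivial holonomy, so every coordinate is trivial in your sense, yet the single transition maps can permute all coordinates. The $\sigma$ supplied by Theorem~\ref{thm:HypercubeBundle} is $\iota_D^{-1}\sigma_{CD}\iota_C$ with arbitrary identifications $\iota_C$, so it is of exactly this uncontrolled kind. Hence ``$\sigma$ restricts to automorphisms of the complementary subcube'' is false for the trivialization you are handed, and the product decomposition does not follow as written.

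\paragraph{Repair and comparison with the paper.} The missing step is a change of trivialization. Choose paths $p_x$ from $x_0$ to each $x$ (e.g.\ in a spanning tree) and put $\Theta(x,u)=(x,\sigma_{p_x}^{-1}(u))$. This is an $H_r$-bundle isomorphism $G'\square_\sigma H_r\to G'\square_\tau H_r$ with
$\tau_{xy}=\sigma_{p_y}^{-1}\sigma_{xy}\sigma_{p_x}=\sigma_{p_x\ast xy\ast p_y^{-1}}\in\Gamma_{x_0}$.
So every $\tau_{xy}$ fixes each trivial coordinate: its permutation part fixes $i$ and its translation part has $a_i=0$. Consequently $\tau_{xy}(v,w)=(\rho_{xy}(v),w)$ and $G\cong(G'\square_\rho H_{r-t})\square H_t$. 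The paper does exactly this in Theorem~\ref{thm:BundleSplit}: it transports each invariant $\chi_i$ to a $\{-1,1\}$-valued eigenfunction $f_i$ and chooses $\theta_x$ with $\chi_{r-m+i}\circ\theta_x=f_i(x,\cdot)$.

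With this repair your proof is complete, and it is a slightly more direct version of the paper's. You count all invariant coordinates at once and split them off simultaneously. The paper instead uses Lemma~\ref{lem:large-holonomy-invariant-coordinate} inside the induction of Theorem~\ref{thm:K_2Factor}: it finds one invariant coordinate at a time, splits off one $K_2$, and re-checks $m_2(B_k)=M-k>(r-k)/2$ at each step. Your final deductions match the paper: regularity of $B$, $\CD(2,\infty)$ via the direct-sum curvature matrix, and $m_2(G)=m_2(B)+t$.
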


\subsection{Continuous splitting versus graph-bundle rigidity}
The Bakry--\'Emery curvature theory of graphs is, in some respects, more closely analogous to that of weighted Riemannian manifolds, see \cite[Section 7]{CKLP} and \cite[Section 1.6]{CLP2020}. In the weighted Riemannian setting, equality in the Lichnerowicz estimate admits a natural splitting interpretation. Cheng and Zhou made the role of the multiplicity of the sharp eigenvalue explicit in the following theorem.
\begin{theorem}[{\cite[Theorem~2]{CZ2017}}]
Let \((M^n,g,e^{-f}dV_g)\) be a complete smooth weighted Riemannian manifold
with \(\Ric_f\geq Kg\) for some positive constant \(K\). Then
\(\lambda_1(\Delta_f)=K\) with multiplicity \(k\geq1\) if and only if
\begin{equation*}
(M^n,g,e^{-f}dV_g)
\cong
(\Sigma^{n-k},g_\Sigma,e^{-f\vert_{\Sigma\times\{0\}}}dV_{g_\Sigma})
\times
(\mathbb{R}^k,g_{\mathrm{Eucl}},e^{-K\frac{\vert x\vert^2}{2}}dx),
\end{equation*}
where \(\Sigma^{n-k}\) is an $(n-k)$ dimensional submanifold satisfying
\(\Ric_f^{\Sigma}\geq K\Delta_f^{\Sigma}\) and
\(\lambda_1(\Delta_f^{\Sigma})>K\).
\end{theorem}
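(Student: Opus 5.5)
This is the Cheng--Zhou theorem quoted from \cite{CZ2017}, a smooth weighted-manifold result, so the plan follows the Bochner-formula route rather than anything graph-specific.

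\emph{Sufficiency.} Suppose $M$ is isometric to the product $\Sigma\times\mathbb{R}^k$ with the stated weights. Then $\Delta_f$ splits as $\Delta_f^\Sigma+\Delta_{\mathbb{R}^k,K}$, where the second operator is the Ornstein--Uhlenbeck operator. The Ornstein--Uhlenbeck operator has first eigenvalue $K$, with eigenfunctions exactly the coordinate functions $x_i$, so its multiplicity is $k$. Since $\lambda_1(\Delta_f^\Sigma)>K$, the spectrum of the product shows that $\lambda_1=K$ with multiplicity exactly $k$. The curvature condition $\Ric_f\geq K$ holds because Bakry--\'Emery Ricci curvature adds across products and the Gaussian factor has $\Ric_f=K$.

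\emph{Necessity, Bochner step.} Let $u$ be a $K$-eigenfunction, which is in $L^2(e^{-f})$. I would integrate the weighted Bochner formula
\[\tfrac12\Delta_f|\nabla u|^2=|\nabla^2u|^2+\langle\nabla u,\nabla\Delta_f u\rangle+\Ric_f(\nabla u,\nabla u).\]
On a complete noncompact $M$ this needs cutoff functions together with the weighted $L^2$ integrability of $u$, $\nabla u$ and $\nabla^2u$ (via Caccioppoli-type estimates). The result is
\[\int|\nabla^2u|^2=\int\bigl(K|\nabla u|^2-\Ric_f(\nabla u,\nabla u)\bigr)\leq0.\]
Hence $\nabla^2u=0$, so $\nabla u$ is a parallel vector field of constant length. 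Moreover $\Ric_f(\nabla u,\nabla u)=K|\nabla u|^2$, and $\Delta_fu=-\langle\nabla f,\nabla u\rangle$ gives $\langle\nabla f,\nabla u\rangle=Ku$.

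\emph{Necessity, splitting step.} Take an orthonormal basis $u_1,\dots,u_k$ of the eigenspace. The gradients $\nabla u_i$ are parallel, and they are linearly independent: if $\sum c_i\nabla u_i=0$ then $\sum c_iu_i$ is constant, and by the relation $\langle\nabla f,\nabla u\rangle=Ku$ that constant is $0$. By de Rham's decomposition, the $k$ parallel fields split $M$ isometrically as $\Sigma\times\mathbb{R}^k$, with $(u_1,\dots,u_k)$ serving, after a linear change, as Euclidean coordinates $x$. Next I would write $f=\tfrac K2|x|^2+h$. From $\langle\nabla f,\nabla x_i\rangle=Kx_i$ we get $\partial_{x_i}h=0$, so $h$ depends only on $\Sigma$; thus $h=f|_{\Sigma\times\{0\}}$ and the weighted measure splits as claimed. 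The curvature splits as well, so $\Ric_f\geq K$ restricts to $\Ric_f^\Sigma\geq K$. Finally, $\lambda_1(\Delta_f^\Sigma)>K$: otherwise a $K$-eigenfunction on $\Sigma$, pulled back to $M$, would give a $(k+1)$-st independent eigenfunction, contradicting multiplicity $k$.

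\emph{Main obstacle.} The hardest part is justifying the integration by parts on a noncompact manifold. One must show that $\nabla u$ and $\nabla^2u$ lie in weighted $L^2$ and that the boundary terms from the cutoffs vanish. I would handle this with standard cutoff arguments using the spectral assumption that $u\in W^{1,2}(e^{-f})$.
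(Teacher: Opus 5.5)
Your argument is correct and uses the same Bochner mechanism the paper invokes for this result. The paper does not prove the theorem; it cites \cite{CZ2017} and sketches the argument in the introduction: equality forces $\nabla^2 u=0$, the eigenfunction gradients become independent parallel fields, and these split off the Gaussian $\mathbb{R}^k$ factor. One minor precision: de Rham's theorem alone would only split the universal cover. The global splitting $M\cong\Sigma\times\mathbb{R}^k$ comes from the fact that these parallel fields are gradients of the functions $u_i$ on a complete manifold, so their flows are global isometric translations and $\Sigma$ can be taken as a common level set.
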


As a consequence, they obtained the following rigidity result when the sharp
eigenvalue has multiplicity at least \(n-1\).
\begin{corollary}[{\cite[Corollary~2]{CZ2017}}]
Let \((M^n,g,e^{-f}dV_g)\) be a complete smooth weighted Riemannian manifold
with \(\Ric_f\geq Kg\) for some positive constant \(K\). If the multiplicity
of \(\lambda_1(\Delta_f)=K\) is at least \(n-1\), then \((M,g)\) is isometric
to the Euclidean space \((\mathbb{R}^n,g_{\mathrm{Eucl}})\), and \(f\) can be
expressed as
\begin{equation*}
f(x_1,\ldots,x_n)
=
\varphi(x_1)+\frac{K(x_2^2+\cdots+x_n^2)}{2},
\end{equation*}
where \(\varphi\) is a smooth function satisfying
\(\varphi^{\prime\prime}\geq K\).
\end{corollary}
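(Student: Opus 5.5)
The plan is to derive the corollary directly from the Cheng--Zhou splitting theorem quoted just above, applied with $k$ equal to the multiplicity of $\lambda_1(\Delta_f)=K$. That theorem gives an isometric splitting
\[
(M^n,g,e^{-f}dV_g)\cong(\Sigma^{n-k},g_\Sigma,e^{-f|_{\Sigma\times\{0\}}}dV_{g_\Sigma})\times(\mathbb{R}^k,g_{\mathrm{Eucl}},e^{-K|x|^2/2}dx).
\]
Since $\dim M=n$, we have $k\le n$. Together with the hypothesis $k\geq n-1$, this leaves exactly two cases: $k=n$ and $k=n-1$. Along the splitting the weight is additive, $f(p,x)=f_\Sigma(p)+K|x|^2/2$, where $f_\Sigma=f|_{\Sigma\times\{0\}}$. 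I read the condition on $\Sigma$ as the Bakry--\'Emery bound $\Ric_f^\Sigma=\Ric_\Sigma+\operatorname{Hess}f_\Sigma\geq Kg_\Sigma$.

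\emph{Case $k=n$.} Here $\Sigma$ is a point, so $M\cong\mathbb{R}^n$ and $f=\mathrm{const}+K|x|^2/2$. Setting $\varphi(x_1)=Kx_1^2/2+\mathrm{const}$ puts $f$ in the required form with $\varphi''=K\geq K$.

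\emph{Case $k=n-1$.} Here $\Sigma$ is a connected, complete, one-dimensional Riemannian manifold, so it is isometric either to $\mathbb{R}$ or to a circle $S^1_L$ of some length $L$. In dimension one the Ricci tensor vanishes, so the curvature condition on $\Sigma$ reduces to $f_\Sigma''\geq K$ in an arclength parameter. The circle is excluded: integrating $f_\Sigma''$ over a closed loop gives
\[
0=\oint f_\Sigma''\,ds\geq KL>0,
\]
a contradiction. Hence $\Sigma\cong\mathbb{R}$ with coordinate $x_1$. Then $M\cong\mathbb{R}\times\mathbb{R}^{n-1}=\mathbb{R}^n$ with the Euclidean metric, and
\[
f(x_1,\dots,x_n)=\varphi(x_1)+\tfrac{K}{2}(x_2^2+\cdots+x_n^2),\qquad \varphi:=f_\Sigma,\quad \varphi''\geq K.
\]

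The only non-formal step is the classification of the one-dimensional factor $\Sigma$, namely ruling out the compact option $S^1$. This is handled by the integration argument above. One must also take care that the curvature hypothesis on $\Sigma$ supplied by the theorem is the weighted Ricci lower bound $\Ric_f^\Sigma\ge Kg_\Sigma$, which in dimension one is exactly $\varphi''\geq K$; that is the assumption my reading of the condition relies on. Everything else is bookkeeping with the product structure.
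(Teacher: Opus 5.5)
Your argument is correct and follows the paper's route. The paper gives no separate proof: it presents the corollary as a consequence of the Cheng--Zhou splitting theorem, and your case split $k\in\{n-1,n\}$ and your exclusion of $\Sigma\cong S^1$ via $\varphi''\geq K$ supply exactly the details behind that claim. You also do not need to rely on your reading of the curvature condition on $\Sigma$ as stated in the quoted theorem: the bound $\Ric_f^\Sigma\geq Kg_\Sigma$ follows directly by applying $\Ric_f\geq Kg$ to vectors tangent to the $\Sigma$-factor, because for the product metric and $f=f_\Sigma+K|x|^2/2$ both $\Ric$ and the Hessian of $f$ restrict to their $\Sigma$ counterparts.
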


An analogous spectral-gap rigidity theorem holds for
\(\mathrm{RCD}(K,\infty)\)-spaces \cite[Theorem~1.1]{GKKO2017}. Thus, in these
continuous settings, sharp eigenfunctions produce global product
directions.

However, our results show that the equality case in the Lichnerowicz estimate in the graph setting is fundamentally different. 

A related multiplicity-versus-rigidity phenomenon occurs for closed K\"ahler manifolds with a positive Ricci lower bound.
For the standard K\"ahler--Einstein metric \(\omega_{\mathbb{CP}^n}\), normalized by \(\mathrm{Ric}(\omega_{\mathbb{CP}^n})=\omega_{\mathbb{CP}^n}\), the model \(\mathbb{CP}^n\) has \(\lambda_1=1\) with multiplicity \(n^2+2n\).
A single $\lambda_1=1$ does not characterize \(\mathbb{CP}^n\), since \(\mathbb{CP}^k\times\mathbb{CP}^{\ell}\) may also attain \(\lambda_1=1\).
Chu, Wang, and Zhang \cite{CWZ2025} proved the optimal rigidity threshold in this setting:
\(\lambda_{n^2+3}=1\) forces the manifold to be \(\mathbb{CP}^n\), whereas \(\lambda_{n^2+2}=1\) is attained by \(\mathbb{CP}^{n-1}\times\mathbb{CP}^1\) and is therefore insufficient.

These parallels and differences motivate our main questions: what hypercube
structure survives
under equality in the graph Lichnerowicz estimate, and when does higher
spectral multiplicity promote this bundle structure to a Cartesian product or to full hypercube rigidity?

\subsection{Proof strategy for the structural theorem}

In the smooth weighted
Riemannian setting, let \(f\) be a nonconstant eigenfunction attaining equality
in the Lichnerowicz estimate. Equality in the Bochner argument forces
\(\operatorname{Hess}f=0\). Consequently, \(\nabla f\) is a parallel vector
field of constant $\card{\nabla f}>0$, and its flow determines a globally
consistent direction along which the space splits off a Gaussian
\(\mathbb{R}\)-factor. More generally, independent eigenfunctions attaining
equality in the Lichnerowicz estimate determine independent parallel directions
and hence a higher-dimensional Gaussian factor.

This mechanism has no direct graph analogue. A graph has no tangent bundle,
parallel vector fields, or geodesic flow that can be integrated. For a sharp
eigenfunction \(f\), the equality condition implies that \(\Gamma(f)\) is
constant, but
\[
  2\Gamma(f)(x)=\sum_{y\sim x}\bigl(f(y)-f(x)\bigr)^2
\]
controls only the total squared variation over all edges incident to \(x\). It
does not prevent an individual edge difference \(f(y)-f(x)\) from vanishing.
Thus a single sharp eigenfunction need not determine a globally consistent
discrete direction and, in general, does not produce even one Cartesian
\(H_1\)-factor. 
Therefore the correct global object must allow twisting and is a graph bundle
rather than necessarily a Cartesian product.

Our first step is therefore to use the entire eigenspace \(E_2\), rather than a
single eigenfunction. A basis of \(E_2\) defines the spectral map
\(\Phi:V(G)\to\mathbb{R}^{m_2(G)}\). We call an edge visible if its endpoints
have distinct images under \(\Phi\), and collapsed otherwise. Although \(\Phi\) depends on the chosen basis, the visible--collapsed
decomposition does not: an edge \(xy\) is collapsed precisely when
\(h(x)=h(y)\) for every \(h\in E_2\).  In our discrete setting, this canonical edge decomposition serves as an
analogue of the splitting and transverse directions in continuous splitting
arguments.

We then choose a generic \(f\in E_2\) which
separates the endpoints of every visible edge and is constant across every
collapsed edge.  The main technical difficulty is to turn the equality in the curvature
inequality into exact combinatorial information. A basic tool to compute the optimal curvature $\mathcal{K}_\infty(x)$ at $x$ is the curvature matrix $A_\infty(x)$ whose smallest eigenvalue is exact $\mathcal{K}_\infty(x)$. Sharpness gives
\(\Gamma_2(f)=2\Gamma(f)\), and hence both the midpoint identity
\eqref{eq:midpoint} and the curvature-matrix relation
\[
  A_\infty(x)(\nabla f)_x=2(\nabla f)_x.
\]

A key observation (Proposition~\ref{prop:residual-visible-exactness}) is that if $xy$ is visible, then for any another $z\sim x$, $(x;y,z)$ satisfies the exact square incidence. More precisely, if
\(xy\) is visible and \(z\neq y\) is another neighbour of \(x\), then \(y\)
and \(z\) are nonadjacent and admit a unique square completion \(w\), with
\(S_1(x)\cap S_1(w)=\{y,z\}\). This exact-square statement is the discrete
replacement for the parallelism supplied by the vanishing Hessian in the
continuous theory. 

Through this exact-square structure, we show that the visible subgraph is a
disjoint union of hypercubes and that adjacent components are connected by
hypercube isomorphisms. Consequently,
\(G\cong G^\prime\square_\sigma H_r\) is a hypercube bundle admitting a section,
where the base \(G^\prime\) satisfies \(\CD(2,\infty)\) and, when nontrivial,
\(\lambda_1(G^\prime)>2\).

\section{Preliminary}\label{Preliminary}
Throughout this article, all graphs are assumed to be simple, connected, unweighted, and locally finite. Let \(G=(V,E)\) be a graph with the non-normalized Laplacian
\[
  Lf(x)=\sum_{y\sim x}(f(y)-f(x)).
\]
The eigenvalues of $-L$ are ordered as
\[
  0=\lambda_0<\lambda_1\leq \lambda_2\leq\cdots,
\]
with multiplicity.  For $K>0$, put
\[
  E_K:=\ker(L+KI),\quad m_K:=\dim E_K.
\]
We write $d_G$ for the combinatorial distance, and
\[
        S_r(x):=\{y\in V:d_G(x,y)=r\},\quad
        B_r(x):=\{y\in V:d_G(x,y)\leq r\}.
\]
For any two adjacent vertices $x,y$, we write $xy\in E$ or $x\sim y$.

The $d$-dimensional hypercube $H_d$ is the graph with vertex set $ \mathbb{F}_2^d$, where two vertices are adjacent if and only if they differ in exactly one coordinate; $H_0$ denotes the one-vertex graph.

If $G_1$ and $G_2$ are graphs, their Cartesian product $G_1\square G_2$ has vertex set $V(G_1)\times V(G_2)$, with $(x_1,x_2)$ adjacent to $(y_1,y_2)$ precisely when either $x_1=y_1$ and $x_2\sim y_2$, or $x_2=y_2$ and $x_1\sim y_1$. Then $H_d, d\geq 1$ is isomorphic to the Cartesian product of $d$ copies of $K_2$.

The Bakry--\'Emery operators $\Gamma$ and $\Gamma_2$ for functions $f,g:V\to \R$ are defined by
\begin{align}
  2\Gamma(f,g)&=L(fg)-fLg-gLf,\label{eq:Gamma}
\\
  2\Gamma_2(f,g)&=L\Gamma(f,g)-\Gamma(f,Lg)-\Gamma(g,Lf).\label{eq:Gamma2}
\end{align}
Then we have
\begin{equation}\label{eq:Gamma_second}
     2\Gamma(f,g)(x)=\sum_{y\sim x}(f(y)-f(x))(g(y)-g(x)).
\end{equation}
We write $\Gamma(f):=\Gamma(f,f)$ and $\Gamma_2(f):=\Gamma_2(f,f)$ for short.

\begin{definition}\label{def:CD}
    Let $G=(V,E)$ be a graph. We say that $G$ satisfies the Bakry--\'Emery curvature dimension condition $\CD(K,\infty)$ at a vertex $x\in V$ if
    \[
  \Gamma_2(f)(x)\ge K\Gamma(f)(x)
\]
holds for any function $f:V\to\R$. The $\infty$-Bakry--\'Emery curvature $\mathcal{K}_{\infty}(x)$ at $x\in V$ is defined as
\[\mathcal{K}_{\infty}(x)=\sup\{K: G \text{ satisfies }\CD(K,\infty)\ \text{at}\ x\}.\]
We say $G$ satisfies $\CD(K,\infty)$ if it satisfies $\CD(K,\infty)$ at every $x\in V$.
\end{definition}


We recall two basic consequences of positive Bakry--\'Emery curvature: the
Lichnerowicz eigenvalue bound and the Myers diameter bound.

\begin{theorem}[Lichnerowicz bound, {\cite[Proposition~1.3]{LMP}}]
\label{thm:LichnerowiczEstimate}
     Assume $G$ satisfies $\CD(K,\infty)$ with $K>0$. Let $D=\max_{v\in V(G)}\deg(v)<\infty$. Then
     \begin{equation}
         \label{equ:LichnerowiczEstimate}
         \lambda_1\geq K.
     \end{equation}
\end{theorem}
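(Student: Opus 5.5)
The plan is to run the standard Bochner/integration argument. The only subtlety is that the graph may be infinite only if it is locally finite with bounded degree. With $D<\infty$ and $\CD(K,\infty)$, $K>0$, I would first obtain finiteness from a Myers-type diameter bound, or else work on finite graphs as assumed throughout the paper. So take $G$ finite, and let $f$ be a nonconstant eigenfunction with $-Lf=\lambda_1 f$, where $\lambda_1>0$ by connectedness.

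\textbf{Step 1: integrate $\Gamma$.} Summing over vertices with the counting measure, the summation-by-parts identity $\sum_x L g(x)=0$ and the definition \eqref{eq:Gamma} give
\[
\sum_x \Gamma(f)(x)=-\sum_x f(x)Lf(x)=\lambda_1\sum_x f(x)^2 .
\]

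\textbf{Step 2: integrate $\Gamma_2$.} Using \eqref{eq:Gamma2} and again $\sum_x L\Gamma(f)(x)=0$,
\[
\sum_x \Gamma_2(f)(x)=-\sum_x \Gamma(f,Lf)(x)=\lambda_1\sum_x\Gamma(f)(x),
\]
where the second equality uses $Lf=-\lambda_1 f$. Equivalently, $\sum_x\Gamma_2(f)=\sum_x (Lf)^2$, the discrete Bochner formula.

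\textbf{Step 3: apply the curvature condition.} Summing $\Gamma_2(f)\ge K\Gamma(f)$ over all vertices gives
\[
\lambda_1\sum_x\Gamma(f)(x)\ge K\sum_x\Gamma(f)(x).
\]
Since $f$ is nonconstant and $G$ is connected, $\sum_x\Gamma(f)>0$, and dividing yields $\lambda_1\ge K$.

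\textbf{Main obstacle.} The only delicate point is the summation-by-parts step, which requires a finite vertex set, or enough summability in the locally finite case. If infinite graphs must be handled, I would first invoke the discrete Bonnet--Myers bound under $\CD(K,\infty)$ with bounded degree $D$, namely $\diam\le 2D/K$. With bounded degree this makes $G$ finite, which reduces to the argument above. The role of the hypothesis $D<\infty$ is exactly to secure this finiteness.
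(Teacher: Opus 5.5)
Your proof is correct; the paper does not prove this bound itself but simply quotes it from \cite{LMP}. Your integrated Bochner argument is the standard one, and it rests on the same summation used in the paper's proof of Lemma~\ref{lem:LMP-sharp}: the pointwise identity $2\Gamma_2(f)=L\Gamma(f)+2\lambda\Gamma(f)$ for $-Lf=\lambda f$, summed over the finite vertex set. Your reduction to finite graphs via $\diam(G)\le 2D/K$ is not circular, since that bound comes from semigroup gradient estimates rather than from the spectral estimate, and under the paper's standing finiteness assumption it is not needed anyway.
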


\begin{theorem}[{\cite[Proposition~1.3 and Theorem~1.4]{LMP}}]
\label{thm:MyersBound}
    Assume $G$ satisfies $\CD(K,\infty)$ with $K>0$. Let $D=\max_{v\in V(G)}\deg(v)<\infty$. Let $\diam(G)$ be the diameter of $G$. Then $G$ satisfies Myers diameter bound
    \begin{equation}
    \label{eq:MyersBound}
        \diam(G)\leq\frac{2D}{K}.
    \end{equation}
    The equality holds if and only if $G$ is $H_D$.
\end{theorem}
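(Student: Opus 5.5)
The plan is a semigroup (heat-flow) argument for the inequality, followed by an equality analysis that tracks where each inequality in the chain must be tight. Let $P_t=e^{tL}$ be the heat semigroup. The standard consequence of $\CD(K,\infty)$ is the gradient estimate
\[
\Gamma(P_tf)\le e^{-2Kt}P_t\Gamma(f).
\]
I would derive it by differentiating $s\mapsto P_s\Gamma(P_{t-s}f)$, which uses $\Gamma_2\ge K\Gamma$ and the boundedness of $L$ (since $D<\infty$). Fix $x_0$ and set $f=d_G(x_0,\cdot)$. Because $f$ is $1$-Lipschitz, $\Gamma(f)\le D/2$. The pointwise Cauchy--Schwarz bound
\[
|Lg(x)|\le\sum_{y\sim x}|g(y)-g(x)|\le \sqrt{\deg(x)}\sqrt{2\Gamma(g)(x)}\le\sqrt{2D\,\Gamma(g)(x)}
\]
then gives $|LP_sf|\le \sqrt{2D\cdot e^{-2Ks}D/2}=De^{-Ks}$. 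Integrating over $s$, we get $|P_tf-f|\le \int_0^t De^{-Ks}\,ds\le D/K$ for every $t$.

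Since $G$ has positive curvature, it is finite: for infinite locally finite graphs the same argument first bounds the distance between any two vertices, so $V$ is finite. Hence $P_tf\to c:=\frac1{|V|}\sum_v f(v)$ as $t\to\infty$. For any $y$, this yields
\[
d(x_0,y)=f(y)-f(x_0)\le|f(y)-c|+|c-f(x_0)|\le 2D/K,
\]
which is \eqref{eq:MyersBound}. That equality holds for $H_D$ is a direct check: $\diam(H_D)=D$ and $K=2$.

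For the rigidity direction, suppose $d(x_0,y_0)=2D/K$. Then every inequality above is an equality for all $s>0$ at both $x_0$ and $y_0$. Reading off the equality cases in order:
\begin{itemize}
\item Equality in Cauchy--Schwarz and in the triangle inequality forces $\deg(x_0)=D$, and forces $P_sf(y)-P_sf(x_0)$ to be the same positive number for every $y\sim x_0$.
\item Equality in $\Gamma(f)\le D/2$ along the relevant heat flow forces $f$ to increase by exactly $1$ along every edge near the support of $P_s$. Letting $s\to0$ and using that $P_s\delta$ spreads to all vertices, this propagates so that every edge changes $f$ by exactly $\pm1$ and every vertex has degree $D$.
\item Equality in the gradient estimate forces $\Gamma_2(P_sf)=K\Gamma(P_sf)$. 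Expanding at $s=0$, this says $f$ is an extremal function for the curvature at each vertex.
\end{itemize}
Combining the extremality with the local $\Gamma_2$ formula, I would read off that each vertex $x$ with $f(x)=k$ has exactly $k$ neighbours in $S_{k-1}(x_0)$ and $D-k$ in $S_{k+1}(x_0)$, that any two down-neighbours span a unique $4$-cycle, and that $K=2$. This gives the layer sizes $\binom{D}{k}$ and the distance-regular intersection array of $H_D$. Since $H_D$ is determined by its intersection array together with the $4$-cycle (square) condition, it follows that $G\cong H_D$.

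The main obstacle is this last step: extracting exact combinatorics (the intersection numbers, the square structure, and the value $K=2$) from the analytic equality $\Gamma_2(P_sf)=K\Gamma(P_sf)$ for all $s$. My first attempt would be to compute the $t^2$-order Taylor coefficient of the equality at $x_0$, which involves $\Gamma_2(f)(x_0)$ explicitly through the $2$-ball, and then to bootstrap vertex by vertex along the distance layers.
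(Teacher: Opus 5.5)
The paper does not prove this statement; it quotes it from \cite{LMP}. Your proof of the inequality is the standard semigroup argument behind that citation and is fine. For infinite graphs, replace ``$P_tf\to c$'' by the decay $|P_tf(x)-P_tf(y)|\le d(x,y)\sqrt{D}\,e^{-Kt}$, applied to a truncated distance function. Your first consequences of equality are also correct: $\Gamma f\equiv D/2$, so $G$ is $D$-regular and every edge changes $f=d(x_0,\cdot)$ by exactly $\pm1$; and $\Gamma_2 f=K\Gamma f$ at every vertex.

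The rigidity half, however, is not proved. The first gap is fixable. The layer structure cannot simply be ``read off'' a Taylor coefficient; the missing step is to sum the pointwise extremality over $V$. Since $\sum_x\Gamma_2 f=\|Lf\|^2$ and $\sum_x\Gamma f=\langle f,-Lf\rangle$, you get $\sum_\lambda\lambda(\lambda-K)\|f_\lambda\|^2=0$ over the spectral components of $f$. Because $\lambda_1\ge K$, this forces $f-\bar f\in E_K$. There are no horizontal edges, so $Lf=D-2a$, where $a(x)$ is the number of neighbours of $x$ closer to $x_0$. Comparing with $Lf=D-Kf$ gives $a=Kf/2$. Evaluating on $S_1(x_0)$ gives $K=2$, and then $a=k$ on $S_k(x_0)$, $|S_k(x_0)|=\binom{D}{k}$ and $|V|=2^D$.

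The serious gap is the final identification with $H_D$. The numbers $k$ and $D-k$ are intersection numbers relative to the single vertex $x_0$. That is not distance-regularity, which would need the same layering from every vertex, i.e.\ every vertex having a vertex at distance $2D/K$; you have not shown this. So the characterization of $H_D$ by its intersection array does not apply. Your ``unique $4$-cycle'' property is neither derived from the curvature condition nor shown, together with base-point counts, to determine $H_D$.

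What is actually needed is exact square incidence (Definition~\ref{def:pair-exact-square}) for every pair of neighbours at every vertex. At $x_0$ this is immediate. All back-degrees in $S_2(x_0)$ equal $2$, so every diagonal entry of $A_\infty(x_0)$ equals $2$, and $A_\infty(x_0)\succeq 2I$ then forces $A_\infty(x_0)=2I$. At other vertices you have no control over back-degrees relative to that vertex, and propagating the squares is the real combinatorial core. It can be done non-circularly by the inductive argument of Proposition~\ref{prop:residual-visible-exactness}, applied to the $2$-eigenfunction $d(x_0,\cdot)-D/2$, whose values differ across every edge. It cannot be done via Theorem~\ref{thm:HypercubeRigidity}, whose proof uses the statement you are proving.

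Even then a global step remains. Once every vertex has this square structure, any two distinct vertices have $0$ or $2$ common neighbours. You still need a result such as Mulder's: a $D$-regular graph with this property has at most $2^D$ vertices, with equality only for $H_D$. Combined with $|V|=2^D$, that finishes the proof.
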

From the above theorem, we know every graph with $\CD(K,\infty)$ for some $K>0$ is a finite graph.

\begin{lemma}[{\cite[Theorem~2.1 and Section~3.1]{LMP}}]\label{lem:LMP-sharp}
Assume $G$ satisfies $\CD(K,\infty)$ with $K>0$.  If $f\in E_K$, then $\Gamma (f)$ is constant on $V$ and $\Gamma_2 (f)=K\Gamma (f)$.  By polarization, $\Gamma(f,g)$ is constant for every $f,g\in E_K$.
\end{lemma}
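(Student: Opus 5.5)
The plan is to prove the lemma through the equality case of the Bochner-type argument behind the Lichnerowicz bound (Theorem~\ref{thm:LichnerowiczEstimate}). Let $f\in E_K$, so $Lf=-Kf$. First, compute $\Gamma_2(f)$ from \eqref{eq:Gamma2}:
\[
2\Gamma_2(f)=L\Gamma(f)-2\Gamma(f,Lf)=L\Gamma(f)+2K\Gamma(f).
\]
Hence $\Gamma_2(f)-K\Gamma(f)=\tfrac12 L\Gamma(f)$. By $\CD(K,\infty)$ the left side is nonnegative at every vertex, so $L\Gamma(f)\ge 0$ pointwise; that is, $\Gamma(f)$ is subharmonic on $V$.

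Next, use finiteness of $G$, which follows from Theorem~\ref{thm:MyersBound}. Summing $L\Gamma(f)$ over $V$ gives $\sum_x L\Gamma(f)(x)=0$, since $L$ is symmetric and $L\one=0$. A nonnegative function with zero sum vanishes identically, so $L\Gamma(f)\equiv0$. This yields $\Gamma_2(f)=K\Gamma(f)$ everywhere. Moreover, $\Gamma(f)$ is harmonic on a finite connected graph, so by the maximum principle it is constant: at a maximum point $x$, $L\Gamma(f)(x)=0$ forces all neighbours to attain the maximum too, and connectedness spreads this to all of $V$. This step does not even need the summation; it only needs $L\Gamma(f)\ge0$ at a maximum, combined with $L\Gamma(f)\le 0$ there. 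So the key point is simply subharmonicity plus finiteness.

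For the polarization statement, take $f,g\in E_K$. Since $E_K$ is a linear space, $f+g\in E_K$. Then
\[
\Gamma(f,g)=\tfrac12\bigl(\Gamma(f+g)-\Gamma(f)-\Gamma(g)\bigr)
\]
by bilinearity and symmetry of $\Gamma$, which are immediate from \eqref{eq:Gamma_second}. Each term on the right is constant, so $\Gamma(f,g)$ is constant.

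No step is a genuine obstacle. The only point needing care is the finiteness of $G$, which justifies the maximum principle or the summation. That finiteness is supplied by the Myers bound of Theorem~\ref{thm:MyersBound}, which needs bounded degree and $K>0$; both are part of the standing assumptions.
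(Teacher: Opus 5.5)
Your proof is correct and follows the paper's argument. It uses the identity $\Gamma_2(f)-K\Gamma(f)=\tfrac12 L\Gamma(f)$, the vanishing of $\sum_x L\Gamma(f)(x)$ over the finite vertex set, and the fact that harmonic functions on a finite connected graph are constant. Your side remark is also valid: the strong maximum principle applied to the subharmonic function $\Gamma(f)$ already gives constancy, and hence $L\Gamma(f)\equiv 0$, without the summation step.
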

\begin{proof}
For any $f\in E_K$, we compute by definition that
    \begin{equation*}
       2 \sum_{x\in V}(\Gamma_2(f)(x)-K\Gamma(f)(x))=\sum_{x\in V}L\Gamma(f)(x)=0.
    \end{equation*}
By assumption, we have $\Gamma_2(f)(x)-K\Gamma(f)(x)\geq 0$. Hence, $\Gamma_2(f)(x)-K\Gamma(f)(x)=0$ at any $x$. This forces $L\Gamma(f)(x)=0$. The only harmonic function over a connected finite graph is the constant function. So $\Gamma(f)$ is constant.
\end{proof}

A key tool for computing the Bakry--Émery curvature at a vertex is the so-called curvature matrix. For each vertex $x$ of a graph $G$, one can associate a symmetric $\deg(x)\times \deg(x)$ matrix $A_\infty(x)$, whose entries are determined by the local structure of the ball $B_2(x)$, such that
\begin{equation}
\mathcal{K}_\infty(x)=\lambda_{\min}\bigl(A_\infty(x)\bigr),
\end{equation}
where $\lambda_{\min}\bigl(A_\infty(x)\bigr)$ denotes the smallest eigenvalue of $A_\infty(x)$. This matrix can be obtained from the matrices representing $\Gamma_2$ and $\Gamma$ by taking a Schur complement, see \cite[Section 2]{CKLP}. Below we recall the definition of the curvature matrix $A_\infty(x)$, in a slightly reformulated form of \cite[Proposition 1.13(i)]{CKLP}.

\begin{definition}[Curvature matrices]\label{def:curMatrix}
    Let $G$ be a graph. For any vertex $x$, we label
\[
   S_1(x)=\{y_1,\ldots,y_{\mathrm{deg}(x)}\}.
\]
For $i\neq j$, set
\[
  \varepsilon_{ij}:=\begin{cases}1,&y_i\sim y_j,\\0,&y_i\not\sim y_j,
  \end{cases}
\]
and
\[
  \omega_{ij}:=\sum_{\substack{z\in S_2(x)\\ z\sim y_i,\ z\sim y_j}}
      \frac1{d_x^-(z)}\ \ \text{with}
  \ \ d_x^-(z):=|S_1(x)\cap S_1(z)|.
\]
Let
\begin{equation}
\label{equ:tandOmega}
 t_i:=\sum_{j\neq i}\varepsilon_{ij} \text{ and } \Omega_i:=\sum_{j\neq i}\omega_{ij}=\card{S_2(x)\cap S_1(y_i)}-\sum_{\substack{z\in S_2(x)\\ z\sim y_i}}\frac{1}{d_x^-(z)}.
\end{equation}
Then we have
\begin{align}
\label{CKLP-off}
      (A_\infty(x))_{ij}&=1-2\varepsilon_{ij}-2\omega_{ij}
  \ \ \text{for}\ \ i\neq j,
\\
\label{CKLP-diag}
       (A_\infty(x))_{ii}&=3-\frac{\mathrm{deg}(x)+\mathrm{deg}(y_i)}{2}+\frac52 t_i+2\Omega_i\\
\end{align}
\end{definition}
By \eqref{CKLP-off} and \eqref{CKLP-diag}, we derive that
\begin{equation}
    \label{CKLP-row}
      (A_\infty(x)\one)_i=2+\frac12 t_i+\frac{\mathrm{deg}(x)-\mathrm{deg}(y_i)}{2},
\end{equation}
where $\one$ is the $\deg(x)$-dimensional all-$1$ vector.

\begin{example}\label{ex:hypercube_cur_matrix}
    The hypercube graph $H_d$ has $\deg(x)=\deg(y_i)=d$, $\varepsilon_{ij}=0$ and $\omega_{ij}=1/2$ for every
$i\neq j$, hence \[A_\infty(x)=2I_d,\]
at any vertex $x$.
\end{example}

For any $f\in E_K$, it satisfies the following local midpoint formula.

\begin{lemma}[{\cite[Theorem~3.4]{LMP}}]\label{lem:LMP-midpoint}
Let $G$ satisfy $\CD(K,\infty)$ with $K>0$ and $f$ satisfy $\Gamma_2(f)(x)=K\Gamma (f)(x)$ at a vertex $x$.  For any $z\in S_2(x)$, we have
\begin{equation}\label{eq:midpoint}
    \frac{f(x)+f(z)}2
  =
  \frac{1}{d_x^{-}(z)}
  \sum_{y\in S_1(x)\cap S_1(z)}f(y).
\end{equation}
\end{lemma}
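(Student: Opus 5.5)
The statement is the local midpoint formula \eqref{eq:midpoint} of Lemma~\ref{lem:LMP-midpoint}. The plan is to extract it from the first-order optimality condition of the quadratic form $\Gamma_2-K\Gamma$ at $x$. Write
\[
Q(g):=\Gamma_2(g)(x)-K\Gamma(g)(x).
\]
This is a quadratic form in the values of $g$ on $B_2(x)$, and $\CD(K,\infty)$ at $x$ says exactly that $Q\ge0$. The hypothesis $\Gamma_2(f)(x)=K\Gamma(f)(x)$ says $Q(f)=0$, so $f$ minimizes $Q$. For a positive semidefinite quadratic form, $Q(f)=0$ forces $f$ into its kernel, so the associated bilinear form vanishes: $Q(f,g)=0$ for every $g$. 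The identity will come from choosing $g=\delta_z$, the indicator of a vertex $z\in S_2(x)$.

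The main computation is $Q(f,\delta_z)$. Since $z\notin B_1(x)$, we have $\Gamma(f,\delta_z)(x)=0$, so only $\Gamma_2(f,\delta_z)(x)$ contributes. Using \eqref{eq:Gamma2},
\[
2\Gamma_2(f,\delta_z)(x)=L\Gamma(f,\delta_z)(x)-\Gamma(f,L\delta_z)(x)-\Gamma(\delta_z,Lf)(x).
\]
Consider the three terms in turn.

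The third term vanishes, because $\delta_z$ is zero on $B_1(x)$.

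For the first term, $\Gamma(f,\delta_z)(y)$ is nonzero only when $y$ is within distance one of $z$. Evaluating $L$ at $x$ involves only $y\in B_1(x)$, so the relevant $y$ are the common neighbours $y\in S_1(x)\cap S_1(z)$. For each such $y$,
\[
2\Gamma(f,\delta_z)(y)=f(z)-f(y).
\]
Hence
\[
L\Gamma(f,\delta_z)(x)=\tfrac12\sum_{y\in S_1(x)\cap S_1(z)}\bigl(f(z)-f(y)\bigr),
\]
because $\Gamma(f,\delta_z)(x)=0$.

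For the second term, $L\delta_z(x)=0$, and $L\delta_z(y)=1$ for each common neighbour $y$. Therefore
\[
2\Gamma(f,L\delta_z)(x)=\sum_{y\in S_1(x)\cap S_1(z)}\bigl(f(y)-f(x)\bigr).
\]

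Setting the total to zero gives
\[
\tfrac12\sum_{y}\bigl(f(z)-f(y)\bigr)-\tfrac12\sum_{y}\bigl(f(y)-f(x)\bigr)=0,
\]
with both sums over $y\in S_1(x)\cap S_1(z)$. This rearranges to $d_x^-(z)\bigl(f(x)+f(z)\bigr)=2\sum_y f(y)$, which is \eqref{eq:midpoint}.

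The only delicate point is the bookkeeping of the factors of $2$ in the three terms; a sign or factor error would not produce the symmetric midpoint form. Note that $d_x^-(z)\ge1$ because $z\in S_2(x)$, so the division in \eqref{eq:midpoint} is legitimate. The argument also uses no eigenfunction property of $f$, only the equality $Q(f)=0$ at the single vertex $x$, which matches the hypothesis as stated.
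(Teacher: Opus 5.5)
Your proof is correct. The three contributions to $2\Gamma_2(f,\delta_z)(x)$ are as you computed, and the $K$-term drops because $\Gamma(f,\delta_z)(x)=0$. The result rearranges to $d_x^-(z)\bigl(f(x)+f(z)\bigr)=2\sum_{y}f(y)$.

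The paper gives no proof of its own here; the lemma is imported from \cite[Theorem~3.4]{LMP}. Your device is Cauchy--Schwarz for the positive semidefinite form $\Gamma_2-K\Gamma$ at $x$, tested against an indicator function. This is exactly what the paper does in the proof of Theorem~\ref{thm:first-sharp-K2}, with $\one_{\{x\}}$ there in place of your $\delta_z$.

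For comparison, another route is to complete the square. The values $f(z)$, $z\in S_2(x)$, enter $\Gamma_2(f)(x)$ without cross terms, and
\[
\Gamma_2(f)(x)=\frac14\sum_{z\in S_2(x)}d_x^-(z)\bigl(f(z)-m_z\bigr)^2+R\bigl(f|_{B_1(x)}\bigr),\qquad m_z:=\frac{2}{d_x^-(z)}\sum_{y\in S_1(x)\cap S_1(z)}f(y)-f(x).
\]
Reset each $f(z)$ to $m_z$; this does not change $\Gamma(f)(x)$. Applying $\CD(K,\infty)$ at $x$ to the modified function gives $R-K\Gamma(f)(x)\ge0$. Hence equality $\Gamma_2(f)(x)=K\Gamma(f)(x)$ forces $f(z)=m_z$, which is the midpoint formula.

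Your first-order argument is shorter and needs no expansion of $\Gamma_2$. The completing-the-square version is the Schur-complement picture behind Definition~\ref{def:curMatrix}. Identifying $R$ with $\frac12(\nabla f)_x^T A_\infty(x)(\nabla f)_x$ also gives, at the same stroke, the identity used in the proof of Lemma~\ref{lem:CKLP}. Both arguments use only $\CD(K,\infty)$ at the single vertex $x$ and need no sign condition on $K$.
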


The next lemma is a useful observation.
For a function $f: V\to \mathbb{R}$, we define its gradient $\nabla f$ as a vector field on $V$ by \[(\nabla f)_x:=\left(f(y_1)-f(x),\dots,f(y_{\mathrm{deg}(x)})-f(x)\right)^T\in \mathbb{R}^{\mathrm{deg}(x)},\ \ \text{for any} \ \ x\in V(G).\]
\begin{lemma}
\label{lem:CKLP}
Let $G$ satisfy $\CD(K,\infty)$ with $K>0$ and $f$ satisfy $\Gamma_2(f)(x)=K\Gamma (f)(x)$ at a vertex $x$. Then, we have
\begin{equation}\label{eq:cur-eig}
    A_{\infty}(x)(\nabla f)_x=K(\nabla f)_x.
\end{equation}
\end{lemma}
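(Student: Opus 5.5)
The plan is to realize $\Gamma_2(f)(x)-K\Gamma(f)(x)$ as a quadratic form in the gradient $(\nabla f)_x$ whose matrix is $A_\infty(x)-KI$, and then read off the eigenvector equation from the equality case. I will use the Schur-complement description of $A_\infty(x)$ from \cite{CKLP}. Write $\Gamma_2(f)(x)$ as a quadratic form in the vector $(\nabla f)_x\in\R^{\deg(x)}$ together with the vector $v_2$ of values $f(z)-f(x)$ for $z\in S_2(x)$:
\[
2\Gamma_2(f)(x)=\begin{pmatrix}(\nabla f)_x\\ v_2\end{pmatrix}^T\begin{pmatrix}M_{11}&M_{12}\\ M_{21}&M_{22}\end{pmatrix}\begin{pmatrix}(\nabla f)_x\\ v_2\end{pmatrix}.
\]
Here $M_{22}$ is diagonal with positive entries, namely $d_x^-(z)/2$ up to normalization. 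The curvature matrix is, up to the factor fixed in \cite{CKLP},
\[
A_\infty(x)=2\bigl(M_{11}-M_{12}M_{22}^{-1}M_{21}\bigr).
\]
It satisfies, for every $f$,
\[
\Gamma_2(f)(x)\ge \tfrac12(\nabla f)_x^TA_\infty(x)(\nabla f)_x,
\]
with equality exactly when $v_2=-M_{22}^{-1}M_{21}(\nabla f)_x$. Since $\Gamma(f)(x)=\tfrac12|(\nabla f)_x|^2$, this gives
\[
\Gamma_2(f)(x)-K\Gamma(f)(x)\ge \tfrac12(\nabla f)_x^T\bigl(A_\infty(x)-KI\bigr)(\nabla f)_x .
\]

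Next, $\CD(K,\infty)$ at $x$ means $\mathcal K_\infty(x)\ge K$, that is, $\lambda_{\min}(A_\infty(x))\ge K$. So $B:=A_\infty(x)-KI$ is positive semidefinite. The hypothesis $\Gamma_2(f)(x)=K\Gamma(f)(x)$ then forces two things: equality in the Schur-complement step, and $(\nabla f)_x^TB(\nabla f)_x=0$. For a positive semidefinite $B$, the identity $u^TBu=0$ gives $\|B^{1/2}u\|^2=0$, hence $Bu=0$. Taking $u=(\nabla f)_x$ yields $A_\infty(x)(\nabla f)_x=K(\nabla f)_x$, which is \eqref{eq:cur-eig}.

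The main obstacle is the first step: checking that the minimization over the $S_2(x)$-values is an honest Schur complement whose infimum is attained. This requires $M_{22}$ to be positive definite and diagonal, and I will take it from \cite[Section 2]{CKLP}. I also need to confirm that the normalization in Definition~\ref{def:curMatrix} matches the factor $\tfrac12$ above; I will check this against Example~\ref{ex:hypercube_cur_matrix}. There is a subtlety in that $\Gamma_2(f)(x)$ also depends on $f$ only through $B_2(x)$ and is invariant under adding constants, so reducing to the variables $(\nabla f)_x$ and $v_2$ loses nothing. As a side remark, the equality condition on $v_2$ recovers the midpoint formula of Lemma~\ref{lem:LMP-midpoint}, which serves as a consistency check.
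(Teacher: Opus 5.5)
Your proposal is correct and follows the paper's route. Both identify $2\bigl(\Gamma_2(f)(x)-K\Gamma(f)(x)\bigr)$ with $(\nabla f)_x^T(A_\infty(x)-KI)(\nabla f)_x$ via the Schur-complement structure of $A_\infty(x)$, and both conclude because a positive semidefinite form vanishing at a vector annihilates it. The only difference is that the paper first cites the midpoint formula of Lemma~\ref{lem:LMP-midpoint} to place the $S_2(x)$-values at the optimum. Your sandwich $0=\Gamma_2(f)(x)-K\Gamma(f)(x)\geq\tfrac12(\nabla f)_x^T(A_\infty(x)-KI)(\nabla f)_x\geq0$ forces that optimality directly, and so recovers the midpoint formula without citing it.

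There is one normalization slip to fix. With your convention that $2\Gamma_2(f)(x)$ is the quadratic form of the block matrix $M$, the curvature matrix is the Schur complement $M_{11}-M_{12}M_{22}^{-1}M_{21}$ itself, not twice it; on $K_2$ one gets $M=(2)=A_\infty(x)$. This is exactly the normalization under which your inequality $\Gamma_2(f)(x)\geq\tfrac12(\nabla f)_x^TA_\infty(x)(\nabla f)_x$ holds.
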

\begin{proof}
    By Lemma \ref{lem:LMP-midpoint}, the function $f$ satisfies the midpoint formula \eqref{eq:midpoint}. Then we can verify directly by Definition \ref{def:curMatrix} that
    \[2(\Gamma_2 (f)(x)-K\Gamma (f)(x))=(\nabla f)_x^T(A_\infty(x)-KI)(\nabla f)_x.\]
    For a more structural proof of the above identity, we refer to \cite[Section 4]{HuLiu2026}. Noticing that $\Gamma_2(f)(x)-K\Gamma(f)(x)=0$ and $A_\infty(x)-KI$ is positive semidefinite, we  conclude \eqref{eq:cur-eig}.
\end{proof}

\section{Hypercube}
In this section, we collect several elementary facts about the hypercube graph \(H_d\) that will be used later. We first characterize the automorphism group of \(H_d\). 
\begin{lemma}
    Let $\tau\in \operatorname{Aut}(H_d)$. Then there exists $a\in\mathbb{F}_2^d$ and a permutation $\pi\in S_d$ such that
\begin{equation}
\label{equ:HypercubeAutomorphism}
    \tau(x)=a+\bigl(x_{\pi^{-1}(1)},\cdots,x_{\pi^{-1}(d)}\bigr).
\end{equation}
\end{lemma}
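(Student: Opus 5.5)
The plan is to normalize \(\tau\) by a translation and then show that what remains permutes the coordinate directions. Put \(a:=\tau(0)\) and \(\tau':=t_a\circ\tau\), where \(t_a(x)=x+a\). Each translation \(t_a\) is an automorphism of \(H_d\), because \(x\) and \(y\) differ in exactly one coordinate if and only if \(x+a\) and \(y+a\) do. Hence \(\tau'\in\operatorname{Aut}(H_d)\) and \(\tau'(0)=0\). Since \(t_a\) is an involution, \(\tau=t_a\circ\tau'\), so it suffices to show that \(\tau'\) is a coordinate permutation \(x\mapsto(x_{\pi^{-1}(1)},\dots,x_{\pi^{-1}(d)})\).

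Next, because \(\tau'\) fixes \(0\), it preserves graph distance from \(0\), which is the Hamming weight \(|x|\). Thus \(\tau'\) maps \(S_1(0)=\{e_1,\dots,e_d\}\) bijectively onto itself. Define \(\pi\in S_d\) by \(\tau'(e_i)=e_{\pi(i)}\), and let \(P_\pi\) be the map sending \(x\) to the vector whose \(\pi(i)\)-th coordinate is \(x_i\), i.e. \(P_\pi(x)=(x_{\pi^{-1}(1)},\dots,x_{\pi^{-1}(d)})\). This \(P_\pi\) is an automorphism of \(H_d\) agreeing with \(\tau'\) on \(B_1(0)\). So \(\sigma:=P_\pi^{-1}\circ\tau'\) is an automorphism fixing \(0\) and each \(e_i\), and the task reduces to showing \(\sigma=\mathrm{id}\).

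To show \(\sigma=\mathrm{id}\), identify each \(x\) with its support \(\operatorname{supp}(x)\subseteq\{1,\dots,d\}\). I prove by induction on \(k=|x|\) that \(\sigma(x)=x\); the cases \(k=0,1\) hold by construction. For \(k\ge2\), assume \(\sigma\) fixes every vertex of weight less than \(k\), and let \(|x|=k\). The set \(N^-(x)\) of neighbours of \(x\) of weight \(k-1\) consists of the vectors obtained by deleting one element of \(\operatorname{supp}(x)\), and these are fixed by \(\sigma\). The vertex \(\sigma(x)\) has weight \(k\), since \(\sigma\) fixes \(0\) and preserves distance to it, and it is adjacent to all of \(N^-(x)\). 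A weight-\(k\) vertex adjacent to a weight-\((k-1)\) vertex \(y\) has support containing \(\operatorname{supp}(y)\). So \(\operatorname{supp}(\sigma(x))\) contains the union of the supports of \(N^-(x)\), which is \(\operatorname{supp}(x)\) because \(k\ge2\). Since both supports have size \(k\), they are equal, and \(\sigma(x)=x\).

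Combining the steps gives \(\tau=t_a\circ P_\pi\), i.e. \(\tau(x)=a+(x_{\pi^{-1}(1)},\dots,x_{\pi^{-1}(d)})\), which is \eqref{equ:HypercubeAutomorphism}. The main thing to handle carefully is the induction step, where \(k\ge2\) is needed. It guarantees at least two lower neighbours, whose supports together recover \(\operatorname{supp}(x)\) exactly; with only one lower neighbour the support of \(\sigma(x)\) would not be determined. The rest is bookkeeping with the indexing convention for \(\pi^{-1}\).
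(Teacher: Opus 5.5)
Your proposal is correct and follows essentially the same route as the paper: translate so that $0$ is fixed, read off $\pi$ from the action on $e_1,\dots,e_d$, and induct on the distance from $0$, using that a vertex of weight at least $2$ is the unique vertex of that weight adjacent to all of its lower neighbours. Composing with $P_\pi^{-1}$ to reduce to an automorphism fixing $B_1(0)$ is only a cosmetic repackaging of the paper's direct induction on $\tau$.
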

\begin{proof}
    Let \(a=\tau(0)\). Replacing \(\tau\) by \(T_a\circ\tau\), where \(T_a(x)=x+a\), we may assume that \(\tau(0)=0\). Then \(\tau\) permutes the neighbours of
\(0\), namely \(e_1,\ldots,e_d\). Hence there exists \(\pi\in S_d\) such that
\(\tau(e_i)=e_{\pi(i)}\). Now we prove the lemma by induction on the distance of $0$. Assume this lemma holds for all vertices of distance at most \(k\geq 1\) from \(0\). Let $S\subset \{1,\cdots, d\} $ with $|S|=k+1$. Let 
\begin{equation*}
    x_S=\sum_{i\in S}e_i\in S_{k+1}(0)\text{ and } x_{S,m}=\sum_{i\in S\setminus\{m\}}e_i\in S_{k}(0)
\end{equation*}
 for every $m\in S$. 

We have $\tau(x_S)\in S_{k+1}(0)$ and $\tau(x_{S,m})\in S_{k}(0)$ for every $m\in S$. By induction, for $m\in S$
\begin{equation*}
    \tau(x_{S,m})=\sum_{i\in S\setminus\{m\}}e_{\pi(i)}.
\end{equation*}
Since \(x_S\) is the unique vertex in \(S_{k+1}(0)\) adjacent to every
\(x_{S,m}\) and \(\tau\) preserves adjacency and distance
from \(0\), the vertex \(\tau(x_S)\) is the unique vertex in \(S_{k+1}(0)\)
adjacent to every \(\tau(x_{S,m})\), \(m\in S\).  Hence 
\begin{equation*}
    \tau(x_S)=\sum_{i\in S}e_{\pi(i)}.
\end{equation*}
This finishes the induction and proves the lemma.
\end{proof}

Next, we recall some spectral properties of the hypercube. 

When $d\geq 1$, $2$ is an eigenvalue of $H_d$. The spectrum of $H_d$ consists of $2k,0\leq k\leq d$, with multiplicity $\binom{d}{k}$. For $S\subseteq\{1,\cdots,d\}$, let $\chi_S(u):=(-1)^{\sum_{i\in S}u_i}$. A direct computation gives $L_{H_d}\chi_S=-2|S|\chi_S$. Let $W_k:=\operatorname{span}\{\chi_S: |S|=k\}$. Then $W_k$ is the $2k$-eigenspace of $H_d$. We obtain the orthogonal decomposition of $\mathbb{R}^{V(H_d)}$
\begin{equation}
\label{equ:DirectSum}
    \mathbb{R}^{V(H_d)}=\bigoplus_{k=0}^d{W_k}.
\end{equation}

\begin{lemma}
    \label{lem:Binary2Eigenvector}
    Let \(H_d\) be the \(d\)-dimensional hypercube with \(d\geq1\). If
    \(f\in\ker(L_{H_d}+2I)\) and \(f(V(H_d))\subseteq\{-1,1\}\), then
    \(f=\pm\chi_i\) for some \(1\leq i\leq d\).
\end{lemma}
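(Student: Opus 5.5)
We write $f=\sum_{i=1}^d a_i\chi_i$, which is possible because $W_1$ is the $2$-eigenspace and is spanned by $\chi_{\{1\}},\dots,\chi_{\{d\}}$; here $\chi_i:=\chi_{\{i\}}$. The plan is to expand $f^2$ in the orthogonal basis $\{\chi_S\}$ and use the decomposition \eqref{equ:DirectSum}. Since $\chi_i^2=1$ and $\chi_i\chi_j=\chi_{\{i,j\}}$ for $i\neq j$, we get
\[
f^2=\sum_{i=1}^d a_i^2\,\one+2\sum_{i<j}a_ia_j\,\chi_{\{i,j\}}.
\]

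The hypothesis $f(V(H_d))\subseteq\{-1,1\}$ gives $f^2\equiv 1=\chi_\emptyset$. Comparing components in $W_0\oplus W_2$ yields $\sum_i a_i^2=1$ and $a_ia_j=0$ for all $i<j$, because the $\chi_{\{i,j\}}$ are linearly independent. Hence at most one coefficient $a_i$ is nonzero, and the first identity forces that coefficient to satisfy $a_i^2=1$. Therefore $f=\pm\chi_i$.

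The main step is linear independence of the characters $\chi_S$. This is standard: they are pairwise orthogonal with respect to the uniform inner product, since $\sum_u\chi_S(u)\chi_T(u)=\sum_u\chi_{S\triangle T}(u)$ vanishes unless $S=T$. The case $d=1$ needs no separate treatment, since then the sum over $i<j$ is empty and the argument goes through unchanged.
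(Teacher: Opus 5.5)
Your proof is correct and matches the paper's argument: write $f=\sum_j a_j\chi_j$, square it, and use linear independence of the characters $\chi_S$ to read off $\sum_j a_j^2=1$ and $a_ja_k=0$ for $j\neq k$. Your orthogonality justification for that linear independence is a detail the paper leaves implicit.
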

\begin{proof}
Since \(f\in W_1(H_d)\), write $ f=\sum_{j=1}^d a_j\chi_j $.
The identity \(f^2=1\) gives
\[
    1=f^2=\sum_{j=1}^d a_j^2+2\sum_{j<k}a_ja_k\chi_{\{j,k\}}.
\]
By the linear independence of \(\chi_S\), we have
\(a_ja_k=0\) for \(j\ne k\), and the constant part gives
\(\sum_j a_j^2=1\). Therefore exactly one coefficient is nonzero and equal to
\(\pm1\), so \(f=\pm\chi_i\) for some \(i\).
\end{proof}

\begin{lemma}
\label{lem:large-holonomy-invariant-coordinate}
Let \(\Gamma\leq \operatorname{Aut}(H_r)\) be a subgroup. Let
\begin{equation*}
    W_1(H_r)^\Gamma=\{f\in W_1(H_r): f\circ \sigma =f\text{ for every $\sigma\in \Gamma$}\}.
\end{equation*}
If $\dim W_1(H_r)^\Gamma>\frac r2$, then there exists a function \(h\in W_1(H_r)^\Gamma\) such that
\[
    h(V(H_r))\subseteq\{-1,1\}.
\]
\end{lemma}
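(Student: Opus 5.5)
The plan is to make the action of \(\Gamma\) on \(W_1(H_r)\) completely explicit using the description of \(\operatorname{Aut}(H_r)\) in \eqref{equ:HypercubeAutomorphism}, and then count dimensions orbit by orbit. By Lemma~\ref{lem:Binary2Eigenvector}, any \(\{-1,1\}\)-valued \(h\in W_1(H_r)\) is \(\pm\chi_i\). So the statement amounts to producing a single coordinate \(i\) with \(\chi_i\circ\sigma=\chi_i\) for all \(\sigma\in\Gamma\), and we can simply take \(h=\chi_i\).

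\textbf{Step 1: \(\Gamma\) acts on \(W_1\) by signed permutations.} For \(\sigma(x)=a+(x_{\pi^{-1}(1)},\dots,x_{\pi^{-1}(r)})\), a direct substitution gives
\[
\chi_i\circ\sigma=(-1)^{a_i}\,\chi_{\pi^{-1}(i)}.
\]
Hence \(f\mapsto f\circ\sigma\) permutes the basis \(\{\chi_1,\dots,\chi_r\}\) up to signs. The assignment \(\sigma\mapsto\pi\) defines an action of \(\Gamma\) on \(\{1,\dots,r\}\); let \(O_1,\dots,O_s\) be its orbits. Each block \(W_{O}:=\operatorname{span}\{\chi_j:j\in O\}\) is \(\Gamma\)-invariant, and \(W_1=\bigoplus_{k} W_{O_k}\). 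Therefore
\[
W_1^\Gamma=\bigoplus_k W_{O_k}^\Gamma .
\]

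\textbf{Step 2: each block contributes at most one dimension.} Suppose \(f=\sum_{j\in O}c_j\chi_j\) is \(\Gamma\)-invariant. Comparing coefficients in \(f\circ\sigma=f\) gives \(c_{\pi^{-1}(i)}=\pm c_i\). Since \(\Gamma\) is transitive on \(O\), all coefficients of \(f\) are determined by any single one, say \(c_{j_0}\). Thus \(\dim W_O^\Gamma\le 1\). Let \(t\) be the number of orbits with \(W_{O}^\Gamma\neq0\). Then \(t=\dim W_1^\Gamma>r/2\).

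\textbf{Step 3: pigeonhole, then the sign check.} If every orbit with a nonzero invariant had size at least \(2\), these orbits would contain at least \(2t>r\) coordinates, which is impossible. So some orbit \(\{i\}\) of size one has \(W_{\{i\}}^\Gamma\neq0\). For this orbit every \(\sigma\in\Gamma\) satisfies \(\pi(i)=i\), and hence \(\chi_i\circ\sigma=(-1)^{a_i}\chi_i\). Because a nonzero multiple of \(\chi_i\) is invariant, every sign \((-1)^{a_i}\) equals \(+1\). Therefore \(h=\chi_i\) lies in \(W_1^\Gamma\) and takes values in \(\{-1,1\}\).

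The only point needing care is the bookkeeping of signs in Step 2, namely that transitivity pins the invariant vector down up to scaling. Everything else is immediate.
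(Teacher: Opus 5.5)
Your proposal is correct and takes the same route as the paper: the signed-permutation action of $\Gamma$ on $\chi_1,\dots,\chi_r$, a decomposition into orbit blocks that each carry at most one invariant dimension, and a pigeonhole argument that yields a singleton orbit $\{i\}$ with $\chi_i$ invariant. You also spell out two steps the paper leaves implicit, namely $W_1^\Gamma=\bigoplus_O W_O^\Gamma$ and the sign check $(-1)^{a_i}=1$.
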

\begin{proof}
For \(\sigma\in\Gamma\), by \eqref{equ:HypercubeAutomorphism} there exist
\(a_\sigma\in\mathbb F_2^r\) and a permutation \(\pi_\sigma\in S_r\) such that
\[
    \chi_i\circ\sigma=(-1)^{(a_\sigma)_i}\chi_{\pi_\sigma^{-1}(i)}.
\]
Thus \(\Gamma\) acts on \(W_1(H_r)\) by signed permutations of
\(\chi_1,\ldots,\chi_r\). Let \(\mathcal O_1,\ldots,\mathcal O_k\) be the
orbits of the induced action on \(\{1,\ldots,r\}\), and let
\[
    U_{\mathcal O}:=\operatorname{span}\{\chi_i:i\in\mathcal O\}.
\]
Then \(W_1(H_r)=\bigoplus_{\mathcal O}U_{\mathcal O}\), and each
\(U_{\mathcal O}\) is \(\Gamma\)-invariant. Since the action on each orbit is
transitive up to signs, \(\dim U_{\mathcal O}^{\Gamma}\leq 1\).

Let \(c\) be the number of orbits \(\mathcal O\) such that
\(U_{\mathcal O}^{\Gamma}\neq0\). Then
\[
    c=\dim W_1(H_r)^\Gamma>\frac r2.
\]
If every such orbit had size at least \(2\), then
\[
    r\geq\sum_{U_{\mathcal O}^{\Gamma}\neq0}|\mathcal O|\geq 2c>r,
\]
a contradiction. Hence there is an orbit \(\mathcal O=\{i\}\) with
\(U_{\mathcal O}^{\Gamma}\neq0\). Therefore \(\chi_i\) is fixed by every
element of \(\Gamma\). Taking \(h=\chi_i\) gives
\(h\in W_1(H_r)^\Gamma\) and \(h(V(H_r))\subseteq\{-1,1\}\).
\end{proof}

\begin{lemma}
\label{lem:hypercube-unique-strict-local-maximum}
Assume $d\geq 1$. Let $f\in\ker(L_{H_d}+2I)$. Assume that $f(x)\neq f(y)$ for any edge $xy$. Then $f$ has a unique strict local maximum. Moreover, this local maximum is the unique global maximum of \(f\).
\end{lemma}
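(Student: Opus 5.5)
We write $f=\sum_{i=1}^d a_i\chi_i$, which is possible since $\ker(L_{H_d}+2I)=W_1$ by \eqref{equ:DirectSum}. Along the edge in direction $i$ from $u$ to $u+e_i$, only $\chi_i$ changes, and it changes sign. Therefore
\[
f(u+e_i)-f(u)=-2a_i\chi_i(u)=-2a_i(-1)^{u_i}.
\]
The hypothesis $f(x)\neq f(y)$ for every edge $xy$ is thus equivalent to $a_i\neq0$ for all $i$. The plan is to exploit this explicit linear form and to identify the maximizer directly.

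First step: characterize local maxima. A vertex $u$ is a local maximum, with all neighbours strictly smaller since no edge difference vanishes, if and only if $a_i(-1)^{u_i}>0$ for every $i$. This says $u_i=0$ when $a_i>0$ and $u_i=1$ when $a_i<0$. Hence there is exactly one vertex with this property, namely $u^*$ with $u^*_i=\mathbf 1[a_i<0]$, so $f$ has a unique strict local maximum.

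Second step: show that $u^*$ is the unique global maximum. Since $|\chi_i|=1$, we have $f(u)=\sum_i a_i\chi_i(u)\le\sum_i|a_i|$, with equality if and only if $a_i\chi_i(u)=|a_i|$ for all $i$. Because every $a_i\neq0$, this forces $\chi_i(u)=\operatorname{sign}(a_i)$ for all $i$, which means $u=u^*$. Alternatively, any global maximum is a local maximum, hence equals $u^*$ by the first step, and a global maximum exists by finiteness.

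There is no real obstacle here. The only point needing care is the identification of $\ker(L_{H_d}+2I)$ with $W_1$ and the sign bookkeeping for $\chi_i$ along edges; both follow from the decomposition \eqref{equ:DirectSum} already stated.
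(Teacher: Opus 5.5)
Your proof is correct and follows essentially the same route as the paper: expand $f$ in the characters $\chi_1,\dots,\chi_d$, use the edge difference $f(u+e_i)-f(u)=-2a_i(-1)^{u_i}$ to get $a_i\neq0$ and fix the sign pattern of any strict local maximum, then conclude because every global maximum is a strict local maximum. Your condition $a_i(-1)^{u_i}>0$ has the correct sign (the paper's displayed inequality at this step has it reversed, though its conclusion about $(x_0)_i$ agrees with yours), and your bound $f\le\sum_i|a_i|$ is an optional extra.
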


\begin{proof}
Let
\begin{equation*}
    f(x)=\sum_{i=1}^da_i(-1)^{x_i},\quad a_i\in\mathbb{R},\ x=(x_1,\cdots,x_d)\in \mathbb{F}_2^d.
\end{equation*}

If $x\in \mathbb{F}_2^d$, then any neighbor of $x$ can be written as $x+e_i$ where the $e_i$ is $i$-th unit vector of $\mathbb{F}_2^d$. Then
\begin{equation}
\label{equ:DifferenceOfEigenVectorOfHypercube}
    f(x+e_i)-f(x)=-2a_i(-1)^{x_i}.
\end{equation}
Since $f(x+e_i)\neq f(x)$, \eqref{equ:DifferenceOfEigenVectorOfHypercube} implies $a_i\neq 0$. Let $x_0$ be a maximum of $f$. Then \eqref{equ:DifferenceOfEigenVectorOfHypercube} gives $a_i(-1)^{{(x_0)}_i}<0$. So $(x_0)_i=0$ if $a_i>0$ and $(x_0)_i=1$ if $a_i<0$. Hence any strict local maximum of $f$ must be $x_0$.
Then $x_0$ is the only strict local maximum of $f$. On the other hand, any maximum of $f$ must also be a strict local maximum of $f$. Thus $x_0$ is the unique maximum of $f$.

\end{proof}

\section{Curvature-value rigidity}
In this section we prove that, for non-normalized Laplacian, equality in the Lichnerowicz estimate \eqref{equ:LichnerowiczEstimate} with \(K>0\) can occur only when \(K=2\).

\begin{theorem}[Curvature-value rigidity]\label{thm:first-sharp-K2}
Let \(G\) be a graph satisfying \(\CD(K,\infty)\) with \(K>0\).  If $\lambda_1=K$, then \(K=\mathcal{K}_\infty(x)=2\) for every vertex \(x\in V(G)\).
\end{theorem}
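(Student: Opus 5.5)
The plan has two parts. First show that the optimal curvature equals $K$ at every vertex. Then pin down the value $K=2$ using the structure of the curvature matrix at an extremal vertex of a sharp eigenfunction.

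\emph{Step 1: $\mathcal{K}_\infty\equiv K$.} Since $\lambda_1=K$, choose a nonzero $f\in E_K$. By Lemma~\ref{lem:LMP-sharp}, $\Gamma(f)$ is a constant $c$ and $\Gamma_2(f)=K\Gamma(f)$ everywhere. If $c=0$, then $f$ would be constant on the connected graph $G$. This is impossible because $f\perp\one$ and $f\neq0$. Hence $c>0$, so $(\nabla f)_x\neq0$ for every $x$. Lemma~\ref{lem:CKLP} then says that $(\nabla f)_x$ is an eigenvector of $A_\infty(x)$ with eigenvalue $K$. Therefore
\[
\mathcal K_\infty(x)=\lambda_{\min}(A_\infty(x))\le K\le \mathcal K_\infty(x),
\]
where the last inequality is $\CD(K,\infty)$ at $x$. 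So $\mathcal K_\infty(x)=K$ for every $x$. It remains to show $K=2$.

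\emph{Step 2: test the eigen-relation against $\one$ at an extremal vertex.} Let $x$ be a global maximum of $f$ and put $g=(\nabla f)_x$. Then $g\le 0$ componentwise and $g\neq0$. Because $A_\infty(x)$ is symmetric, $\one^T(A_\infty(x)-K)g=0$. Combined with the row-sum identity \eqref{CKLP-row}, this gives
\[
\sum_{i}\Bigl(2-K+\tfrac12 t_i+\tfrac{\deg(x)-\deg(y_i)}2\Bigr)g_i=0 .
\]
Applying the same identity at a global minimum, where $g\ge0$, gives a second relation. I intend to combine these with the midpoint formula \eqref{eq:midpoint}. At a maximum $x$, \eqref{eq:midpoint} forces $f(z)\le f(x)$ together with averaging constraints over $S_1(x)\cap S_1(z)$ for $z\in S_2(x)$. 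From this I expect to show that the neighbours $y_i$ with $g_i<0$ have $t_i=0$ and $\deg(y_i)=\deg(x)$. Intuitively, an edge carrying a nonzero difference of a sharp eigenfunction must look locally like a hypercube edge. Once that holds, the displayed relation collapses to $(2-K)\sum_i g_i=0$, and $\sum_i g_i=Lf(x)=-Kf(x)\neq0$ since $f(x)=\max f>0$. This gives $K=2$.

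\emph{Backup: global summation.} If the local control is not sufficient, I would sum the relation $\one^T(A_\infty(x)-K)(\nabla f)_x=0$ over all $x$, weighted by $f(x)$. By symmetry of the edge quantities, the triangle term becomes $-\tfrac12\sum_{xy\in E}t_{xy}(f(y)-f(x))^2\le0$, and the constant term becomes $(2-K)\sum_x f(x)Lf(x)=-(2-K)K\|f\|^2$. One would then need to control the degree term $\sum_{xy}(\deg x-\deg y)(f(y)^2-f(x)^2)$. Handling this term is where I expect trouble.

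\emph{Main obstacle.} The hard step is the one in Step 2: showing that triangles and degree discrepancies cannot occur along edges where $f$ varies. Equivalently, the difficulty is extracting the exact hypercube-like local structure from the equality case in the Bochner inequality. I would attack it first at the maximum vertex, using \eqref{eq:midpoint} together with the explicit off-diagonal entries \eqref{CKLP-off} in the $i$-th row of $A_\infty(x)g=Kg$ for the index $i$ where $g_i$ is most negative.
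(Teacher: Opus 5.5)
\textbf{There is a genuine gap in Step 2, which carries the whole content of the theorem.} Step 1 is fine. It is essentially the second half of the paper's proof, and you do not need $A_\infty(x)$ there, since $\mathcal K_\infty(x)\le \Gamma_2(f)(x)/\Gamma(f)(x)=K$.

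The intermediate claim you hope to prove is false, because the statement does not assume regularity. That claim is: neighbours $y_i$ of a maximum $x$ with $g_i<0$ satisfy $t_i=0$ and $\deg(y_i)=\deg(x)$. A counterexample is $K_4\setminus e$, which satisfies $\CD(2,\infty)$ and has spectrum $\{0,2,4,4\}$ (see the remark after Theorem~\ref{thm:Delta-1Rigidity}). Call the degree-$2$ vertices $t,b$ and the adjacent degree-$3$ vertices $l,r$. Then $f=\one_{\{t\}}-\one_{\{b\}}$ spans $E_2$ and attains its maximum at $t$. Both neighbours $l,r$ have $g_i=-1$, $t_i=1$ and degree $3\neq 2$. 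Your relation holds there only because $\tfrac12 t_i$ and $\tfrac{\deg(x)-\deg(y_i)}{2}$ cancel.

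The route is also circular for regular graphs. ``No triangle through an edge on which $f$ varies'' is the paper's Proposition~\ref{prop:no-visible-triangle}, and its proof already uses $K=2$. Your backup does not close the gap either. In the regular case it gives $(K-2)K\|f\|^2=\tfrac12\sum_{xy\in E}t_{xy}(f(x)-f(y))^2\ge 0$, which is only $K\ge 2$.

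\textbf{The missing idea is an exact evaluation of the non-constant term, not a structural vanishing.} Set $R:=\sum_i\bigl(\tfrac12 t_i+\tfrac{\deg(x)-\deg(y_i)}{2}\bigr)g_i$. This works at any vertex with $f(x)\neq 0$; no extremality is needed.

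Sum the eigen-equation $Lf(y_i)=-Kf(y_i)$ over $i$:
\begin{itemize}
\item the terms along edges inside $S_1(x)$ cancel;
\item the terms towards $x$ give $Kf(x)$;
\item by \eqref{eq:midpoint}, each $z\in S_2(x)$ contributes $\sum_{y\in S_1(x)\cap S_1(z)}(f(z)-f(y))=\sum_{y\in S_1(x)\cap S_1(z)}(f(y)-f(x))$.
\end{itemize}
Since $y_i$ has $\deg(y_i)-1-t_i$ neighbours in $S_2(x)$, this yields
\[
\sum_i\bigl(\deg(y_i)-\deg(x)-t_i\bigr)g_i=K(K-2)f(x).
\]
Hence $R=\tfrac{K(2-K)}{2}f(x)$. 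Your relation then reads $-(2-K)Kf(x)+\tfrac{K(2-K)}{2}f(x)=0$, that is, $K(2-K)f(x)=0$, so $K=2$.

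This is the paper's argument in matrix form. The paper notes that $\mathcal B_x(u,v)=\Gamma_2(u,v)(x)-K\Gamma(u,v)(x)$ is positive semidefinite with $\mathcal B_x(f,f)=0$. By Cauchy--Schwarz, $\mathcal B_x(f,h)=0$ for every $h$. It then computes directly from the definitions of $\Gamma,\Gamma_2$ and $Lf=-Kf$ that $4\mathcal B_x(f,\one_{\{x\}})=K(2-K)f(x)$. Because $f$ satisfies the midpoint formula at $x$, one has $2\mathcal B_x(f,\one_{\{x\}})=-\one^T(A_\infty(x)-K)(\nabla f)_x$, which is exactly your quantity. The paper's route avoids the curvature matrix and all local-structure considerations.
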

\begin{proof}
Recall $E_K:=\ker(L+KI)$ and choose $0\neq f\in E_K$.  For a vertex $x$, define the symmetric
bilinear form
\[
  \mathcal{B}_x(u,v):=\Gamma_2(u,v)(x)-K\Gamma(u,v)(x).
\]
The curvature condition implies that $\mathcal{B}_x$ is positive semidefinite. Moreover, Lemma~\ref{lem:LMP-sharp} gives $\mathcal{B}_x(f,f)=0$ for every vertex $x$. Hence, by the Cauchy--Schwarz inequality for semidefinite bilinear forms,
\[
    \mathcal{B}_x(f,h)=0
\]
for every test function $h$ and every vertex $x$.

Choose a vertex $x$ with $f(x)\neq 0$, and let $h=\one_{\{x\}}$ be the indicator function of $\{x\}$, i.e., $h(x)=1$ and $h(y)=0$ for all $y\neq x$. Then we have
\begin{equation}\label{eq:Lh}
 L h(x)=-\mathrm{deg}(x),\ \ \text{and}\ \ Lh(y)=1,\ \ \text{for all}\ \ y\sim x.
\end{equation}
By \eqref{eq:Gamma_second}, we derive
\begin{equation}\label{eq:Gammafh}
    2\Gamma(f,h)(x)=-Lf(x)\ \ \text{and}\ \ 2\Gamma(f,h)(y)=f(x)-f(y),\ \ \text{for all}\ \ y\sim x.
\end{equation}
Using \eqref{eq:Lh}, \eqref{eq:Gammafh}, and $Lf=-Kf$, we get
\[
\begin{aligned}
  2\mathcal{B}_x(f,h)
  =L\Gamma(f,h)(x)-\Gamma(f,Lh)(x)-K\Gamma(f,h)(x)
  =\frac{K(2-K)}2 f(x).
\end{aligned}
\]
Thus
\[
  0=4\mathcal{B}_x(f,h)=K(2-K) f(x).
\]
Since $K>0$ and $f(x)\neq0$, it follows that $K=2$.

Now we prove $\mathcal{K}_\infty(x)=2$. By Lemma~\ref{lem:LMP-sharp}, we have $\Gamma(f)$ is constant on $V$.  Since $f\neq 0$, so $\Gamma(f)(x)>0$.  We also have
\begin{equation*}
 0\leq \Gamma_2(f)-\mathcal{K}_\infty(x)\Gamma(f)=(K-\mathcal{K}_\infty(x))\Gamma(f)
\end{equation*}
Since $K\leq \mathcal{K}_\infty(x)$, we must have $\mathcal{K}_\infty(x)=K=2$.
\end{proof}

\section{Hypercube bundle structure of Lichnerowicz-sharp graphs}
Let $G$ be an unweighted and connected graph. We call $G$ Lichnerowicz-sharp if $G$ satisfies $\CD(2,\infty)$ and $\lambda_1(G)=2$. In this section, we prove for any regular Lichnerowicz-sharp graph, it has a bundle structure with fiber graph, an $r$-dimensional hypercube $H_r$. The following is the definition of graph bundle.

\begin{definition}[{\cite[Definiation 1]{LL2024}}]
\label{def:GraphBundle}
    Let $G$ and $F$ be two unweighted graphs. Let $E^{\mathrm{ori}}_{G}=\{(x,y): x\sim y \text{ in } G\}$ be the set of oriented edges of $G$. Let $\sigma: E^{\mathrm{ori}}_G\to \operatorname{Aut}(F), (x,y)\to \sigma_{xy}$ such that $\sigma_{yx}=\sigma_{xy}^{-1}$. Construct the graph $G\square_\sigma F$ by setting $V(G\square_\sigma F)=V(G)\times V(F)$ and $E(G\square_\sigma F)$
    \begin{enumerate}[label=\textup{(\roman*)}]
        \item $(x,u)\sim (y,\sigma_{xy}(u))$ if $x\sim y$ in $G$;
        \item $(x,u)\sim (x,v)$ if $u\sim v$ in $F$.
    \end{enumerate}
    We call $G\square_\sigma F$ the $F$-bundle over $G$ and refer to $G$ and $F$ as the base graph and the fiber graph, respectively.
\end{definition}
\begin{definition}
    Let $G\square_\sigma F$ be a $F$-bundle over $G$. A section of $G\square_\sigma F$ is a graph homomorphism $s: G\to G\square_\sigma F$ such that $\pi_1\circ s: V(G)\to V(G)$ is the identity map where $\pi_1$ is the projection to the first factor $V(G\square_\sigma F)\to V(G), (x,u)\to x$.
\end{definition}

From here to the end of this section, unless otherwise stated, we shall always assume that $G$ is a connected, $d$-regular, Lichnerowicz-sharp graph.

Let $E_2=\ker(L+2I)$ and $\phi_1,\ldots,\phi_{m}: V(G)\to \mathbb{R}$ be a basis of $E_2$, that is, $L\phi_i+2\phi_i=0$. Define a map $\Phi:V(G)\to \mathbb{R}^{m}$ by
\begin{equation*}
    \Phi(x)=(\phi_1(x),\cdots,\phi_m(x)).
\end{equation*}

We partition the edge set of $G$ into two classes according to whether
$\Phi(x)-\Phi(y)$ vanishes at the endpoints of an edge. We call an edge
$x\sim y$ of $G$ visible if $\Phi(x)\neq \Phi(y)$, and collapsed otherwise.
Let
\[
E_{\vis}=\{xy\in E(G): xy\text{ is visible}\},\quad
E_{\col}=\{xy\in E(G): xy\text{ is collapsed}\}.
\]
and $G_{\vis}=(V(G),E_{\vis})$ and $G_{\col}=(V(G),E_{\col})$.

Since every $f\in E_2$ is a linear combination of $\phi_1,\ldots,\phi_m$,
an edge $xy$ is collapsed if and only if $f(x)=f(y)$ for every
$2$-eigenvector $f$. Hence whether an edge is visible or collapsed is
independent of the choice of basis of $E_2$.

\begin{lemma}
\label{lem:OneVisibleEdge}
    $S_1^{G_{\vis}}(x)\neq\varnothing$ for any $x\in V(G)$.
\end{lemma}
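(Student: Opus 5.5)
We argue by contradiction. Suppose some vertex $x$ has no visible edge, i.e.\ every edge at $x$ is collapsed. Then $h(y)=h(x)$ for every $h\in E_2$ and every $y\sim x$, so $(\nabla h)_x=0$ and $\Gamma(h)(x)=0$ for all $h\in E_2$. By Lemma~\ref{lem:LMP-sharp}, $\Gamma(h)$ is constant on $V$, hence $\Gamma(h)\equiv 0$ on $V$. Since $G$ is connected, $h$ is constant. But $Lh=-2h$ and a constant function satisfies $Lh=0$, so $h=0$.

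This holds for every $h\in E_2$, so $E_2=0$. This contradicts $\lambda_1(G)=2$, which says $m_2(G)\geq1$. Hence every vertex is incident to at least one visible edge, which is the claim $S_1^{G_{\vis}}(x)\neq\varnothing$.

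The only step needing care is the passage from the local vanishing $\Gamma(h)(x)=0$ to global vanishing. This is exactly what the constancy of $\Gamma(h)$ in Lemma~\ref{lem:LMP-sharp} provides, and it uses the $\CD(2,\infty)$ assumption together with $h\in E_2$. Regularity of $G$ is not needed for this lemma.
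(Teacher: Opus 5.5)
Your proof is correct, but it takes a different and shorter route than the paper.

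The paper also starts from a vertex $x_0$ all of whose edges are collapsed. It then propagates constancy of the spectral map $\Phi$ outward, sphere by sphere, using the midpoint formula of Lemma~\ref{lem:LMP-midpoint}. If $\Phi$ is constant on $B_k(x_0)$, take $z\in S_{k+1}(x_0)$ and $x\in S_{k-1}(x_0)$ with $d(x,z)=2$. All common neighbours of $x$ and $z$ lie in $S_k(x_0)$, which forces $\Phi(z)=\Phi(x)$. Finally, a constant $\Phi$ with $L\Phi=-2\Phi$ must vanish.

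You instead use the other conclusion of Lemma~\ref{lem:LMP-sharp}, namely that $\Gamma(h)$ is constant for $h\in E_2$. This turns the local vanishing $\Gamma(h)(x)=0$ into global vanishing in one step, with no induction over spheres.

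Both arguments rest on the same input: the pointwise equality $\Gamma_2(h)=2\Gamma(h)$, obtained by summing over the finite vertex set. So neither is more general. Yours is more economical, and it makes explicit the quantitative fact that $\sum_{y\sim x}(h(y)-h(x))^2$ is the same positive number at every vertex for each nonzero $h\in E_2$. The paper's argument is more local and combinatorial, and it previews the midpoint-propagation mechanism reused later, e.g.\ in Lemma~\ref{lem:distance}.

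You are also right that regularity plays no role in this lemma.
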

\begin{proof}
    If not, assume $x_0\in V(G)$ with $S_1^{G_{\vis}}(x_0)=\varnothing$. We prove by induction that $\Phi$ is constant on $V(G)$. Since every edge incident to $x_0$ is collapsed, the result holds for $x$ with $d(x,x_0)\leq 1$. Now assume this holds for any $x$ with $d(x,x_0)\leq k, k\geq 1$. Let $x\in S_{k-1}^G(x_0), z\in S_{k+1}^G(x_0)$ with $d(x,z)=2$. By \eqref{eq:midpoint},
    \begin{equation*}
        \Phi(z)=\frac{2}{\card{S_1^G(x)\cap S_2^G(z)}}\sum_{y\in S_1^G(x)\cap S_2^G(z)}\Phi(y)-\Phi(x)=\Phi(x).
    \end{equation*}
    Thus this result holds for $x\in S_{k+1}^G(x_0)$ and $\Phi$ is constant on $V(G)$. Once $\Phi$ is constant, $L\Phi(x)=0$ contradicting $L\Phi(x)=-2\Phi(x)$.
\end{proof}

Let $G^\prime$ be the graph whose vertex set consists of the connected components of $G_{\vis}$, with two distinct components adjacent if and only if some collapsed edge of $G$ has one endpoint in each component. We will later show that both $G_{\vis}$ and $G_{\col}$ are regular, so the degree of $G_{\vis}$ is well defined. The following theorem is the main result of this section and gives a more precise version of Theorem~\ref{tm:Bundle-structure1} stated in the introduction.
\begin{theorem}
\label{thm:HypercubeBundle}
    Let $G$ be a connected, $d$-regular, Lichnerowicz-sharp graph. Then $G$ is a $H_r$-bundle whose base graph is $G^\prime$ and $r\geq 1$ is the degree of $G_{\vis}$. $G$ admits a section. If $G$ has at least one collapsed edge, then $G^\prime$ is a connected, regular, $\CD(2,\infty)$ graph with $\lambda_1(G^\prime)>2$. 
\end{theorem}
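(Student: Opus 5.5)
We first pick a generic $f\in E_2$, i.e.\ a linear combination of $\phi_1,\dots,\phi_m$ avoiding the finitely many hyperplanes $\{h\in E_2: h(x)=h(y)\}$ indexed by visible edges $xy$. Such an $f$ separates the endpoints of every visible edge and is constant across every collapsed edge. By Lemma~\ref{lem:LMP-sharp} and Theorem~\ref{thm:first-sharp-K2}, we have $\Gamma_2(f)=2\Gamma(f)$ everywhere. Hence Lemma~\ref{lem:LMP-midpoint} gives the midpoint identity \eqref{eq:midpoint}, and Lemma~\ref{lem:CKLP} gives $A_\infty(x)(\nabla f)_x=2(\nabla f)_x$. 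Since $\mathcal K_\infty\equiv 2$, the matrix $A_\infty(x)-2I$ is positive semidefinite, and the same eigen-equation holds for every $h\in E_2$.

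The key local step, and the one I expect to be the main obstacle, is the exact-square property. Suppose $xy$ is visible and $z\neq y$ is a neighbour of $x$. We want to show that $y\not\sim z$, and that there is a unique $w\in S_2(x)$ with $S_1(x)\cap S_1(w)=\{y,z\}$. To prove this I would write the $y$-row of $(A_\infty(x)-2I)(\nabla f)_x=0$ using \eqref{CKLP-off} and \eqref{CKLP-diag}, with $d$-regularity so that the degree terms cancel. I would combine it with the row-sum identity \eqref{CKLP-row}, $(A_\infty\one)_i=2+\tfrac12 t_i$, and with positive semidefiniteness. Testing $A_\infty-2I\succeq0$ on the vectors $\one$, $e_y$, and $e_y\pm e_z$ should yield inequalities such as $t_i\ge0$ together with $\tfrac12 t_y+\dots$. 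Together with the eigen-relation, which carries a nonzero coefficient $f(y)-f(x)$, these should force $\varepsilon_{yz}=0$ and $\omega_{yz}=\tfrac12$. Here $\omega_{yz}=\tfrac12$ means exactly one common neighbour $w\in S_2(x)$ of $y$ and $z$, with $d_x^-(w)=2$. If this direct approach fails, I would first establish $t_i=0$, i.e.\ local triangle-freeness. Then the $2\times2$ principal minors on the pair $\{y,z\}$ together with the midpoint formula at $w$ should pin down $\omega_{yz}$.

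Next, I would use the exact squares to transport structure along edges. If $xy$ is visible, the midpoint formula at the square $x,y,z,w$ gives $f(w)-f(z)=f(y)-f(x)$ for every $h\in E_2$. So $zw$ is visible when $xy$ is, and $zw$ is collapsed when $xz$ is collapsed, and similarly for the other pair of opposite sides. This "opposite edges have the same type" property gives three consequences. First, $\deg_{\vis}$ is constant along edges, hence constant ($=r\ge1$ by Lemma~\ref{lem:OneVisibleEdge}), and so $G_{\col}$ is $(d-r)$-regular. Second, every pair of visible edges at a vertex spans a $4$-cycle of visible edges with no diagonals. So each component of $G_{\vis}$ is an $r$-regular graph in which every two incident edges lie in a unique square, and any three such squares close up into a cube. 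Applying the exact-square property again at $w$ gives this cube closure. By the standard characterization of hypercubes as connected graphs with these square properties, each visible component is $H_r$. Third, a collapsed edge $xz$ induces the map $y\mapsto w$ on visible neighbours; this map extends along squares to a bijection between the two incident components preserving adjacency. I then define $\sigma$ as this isomorphism, expressed in fixed coordinates of each component. Collapsed edges between two components are exactly the graph of this bijection, because opposite edges of a square have the same type. This gives $G\cong G'\square_\sigma H_r$.

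For the section, I would take in each component the unique global maximum of $f$, which exists by Lemma~\ref{lem:hypercube-unique-strict-local-maximum} since $f$ restricted to a component is a $2$-eigenfunction of $H_r$ separating visible edges. The restriction really is such an eigenfunction: since $f$ is constant on collapsed edges, $Lf$ computed in $G$ equals the Laplacian of $f$ within the component. Collapsed edges preserve $f$ and the transition maps are isomorphisms, so they send maxima to maxima; the maxima therefore form a section. The same observation shows that functions on $G'$ pulled back to $G$ satisfy $\Gamma,\Gamma_2$ computed along collapsed edges only. More precisely, evaluating on the copy $s(G')$ and using the square structure, the curvature matrix splits as a fiber block $2I_r$ plus the horizontal block $A_\infty^{G'}$, so $\CD(2,\infty)$ passes to $G'$. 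Finally, a $2$-eigenfunction $g$ of $G'$ pulls back to a $2$-eigenfunction of $G$ that is constant on visible components. This is impossible: by the definition of collapsed edges it would be constant on collapsed edges as well, hence constant on all of $G$. So $\lambda_1(G')>2$, and $G'$ is connected and $(d-r)$-regular.
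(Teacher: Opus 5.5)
The gap is in the step you flag as the main obstacle. Exact square incidence is not established, and the purely local argument you sketch cannot establish it. Everything you plan to use lives at the single vertex $x$: regularity, $A_\infty(x)-2I\succeq 0$, and a vector $v\in\ker(A_\infty(x)-2I)$ with $v_y\neq 0$. The midpoint formula adds nothing here, since it only fixes the values on $S_2(x)$. These data do not determine $\omega_{yz}$.

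A concrete case is the paper's example $X_{n,r}=K_n\square_\sigma H_r$ with all $\sigma_{ij}=\tau$ and $n\geq 5$. Take $P=(i,u)$ with $\tau u\neq u$.
\begin{itemize}
\item $A_\infty(P)$ is the direct sum of $2I_r$ and an $(n-1)\times(n-1)$ block with diagonal $n-2$ and off-diagonal entries $-\frac{n-4}{n-2}$.
\item Hence $A_\infty(P)\succeq 2I$, all $t_i=0$, and $\mathbf 1\in\ker(A_\infty(P)-2I)$ has every entry nonzero.
\item Yet two horizontal neighbours $Q,W$ of $P$ have $n-3$ common neighbours in $S_2(P)$, each of back-degree $n-2$. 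So $\omega_{QW}=\frac{n-3}{n-2}\neq\frac12$.
\end{itemize}
So neither your direct approach nor your fallback (first $t_i=0$, then $2\times 2$ minors) can work.

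What is missing is a global use of $f$. The paper first rules out visible triangles by choosing $x$ to maximise $f$ among endpoints of visible edges lying in triangles. This is needed because the row-sum identity only gives $\sum_i t_i(f(y_i)-f(x))=0$, which allows cancellation. It then proves every vertex good by induction in increasing order of $f$:
\begin{itemize}
\item Squares through lower neighbours are transferred from already-good vertices (Lemma~\ref{lem:visible-star-transfer}).
\item For pairs in $S_1^+(x)\times(S_1^+(x)\cup S_1^0(x))$, existence comes from the midpoint formula.
\item Uniqueness and back-degree $2$ come from the zero-fan exclusion, the diagonal bound $(A_\infty(x))_{pp}\geq 2$, and an AM--HM counting argument.
\end{itemize}

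Two later steps rely on local structure in the same way. First, local square conditions, even closure of squares into $3$-cubes, do not characterise hypercubes. The folded $n$-cube $H_n/\langle u\mapsto u+\mathbf 1\rangle$ has all balls of radius $R$ isomorphic to those of $H_n$ once $n\geq 2R+2$. The paper itself notes that $\mathbb F_2^5/\langle u\sim u+\mathbf 1\rangle$ is locally cubical, and it is not a hypercube. The paper instead shows $A_\infty^{G_{\vis}}=2I_r$, so each visible component $C$ is $\CD(2,\infty)$ with $f|_C\in E_2(C)$ separating every edge. It then applies Theorem~\ref{thm:HypercubeRigidity}: the number of lower neighbours changes by exactly one along each edge, which gives $\diam(C)\geq r$, and Myers rigidity finishes.

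Second, ``opposite sides have the same type'' lets you transport a collapsed edge along visible edges. It does not stop a vertex from having two collapsed neighbours in the same component, or one in its own component: transport around a closed walk in $C$ could return with a different partner. Without this, none of the following is justified:
\begin{itemize}
\item the bijection between adjacent components;
\item $G'$ as a simple base graph;
\item the identity $L_G\widetilde g=\widetilde{L_{G'}g}$ underlying your $\CD(2,\infty)$ and $\lambda_1(G')>2$ arguments.
\end{itemize}
The paper obtains $m_{CD}\leq 1$ and $m_{CC}=0$ in two steps. It shows $m_{CD}$ is constant on $C$. It then shows that at the maximum of $f|_C$, any collapsed neighbour in $D$ must be the unique maximum of $f|_D$.
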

As for $r\geq m_2(G)$, it is a consequence of Proposition \ref{prop:EigenspaceOfBundle}.

\subsection{Exact square incidence}
\begin{lemma}
\label{lem:visible-generic-eigenfunction}
There exists $f\in E_2$ such that for every visible edge $uv$,
\begin{equation}\label{eq:visible-generic-property}
f(u)\ne f(v)
\end{equation}
and for every collapsed edge $uv$,
\begin{equation}
\label{eq:zero-difference-iff-collapsed}
    f(u)=f(v).
\end{equation}
\end{lemma}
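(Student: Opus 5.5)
The collapsed-edge condition \eqref{eq:zero-difference-iff-collapsed} is automatic: by definition an edge $uv$ is collapsed exactly when $h(u)=h(v)$ for every $h\in E_2$. So the only thing to prove is the existence of a single $f\in E_2$ that separates the endpoints of every visible edge simultaneously. I will obtain it by a standard genericity (avoidance of finitely many hyperplanes) argument in the finite-dimensional space $E_2$.

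For each visible edge $uv$, I consider the linear functional
\[
\ell_{uv}:E_2\to\R,\qquad \ell_{uv}(h)=h(u)-h(v).
\]
Since $uv$ is visible, $\Phi(u)\neq\Phi(v)$. Hence some basis element $\phi_i$ satisfies $\phi_i(u)\neq\phi_i(v)$, so $\ell_{uv}\neq 0$. Its kernel $N_{uv}:=\ker\ell_{uv}$ is therefore a proper linear subspace (a hyperplane) of $E_2$.

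Because $G$ is finite (by Theorem~\ref{thm:MyersBound}), there are only finitely many visible edges. A real vector space is never a finite union of proper subspaces, so $E_2\neq\bigcup_{uv\in E_{\vis}}N_{uv}$. Any $f\in E_2$ outside this union then satisfies \eqref{eq:visible-generic-property}.

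To make the last step concrete, take $f=\sum_{i=1}^m t^{i-1}\phi_i$. Then $\ell_{uv}(f)$ is a nonzero polynomial in $t$, because its coefficients $\phi_i(u)-\phi_i(v)$ are not all zero. Finitely many nonzero polynomials have finitely many roots in total, so all but finitely many real $t$ work. Combining this with the automatic collapsed-edge identity gives the lemma.

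There is no real obstacle here. The only points needing care are that $E_2\neq 0$, which holds because $\lambda_1=2$, and that the set of visible edges is finite, which follows from Theorem~\ref{thm:MyersBound}. If $E_{\vis}$ were empty the claim would be vacuous, and in fact Lemma~\ref{lem:OneVisibleEdge} shows it is not.
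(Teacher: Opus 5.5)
Your proof is correct and follows essentially the same argument as the paper. You use nonzero functionals $\ell_{uv}$ for the visible edges, note there are finitely many such hyperplanes since $G$ is finite (Theorem~\ref{thm:MyersBound}), pick $f\in E_2$ outside their union, and observe that the collapsed-edge condition is automatic; your explicit choice $f=\sum_{i} t^{i-1}\phi_i$ is just a concrete way to carry out the hyperplane-avoidance step.
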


\begin{proof}
For a visible edge $uv$, the linear functional
\begin{equation*}
\ell_{uv}:E_2\to\R,
\quad
\ell_{uv}(f)=f(u)-f(v)
\end{equation*}
is nonzero. Hence $\ker \ell_{uv}$ is a proper hyperplane in $E_2$. By Theorem~\ref{thm:MyersBound}, the graph is finite, so there are only finitely many visible edges. A finite union of proper hyperplanes cannot cover $E_2$. Choose $f$ outside this union. Then \eqref{eq:visible-generic-property} holds. If $uv$ is collapsed, then $\Phi(u)=\Phi(v)$, so every $f\in E_2$ satisfies $f(u)=f(v)$. This proves \eqref{eq:zero-difference-iff-collapsed}.
\end{proof}

By Lemma~\ref{lem:visible-generic-eigenfunction}, we  take a $f\in E_2$ such that $f(x)\neq f(y)$ for any visible edge $xy$ and $f(x)=f(y)$ for any collapsed edge.

\begin{proposition}
\label{prop:no-visible-triangle}
There are no vertices $x,y,z$ such that
\begin{equation*}
x\sim y,
\quad
x\sim z,
\quad
y\sim z
\end{equation*}
and $xy$ is a visible edge.
\end{proposition}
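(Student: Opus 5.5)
The plan is to test the sharpness identity against indicator functions, as in the proof of Theorem~\ref{thm:first-sharp-K2}, and then finish with an extremal argument. Fix $f\in E_2$ as chosen after Lemma~\ref{lem:visible-generic-eigenfunction}. At every vertex $x$, set $\mathcal B_x(u,v)=\Gamma_2(u,v)(x)-2\Gamma(u,v)(x)$. This form is positive semidefinite, and $\mathcal B_x(f,f)=0$ by Lemma~\ref{lem:LMP-sharp}. Cauchy--Schwarz for semidefinite forms therefore gives $\mathcal B_x(f,h)=0$ for every test function $h$ and every vertex $x$.

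\textbf{Step 1: a local identity on each edge.} Take an edge $xy$ and let $T_{xy}=S_1(x)\cap S_1(y)$ with $t=|T_{xy}|$. Put $h=\one_{\{y\}}$ and expand $2\mathcal B_x(f,h)=L\Gamma(f,h)(x)-\Gamma(f,Lh)(x)-2\Gamma(f,h)(x)$ using \eqref{eq:Gamma_second} and $Lf=-2f$. In my computation the degree terms cancel and what remains is
\[
  0=\frac t2\bigl(f(y)-f(x)\bigr)-\sum_{v\in T_{xy}}\bigl(f(v)-f(x)\bigr).
\]
Doing the same at $y$ with $h=\one_{\{x\}}$ and adding the two identities gives the triangle midpoint rule
\[
  \sum_{v\in T_{xy}}f(v)=\frac t2\bigl(f(x)+f(y)\bigr).
\]
In words, the average of $f$ over the common neighbours of an edge equals the average of $f$ at its endpoints. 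This should be checked carefully, since everything else rests on it.

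\textbf{Step 2: extremal argument.} Suppose a triangle $x,y,z$ exists with $xy$ visible, so that $f(x)\neq f(y)$. By the midpoint rule, some common neighbour $v$ of $x$ and $y$ satisfies $f(v)\ne f(x)$ or $f(v)\neq f(y)$. Hence the edges of triangles carrying visible edges propagate. I would consider the finite set $\mathcal T$ of all triangles containing a visible edge and choose an extremal configuration. First I would try the vertex $x$ maximizing $f$ among vertices of triangles in $\mathcal T$, and then, among the visible triangle-edges $xy$ at that $x$, the one minimizing $f(y)$.

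Applying the midpoint rule to $xy$ and to the edges $xv$ with $v\in T_{xy}$ should force all the relevant values to be equal. This would contradict $f(x)\neq f(y)$. The tool here is that each $v\in T_{xy}$ with $f(v)\ne f(x)$ gives a further visible edge $xv$ lying in a triangle in $\mathcal T$.

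\textbf{Main obstacle: closing the extremal argument.} The midpoint rule alone is symmetric and may not give a contradiction. If the extremal choice does not close, I would add the curvature-matrix information from Lemma~\ref{lem:CKLP}. Together with \eqref{CKLP-row} and $d$-regularity, the relation $(A_\infty(x)-2I)(\nabla f)_x=0$ gives
\[
  \sum_i t_i\bigl(f(y_i)-f(x)\bigr)=0 \quad\text{at every } x.
\]
I would also use positive semidefiniteness of $A_\infty(x)-2I$ on vectors of the form $(\nabla f)_x+\varepsilon e_i$ with $y_i=y$. Since $\mathcal B_x(f,\cdot)$ vanishes, this reduces to
\[
  (A_\infty(x))_{ii}\ge 2,\qquad\text{that is}\qquad \tfrac52 t_i+2\Omega_i\ge 1 .
\]
That inequality is satisfied automatically, so the useful contribution must instead come from the off-diagonal entries $1-2\varepsilon_{ij}-2\omega_{ij}$. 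These are at most $-1$ exactly when $y_i\sim y_j$.

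My first attempt would be to combine the eigen-relation in the coordinates $y$ and $z$ with the midpoint formula \eqref{eq:midpoint} for vertices in $S_2(x)$. The aim is to show that a triangle through a visible edge makes the row of $A_\infty(x)$ at $y$ too negative to be consistent with $(A_\infty(x)-2I)(\nabla f)_x=0$, at the extremal vertex $x$ where all the differences $f(y_i)-f(x)$ in the triangle have a fixed sign.
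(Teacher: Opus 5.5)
Your Step~1 is correct. Expanding $2\mathcal B_x(f,\one_{\{y\}})$ gives $\tfrac{t}{2}\bigl(f(x)+f(y)\bigr)-\sum_{v\in T_{xy}}f(v)+\tfrac{\deg(y)-\deg(x)}{2}\bigl(f(y)-f(x)\bigr)$, and the last term vanishes by $d$-regularity. The identity at $x$ alone is already the symmetric midpoint rule, so adding the one at $y$ gives nothing new.

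\textbf{The gap is Step~2: you never reach a contradiction.} You assert that the midpoint rule on $xy$ and on the edges $xv$, $v\in T_{xy}$, ``should force all the relevant values to be equal'', but this is not justified. For a single edge the rule only says that the values $f(v)$, $v\in T_{xy}$, average to $\tfrac12\bigl(f(x)+f(y)\bigr)$. That is compatible with both of your extremal constraints. The fallback, in which off-diagonal entries of $A_\infty(x)$ are ``too negative'', is stated only as an aim, not as an argument. Incidentally, in the $d$-regular case $(A_\infty(x))_{ii}\ge 2$ reads $\tfrac52 t_i+2\Omega_i\ge d-1$, not $\ge 1$. It is not automatic: the paper uses it nontrivially in Lemma~\ref{lem:visible-diagonal-estimate}.

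\textbf{The missing step is short and uses only what you already have.}
\begin{enumerate}[label=\textup{(\arabic*)}]
\item Sum your Step~1 identity over all $y\in S_1(x)$. Since $v\in T_{xy}$ if and only if $y\in T_{xv}$, the double sum $\sum_{y}\sum_{v\in T_{xy}}\bigl(f(v)-f(x)\bigr)$ equals $S:=\sum_{y\in S_1(x)}t_{xy}\bigl(f(y)-f(x)\bigr)$. So you get $\tfrac12S-S=0$, i.e.\ $S=0$. This is exactly the identity you later rederive from Lemma~\ref{lem:CKLP} and \eqref{CKLP-row}. Summing over the star is what breaks the symmetry you were worried about.
\item Keep only your first extremal choice: $x$ maximizing $f$ over the vertices of triangles containing a visible edge.
\item Take any $y\sim x$ with $t_{xy}>0$. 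Either $xy$ is collapsed and $f(y)=f(x)$, or $xy$ is a visible edge lying in a triangle. In the latter case $y$ belongs to the same set, so $f(y)\le f(x)$ by maximality, and $f(y)\neq f(x)$ by the choice of $f$. Hence every term of $S$ is $\le 0$.
\item The vertex $x$ lies on a triangle containing a visible edge, and $f$ cannot take the same value at all three vertices of such a triangle. So $x$ is an endpoint of a visible edge of that triangle, which contributes a strictly negative term to $S$. This contradicts $S=0$.
\end{enumerate}

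This is precisely the paper's argument. The only difference is how $S=0$ is obtained: the paper pairs $(A_\infty(x)-2I)(\nabla f)_x=0$ with $\one$ and uses the row sums \eqref{CKLP-row}. Your edgewise identity yields $S=0$, and a finer edge-by-edge statement, without the curvature matrix. The second extremal choice and the off-diagonal analysis are unnecessary.
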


\begin{proof}
Assume that a triangle containing a visible edge exists. Let
\begin{equation*}
    \mathcal{T}=\{u\in V(G): \text{$u$ is an endpoint of a visible edge contained in a triangle}\}.
\end{equation*}
Choose $x\in\mathcal T$ such that $f(x)=\max_{u\in\mathcal T}f(u)$.

Write $S_1(x)=\{y_1,\ldots,y_d\}$ and let $t_i=\deg_{[S_1(x)]}(y_i)$. For each $i$ with $t_i>0$, if $xy_i$ is a collapsed edge,  then $f(x)=f(y_i)$. If $xy_i$ is visible, then $y_i\in\mathcal{T}$ and hence, $f(y_i)<f(x)$. Since $f\in E_2$, by Lemma \ref{lem:LMP-sharp} and Lemma~\ref{lem:CKLP}, we have $(A_{\infty}(x)-2I_d)(\nabla f)_x=0$. Combining with \eqref{CKLP-row}, we have
\begin{equation*}
\begin{aligned}
0
&=2\langle\mathbf{1},(A_{\infty}(x)-2I_d)(\nabla f)_x\rangle\\
&=2\langle(A_{\infty}(x)-2I_d)\mathbf{1},(\nabla f)_x\rangle\\
&=\sum_i t_{i}(f(y_i)-f(x)).
\end{aligned}
\end{equation*}

Since $x\in\mathcal{T}$, we take a visible edge $xy_m$ with $t_m>0$. Then $f(y_m)-f(x)<0$. This implies $\sum_i t_{i}(f(y_i)-f(x))<0$. It is a contradiction.
\end{proof}
\begin{definition}[Exact square incidence]
\label{def:pair-exact-square}
Let $x\in V(G)$ and let $y,z\in S_1(x)$ with $y\ne z$. We say that the triple $(x;y,z)$ satisfies exact square incidence if the following three conditions hold.
\begin{enumerate}[label=\textup{(\roman*)}]
\item The two neighbours are not adjacent;
\item There exists a unique $w\in S_2(x)$ such that $w\sim y, w\sim z$;
\item This vertex $w$ has exactly two back-neighbours at $x$:
\begin{equation*}
S_1(x)\cap S_1(w)=\{y,z\}.
\end{equation*}
\end{enumerate}
We call $w$ the exact square completion of the triple $(x;y,z)$.
\end{definition}
We say a vertex $x$ is good if for any visible edge $xy$ and any $z\in S_1(x)\setminus\{y\}$, $(x;y,z)$ satisfies exact square incidence. A key observation is that every vertex is good.

\begin{lemma}
\label{lem:visible-star-transfer}
Let $xy$ be a visible edge. Assume that $y$ is good. Then for every $z\in S_1(x)\setminus\{y\}$, the triple $(x;y,z)$ satisfies exact square incidence.
\end{lemma}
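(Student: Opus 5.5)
Fix a visible edge $xy$ with $y$ good, and fix $z\in S_1(x)\setminus\{y\}$. The plan is to build the square completion of $(x;y,z)$ by first working at $y$, where goodness already gives exact square incidence for the visible edge $yx$, and then to transfer it back to $x$.

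\textbf{Step 1 (condition (i)).} Since $xy$ is visible, Proposition~\ref{prop:no-visible-triangle} rules out any triangle containing $xy$. Hence $y\not\sim z$.

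\textbf{Step 2 (a candidate completion).} Because $y$ is good and $yx$ is visible, every triple $(y;x,u)$ with $u\in S_1(y)\setminus\{x\}$ satisfies exact square incidence. Its completion $w_u\in S_2(y)$ is adjacent to $x$ and $u$, and $S_1(y)\cap S_1(w_u)=\{x,u\}$.

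\begin{itemize}
\item Distinct $u$ give distinct $w_u$, since $w_u$ determines $u$ as the element of $S_1(y)\cap S_1(w_u)$ other than $x$.
\item Each $w_u$ is a neighbour of $x$ other than $y$.
\item By $d$-regularity, $x$ has exactly $d-1$ neighbours other than $y$, and there are exactly $d-1$ choices of $u$. So the map $u\mapsto w_u$ is a bijection from $S_1(y)\setminus\{x\}$ onto $S_1(x)\setminus\{y\}$.
\end{itemize}

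This is the step where regularity is essential. In particular there is a unique $u\in S_1(y)\setminus\{x\}$ with $w_u=z$. Set $w:=u$. Then $w\sim y$, $w\sim z$, and $w\neq x$. Also $w\not\sim x$, because otherwise $x,y,w$ would be a triangle on the visible edge $xy$. Hence $w\in S_2(x)$ is a common neighbour of $y$ and $z$.

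\textbf{Step 3 (uniqueness of $w$, condition (ii)).} Suppose $w'\in S_2(x)$ is another common neighbour of $y$ and $z$. Then $w'\in S_1(y)\setminus\{x\}$ and $z\in S_1(y)\cap S_1(w')$. Applying exact square incidence of $(y;x,w')$, we get $S_1(y)\cap S_1(z)=\{x,w'\}$. The same argument applied to $w$ gives $S_1(y)\cap S_1(z)=\{x,w\}$. Hence $w'=w$.

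\textbf{Step 4 (condition (iii)).} It remains to show $S_1(x)\cap S_1(w)=\{y,z\}$, i.e. that no third neighbour $z'$ of $x$ is adjacent to $w$. I expect this to be the main obstacle, because goodness at $y$ only controls back-neighbours at $y$, not at $x$. My first approach is a counting argument.

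\begin{itemize}
\item Suppose $z'\in S_1(x)\setminus\{y,z\}$ is adjacent to $w$.
\item By Step 2, $z'=w_{u'}$ for some $u'\neq w$ in $S_1(y)\setminus\{x\}$.
\item Then $z'$ and $w$ are both neighbours of $y$ or of $u'$, and $w\sim z'$. I would try to derive a triangle through a visible edge, or a violation of $S_1(y)\cap S_1(z')=\{x,u'\}$.
\end{itemize}

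If that does not close the case, I would use the midpoint formula \eqref{eq:midpoint} at $x$ for the vertex $w$, with the generic $f$ of Lemma~\ref{lem:visible-generic-eigenfunction}, together with the row identity \eqref{CKLP-row} and the eigenvector relation $A_\infty(x)(\nabla f)_x=2(\nabla f)_x$ of Lemma~\ref{lem:CKLP}.

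\begin{itemize}
\item Since $x,y,w$ and the $z$'s pair up under $u\mapsto w_u$, the midpoint identities at $y$ give $f(w)-f(z)=f(y)-f(x)\neq0$ for each paired square.
\item I would then compare the $y$-row of $A_\infty(x)$, which involves the off-diagonal weights $\omega_{yz}$, with the corresponding row of $A_\infty(y)$.
\item The goal is to show that the extra back-neighbour $z'$ would force $d_x^-(w)\ge3$, making $\omega_{yz}<1/2$. This would make $(A_\infty(x)(\nabla f)_x)_y\neq 2(f(y)-f(x))$, a contradiction.
\end{itemize}
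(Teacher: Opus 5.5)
Your Steps 1--3 follow the paper's proof. The map $u\mapsto w_u$ is the paper's bijection $\sigma_y\colon S_1(y)\setminus\{x\}\to S_1(x)\setminus\{y\}$, and your uniqueness argument is equivalent to the paper's use of injectivity of $\sigma_y$.

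There is a slip in Step 3: $z\notin S_1(y)$, since $y\not\sim z$ by Step 1. What you should say is the following. The vertex $z\in S_2(y)$ is a common neighbour of $x$ and $w'$. By the uniqueness clause (ii) for $(y;x,w')$, $z$ is the completion of that triple, and then (iii) gives $S_1(y)\cap S_1(z)=\{x,w'\}$.

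\textbf{The gap is Step 4.} Condition (iii) is never proved: you name a target and a fallback but close neither. Your reason for expecting difficulty is mistaken. An extra back-neighbour $z'$ of $w$ at $x$ is a vertex of $S_2(y)$ adjacent to both $x$ and $w$, so exact square incidence at $y$ does control it.

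Concretely, suppose $z'\in S_1(x)\setminus\{y,z\}$ and $z'\sim w$. Then $z'\not\sim y$ by Proposition~\ref{prop:no-visible-triangle}, so $z'\in S_2(y)$ is a common neighbour of $x$ and $w$. Uniqueness in (ii) for $(y;x,w)$ forces $z'=w_w=z$, a contradiction. This is exactly the paper's one-line argument.

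Your own first approach also closes immediately. Write $z'=w_{u'}$; then $u'\neq w$ by injectivity, and $w\in S_1(y)\cap S_1(z')=\{x,u'\}$ is impossible.

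The curvature-matrix fallback is unnecessary, and as sketched it is not an argument. Knowing $d_x^-(w)\ge 3$ changes $\omega_{yz}$ (and the diagonal entry) at $x$. But without control of the rest of the $y$-row, this does not by itself contradict $A_\infty(x)(\nabla f)_x=2(\nabla f)_x$.
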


\begin{proof}
Since $xy$ is visible, Proposition~\ref{prop:no-visible-triangle} gives $y\not\sim z$ for every $z\in S_1(x)\setminus\{y\}$. This proves \textup{(i)}.

Since $y$ is good, for every $u\in S_1(y)\setminus\{x\}$, the triple $(y;x,u)$ satisfies exact square incidence. So there exists a unique exact square completion $\sigma_y(u)\in S_2(y)$ for $(y;x,u)$. Thus we obtain a map
\begin{equation*}
\sigma_y:S_1(y)\setminus\{x\}\to S_1(x)\setminus\{y\}.
\end{equation*}
This map is injective. Indeed, if $\sigma_y(u)=\sigma_y(u')$ with $u\ne u'$, then $\{x,u,u^\prime\}\subseteq S_1(y)\cap S_1(\sigma_y(u))$, contradicting $d_y^{-}(\sigma_y(u))=2$. Since $|S_1(x)\setminus\{y\}|=\card{S_1(y)\setminus\{x\}}=d-1$, $\sigma_y$ is bijective.

Fix $z\in S_1(x)\setminus\{y\}$. By bijectivity, there is a unique $w\in S_1(y)\setminus\{x\}$ such that $\sigma_y(w)=z$. Because $y$ is good, the vertices $x$ and $w$ are not adjacent. Therefore $w\in S_2(x)$.
If $w'\in S_2(x)$ also satisfies $w'\sim y$ and $w'\sim z$, then $\sigma_y(w^\prime)=z$ and injectivity gives $w'=w$. This proves \textup{(ii)}.

Finally, suppose $w$ were adjacent to some $z'\in S_1(x)\setminus\{y,z\}$. Then $\sigma_y^{-1}(z)=\sigma_y^{-1}(z^\prime)=w$ contradicting $\sigma_y(w)=z$ and $z^\prime\neq z$.  Hence $S_1(x)\cap S_1(w)=\{y,z\}$. This proves \textup{(iii)}.

\end{proof}

For any $x\in V(G)$, we define the following three sets and quantities:
\begin{equation*}
\begin{aligned}
S_1^-(x)&=\{y\sim x:f(y)<f(x)\},\ &r=\card{S_1^-(x)},\\
S_1^0(x)&=\{y\sim x:f(y)=f(x)\},\ &s=\card{S_1^0(x)},\\
S_1^+(x)&=\{y\sim x:f(y)>f(x)\},\ &m=\card{S_1^+(x)}.
\end{aligned}
\end{equation*}
By the construction of $f$, every vertex in $S_1^-(x)\cup S_1^+(x)$ is joined to $x$ by a visible edge, and every vertex in $S_1^0(x)$ is joined to $x$ by a collapsed edge.

Assume every vertex in $S_1^-(x)$ is good. For each $n\in S_1^-(x)$ and each $u\in S_1(x)\setminus\{n\}$, Lemma~\ref{lem:visible-star-transfer} gives a unique exact square completion for $(x;n,u)$. Let $K_x$ denote the set of $v\in S_2(x)$ which is the unique exact square completion for some $n\in S_1^-(x)$ and $u\in S_1(x)\setminus\{n\}$. If $a\in S_2(x)\setminus K_x$, then $a$ has no back-neighbour in $S_1^{-}(x)$. Indeed, if $a\sim n$ for some $n\in S_1^-(x)$, then $a$ is the exact square completion for $(x;n,\sigma_n(a))$, where $\sigma_n$ is the bijection constructed in the proof of Lemma~\ref{lem:visible-star-transfer}. Hence $S_1(x)\cap S_1(a)\subseteq S_1^0(x)\cup S_1^+(x)$. Define
\begin{equation*}
R_x^+=\{a\in S_2(x)\setminus K_x:S_1(a)\cap S_1^+(x)\ne\varnothing\}.
\end{equation*}
For $a\in R_x^+$, set
\begin{equation*}
    \begin{aligned}
        H_a&=S_1(a)\cap S_1^+(x),
\quad h_a=\card{H_a},\\
Z_a&=S_1(a)\cap S_1^0(x),
\quad z_a=\card{Z_a}.\\
    \end{aligned}
\end{equation*}
We also define $\rho_a=h_a+z_a$ and
\begin{equation*}
B_a=\sum_{p\in H_a}{(f(p)-f(x))}.
\end{equation*}
Using \eqref{eq:midpoint} for $x,a$ and since $S_1(a)\cap S_1^{-}(x)=\varnothing$, we obtain
\begin{equation}\label{eq:f-a-via-B-a}
f(a)-f(x)=\frac{2B_a}{\rho_a}.
\end{equation}
In particular, $f(a)>f(x)$.

Let $\mathcal P_x=
\bigl\{\{p,u\}:p\in S_1^+(x),\ u\in S_1^+(x)\cup S_1^0(x),\ u\ne p\bigr\}$.
Its cardinality is
\begin{equation*}
N_x=\card{\mathcal P_x}=\binom{m}{2}+ms.
\end{equation*}

\begin{lemma}
\label{lem:residual-existence}
Assume every vertex in $S^{-}_1(x)$ is good. Then every pair $e\in\mathcal P_x$ has at least one common neighbour in $R_x^+$.
\end{lemma}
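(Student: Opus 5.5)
The plan is to split the statement into an easy \emph{location} part and a harder \emph{existence} part. For the location part, fix $e=\{p,u\}\in\mathcal P_x$ with $p\in S_1^+(x)$ and $u\in S_1^+(x)\cup S_1^0(x)$. The edge $xp$ is visible because $f(p)\neq f(x)$, so Proposition~\ref{prop:no-visible-triangle} gives $p\not\sim u$. Hence any common neighbour $a\neq x$ of $p$ and $u$ is not adjacent to $x$: an edge $xa$ together with $xp$ and $pa$ would form a triangle containing the visible edge $xp$. So every such $a$ lies in $S_2(x)$.

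Suppose such an $a$ belonged to $K_x$. Then $a$ would be the exact square completion of some $(x;n,v)$ with $n\in S_1^-(x)$, and condition (iii) of Definition~\ref{def:pair-exact-square} would give $S_1(x)\cap S_1(a)=\{n,v\}$. But this set contains both $p$ and $u$, and neither lies in $S_1^-(x)$; since $p\neq u$, at most one of them can equal $v$, a contradiction. So $a\notin K_x$, and since $a\sim p\in S_1^+(x)$ we get $a\in R_x^+$. The lemma therefore reduces to showing that $p$ and $u$ have at least one common neighbour in $S_2(x)$, i.e.\ $\omega_{pu}>0$ in Definition~\ref{def:curMatrix}.

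For existence I would use the spectral information at $x$. Let $M=A_\infty(x)-2I_d$ and $v=(\nabla f)_x$. By Lemmas~\ref{lem:LMP-sharp} and~\ref{lem:CKLP}, $M$ is positive semidefinite and $Mv=0$. By \eqref{CKLP-row} and $d$-regularity, $(M\one)_i=t_i/2$, and $t_p=0$ since $p$ lies in no triangle. Suppose, for contradiction, that $\omega_{pu}=0$. Because also $\varepsilon_{pu}=0$, this gives $M_{pu}=1>0$.

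I would then test the quadratic form $q(w)=w^TMw\geq0$ on vectors $w=v+\alpha\one+\beta e_p+\gamma e_u$, aiming for a contradiction. The cross terms involving $v$ vanish, and the terms involving $\one$ are controlled by the $t_i$, with the entry at $p$ equal to zero. The diagonal entry $M_{pp}=1-d+2\Omega_p$ is controlled by $\Omega_p$. I expect to need a second source of information, namely the goodness of the vertices $n\in S_1^-(x)$. Via the bijections $\sigma_n$ from Lemma~\ref{lem:visible-star-transfer}, each $n\in S_1^-(x)$ produces exact squares with $p$ and with $u$, which pins down the entries $\omega_{np}=\omega_{nu}=\tfrac12$ and hence $M_{np}=M_{nu}=0$.

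The fallback for existence is an extremal/midpoint argument in the style of Proposition~\ref{prop:no-visible-triangle}. I would write row $p$ of $Mv=0$ explicitly: the entries $M_{pj}=1-2\omega_{pj}$, and the components $v_j=f(y_j)-f(x)$ are negative on $S_1^-(x)$, zero on $S_1^0(x)$ and positive on $S_1^+(x)$. Combining this with \eqref{eq:f-a-via-B-a}, the hope is to show that a missing square at $\{p,u\}$ forces the sum $\Omega_p$ to be too small for row $p$ to balance.

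The main obstacle is this existence step, showing $\omega_{pu}>0$ for every pair in $\mathcal P_x$. Locating the common neighbour in $R_x^+$ is routine once existence is known. The difficulty is that the only usable inputs at a general vertex $x$ are the single identity $Mv=0$ with $M\succeq0$ and the goodness of the vertices in $S_1^-(x)$, and these must be combined carefully to rule out a missing square.
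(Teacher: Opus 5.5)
Your location step is correct and matches the paper's: $p\not\sim u$ by Proposition~\ref{prop:no-visible-triangle}; every common neighbour $a\neq x$ of $p$ and $u$ lies in $S_2(x)$; it cannot lie in $K_x$, since it would then have the three back-neighbours $n,p,u$; hence it lies in $R_x^+$.

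The gap is the existence step, which is the actual content of the lemma. You never prove that $p$ and $u$ have a common neighbour other than $x$.

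Your first tool cannot do this as stated. Since $Mv=0$, one has $q(v+y)=q(y)$. So testing $q$ on $v+\alpha\one+\beta e_p+\gamma e_u$ only checks the curvature inequality $A_\infty(x)\succeq 2I$ on a three-dimensional subspace, and it carries no information about $f$. You give no reason why that local inequality alone should force $\omega_{pu}>0$.

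The fallback, row $p$ of $Mv=0$ combined with \eqref{eq:f-a-via-B-a}, is stated only as a hope. Like everything else you propose, it uses only identities centred at $x$.

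The missing idea is to use $\Gamma_2(f)=2\Gamma(f)$ at the vertex $p$ instead of at $x$. By Lemma~\ref{lem:LMP-sharp} this equality holds at every vertex, so Lemma~\ref{lem:LMP-midpoint} applies with centre $p$. Since $p\not\sim u$ and $x\in S_1(p)\cap S_1(u)$, we have $u\in S_2(p)$, and \eqref{eq:midpoint} gives
\[
\frac{f(p)+f(u)}{2}=\frac{1}{d_p^-(u)}\sum_{y\in S_1(p)\cap S_1(u)}f(y).
\]
If $x$ were the only common neighbour, the right-hand side would be $f(x)$. The left-hand side is strictly larger, because $f(p)>f(x)$ and $f(u)\ge f(x)$. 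Hence a common neighbour $a\neq x$ exists, and your location argument places it in $R_x^+$.

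This is exactly the paper's one-line proof. What drives it is the sign information on $f(p)-f(x)$ and $f(u)-f(x)$. The goodness of $S_1^-(x)$ is needed only to make sense of $K_x$ in the location step, not for existence.
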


\begin{proof}
Let $e=\{p,u\}\in\mathcal P_x$ with $p\in S_1^+(x)$. Since $xp$ is visible, by Proposition~\ref{prop:no-visible-triangle}, $xp$ couldn't be contained in a triangle. Thus $d(p,u)=2$. Suppose that $p$ and $u$ have no common neighbour in $R_x^+$. A common neighbour different from $x$ lies in $S_2(x)$. It cannot lie in $K_x$. Otherwise, its back-degree is exactly two against having at least three back-neighbours. Hence the only common neighbour of $p$ and $u$ is $x$. Applying \eqref{eq:midpoint} to $p,u$ gives
\begin{equation}\label{eq:existence-contradiction-midpoint}
\frac{f(p)+f(u)}2=f(x).
\end{equation}
But $f(p)>f(x)$ and $f(u)\ge f(x)$, contradicting \eqref{eq:existence-contradiction-midpoint}. Therefore such a common neighbour exists.
\end{proof}

For $a\in R_x^+$, the number of pairs in $\mathcal{P}_x$, such that $a$ is the exact square completion for the pair and $x$, is $I_a=\binom{h_a}{2}+h_az_a$. Let
\begin{equation}
I=\sum_{a\in R_x^+}I_a.
\end{equation}
Lemma~\ref{lem:residual-existence} implies $I\geq N_x$.

\begin{lemma}
    \label{lem:zero-fan-exclusion}
Assume every vertex in $S^{-}_1(x)$ is good.  For every $a\in R_x^+$, $z_a\le1$.
\end{lemma}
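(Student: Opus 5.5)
The plan is to argue by contradiction. Suppose some $a\in R_x^+$ has $z_a\geq 2$, and pick two distinct vertices $u,u'\in Z_a$. Both $xu$ and $xu'$ are collapsed, so $f(u)=f(u')=f(x)$. By \eqref{eq:f-a-via-B-a} we have $f(a)>f(x)=f(u)=f(u')$, so the edges $au$ and $au'$ are visible. The argument splits according to whether $u\sim u'$.

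\emph{Case $u\sim u'$.} Then $a,u,u'$ form a triangle. It contains the visible edge $au$. This contradicts Proposition~\ref{prop:no-visible-triangle}.

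\emph{Case $d(u,u')=2$.} We apply the midpoint formula \eqref{eq:midpoint} to the pair $u,u'$. This is legitimate because $f\in E_2$ satisfies $\Gamma_2(f)=2\Gamma(f)$ everywhere by Lemma~\ref{lem:LMP-sharp}. The left-hand side is $\frac{f(u)+f(u')}{2}=f(x)$. The right-hand side is the average of $f$ over the common neighbours of $u$ and $u'$. This set contains $x$, where $f=f(x)$, and $a$, where $f>f(x)$. Hence there must be a common neighbour $c$ of $u$ and $u'$ with $f(c)<f(x)$. Such a $c$ lies either in $S_1(x)$ or in $S_2(x)$, and we rule out each option.

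\emph{If $c\in S_1(x)$.} Then $c\in S_1^-(x)$, so $xc$ is visible. But $x,c,u$ form a triangle, which contradicts Proposition~\ref{prop:no-visible-triangle}.

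\emph{If $c\in S_2(x)$.} We apply \eqref{eq:midpoint} to the pair $x,c$. It says $f(c)-f(x)$ equals $\frac{2}{d_x^-(c)}$ times the sum of $f(y)-f(x)$ over the back-neighbours $y\in S_1(x)\cap S_1(c)$. Since $f(c)<f(x)$, at least one back-neighbour $n$ of $c$ lies in $S_1^-(x)$. Because every vertex of $S_1^-(x)$ is good, $c$ is then the exact square completion of $(x;n,\sigma_n(c))$, as in the discussion preceding $R_x^+$. Therefore $c\in K_x$ and $d_x^-(c)=2$. But $c$ has at least the three back-neighbours $n,u,u'$, where $n\neq u,u'$ since $f(n)<f(x)$. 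This is a contradiction.

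Both cases are impossible, so $z_a\leq1$. The only delicate step is the last one: we need the sign argument via the midpoint formula to force a back-neighbour in $S_1^-(x)$, and then the goodness hypothesis on $S_1^-(x)$ to pin the back-degree of $c$ to exactly two.
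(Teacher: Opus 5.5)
Your proof is correct and takes essentially the same route as the paper. The triangle case is excluded by Proposition~\ref{prop:no-visible-triangle}. In the distance-two case you apply the midpoint formula \eqref{eq:midpoint} to the two zero back-neighbours, handling common neighbours in $S_1(x)$ by the triangle argument and those in $S_2(x)$ by the back-degree-two property of $K_x$. The only difference is the direction of the argument: you posit a common neighbour with $f<f(x)$ and rule it out, whereas the paper shows directly that every common neighbour other than $a$ satisfies $f\ge f(x)$.
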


\begin{proof}
Assume instead that there are two distinct vertices $c,c'\in Z_a$. By \eqref{eq:f-a-via-B-a},
\begin{equation*}
f(a)>f(x)=f(c)=f(c').
\end{equation*}
If $c\sim c'$, then $cac^\prime$ is a triangle. But the edge $ca$ is visible contradicting Proposition~\ref{prop:no-visible-triangle}.

Thus $c\not\sim c'$, so $d(c,c')=2$. We apply \eqref{eq:midpoint} to $c,c'$. We claim that every common neighbour $q$ of $c,c'$ other than $a$ satisfies
\begin{equation}\label{eq:common-neighbor-nonnegative}
f(q)\ge f(x).
\end{equation}
Indeed, if $q=x$, this is clear. If $q\in S_1(x)$ and $q\ne x$, then $q$ cannot be visible from $x$, because the triangle $xqc$ would contain the visible edge $xq$. Hence $q\in S_1^0(x)$ and $f(q)=f(x)$. If $q\in S_2(x)$, then $q$ cannot lie in $K_x$, since every vertex in $K_x$ has exactly two back-neighbours at $x$ and at least one of them has lower $f$-value, while $q$ is adjacent to the two zero back-neighbours $c,c'$. Hence $q\not\in K_x$ and all back-neighbours of $q$ at $x$ lie in $S_1^0(x)\cup S_1^+(x)$.  Applying \eqref{eq:midpoint} to the pair $x,q$ gives \eqref{eq:common-neighbor-nonnegative}.

Now $a$ is one common neighbour of $c,c'$ and satisfies $f(a)>f(x)$, while all other common neighbours satisfy \eqref{eq:common-neighbor-nonnegative}.
Therefore by \eqref{eq:midpoint}
\[
\begin{aligned}
\frac{f(c)+f(c')}{2}
&=
\frac{1}{\lvert S_1(c)\cap S_1(c')\rvert}
\left(
\sum_{\substack{w\sim c,\ w\sim c^\prime\\ w\neq a}} f(w)+f(a)
\right)>f(x).
\end{aligned}
\]
This contradicts $f(x)=f(c)=f(c^\prime)$.
\end{proof}

\begin{lemma}
\label{lem:visible-diagonal-estimate}
Assume every vertex in $S^{-}_1(x)$ is good.  One has
\begin{align}
         \label{eq:degree-count-DH}
    \sum_{z\in R_x^+}h_z&=m(m+s-1);\\
    \label{eq:diagonal-estimate}
    \sum_{z\in R_x^+}\frac{h_z}{\rho_z}&\le \frac{m(m+s-1)}{2}. 
\end{align}
\end{lemma}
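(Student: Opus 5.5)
The two assertions have different natures. \eqref{eq:degree-count-DH} is an exact double count of the edges between $S_1^+(x)$ and $R_x^+$. \eqref{eq:diagonal-estimate} combines that count with the pair-covering inequality $I\ge N_x$ from Lemma~\ref{lem:residual-existence} and the bound $z_a\le 1$ from Lemma~\ref{lem:zero-fan-exclusion}. Throughout I use $d=r+s+m$.

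\emph{Step 1: the count \eqref{eq:degree-count-DH}.} Fix $p\in S_1^+(x)$ and sort its $d$ neighbours.
\begin{itemize}
\item Since $xp$ is visible, Proposition~\ref{prop:no-visible-triangle} shows that $p$ has no neighbour in $S_1(x)$. So apart from $x$, all neighbours of $p$ lie in $S_2(x)$.
\item A neighbour $a\in K_x$ of $p$ is the exact square completion of some pair $(x;n,u)$ with $n\in S_1^-(x)$. Its back-neighbours are exactly $\{n,u\}$, and $p$ is one of them, so $u=p$.
\item Conversely, each $n\in S_1^-(x)$ gives, by Lemma~\ref{lem:visible-star-transfer}, exactly one completion of $(x;n,p)$. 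Distinct $n$ give distinct completions, because each completion has exactly two back-neighbours. Hence $p$ has exactly $r$ neighbours in $K_x$.
\item Every other neighbour of $p$ in $S_2(x)$ lies outside $K_x$ and is adjacent to $p\in S_1^+(x)$, so it lies in $R_x^+$.
\end{itemize}
Thus $p$ has exactly $d-1-r=m+s-1$ neighbours in $R_x^+$. Summing over the $m$ choices of $p$ gives $\sum_{a\in R_x^+}h_a=m(m+s-1)$.

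\emph{Step 2: bookkeeping for \eqref{eq:diagonal-estimate}.} By Step~1, $\sum_a h_a/2=\binom m2+\frac{ms}{2}=N_x-\frac{ms}{2}$. Lemma~\ref{lem:residual-existence} gives $I=\sum_a\bigl(\binom{h_a}{2}+h_az_a\bigr)\ge N_x$, and Lemma~\ref{lem:zero-fan-exclusion} gives $z_a\in\{0,1\}$, so $\rho_a\in\{h_a,h_a+1\}$. Since $\sum_a h_a$ is already known, it suffices to show
\[
\sum_{a\in R_x^+}h_a\Bigl(\frac12-\frac1{\rho_a}\Bigr)\ \ge\ 0 .
\]
The only positive defects on the wrong side come from $a$ with $\rho_a=1$, that is, $h_a=1$ and $z_a=0$. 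Each such $a$ contributes $-\tfrac12$ to this sum and covers no pair of $\mathcal P_x$. Every $a$ with $\rho_a\ge 3$ contributes $h_a(\rho_a-2)/(2\rho_a)>0$ and covers $I_a$ pairs. Vertices with $\rho_a=2$ contribute zero.

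\emph{Step 3: the main obstacle.} The deficit created by the $\rho_a=1$ vertices must be paid for by the surplus of the $\rho_a\ge 3$ vertices. The plan is to find constants $\alpha,\beta$ such that, for all admissible $(h,z)$ with $z\le 1$,
\[
h\Bigl(\frac12-\frac1{h+z}\Bigr)\ \ge\ \alpha h+\beta\Bigl(\binom h2+hz\Bigr),
\]
and then sum over $a$. The right-hand side sums to $\alpha\,m(m+s-1)+\beta I$. Using $I\ge N_x$ and Step~1, this would be nonnegative. I would first try $\beta>0$ with $\alpha=-\beta/2$, because then the right-hand side sums to $\beta\bigl(I-\tfrac12\sum_a h_a\bigr)\ge \beta\,\tfrac{ms}{2}\ge 0$.

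I expect this pointwise inequality to be the delicate part. It may well fail for large $h_a$, since the left side grows only linearly in $h$ while $\binom h2$ grows quadratically. If it fails, I would instead derive an exact counting identity for $I$, as in Step~1, to replace the inequality $I\ge N_x$. The idea is that vertices with large $h_a$ cover pairs redundantly. Combined with the midpoint formula \eqref{eq:midpoint} applied to pairs $p,u$, this redundancy should restore the balance.
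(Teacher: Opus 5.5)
Your Step~1 is correct and is the same count the paper uses. The gap is in \eqref{eq:diagonal-estimate}. It is \emph{not} a consequence of the data you propose to combine: $\sum_a h_a=m(m+s-1)$, $I\ge N_x$, $z_a\le 1$, and the midpoint formula. So neither Step~3 nor its fallback can close the argument.

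With your choice $\alpha=-\beta/2$, the pointwise inequality already fails at small $h$. The case $(h,z)=(1,0)$ forces $\beta\ge 1$, while $(h,z)=(3,0)$ forces $\beta\le\tfrac13$.

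More decisively, no combination of your constraints can work. Take $s=0$, $m=3$ and $S_1^+(x)=\{p_1,p_2,p_3\}$. Let $a_0\in R_x^+$ have $H_{a_0}=S_1^+(x)$ and $z_{a_0}=0$. Let three further vertices $w_i\in R_x^+$ have $H_{w_i}=\{p_i\}$ and $z_{w_i}=0$, so each $p_i$ has the required $m+s-1=2$ residual neighbours. Then $\sum_a h_a=6$ and $I=3=N_x$. Set $f(p_i)=f(x)+c$ and $f(a_0)=f(w_i)=f(x)+2c$. The midpoint identities for the pairs $(x,a_0)$, $(x,w_i)$ and $(p_i,p_j)$ all hold. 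The AM--HM equalities hold, and so does the balance $|\mathcal D|=\sum_{\mathcal F}h_w(h_w-2)$ from Lemma~\ref{lem:backdegree-two}. Even the $p$-rows of $A_\infty(x)(\nabla f)_x=2(\nabla f)_x$ hold. Yet
\[
\sum_a \frac{h_a}{\rho_a}=1+3=4>3=\frac{m(m+s-1)}{2}.
\]
Information extracted from $f$ alone cannot exclude this configuration.

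What is missing is the curvature bound $A_\infty(x)\succeq 2I_d$, used in a coordinate direction rather than along $\nabla f$, that is, through its diagonal entries. For $p\in S_1^+(x)$ you have $t_p=0$, $\deg(x)=\deg(p)=d$ and $|S_2(x)\cap S_1(p)|=d-1$. So \eqref{CKLP-diag} gives
\[
2\le (A_\infty(x))_{pp}=d+1-2\sum_{z\in S_2(x),\,z\sim p}\frac{1}{d_x^-(z)}.
\]
By your Step~1, the $r$ neighbours of $p$ in $K_x$ have $d_x^-=2$ and contribute $r/2$. For $z\in R_x^+$ one has $d_x^-(z)=\rho_z$, since such $z$ has no back-neighbour in $S_1^-(x)$. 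Hence $\sum_{z\in R_x^+,\,z\sim p}1/\rho_z\le\tfrac{m+s-1}{2}$ for every $p$, and summing over $p\in S_1^+(x)$ gives \eqref{eq:diagonal-estimate}. In the configuration above, $(A_\infty(x))_{p_ip_i}=\tfrac43<2$, which is exactly how it is excluded.

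This diagonal bound is the independent input that Lemma~\ref{lem:backdegree-two} later needs to rule out fans and dangling vertices. That is why it cannot be recovered from the pair-covering inequality $I\ge N_x$.
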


\begin{proof}
Fix $p\in S_1^+(x)$. Since $xp$ is visible, Proposition~\ref{prop:no-visible-triangle} implies that $p$ is not adjacent to any vertex of $S_1(x)\setminus\{p\}$. The neighbours of $p$ consist of $x$, the $r$ vertices in $K_x$ adjacent to pairs $p$ and $n$ with $n\in S_1^-(x)$, and the remaining neighbours in $R_x^+$. Hence the number of neighbours of $p$ in $R_x^+$ is
\begin{equation}\label{eq:p-residual-degree}
d-1-r=m+s-1.
\end{equation}
Summing \eqref{eq:p-residual-degree} over all $p\in S_1^+(x)$ gives \eqref{eq:degree-count-DH}.

Since $A_\infty(x)\succeq2I_d$, we have $(A_\infty(x))_{pp}\ge2$.  By \eqref{CKLP-diag} and $t_p=0$, we have
\begin{equation*}
    (A_{\infty}(x))_{pp}=3-d+2\Omega_p=3-d+2\card{S_2(x)\cap S_1(p)}-2\sum_{\substack{z\in S_2(x)\\ z\sim p}}\frac{1}{d^-_x(z)}.
\end{equation*}
Together with $\card{S_2(x)\cap S_1(p)}=d-1$, it gives
\begin{equation*}
\begin{aligned}
    \sum_{\substack{z\in S_2(x)\\ z\sim p}}\frac1{d_x^-(z)}=&\sum_{\substack{z\in K_x\\ z\sim p}}\frac1{d_x^-(z)}+\sum_{\substack{z\in R_x^+\\ z\sim p}}\frac1{d_x^-(z)}=\frac{r}{2}+\sum_{\substack{z\in R_x^+\\ z\sim p }}\frac1{d_x^-(z)}\le \frac{d-1}{2}.
\end{aligned}
\end{equation*}
Therefore
\begin{equation}\label{eq:p-diagonal-residual}
\sum_{\substack{z\in R_x^+\\ z\sim p}}\frac1{\rho_z}
\le
\frac{d-1-r}{2}
=
\frac{m+s-1}{2}.
\end{equation}
Here $\rho_z=d^{-}_x(z)$ because every back-neighbour of $z$ lies in $H_z\cup Z_z$. Summing \eqref{eq:p-diagonal-residual} over $p\in S_1^+(x)$ gives \eqref{eq:diagonal-estimate}.
\end{proof}

\begin{lemma}
\label{lem:amhm-uniqueness}
Assume every vertex in $S^{-}_1(x)$ is good.  Every pair $e\in\mathcal P_x$ has exactly one common neighbour in $R_x^+$.
\end{lemma}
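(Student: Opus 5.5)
The plan is to show $I\le N_x$. Together with $I\ge N_x$ (the remark after Lemma~\ref{lem:residual-existence}), this forces every pair to be covered exactly once. Write $c_e\ge1$ for the number of common neighbours of $e\in\mathcal P_x$ in $R_x^+$. Double counting gives $I=\sum_{a\in R_x^+}I_a=\sum_{e\in\mathcal P_x}c_e$, so it suffices to prove $\sum_e c_e\le N_x$.

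First I will simplify $I_a$ using Lemma~\ref{lem:zero-fan-exclusion}. Since $z_a\le1$, one has $I_a=\binom{h_a}{2}+h_az_a=\binom{\rho_a}{2}$. Next I will weight the pairs by $w_e=2$ if both elements lie in $S_1^+(x)$ and $w_e=1$ if one lies in $S_1^0(x)$. This is the number of endpoints of $e$ in $S_1^+(x)$. Counting pairs through each $p\in S_1^+(x)$ gives
\[
\sum_{e}w_e=m(m-1)+ms=m(m+s-1)=\sum_{a\in R_x^+}h_a ,
\]
where the last equality is \eqref{eq:degree-count-DH}. Similarly,
\[
\sum_e c_e w_e=\sum_{a\in R_x^+}h_a(\rho_a-1).
\]
So the claim $c_e\equiv1$ is equivalent to
\[
\sum_{a\in R_x^+}h_a(\rho_a-1)\le \sum_{a\in R_x^+}h_a ,\qquad\text{i.e.}\qquad \sum_a h_a\rho_a\le 2\sum_a h_a .
\]
The lower bound is automatic. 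By Cauchy--Schwarz in AM--HM form,
\[
\Bigl(\sum_a h_a\Bigr)^2\le\Bigl(\sum_a h_a\rho_a\Bigr)\Bigl(\sum_a h_a/\rho_a\Bigr),
\]
and \eqref{eq:diagonal-estimate} then gives $\sum_a h_a\rho_a\ge 2\sum_a h_a$. Hence the whole problem reduces to a matching upper bound.

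To get the upper bound I will use the sharpness of $f$ rather than only $A_\infty(x)\succeq 2I$. Concretely, I will evaluate the identity $(A_\infty(x)-2I)(\nabla f)_x=0$ from Lemma~\ref{lem:CKLP} in the rows indexed by $p\in S_1^+(x)$. By \eqref{CKLP-off}, the off-diagonal entries in such a row are $1-2\omega_{pu}$, with
\[
\omega_{pu}=\sum_{a\in R_x^+,\ a\sim p,u}\frac1{\rho_a}
\]
for $u\in S_1^+(x)\cup S_1^0(x)$. For $n\in S_1^-(x)$ the entry is $\omega_{pn}=\tfrac12$, coming from the exact square completions in $K_x$. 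I will then pair the row identity against $(f(p)-f(x))_p$ and sum over $p$. The resulting expression involves $\sum_a \frac{1}{\rho_a}\bigl(\sum_{p\in H_a}(f(p)-f(x))\bigr)^2=\sum_a B_a^2/\rho_a$. Using \eqref{eq:f-a-via-B-a} and the midpoint formula \eqref{eq:midpoint} applied at the pairs $(x,a)$, I expect to obtain an identity in which each $a$ contributes a term proportional to $(\rho_a-2)$ times a nonnegative quantity. That would give a reverse inequality forcing $\rho_a=2$ on average, i.e.\ $\sum_a h_a\rho_a\le2\sum_a h_a$.

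I expect this last step to be the main obstacle: converting the eigenvector identity into a clean upper bound. If a direct weighting does not close, I would fall back on equality-case reasoning. First, the pointwise diagonal inequalities \eqref{eq:p-diagonal-residual} must be equalities, because the row sums $(A_\infty(x)\one)_p=2$ together with the PSD condition constrain the rows. Second, the AM--HM inequality is then tight, which forces $\rho_a$ to be constant ($=2$) on $R_x^+$. This yields $I_a=1$ for every $a$, and hence $I=|R_x^+|$. Finally, \eqref{eq:degree-count-DH} gives $\sum_a h_a=m(m+s-1)$, and comparing this with $\sum_e w_e$ forces $c_e=1$ for every pair $e\in\mathcal P_x$.
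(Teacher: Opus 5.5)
Your reduction is correct, but the upper bound $\sum_a h_a\rho_a\le 2\sum_a h_a$ is never proved, and the ingredients you propose cannot prove it. The reduction itself is fine: writing $c_e\ge1$ for the number of common neighbours of $e$ in $R_x^+$, using $I_a=\binom{\rho_a}{2}$ and the weighted counts $\sum_e w_e=\sum_a h_a$, $\sum_e c_ew_e=\sum_a h_a(\rho_a-1)$, the lemma is indeed equivalent to that inequality. The trouble is that everything you invoke uses sharpness only at $x$.

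For $p\in S_1^+(x)$ one has $t_p=0$. A direct computation from \eqref{CKLP-off}, \eqref{CKLP-diag} and the midpoint formula at $x$ shows that the $p$-th entry of $(A_\infty(x)-2I)(\nabla f)_x$ equals $(Lf+2f)(x)-(Lf+2f)(p)$. So these rows only restate the eigenvalue equation at $p$. Pairing with $\delta_p=f(p)-f(x)$ yields nothing beyond $\sum_a 2B_a^2/\rho_a=(m+s-2)\sum_p\delta_p^2+\bigl(\sum_p\delta_p\bigr)^2$, which has no sign-definite $(\rho_a-2)$ structure.

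In fact everything you use holds in the radius-two picture of $K_{3,3}$:
\begin{itemize}
\item $d=3$, $r=s=0$, $S_1(x)=\{p_1,p_2,p_3\}$, and $R_x^+=\{a_1,a_2\}$ with each $a_i$ adjacent to all $p_j$;
\item $f(x)=-\tfrac32\delta$, $f(p_j)=-\tfrac12\delta$, $f(a_i)=\tfrac12\delta$ with $\delta>0$.
\end{itemize}
Here $A_\infty(x)=3I-\tfrac13J\succeq2I$ ($J$ the all-ones matrix), and $(\nabla f)_x=\delta\mathbf 1$ lies in the kernel of $A_\infty(x)-2I$. The midpoint formula at $(x,a_i)$, \eqref{eq:f-a-via-B-a}, \eqref{eq:degree-count-DH}, \eqref{eq:diagonal-estimate}, $z_a\le1$, $I\ge N_x$, and $Lf=-2f$ at $x$ and at every $p_j$ all hold. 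Yet every pair has two common neighbours in $R_x^+$, and $\sum_a h_a\rho_a=18>12$.

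Your fallback breaks on the same example. \eqref{eq:p-diagonal-residual} is strict there ($\tfrac23<1$), even though each row $p$ of $A_\infty(x)$ sums to $2$ and $A_\infty(x)-2I\succeq0$. Even equality in \eqref{eq:diagonal-estimate} would only say that the $h$-weighted harmonic mean of the $\rho_a$ is $2$. That does not make AM--HM tight: take $(h_a,\rho_a)\in\{(1,1),(3,3)\}$. Tightness is exactly the inequality you are trying to prove. Note also that the paper obtains $\rho_a=2$ in Lemma~\ref{lem:backdegree-two} only after, and by means of, the uniqueness statement.

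The missing idea is sharpness at the neighbours of $x$: apply \eqref{eq:midpoint} to the pair $e=\{p,u\}\in\mathcal P_x$ itself. This is precisely what fails in the $K_{3,3}$ picture. The common neighbours of $p,u$ are exactly $\{x\}\cup W_e$, where $W_e$ is the set of those in $R_x^+$: no triangle contains $xp$, and no vertex of $K_x$ is adjacent to both $p$ and $u$. Hence $\sum_{w\in W_e}(f(w)-f(x))=\frac{c_e+1}{2}\gamma_e$, with $\gamma_e=(f(p)-f(x))+(f(u)-f(x))>0$.

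Divide by $\gamma_e$, sum over $e$, and regroup by $w$. This gives $\sum_{w\in R_x^+}(f(w)-f(x))\sum_{e:\,w\in W_e}\gamma_e^{-1}=\frac12(I+N_x)$. Because $z_w\le1$, the $I_w=\binom{\rho_w}{2}$ pairs covered by $w$ have $\gamma$-sum $(\rho_w-1)B_w$, so their mean is $2B_w/\rho_w=f(w)-f(x)$. AM--HM then bounds the $w$-term below by $I_w$. Therefore $I\le\frac12(I+N_x)$, i.e.\ $I\le N_x$, and together with $I\ge N_x$ this forces $c_e=1$ for every $e$. This is the paper's argument.
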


\begin{proof}
For $e=\{a,b\}\in\mathcal P_x$, let
\begin{equation*}
W_e=\{w\in R_x^+:w\sim a,\ w\sim b\},
\quad
\eta_e=\card{W_e}.
\end{equation*}
By Lemma~\ref{lem:residual-existence}, $\eta_e\ge1$. Define
\begin{equation}
\gamma_e=(f(a)-f(x))+(f(b)-f(x))>0.
\end{equation}
The common neighbours of $a$ and $b$ are exactly $\{x\}\cup W_e$. Applying \eqref{eq:midpoint} to the pair $a,b$ gives
\begin{equation}
\sum_{w\in W_e}\bigl(f(w)-f(x)\bigr)=\frac{\eta_e+1}{2}\gamma_e.
\end{equation}
Dividing by $\gamma_e$, summing over $e\in\mathcal P_x$ and using \eqref{eq:f-a-via-B-a}, we obtain
\begin{equation}\label{eq:global-amhm-identity}
\sum_{w\in R_x^+}\frac{2B_w}{\rho_w}
\sum_{\substack{\{a,b\}\subset H_w\cup Z_w\\ \{a,b\}\cap H_w\ne\varnothing}}
\frac1{\gamma_{\{a,b\}}}
=
\frac12\bigl(I+N_x\bigr).
\end{equation}
Here $I=\sum_{\{a,b\}\in\mathcal P_x}\eta_{\{a,b\}}=\sum_{w\in R_x^+} I_w$.

Fix $w\in R_x^+$. By Lemma~\ref{lem:zero-fan-exclusion}, $z_w\le1$. When $I_w>0$, we have
\begin{equation*}
\begin{aligned}
     \frac{1}{I_w}\sum_{\substack{\{a,b\}\subset H_w\cup Z_w\\ \{a,b\}\cap H_w\neq\varnothing}}\bigl(f(a)-f(x)+f(b)-f(x)\bigr)=\frac{2(h_w+z_w-1)}{h_w(h_w+2z_w-1)}B_w.
\end{aligned}
\end{equation*}
Whenever $z_w=0$ or $z_w=1$, we have $\frac{(h_w+z_w-1)}{h_w(h_w+2z_w-1)}=\frac{1}{\rho_w}$. Thus
\begin{equation*}
    \frac{1}{I_w}\sum_{\substack{\{a,b\}\subset H_w\cup Z_w\\ \{a,b\}\cap H_w\ne\varnothing}}
\gamma_{\{a,b\}}=\frac{2B_w}{\rho_w}.
\end{equation*}
Therefore the arithmetic mean--harmonic mean inequality gives
\begin{equation}\label{eq:local-amhm-bound}
\frac{2B_w}{\rho_w}
\sum_{\substack{\{a,b\}\subset H_w\cup Z_w\\ \{a,b\}\cap H_w\ne\varnothing}}
\frac1{\gamma_{\{a,b\}}}
\ge I_w.
\end{equation}
Summing \eqref{eq:local-amhm-bound} over $w$ and using \eqref{eq:global-amhm-identity} yields $I\leq N_x$. Together with $I\geq N_x$, we have $I=N_x$.
Since every $\eta_{\{a,b\}}\ge1$ and $\sum_{\{a,b\}\in\mathcal P_x}\eta_{\{a,b\}}=N_x=\card{\mathcal P_x}$, every $\eta_{\{a,b\}}$ equals one. Thus every pair in $\mathcal{P}_x$ has exactly one common neighbour in $R_x^+$.
\end{proof}

\begin{lemma}
\label{lem:backdegree-two}
Assume every vertex in $S^{-}_1(x)$ is good.  We have $\rho_w=2$ for $w\in R_x^+$.
\end{lemma}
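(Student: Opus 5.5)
The plan is to work one vertex $p\in S_1^+(x)$ at a time. I will combine the exact uniqueness of Lemma~\ref{lem:amhm-uniqueness} with the diagonal estimate \eqref{eq:p-diagonal-residual} from the proof of Lemma~\ref{lem:visible-diagonal-estimate}, and finish with a Cauchy--Schwarz (AM--HM) equality argument. Every $w\in R_x^+$ has $h_w\geq 1$ by definition, so $w$ is adjacent to some $p\in S_1^+(x)$. It therefore suffices to show that $\rho_w=2$ for every $w\in R_x^+$ adjacent to a fixed $p\in S_1^+(x)$.

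Fix $p\in S_1^+(x)$ and let $N_p=\{w\in R_x^+: w\sim p\}$. By \eqref{eq:p-residual-degree}, $|N_p|=m+s-1=:n$.

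\textbf{Step 1: count pairs containing $p$.} The pairs in $\mathcal P_x$ containing $p$ are exactly $\{p,u\}$ with $u\in (S_1^+(x)\cup S_1^0(x))\setminus\{p\}$, so there are $n$ of them. A vertex $w\in N_p$ is a common neighbour in $R_x^+$ of the pair $\{p,u\}$ precisely when $u\in (H_w\cup Z_w)\setminus\{p\}$. Every back-neighbour of $w$ lies in $H_w\cup Z_w$, and $p\in H_w$. Hence $w$ is a common neighbour of exactly $\rho_w-1$ of these pairs, and every such pair lies in $\mathcal P_x$ because it contains $p\in S_1^+(x)$. By Lemma~\ref{lem:amhm-uniqueness}, each pair containing $p$ has exactly one common neighbour in $R_x^+$, and that neighbour is adjacent to $p$, so it lies in $N_p$. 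Double counting therefore gives
\[
\sum_{w\in N_p}(\rho_w-1)=n,\qquad\text{that is,}\qquad \sum_{w\in N_p}\rho_w=2n .
\]

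\textbf{Step 2: combine with the diagonal bound.} Inequality \eqref{eq:p-diagonal-residual} states that $\sum_{w\in N_p}1/\rho_w\le n/2$. On the other hand, Cauchy--Schwarz (AM--HM) gives
\[
\sum_{w\in N_p}\frac1{\rho_w}\ \ge\ \frac{n^2}{\sum_{w\in N_p}\rho_w}=\frac{n}{2}.
\]
So equality holds in AM--HM, which forces all $\rho_w$, $w\in N_p$, to be equal. Their average is $2n/n=2$, so $\rho_w=2$ for every $w\in N_p$. If $n=0$ there is nothing to prove for this $p$, since then $N_p=\varnothing$. Letting $p$ range over $S_1^+(x)$ covers all of $R_x^+$.

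The main obstacle is Step 1. The count must be exact: no pair containing $p$ may be covered by a vertex of $K_x$, and every pair covered by $w$ must genuinely belong to $\mathcal P_x$. Both points rely on the description of back-neighbours of $w\in R_x^+$ (they avoid $S_1^-(x)$) and on Lemma~\ref{lem:amhm-uniqueness}. Once that identity is secured, the rest is a one-line convexity argument.
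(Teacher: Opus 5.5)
Your proof is correct, but it takes a different route from the paper's.

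\textbf{The paper's route.} The paper first reaches back into the proof of Lemma~\ref{lem:amhm-uniqueness}. Since $I=N_x$, every local AM--HM inequality \eqref{eq:local-amhm-bound} must be an equality. Hence all $\gamma_{\{a,b\}}$ attached to a given $w$ coincide, which rules out $z_w=1$, $h_w\ge 2$. For the remaining vertices, which have $z_w=0$, the paper introduces the classes $\mathcal F$ ($h_w\ge 3$) and $\mathcal D$ ($h_w=1$). It counts $D_H=\sum_w h_w(\rho_w-1)=\sum_w h_w$ globally over all of $R_x^+$. The summed diagonal estimate \eqref{eq:diagonal-estimate} then forces $\sum_{w\in\mathcal F}(h_w-1)(h_w-2)\le 0$, so $\mathcal F=\mathcal D=\varnothing$.

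\textbf{Your route.} You localize at a single $p\in S_1^+(x)$. The exact identity $\sum_{w\in N_p}\rho_w=2(m+s-1)$ comes from double counting against Lemma~\ref{lem:amhm-uniqueness}. Cauchy--Schwarz against the per-$p$ bound \eqref{eq:p-diagonal-residual} then replaces the $\mathcal F/\mathcal D$ bookkeeping. Summing your identity over $p$ recovers the paper's relation $\sum_w h_w(\rho_w-2)=0$. So your argument is essentially the $h_w$-weighted Cauchy--Schwarz form of the paper's count.

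\textbf{What each approach buys.} Your route uses only the \emph{statement} of Lemma~\ref{lem:amhm-uniqueness}, not the equality structure hidden inside its proof. It needs no case distinction on $z_w$ and is noticeably shorter. The paper's route stays within explicit integer counting, at the cost of reopening the previous proof and handling the cases separately.
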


\begin{proof}
Since $I=N_x$, \eqref{eq:local-amhm-bound} must take an equality after summing over $w$. Equality holds in the arithmetic mean--harmonic mean inequality for every $w$ with $I_w>0$. Therefore, for such a $w$, all numbers $\gamma_{\{a,b\}}$ are equal for any $\{a,b\}\subset H_w\cup Z_w$ with $\{a,b\}\cap H_w\neq\varnothing$.

If $z_w=1$ and $h_w\ge2$, there exist $a,b\in H_w, c\in Z_w$. Then we have $\gamma_{\{a,b\}}=\gamma_{\{a,c\}}$. This gives $f(c)=f(b)$. It is a contradiction. Since $w\in R_x^+$, this gives $h_w=1$ and hence $\rho_w=2$.

 Let
\begin{equation*}
    \begin{aligned}
        \mathcal F&=\{w\in R_x^+:z_w=0,\ h_w\ge3\},\\
        \mathcal D&=\{w\in R_x^+:z_w=0,\ h_w=1\}.
    \end{aligned}
\end{equation*}
It remains to show $\mathcal{F}=\mathcal{D}=\varnothing$.
Let
\begin{equation}
D_H=\sum_{\substack{e\in\mathcal{P}_x}}\card{e\cap S_1^+(x)}=2\binom m2+ms=m(m+s-1).
\end{equation}
Because every desired pair $e$ has a unique $w\in R_x^+$ adjacent to them, counting $\card{e\cap S_1^+(x) }$ through $w$ gives
\begin{equation}\label{eq:high-appearance-through-w}
D_H=\sum_{w\in R_x^+}h_w(\rho_w-1).
\end{equation}
On the other hand, \eqref{eq:degree-count-DH} gives
\begin{equation}\label{eq:high-incidence-degree}
D_H=\sum_{w\in R_x^+}h_w.
\end{equation}
Subtracting \eqref{eq:high-incidence-degree} from \eqref{eq:high-appearance-through-w}, gives
\begin{equation}\label{eq:dangling-fan-balance}
\card{\mathcal D}=
\sum_{w\in\mathcal F} h_w(h_w-2).
\end{equation}

Now use \eqref{eq:diagonal-estimate}, which says
\begin{equation}\label{eq:diagonal-minus-DH}
\begin{aligned}
    &\sum_{w\in R_x^+}\left(\frac{h_w}{\rho_w}-\frac{h_w}{2}\right)\\
    =&\sum_{w\in\mathcal{F}}\left(1-\frac{h_w}{2}\right)+\sum_{w\in\mathcal{D}}\frac{1}{2}\\
    =&\frac{\card{\mathcal D}}{2}
-
\frac12\sum_{w\in\mathcal F}(h_w-2)\leq0.
\end{aligned}
\end{equation}
Using \eqref{eq:dangling-fan-balance}, we have
\begin{equation}\label{eq:fan-positive-contradiction}
\sum_{w\in R_x^+}\left(\frac{h_w}{\rho_w}-\frac{h_w}{2}\right)
=
\frac12\sum_{w\in\mathcal F}(h_w-1)(h_w-2).
\end{equation}
If $\mathcal F$ were nonempty, the right side of \eqref{eq:fan-positive-contradiction} would be strictly positive, contradicting the non-positivity in \eqref{eq:diagonal-minus-DH}. Hence $\mathcal F=\varnothing$. \eqref{eq:dangling-fan-balance} then gives $\mathcal D=\varnothing$.
Therefore every $w\in R_x^+$ has $\rho_w=2$.
\end{proof}

\begin{proposition}
\label{prop:residual-visible-exactness}
Every vertex $x$ is good.
\end{proposition}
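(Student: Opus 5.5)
We argue by induction along the values of the generic eigenfunction $f$ fixed after Lemma~\ref{lem:visible-generic-eigenfunction}. Order the vertices of $G$ by increasing $f$-value and suppose, for contradiction, that some vertex is not good. Choose a non-good vertex $x$ with $f(x)$ minimal. Every vertex $n\in S_1^-(x)$ has $f(n)<f(x)$, so it is good by minimality. This is exactly the standing hypothesis of Lemmas~\ref{lem:residual-existence}--\ref{lem:backdegree-two}, so all of those lemmas are available at $x$.

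Next we verify exact square incidence for each triple $(x;y,z)$ with $xy$ visible and $z\in S_1(x)\setminus\{y\}$. There are two cases.

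\emph{Case $y\in S_1^-(x)$.} Here $y$ is good and $xy$ is visible, so Lemma~\ref{lem:visible-star-transfer} gives exact square incidence for $(x;y,z)$ immediately.

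\emph{Case $y\in S_1^+(x)$.} If $z\in S_1^-(x)$, the triple $(x;z,y)$ is covered by the previous case, and exact square incidence is symmetric in $y,z$. So assume $z\in S_1^+(x)\cup S_1^0(x)$, which means $\{y,z\}\in\mathcal P_x$.
\begin{itemize}
\item Condition (i): $y\not\sim z$ by Proposition~\ref{prop:no-visible-triangle}, since $xy$ is visible.
\item Condition (ii): a common neighbour $w\in S_2(x)$ of $y,z$ cannot lie in $K_x$, because vertices of $K_x$ have exactly two back-neighbours, one of which lies in $S_1^-(x)$. Hence $w\in R_x^+$. By Lemma~\ref{lem:amhm-uniqueness}, the pair $\{y,z\}$ has exactly one common neighbour in $R_x^+$, so $w$ exists and is unique.
\item Condition (iii): by Lemma~\ref{lem:backdegree-two}, $\rho_w=2$. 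Since every back-neighbour of $w\in R_x^+$ lies in $H_w\cup Z_w$, we get $S_1(x)\cap S_1(w)=\{y,z\}$.
\end{itemize}

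These two cases cover every visible edge at $x$, so $x$ is good, contradicting its choice. Hence every vertex is good.

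The base of the induction needs no separate treatment. If $x$ minimizes $f$ (or more generally has $S_1^-(x)=\varnothing$), the lemmas hold vacuously for $S_1^-(x)$ and the same argument applies, with $K_x=\varnothing$ and $r=0$.

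The only point needing care is the reduction from non-good vertices to a well-ordered choice. Ties in $f$ do no harm: the hypothesis of the lemmas concerns only $S_1^-(x)$, which consists of vertices with strictly smaller $f$-value. Minimality of $f(x)$ among non-good vertices, with $G$ finite by Theorem~\ref{thm:MyersBound}, therefore suffices. The remaining check is the symmetric handling of $z\in S_1^-(x)$ when $y\in S_1^+(x)$ and the exclusion of $K_x$ in the existence step, both of which are direct.
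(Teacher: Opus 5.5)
Your proof is correct and follows the paper's own argument. Choosing a non-good vertex with minimal $f$-value is equivalent to the paper's induction along $f(x_1)\le\cdots\le f(x_n)$. The case split is the same: when a neighbour lies in $S_1^-(x)$ you use Lemma~\ref{lem:visible-star-transfer}, and otherwise you combine Proposition~\ref{prop:no-visible-triangle}, Lemma~\ref{lem:amhm-uniqueness} and Lemma~\ref{lem:backdegree-two}. Your explicit check that no common neighbour of $y,z$ in $S_2(x)$ lies in $K_x$ spells out a step the paper leaves implicit.
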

\begin{proof}
Assume $f(x_1)\leq \cdots\leq f(x_n)$. We prove the proposition by induction on $x_i$. Assume for $1\leq i\leq k$, $x_i$ is good.  Let $x_{k+1}y$ be a visible edge and $z\in S_1(x_{k+1})\setminus\{y\}$. If one of $y,z$ is in $S_1^{-}(x_{k+1})$, then by Lemma \ref{lem:visible-star-transfer}, $(x_{k+1};y,z)$ satisfies exact square incidence. Now assume $y\in S_1^+(x_{k+1}), z\in S_1^0(x_{k+1})\cup S_1^+(x_{k+1})$. Proposition~\ref{prop:no-visible-triangle} gives $y\not\sim z$. Lemma~\ref{lem:amhm-uniqueness} gives a unique $w\in R_{x_{k+1}}^+$ adjacent to both $y$ and $z$. Lemma~\ref{lem:backdegree-two} gives $S_1(x_{k+1})\cap S_1(w)=\{y,z\}$. Therefore $(x_{k+1};y,z)$ satisfies exact square incidence. Thus $x_{k+1}$ is also good.
\end{proof}

As a consequence of Proposition \ref{prop:residual-visible-exactness}, we prove a new rigidity theorem for hypercubes.

\begin{theorem}[Hypercube rigidity]
\label{thm:HypercubeRigidity}
    Let $G$ be a connected, $d$-regular Lichnerowicz-sharp graph. If every edge of $G$ is visible, then $G\cong H_d$.
\end{theorem}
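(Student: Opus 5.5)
If every edge is visible, then by Proposition~\ref{prop:residual-visible-exactness} every triple $(x;y,z)$ with $y,z\in S_1(x)$ distinct satisfies exact square incidence. The idea is to read off the curvature matrix from this local structure and then invoke the Myers equality case. The plan has three steps.

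\textbf{Step 1: local structure.} Fix $x$ with $S_1(x)=\{y_1,\dots,y_d\}$. Exact square incidence gives the following.
\begin{itemize}
\item[(a)] $S_1(x)$ is an independent set, so $\varepsilon_{ij}=0$ and $t_i=0$.
\item[(b)] Every pair $y_i,y_j$ has exactly one common neighbour $w_{ij}\in S_2(x)$, and $d_x^-(w_{ij})=2$.
\end{itemize}
Every $w\in S_2(x)$ has at least one back-neighbour. It cannot have exactly one: by (a) its back-neighbours form an independent set, and applying exactness to any pair of them shows $d_x^-(w)=2$ whenever $d_x^-(w)\ge 2$. So the case $d_x^-(w)=1$ is the only one to exclude. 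For this I would use the generic eigenfunction $f$ of Lemma~\ref{lem:visible-generic-eigenfunction} and the diagonal entry \eqref{CKLP-diag}.

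\textbf{Step 2: the curvature matrix.} I would show that, because $y_i$ has degree $d$ and no neighbours inside $S_1(x)$, it has $d-1$ neighbours in $S_2(x)$. Exactly $d-1$ of these are the squares $w_{ij}$, and these are pairwise distinct since each has back-degree $2$. So every neighbour of $y_i$ in $S_2(x)$ is some $w_{ij}$, and none has back-degree $1$. This gives $\omega_{ij}=1/2$ for $i\ne j$ and $\Omega_i=(d-1)/2$. Then \eqref{CKLP-off} and \eqref{CKLP-diag} give $A_\infty(x)=2I_d$ at every vertex, exactly as in Example~\ref{ex:hypercube_cur_matrix}. The same count shows that $|S_2(x)|=\binom d2$.

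\textbf{Step 3: globalize.} Now $G$ is $d$-regular, triangle-free, and every two vertices at distance $2$ have exactly two common neighbours. Under these conditions I would aim for $\operatorname{diam}(G)=d$ and then apply the equality case of Theorem~\ref{thm:MyersBound} with $K=2$, $D=d$ to conclude $G\cong H_d$.

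To get the diameter, I would build an explicit isomorphism from $H_d$ by induction on the distance from a base vertex $x_0$. Label the neighbours of $x_0$ by $e_1,\dots,e_d$. Assign to the vertex reached from $x_0$ through the $e_i$, $i\in S$, the set $S$. Exact square incidence at each vertex of the current sphere provides unique completions, and these should make the labelling well defined and bijective on spheres, with the layer sizes $\binom dk$.

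Alternatively, one could avoid the combinatorial induction: use $\lambda_1=2$ and the fact that all $d$ edges at each vertex are visible to show $m_2\ge d$, then apply Theorem~\ref{lem:LMP-rigidity}. However, obtaining $m_2\ge d$ directly seems harder.

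\textbf{Main obstacle.} The delicate point is Step 3: the induction showing that the square completions close up consistently, so that two ways of reaching a vertex give the same label and the sphere sizes match $\binom dk$. I would handle it as in the proof of the automorphism lemma for $H_d$. The key claim is that each vertex at distance $k+1$ from $x_0$ has exactly $k+1$ back-neighbours, any two of which determine it. I would derive this by applying exact square incidence at vertices of $S_k(x_0)$ to pairs consisting of one back-neighbour and one forward neighbour.
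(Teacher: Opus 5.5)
There is a genuine gap in Step~3. Steps~1--2 are correct; the count in Step~2 already excludes back-degree one, so you do not need $f$ there. But together they only show that $G$ is locally cubical with $A_\infty\equiv 2I_d$. The induction you sketch in Step~3 uses nothing beyond this local information, and local cubicality does not force a hypercube.

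\begin{itemize}
\item The folded $5$-cube $\mathbb F_2^5/\langle u\sim u+\mathbf 1\rangle$ (the Clebsch graph, which is the base $(G_n)'$ in Example~\ref{ex:H-2n-plus-5-quotient}) is $5$-regular and triangle-free. Any two vertices at distance $2$ have exactly two common neighbours, and $A_\infty\equiv 2I_5$, yet its diameter is $2$. From $x_0=0$, the three ``forward'' neighbours $e_1+e_2+e_k\equiv e_l+e_m$ of $e_1+e_2\in S_2(x_0)$ lie in $S_2(x_0)$ itself. So $S_3(x_0)=\varnothing$, and the layers do not have sizes $\binom 5k$.
\item Ruling out edges inside spheres would not save the induction. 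The folded $6$-cube is bipartite and locally cubical, but its vertices at distance $3$ from a base point have $6$ back-neighbours. This refutes your key claim that a vertex at distance $k+1$ has exactly $k+1$ back-neighbours.
\item Even Lichnerowicz-sharpness plus local cubicality is not enough: the graphs $G_n$ of that example have both properties and are not hypercubes.
\end{itemize}

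So the hypothesis that \emph{every} edge is visible has to enter globally, and your Step~3 never uses it. Also, if you really built an isomorphism with $H_d$, the appeal to the Myers equality case would be redundant.

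The missing idea, which is how the paper argues, is to use the generic $f\in E_2$ of Lemma~\ref{lem:visible-generic-eigenfunction}, now with $f(u)\neq f(v)$ on every edge, as a global coordinate. Take an edge $xy$, a vertex $u\in S_1(x)\setminus\{y\}$, and let $v$ be the exact square completion of $(x;y,u)$. Since $S_1(x)\cap S_1(v)=\{y,u\}$, \eqref{eq:midpoint} gives $f(v)-f(y)=f(u)-f(x)$, and these $v$ exhaust $S_1(y)\setminus\{x\}$. Hence the multiset $\{f(w)-f(y):w\sim y\}$ is $\{f(u)-f(x):u\sim x\}$ with the single entry $f(y)-f(x)\neq 0$ negated. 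Therefore $\nu(x):=\card{\{u\sim x: f(u)<f(x)\}}$ changes by exactly $1$ along every edge. A minimiser of $f$ has $\nu=0$ and a maximiser has $\nu=d$, so $\diam(G)\geq d$. Theorem~\ref{thm:MyersBound} gives $\diam(G)\leq d$, and its equality case gives $G\cong H_d$. This route needs neither your Steps~1--2 nor the bound $m_2\geq d$ from your alternative route.
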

\begin{proof}
By Lemma \ref{lem:visible-generic-eigenfunction}, we take a $f\in E_2$ such that $f(u)\neq f(v)$ for any edge $uv$. For $x\in V(G)$ define
\begin{equation*}
A_x(f)=\{f(u)-f(x):u\sim x\}
\end{equation*}
as a multiset, and define
\begin{equation*}
\nu(x)=\card{\{u\sim x:f(u)<f(x)\}}.
\end{equation*}

Let $x\sim y$. For any $u\in S_1(x)\setminus\{y\}$, by Proposition \ref{prop:residual-visible-exactness}, $(x;y,u)$ satisfies exact square incidence. Thus there exists a unique square completion $v$ for $(x;y,u)$. Since $S_1(x)\cap S_1(v)=\{y,u\}$, applying \eqref{eq:midpoint} to $x,v$ gives
\begin{equation*}
f(v)-f(y)=f(u)-f(x).
\end{equation*}
Since $\card{S_1(x)}=\card{S_1(y)}=d$, these vertices exhaust
$S_1(y)\setminus\{x\}$. Therefore
\begin{equation}\label{eq:edgewise-rigidity-sign-flip}
A_y(f)=\bigl(A_x(f)\setminus\{f(y)-f(x)\}\bigr)\cup\{f(x)-f(y)\}
\end{equation}
as multisets. Because $f(x)-f(y)\ne0$, \eqref{eq:edgewise-rigidity-sign-flip} implies
\begin{equation}\label{eq:edgewise-rigidity-nu-change}
\lvert\nu(y)-\nu(x)\rvert=1
\end{equation}
for every edge $xy$.

Let $m\in V(G)$ be a point where $f$ attains its minimum, and let
$M\in V(G)$ be a point where $f$ attains its maximum. Since $f(u)\neq f(m), f(v)\neq f(M)$ for any $u\sim m, v\sim M$, we have $f(u)>f(m), f(v)<f(M)$. Thus $m\neq M$ and $\nu(m)=0, \nu(M)=d$.
\eqref{eq:edgewise-rigidity-nu-change} implies that every path from
$m$ to $M$ has length at least $d$. Hence $\diam(G)\ge d$. On the other hand, \eqref{eq:MyersBound} gives $\diam(G)\le d$. Therefore $\diam(G)=d$.
By Theorem \ref{thm:MyersBound}, the diameter rigidity of hypercube gives $G\cong H_d$.
\end{proof}

\subsection{Bundle structure}

Let $xy$ be an edge of $G$. By Lemma~\ref{lem:OneVisibleEdge}, both $S_1^{G_{\vis}}(x)$ and $S_1^{G_{\vis}}(y)$ are nonempty. We define a map $T_{xy}:S_1^{G_{\vis}}(x)\to S_1^{G_{\vis}}(y)$ as follows. If $xy$ is visible, set $T_{xy}(y)=x$. Take $z\in S_1^{G_{\vis}}(x)\setminus\{y\}$. Proposition~\ref{prop:residual-visible-exactness} gives a unique exact square completion $w$ for $(x;z,y)$. By
\eqref{eq:midpoint},
\begin{equation*}
    \Phi(x)-\Phi(z)=\Phi(y)-\Phi(w)\neq 0.
\end{equation*}
Hence $yw\in E_{\vis}$. Then $w\in S_1^{G_{\vis}}(y)$ and define $T_{xy}(z)=w$.
\begin{lemma}
\label{lem:TransportMap}
    $T_{xy}$ is a bijection.
\end{lemma}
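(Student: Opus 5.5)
The plan is to show that $T_{xy}$ has a two-sided inverse, namely $T_{yx}$, rather than counting. Counting would need $|S_1^{G_{\vis}}(x)|=|S_1^{G_{\vis}}(y)|$, and the regularity of $G_{\vis}$ has not been established yet. The map $T_{yx}:S_1^{G_{\vis}}(y)\to S_1^{G_{\vis}}(x)$ is defined by the same recipe with the roles of $x$ and $y$ exchanged. So it suffices to prove $T_{yx}\circ T_{xy}=\mathrm{id}$ on $S_1^{G_{\vis}}(x)$; the identity $T_{xy}\circ T_{yx}=\mathrm{id}$ then follows by the symmetric argument. The two tools are the uniqueness clause (ii) of exact square incidence, available at every vertex by Proposition~\ref{prop:residual-visible-exactness}, and the absence of triangles containing a visible edge, Proposition~\ref{prop:no-visible-triangle}.

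\emph{Generic case.} Take $z\in S_1^{G_{\vis}}(x)$ with $z\neq y$, and let $w=T_{xy}(z)$ be the exact square completion of $(x;z,y)$. Then $w\in S_2(x)$, so $w\neq x$, and $yw$ is visible. Hence $T_{yx}(w)$ is defined by the square rule: it is the unique vertex of $S_2(y)$ adjacent to both $w$ and $x$, i.e.\ the exact square completion of $(y;w,x)$. The vertex $z$ is adjacent to both $x$ and $w$, so it remains to check that $z\in S_2(y)$.
\begin{itemize}
\item $z\neq y$ by choice.
\item $z\not\sim y$: otherwise $xyz$ would be a triangle containing the visible edge $xz$, contradicting Proposition~\ref{prop:no-visible-triangle}.
\item $d(y,z)=2$, because $x$ is a common neighbour of $y$ and $z$.
\end{itemize}
The uniqueness part of exact square incidence at $y$ then forces $T_{yx}(w)=z$.

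\emph{The remaining point when $xy$ is visible.} Here $y\in S_1^{G_{\vis}}(x)$ as well, and $T_{xy}(y)=x$ while $T_{yx}(x)=y$. Since the square-completion images lie in $S_2(x)$, they are never equal to $x$. So the special point $y$ is not mixed up with any square-completion image, and the composition is the identity on $y$ too. When $xy$ is collapsed, every $z\in S_1^{G_{\vis}}(x)$ automatically satisfies $z\neq y$, so only the generic case occurs.

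\emph{Expected difficulty.} The step needing the most care is confirming that the recipe for $T_{yx}$ applies to $w$. That means checking that $yw$ is visible, which follows from the midpoint identity $\Phi(x)-\Phi(z)=\Phi(y)-\Phi(w)\neq0$ used in defining $T_{xy}$. It also means checking that $z$ really lies in $S_2(y)$, so that the uniqueness clause of exact square incidence can be invoked. Both reduce to the no-visible-triangle property, so I do not expect a genuine obstruction.
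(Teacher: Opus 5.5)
Your proof is correct, but it follows a different route from the paper's.

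\textbf{The paper's route.} The paper proves only that $T_{xy}$ is injective. Suppose $T_{xy}(z)=T_{xy}(z')=w$. Since $w\in S_2(x)$, we have $w\neq x$, so $z'\neq y$. Clause~(iii) of exact square incidence at $x$, namely $S_1(x)\cap S_1(w)=\{y,z\}$, then forces $z'=z$. Running the same argument for $T_{yx}$ gives $|S_1^{G_{\vis}}(x)|=|S_1^{G_{\vis}}(y)|$. An injection between finite sets of equal size is a bijection.

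\textbf{Your motivating remark is inaccurate.} You say that counting would need regularity of $G_{\vis}$. It does not: the two injections supply the equality of cardinalities. In fact, the paper deduces regularity of $G_{\vis}$ (Theorem~\ref{thm:G_vis}) from this lemma.

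\textbf{Your route.} You use the uniqueness clause~(ii) at the other endpoint $y$, applied to the visible edge $yw$, to show that $T_{yx}\circ T_{xy}=\mathrm{id}$. Your checks are complete: $w\neq x$, $yw$ is visible, and $z\in S_2(y)$ with $z\sim x,w$.

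\textbf{What each approach buys.} Your approach gives the explicit identification $T_{xy}^{-1}=T_{yx}$ and needs no finiteness. That is slightly stronger and structurally cleaner, and it fits the later relation $\sigma_{DC}=\sigma_{CD}^{-1}$. The paper's argument is a bit shorter: one application of clause~(iii) at $x$, plus symmetry and a cardinality comparison.
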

\begin{proof}
  We show $T_{xy}$ is injective. Take $z\in S_1^{G_{\vis}}(x)\setminus\{y\}$. Let $w=T_{xy}(z)$. Assume $T_{xy}(z)=T_{xy}(z')=w$. Because $w\in S_2^G(x)$ we have $w\neq x$. Thus if $xy$ is visible, $T_{xy}^{-1}(x)=\{y\}$ and $z^\prime\neq y$. If $xy$ is collapsed, $y\not\in S_1^{G_{\vis}}(x)$. We also have $z^\prime\neq y$. Thus $S_1^G(x)\cap S_1^G(w)=\{y,z\}$ forces $z=z^\prime$. Hence $\card{S_1^{G_{\vis}}(x)}\leq |S_1^{G_{\vis}}(y)|$. Repeating the same argument with the roles of $x$ and $y$ interchanged gives $\card{S_1^{G_{\vis}}(y)}\leq |S_1^{G_{\vis}}(x)|$. So $\card{S_1^{G_{\vis}}(x)}= |S_1^{G_{\vis}}(y)|$. This implies $T_{xy}$ is also surjective. Then $T_{xy}$ is a bijection.
\end{proof}

Next, we characterize the structure of $G_{\vis}$. It follows that in fact, every connected component of $G_{\vis}$ is a hypercube.
\begin{theorem}
\label{thm:G_vis}
   $G_{\vis}$ is a regular graph and every connected component of $G_{\vis}$ is isomorphic to an $r$-dimensional hypercube where $r$ is the degree of $G_{\vis}$ and $r\geq 1$.
\end{theorem}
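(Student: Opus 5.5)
Regularity of $G_{\vis}$ comes first. Lemma~\ref{lem:TransportMap} shows that $|S_1^{G_{\vis}}(x)|=|S_1^{G_{\vis}}(y)|$ for every edge $xy$ of $G$, whether that edge is visible or collapsed. Since $G$ is connected, $\deg_{G_{\vis}}$ is therefore constant on $V(G)$; call this value $r$. Lemma~\ref{lem:OneVisibleEdge} gives $r\geq 1$.

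Next fix a connected component $C$ of $G_{\vis}$; it is an $r$-regular graph. I would show that $C$ inherits the exact-square structure. Take $x\in C$ and two distinct visible neighbours $y,z$ of $x$. By Proposition~\ref{prop:residual-visible-exactness}, $y\not\sim z$ and there is a unique $w\in S_2^G(x)$ with $S_1^G(x)\cap S_1^G(w)=\{y,z\}$. The midpoint identity \eqref{eq:midpoint} gives $\Phi(w)-\Phi(y)=\Phi(z)-\Phi(x)\neq 0$ and $\Phi(w)-\Phi(z)=\Phi(y)-\Phi(x)\neq0$, so both $wy$ and $wz$ are visible, and $w\in C$. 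Uniqueness of $w$ inside $C$ follows from uniqueness in $G$. Hence, in $C$, any two neighbours of a vertex lie in exactly one common $4$-cycle through that vertex, any two neighbours are non-adjacent, and the completion has exactly two back-neighbours. This is precisely the local structure of $H_r$: the curvature matrix computed inside $C$ has $\varepsilon_{ij}=0$ and $\omega_{ij}=1/2$, so $A_\infty^C(x)=2I_r$ as in Example~\ref{ex:hypercube_cur_matrix}. In particular $C$ satisfies $\CD(2,\infty)$.

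To finish, I would prove $C\cong H_r$ by transporting the argument of Theorem~\ref{thm:HypercubeRigidity} to $C$. Restrict the generic $f$ of Lemma~\ref{lem:visible-generic-eigenfunction} to $C$; it separates the endpoints of every edge of $C$. For an edge $xy$ of $C$, the map $T_{xy}$ is a bijection $S_1^{C}(x)\to S_1^{C}(y)$ with $f(T_{xy}(z))-f(y)=f(z)-f(x)$ for $z\neq y$, and $T_{xy}(y)=x$. So the multiset of visible differences at $y$ is obtained from the one at $x$ by flipping the sign of $f(y)-f(x)$, and $\nu_C$ (the number of lower neighbours in $C$) changes by exactly $1$ along each edge of $C$. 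At a minimum of $f|_C$ we have $\nu_C=0$, and at a maximum $\nu_C=r$, so $\diam(C)\ge r$. Myers' bound \eqref{eq:MyersBound} applied to $C$ (which is $\CD(2,\infty)$ with maximum degree $r$) gives $\diam(C)\le r$. The rigidity part of Theorem~\ref{thm:MyersBound} then yields $C\cong H_r$.

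The main obstacle is justifying that $C$ satisfies $\CD(2,\infty)$, i.e.\ checking that $A^C_\infty(x)$ is computed purely from the $2$-ball of $x$ inside $C$. That ball contains only the neighbours and square completions found above, with back-degree exactly $2$ and no triangles. With that in hand, $A_\infty^C(x)=2I_r$ and $\CD(2,\infty)$ follow immediately. If this route runs into difficulty, the alternative is to avoid curvature and build an explicit isomorphism: label the $r$ edge directions at a base vertex, propagate the labels along squares using the uniqueness of square completions, and show that the induced coordinate map to $\mathbb{F}_2^r$ is well defined and bijective. The consistency of that labelling is the delicate point.
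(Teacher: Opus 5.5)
Your proposal is correct and matches the paper's proof through the regularity step (via Lemma~\ref{lem:TransportMap}) and the computation $A_\infty^{C}(x)=2I_r$ from the exact-square structure inside a visible component $C$. The only difference is the final step. The paper notes that $f|_{V(C)}\in E_2(C)$, so $C$ is itself an $r$-regular Lichnerowicz-sharp graph with all edges visible, and applies Theorem~\ref{thm:HypercubeRigidity} directly. You instead inline that theorem's $\nu$-counting and Myers-rigidity argument using the $G$-level transport maps $T_{xy}$; this is equally valid and avoids re-deriving exact square incidence inside $C$.
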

\begin{proof}
By Lemma~\ref{lem:TransportMap}, we know $\card{S_1^{G_{\vis}}(x)}=\card{S_1^{G_{\vis}}(y)}$ for $x\sim y$. Since $G$ is connected, we know $\card{S_1^{G_{\vis}}(x)}$ is constant on $V(G)$. Thus $G_{\vis}$ is a regular graph. Let $r=\card{S_1^{G_{\vis}}(x)}$. By Lemma~\ref{lem:OneVisibleEdge}, $S_1^{G_{\vis}}(x)\neq \varnothing$. So $r\geq 1$.

Let $C$ be a connected component of $G_{\vis}$. When $r=1$, $C$ is a $1$-regular graph, that is $C\cong H_1$. Now assume $r\geq 2$. Let $y,z\in V(C)$ be two distinct vertices with $xy,xz\in E_{\vis}$. By Proposition~\ref{prop:residual-visible-exactness}, $y\not\sim z$ and there exists a unique $w\in S_2^G(x)$ with $S_1^G(x)\cap S_1^G(w)=\{y,z\}$. By \eqref{eq:midpoint}, we have
\begin{equation*}
\Phi(x)-\Phi(y)=\Phi(z)-\Phi(w)\neq0,\quad \Phi(x)-\Phi(z)=\Phi(y)-\Phi(w)\neq0.
\end{equation*}
This implies $yw,zw\in E_{\vis}$. Thus we also have $S_1^{G_{\vis}}(x)\cap S_1^{G_{\vis}}(w)=\{y,z\}$. So $\varepsilon^{G_{\vis}}_{yz}=0$ and $\omega_{yz}^{G_{\vis}}=\frac{1}{2}$. \eqref{CKLP-diag} and \eqref{CKLP-off} give
\begin{equation*}
    \bigl(A_{\infty}^{G_{\vis}}\bigr)_{yy}=2,\quad \bigl(A_{\infty}^{G_{\vis}}\bigr)_{yz}=0.
\end{equation*}
Hence $A_{\infty}^{G_{\vis}}(x)=2I_{r}$. This shows $C$ itself is a $\CD(2,\infty)$ graph. A direct computation shows $f|_{V(C)}\in E_2(C)$ for any $f\in E_2(G)$. By \eqref{equ:LichnerowiczEstimate}, we have $\lambda_1(C)=2$. Since $C$ is also $r$-regular and its every edge is visible, by Theorem~\ref{thm:HypercubeRigidity}, we conclude $C\cong H_r$.
\end{proof}

\begin{lemma}
\label{lem:other-visible-component-neighbour-count-constant}
Let $C$ and $D$ be two connected components of $G_{\vis}$, allowed to be equal. For
$u\in V(C)$,
\begin{equation*}
m_{CD}(u)=\card{S_1^{G_{\col}}(u)\cap V(D)}.
\end{equation*}
Then $m_{CD}(u)$ is constant on $V(C)$.
\end{lemma}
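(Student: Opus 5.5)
Since $C$ is connected in $G_{\vis}$, it suffices to show $m_{CD}(u)=m_{CD}(v)$ for every visible edge $uv$ with $u,v\in V(C)$. The transport map $T_{uv}$ of Lemma~\ref{lem:TransportMap} will carry collapsed neighbours of $u$ lying in $D$ to collapsed neighbours of $v$ lying in $D$. The relevant construction is the one used to define $T_{xy}$, applied now to a collapsed neighbour in place of a visible one.

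Fix a visible edge $uv$ and let $c\in S_1^{G_{\col}}(u)\cap V(D)$. Since $u$ is good (Proposition~\ref{prop:residual-visible-exactness}), the triple $(u;v,c)$ satisfies exact square incidence, so there is a unique $w\in S_2(u)$ with $S_1(u)\cap S_1(w)=\{v,c\}$. The midpoint formula \eqref{eq:midpoint} applied to $\Phi$ at the pair $u,w$ gives $\Phi(u)+\Phi(w)=\Phi(v)+\Phi(c)$. Since $\Phi(c)=\Phi(u)$, this yields $\Phi(w)=\Phi(v)$ and $\Phi(w)-\Phi(c)=\Phi(v)-\Phi(u)\neq0$. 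Hence $vw$ is collapsed and $cw$ is visible. The latter puts $w$ in the same $G_{\vis}$-component as $c$, namely $D$, so $w\in S_1^{G_{\col}}(v)\cap V(D)$. This defines a map $\tau: S_1^{G_{\col}}(u)\cap V(D)\to S_1^{G_{\col}}(v)\cap V(D)$, $c\mapsto w$.

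Next, $\tau$ is injective. If $\tau(c)=\tau(c')=w$, then $c,c'\in S_1(u)\cap S_1(w)=\{v,c\}$. Because $c'\neq v$ (the edge $uc'$ is collapsed while $uv$ is visible), we get $c'=c$. This gives $m_{CD}(u)\le m_{CD}(v)$. Exchanging the roles of $u$ and $v$, using that $v$ is also good, gives the reverse inequality, so $m_{CD}(u)=m_{CD}(v)$. Connectivity of $C$ in $G_{\vis}$ then propagates the equality to all of $V(C)$.

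The only subtle point is the case $C=D$. Here one should check that $w\neq v$ and that $w$ is genuinely a new vertex, i.e. that collapsed edges inside a component cause no degeneracy. Both hold because $w\in S_2(u)$, whereas $v\in S_1(u)$. Otherwise the argument is a direct reuse of the exact-square machinery, and I expect no real obstacle beyond carefully confirming that $w$ lands in $D$ through the visible edge $cw$.
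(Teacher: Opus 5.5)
Your proposal is correct and follows essentially the same route as the paper. The paper also fixes a visible edge, sends each collapsed neighbour in $D$ to the exact square completion via the midpoint formula, proves injectivity from $S_1(u)\cap S_1(w)=\{v,c\}$, and concludes by symmetry and connectivity of $C$.
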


\begin{proof}
It is enough to prove that $m_{CD}(u)=m_{CD}(u')$ whenever $u,u'\in V(C)$ and $uu'$ is a visible edge. Fix such an edge $uu'$. We first assume $m_{CD}(u)>0$. Define a map $T : S_1^{G_{\col}}(u)\cap V(D)\to S_1^{G_{\col}}(u^{\prime})\cap V(D)$ as follows.

Take $y\in S_1^{G_{\col}}(u)\cap V(D)$.  Since $uy$ is collapsed, $\Phi(y)=\Phi(u)$. Apply Proposition~\ref{prop:residual-visible-exactness} to $(u;u^\prime,y)$. We obtain a unique exact square completion $y^{\prime}$ for $(u;u^{\prime},y)$. By \eqref{eq:midpoint}, we have
\begin{equation*}
    \Phi(y^{\prime})=\Phi(u^{\prime}), \quad \Phi(y')-\Phi(y)=\Phi(u')-\Phi(u)\ne0.
\end{equation*}
Therefore $yy'$ is a visible edge. Since $y\in V(D)$, it follows that $y^\prime\in S_1^{G_{\col}}(u^\prime)\cap V(D)$. We define $T(y)=y^{\prime}$.

This map is injective. Indeed, assume two vertices $y_1,y_2\in S_1^{G_{\col}}(u)\cap V(D)$ give the same image $y'$. Since $uy_2\in E_{\col}$ while $uu'\in E_{\vis}$, we have $y_2\ne u'$. Hence
\[
y_2\in S_1^G(u)\cap S_1^G(y')=\{u',y_1\}
\]
forces $y_2=y_1$. Therefore $T$ is injective, and so
$m_{CD}(u)\le m_{CD}(u')$.

By applying the same argument with $u$ and $u'$ interchanged whenever
$m_{CD}(u')>0$, we obtain the reverse inequality. Hence
$m_{CD}(u)=m_{CD}(u')$; in particular, if $m_{CD}(u)=0$, then
$m_{CD}(u')=0$.

Since $C$ is connected as a component of $G_{\vis}$, the function $m_{CD}$ is
constant on $V(C)$.
\end{proof}

Next we strengthen Lemma~\ref{lem:other-visible-component-neighbour-count-constant} to $m_{CD}\leq 1$. When $C=D$, we have $m_{CC}=0$.

\begin{proposition}
\label{prop:at-most-one-neighbour-in-other-visible-component}
Let $C$ and $D$ be two connected components of $G_{\vis}$. Then for
every $x\in V(C)$,
\begin{equation*}
m_{CD}(x):=\card{S_1^{G_{\col}}(x)\cap V(D)}\le1.
\end{equation*}
If $C=D$, then $m_{CC}(x)=0$.
\end{proposition}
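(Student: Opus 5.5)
The idea is to transport collapsed neighbours along a monotone path inside $C$ until we reach the maximum of $f$ on $C$. There the transported neighbours are all strict local maxima of $f$ on the hypercube $D$, and Lemma~\ref{lem:hypercube-unique-strict-local-maximum} forces them to coincide. Throughout, $f\in E_2$ is the generic eigenfunction from Lemma~\ref{lem:visible-generic-eigenfunction}. By Theorem~\ref{thm:G_vis}, $C\cong D\cong H_r$, and, as noted in the proof of that theorem, $f|_{V(C)}\in E_2(C)$ and $f|_{V(D)}\in E_2(D)$. Since every edge of $C$ and of $D$ is visible, $f$ differs across every edge of $C$ and of $D$.

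\emph{Transport step.} Let $u\sim u'$ be a visible edge in $C$ and $y\in S_1^{G_{\col}}(u)\cap V(D)$. As in the proof of Lemma~\ref{lem:other-visible-component-neighbour-count-constant}, the exact square completion $y'=T(y)$ of $(u;u',y)$ satisfies $y'\in S_1^{G_{\col}}(u')\cap V(D)$, and the map $T$ is injective. We also need the increments of $f$ at $y'$ to match those at $u'$. Apply the map $T_{uy}$ of Lemma~\ref{lem:TransportMap} (now $uy$ is collapsed). Each visible neighbour $z$ of $u$ has an exact square completion $T_{uy}(z)\in S_1^{G_{\vis}}(y)$. Since $S_1(u)\cap S_1(T_{uy}(z))=\{z,y\}$, the midpoint formula \eqref{eq:midpoint} gives
\[
f(T_{uy}(z))-f(y)=f(z)-f(u).
\]
Because $T_{uy}$ is a bijection, the multiset of increments $\{f(v)-f(y):v\in S_1^{G_{\vis}}(y)\}$ equals $\{f(z)-f(u):z\in S_1^{G_{\vis}}(u)\}$. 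Moreover $S_1^{G_{\vis}}(y)$ is exactly the set of neighbours of $y$ in $D$.

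\emph{Main argument.} Suppose $y_1\neq y_2$ both lie in $S_1^{G_{\col}}(x)\cap V(D)$. By Lemma~\ref{lem:hypercube-unique-strict-local-maximum}, $f|_{V(C)}$ has a unique strict local maximum $M$, which is its global maximum. Starting from $x$, repeatedly step to a neighbour in $C$ with larger $f$-value; this terminates at a strict local maximum, hence at $M$, along a path $x=u_0,u_1,\dots,u_k=M$ of visible edges. Composing the injective transport maps along this path sends $y_1,y_2$ to distinct vertices $y_1^{*},y_2^{*}\in V(D)$, each joined to $M$ by a collapsed edge. By the increment identity, all increments of $f$ at $y_i^{*}$ inside $D$ coincide with those at $M$ inside $C$, and those are all negative. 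So both $y_i^{*}$ are strict local maxima of $f|_{V(D)}$, and Lemma~\ref{lem:hypercube-unique-strict-local-maximum} gives $y_1^{*}=y_2^{*}$, contradicting injectivity. Hence $m_{CD}(x)\le 1$.

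\emph{Case $C=D$.} Suppose $y\in S_1^{G_{\col}}(x)\cap V(C)$. Transporting along the same path yields $y^{*}\in V(C)$ with $y^{*}M$ a collapsed edge and $y^{*}$ a strict local maximum of $f|_{V(C)}$. Uniqueness forces $y^{*}=M$, which is impossible since $y^{*}\sim M$. Hence $m_{CC}(x)=0$.

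The main point to check carefully is the increment identity for a collapsed edge, that is, that $T_{uy}$ exhausts the $D$-neighbours of $y$. This is where the bijectivity in Lemma~\ref{lem:TransportMap} and the exact back-degree $2$ from Proposition~\ref{prop:residual-visible-exactness} are both used.
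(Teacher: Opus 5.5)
Your proof is correct and follows the paper's argument. At a maximizer of $f$ on $C$, every collapsed neighbour lying in $D$ is forced, by exact square completions and the midpoint formula, to be a strict local maximum of $f|_{V(D)}$, and uniqueness on the hypercube gives at most one such neighbour (and none when $C=D$). The only difference is cosmetic: you transport neighbours along a path to the maximizer instead of citing the constancy of $m_{CD}$ from Lemma~\ref{lem:other-visible-component-neighbour-count-constant}. That is the same injectivity argument, and any path in $C$, monotone or not, would do.
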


\begin{proof}
Choose $f\in E_2$ as in Lemma~\ref{lem:visible-generic-eigenfunction}. By
Lemma~\ref{lem:other-visible-component-neighbour-count-constant}, the number $m_{CD}(x)$ is constant on $V(C)$.  Choose $x_0\in V(C)$ such that $f(x_0)=\max_{x\in V(C)}f(x)$. If $m_{CD}(x_0)=0$, then we are done. Otherwise, we assume there exists $y_0\in S_1^{G_{\col}}(x_0)\cap V(D)$.

Since $x_0y_0$ is collapsed, $f(y_0)=f(x_0)$. Let $y\in S_1^{G_{\vis}}(y_0)$. Applying Proposition~\ref{prop:residual-visible-exactness} to $(y_0;y,x_0)$, we obtain a unique exact square completion $x$ for $(y_0;y,x_0)$.
\eqref{eq:midpoint} gives
\begin{equation}\label{eq:at-most-one-local-difference}
f(y)-f(y_0)=f(x)-f(x_0)\neq 0.
\end{equation}
Hence $x_0x$ is visible, so $x\in V(C)$. Combining with $f(x)\leq f(x_0)$, we have $f(x)<f(x_0)$. By \eqref{eq:at-most-one-local-difference}, we get $f(y)<f(y_0)$. By Lemma~\ref{lem:hypercube-unique-strict-local-maximum}, $y_0$ is a maximum of $f\vert_{V(D)}$.

Let $y^\prime\in S_1^{G_{\col}}(x_0)\cap V(D)$. Then $y^\prime$ is a maximum of $f\vert_{V(D)}$. By Theorem~\ref{thm:G_vis}, the visible component
$D$ is isomorphic to $H_r$. A direct computation shows $f|_{V(D)}\in E_2(D)$. Since $f(x)\neq f(y)$ for any edge $xy$ of $D$, Lemma~\ref{lem:hypercube-unique-strict-local-maximum} implies that $f|_{V(D)}$ has only one maximum $y_0$. Thus $y^\prime=y_0$. Then $m_{CD}(x_0)\leq 1$. Finally, Lemma~\ref{lem:other-visible-component-neighbour-count-constant} gives, for every
$x\in V(C)$,
\begin{equation*}
m_{CD}(x)
=
m_{CD}(x_0)
\le1.
\end{equation*}

When $C=D$, $x_0$ is the unique maximum of $f\vert_{V(C)}$. Hence $y_0$ couldn't exist. Thus $m_{CC}(x)=m_{CC}(x_0)=0$.
\end{proof}

From the above proposition, we know collapsed edges only join two distinct connected components of $G_{\vis}$. Any two vertices in the same connected component of $G_{\vis}$ couldn't be connected by a collapsed edge.

Let $C,D$ be two distinct connected component of $G_{\vis}$. Assume $xy$ is a collapsed edge where $x\in C,y\in D$. We define a map $\sigma_{CD}: V(C)\to V(D)$. By Proposition~\ref{prop:at-most-one-neighbour-in-other-visible-component}, $\card{S_1^{G_{\col}}(u)\cap V(D)}=1$ for $u\in V(C)$. Let $\sigma_{CD}(u)$ denote the unique neighbor of $u$ in $D$.
\begin{proposition}
\label{prop:Sigma_CDIsomorphism}
    $\sigma_{CD}$ is a graph isomorphism.
\end{proposition}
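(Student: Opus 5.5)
We need to show that $\sigma_{CD}:V(C)\to V(D)$ is a bijection that preserves adjacency. The plan is to use the square-completion mechanism of Proposition~\ref{prop:residual-visible-exactness}, exactly as in the proof of Lemma~\ref{lem:other-visible-component-neighbour-count-constant}.

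\textbf{Step 1 (collapsed edges transport along visible edges).} Let $u\sim u'$ be a visible edge in $C$, and write $y=\sigma_{CD}(u)$, so $uy$ is collapsed. Proposition~\ref{prop:residual-visible-exactness} applied to $(u;u',y)$ gives a unique exact square completion $y'$. By \eqref{eq:midpoint} we get $\Phi(y')=\Phi(u')$ and $\Phi(y')-\Phi(y)=\Phi(u')-\Phi(u)\neq0$. Hence $u'y'$ is collapsed and $yy'$ is visible. Since $yy'$ is visible, $y'\in V(D)$. Uniqueness in Proposition~\ref{prop:at-most-one-neighbour-in-other-visible-component} then forces $y'=\sigma_{CD}(u')$. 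So $\sigma_{CD}(u)\sim\sigma_{CD}(u')$ in $D$, via a visible edge: $\sigma_{CD}$ is a graph homomorphism $C\to D$.

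\textbf{Step 2 (bijectivity).} The relation ``joined by a collapsed edge'' is symmetric, and $\sigma_{DC}$ is defined because $C$ and $D$ are joined by the collapsed edge $xy$. For $v\in V(D)$, the unique collapsed neighbour of $v$ in $C$ is $\sigma_{DC}(v)$. Hence $\sigma_{DC}(\sigma_{CD}(u))=u$, since $u$ is a collapsed neighbour of $\sigma_{CD}(u)$ lying in $C$. Symmetrically, $\sigma_{CD}\circ\sigma_{DC}=\mathrm{id}_{V(D)}$. Thus $\sigma_{CD}$ is a bijection with inverse $\sigma_{DC}$.

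\textbf{Step 3 (adjacency reflected).} Applying Step 1 to $\sigma_{DC}$ shows that it also preserves adjacency. Therefore $\sigma_{CD}$ is a graph isomorphism. As a consistency check, both $C$ and $D$ are $H_r$ by Theorem~\ref{thm:G_vis}, so a bijective homomorphism between them is automatically an isomorphism by edge counting anyway.

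\textbf{Main obstacle.} The only delicate point is in Step 1: checking that $y'$ really lies in $D$ and is collapsed-adjacent to $u'$, rather than being some other vertex. This is settled by the $\Phi$-identities from the midpoint formula, combined with the uniqueness statement of Proposition~\ref{prop:at-most-one-neighbour-in-other-visible-component}.
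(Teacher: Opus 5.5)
Your proof is correct. Step 1 is the paper's homomorphism argument in different notation. The paper writes the exact square completion of $(u;v,u')$ as $T_{uu'}(v)$ and uses Lemma~\ref{lem:TransportMap} to see that it is a visible neighbour of $\sigma_{CD}(u)$, hence lies in $D$. You instead verify this directly from the $\Phi$-identities, as in Lemma~\ref{lem:other-visible-component-neighbour-count-constant}.

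Where you genuinely differ is bijectivity. The paper proves surjectivity by a connectivity argument in $D$, using that $T_{uu'}$ maps $S_1^{G_{\vis}}(u)$ onto $S_1^{G_{\vis}}(u')$. It then gets injectivity from $|V(C)|=|V(D)|$, which relies on Theorem~\ref{thm:G_vis}, and leaves the reflection of adjacency implicit.

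You instead observe that $\sigma_{DC}$ is also well defined. This follows from Lemma~\ref{lem:other-visible-component-neighbour-count-constant} and Proposition~\ref{prop:at-most-one-neighbour-in-other-visible-component} applied to the pair $(D,C)$, together with $m_{DC}(y)\ge 1$. By uniqueness of the collapsed neighbour, $\sigma_{DC}$ is a two-sided inverse of $\sigma_{CD}$; the paper only records this fact later, in the proof of Theorem~\ref{thm:HypercubeBundle}.

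Your route needs neither connectedness of $D$ nor the identification of visible components with $H_r$. Applying Step 1 to $\sigma_{DC}$ also makes the reverse implication on edges explicit rather than leaving it to an edge count.
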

\begin{proof}
We first show that $\sigma_{CD}$ is a homomorphism. Fix $u\in V(C)$. Let $v\in S_1^{G_{\vis}}(u)$ and $u^\prime=\sigma_{CD}(u)$. By Lemma~\ref{lem:TransportMap}, $T_{uu^\prime}(v)\in S_1^{G_{\vis}}(u^\prime)$. Since $T_{uu^\prime}(v)$ is the exact square completion for $(u;v,u^\prime)$, $v\sim T_{uu^\prime}(v)$. Hence we get
\begin{equation}
\label{eq:sigma-equal-T}
    \sigma_{CD}(v)=T_{uu^\prime}(v).
\end{equation}
Hence $\sigma_{CD}(uv)=\sigma_{CD}(u)\sigma_{CD}(v)$ and $\sigma_{CD}$ is a homomorphism.

     Next, we show $\sigma_{CD}$ is a bijection. If $\sigma_{CD}(V(C))\neq V(D)$, choose an edge $u^\prime v^\prime$ such that $u^\prime\in \sigma_{CD}(V(C)),\ v^\prime\notin \sigma_{CD}(V(C))$. Let $u=\sigma_{CD}^{-1}(u^\prime)$. Since $T_{uu^\prime}$ is a bijection, by \eqref{eq:sigma-equal-T}, we have
     \begin{equation*}
         \sigma_{CD}\bigl(S_1^{G_{\vis}}(u)\bigr)=T_{uu^\prime}\bigl(S_1^{G_{\vis}}(u)\bigr)=S_1^{G_{\vis}}(u^\prime).
     \end{equation*}
     Hence $v^\prime\in \sigma_{CD}(V(C))$. It is a contradiction. Finally, since $\card{V(C)}=\card{V(D)}$, we also have $\sigma_{CD}$ is injective. Hence it is a bijection.
 \end{proof}

 \begin{theorem}\label{thm:visible-quotient-embeds}
Let $G$ be a connected regular Lichnerowicz-sharp graph and $G^\prime$ be the quotient graph whose vertices are the visible components of $G$, with two distinct visible components adjacent when they are joined by an edge of
$G$.  Then there exists a graph embedding
\begin{equation*}
s:G^\prime\hookrightarrow G
\end{equation*}
such that $s(C)\sim_{G} s(D)$ if and only if $C\sim_{G^\prime} D$.
\end{theorem}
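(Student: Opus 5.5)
Choose one vertex in each visible component so that whenever two components are adjacent in $G^\prime$, their chosen vertices are joined by the unique collapsed edge between them. The natural choice is a level set of a generic eigenfunction. Fix $f\in E_2$ as in Lemma~\ref{lem:visible-generic-eigenfunction}. For each visible component $C$, Theorem~\ref{thm:G_vis} gives $C\cong H_r$. A direct computation gives $f|_{V(C)}\in E_2(C)$, and $f$ separates the endpoints of every edge of $C$. So by Lemma~\ref{lem:hypercube-unique-strict-local-maximum}, $f|_{V(C)}$ has a unique maximizer, which we call $x_C$. Define $s(C):=x_C$. This map is injective because distinct components are disjoint.

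The key step is that $\sigma_{CD}(x_C)=x_D$ whenever $C\sim_{G^\prime}D$. This argument already appears inside the proof of Proposition~\ref{prop:at-most-one-neighbour-in-other-visible-component}. Let $y_0=\sigma_{CD}(x_C)$, so that $f(y_0)=f(x_C)$. For any $y\in S_1^{G_{\vis}}(y_0)$, take the exact square completion $x$ of $(y_0;y,x_C)$. Then $x\in S_1^{G_{\vis}}(x_C)$, and the midpoint formula \eqref{eq:midpoint} gives $f(y)-f(y_0)=f(x)-f(x_C)<0$, since $x_C$ is the maximizer on $C$. Hence $y_0$ is a strict local maximum of $f|_{V(D)}$. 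By the uniqueness in Lemma~\ref{lem:hypercube-unique-strict-local-maximum}, $y_0=x_D$. Thus $s(C)\sim_G s(D)$ via a collapsed edge whenever $C\sim_{G^\prime}D$.

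For the converse, suppose $s(C)\sim_G s(D)$ with $C\neq D$. The edge joins two distinct visible components, so it cannot be visible; hence it is collapsed, and so $C\sim_{G^\prime}D$ by definition. Here one should note that the definition of $G^\prime$ in Theorem~\ref{thm:visible-quotient-embeds} ("joined by an edge of $G$") agrees with the earlier one via collapsed edges, because visible edges stay inside components. Collapsed edges never join two vertices of the same component, by Proposition~\ref{prop:at-most-one-neighbour-in-other-visible-component}. So $s$ is an isomorphism of $G^\prime$ onto the induced subgraph $G[s(V(G^\prime))]$, which is the claimed embedding.

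I expect the only delicate point to be the key step, namely checking that the maximizer of $f$ on one fibre is mapped by $\sigma_{CD}$ to the maximizer on the adjacent fibre. It relies on $f$ being constant across collapsed edges and on exact square incidence (Proposition~\ref{prop:residual-visible-exactness}). Both are already available, so the argument above should close without further input.
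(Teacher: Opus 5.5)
Your proof is correct and is essentially the paper's argument. The paper also sets $s(C)$ to be the unique maximizer of a generic $f\in E_2$ on $C$, and shows that the collapsed neighbour of $s(C)$ in $D$ is the strict local maximum of $f|_{V(D)}$. The only difference is that the paper transports visible neighbours via the bijection $T_{xy}$, whereas you take exact square completions at $y_0$ directly, as in the proof of Proposition~\ref{prop:at-most-one-neighbour-in-other-visible-component}.
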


\begin{proof}
Choose $f\in E_2$ as in Lemma~\ref{lem:visible-generic-eigenfunction}. By Lemma~\ref{lem:hypercube-unique-strict-local-maximum}, there exists a unique local strict maximum of $f|_{C}$ for any connected component $C$ of $G_{\vis}$. Let $s(C)$ be the unique local strict maximum. Since distinct visible components are disjoint, $s$ is injective.

It remains to prove that $s$ is a homomorphism.  Let $C,D$ be distinct visible
components with $C\sim_{G^\prime}D$ and $x=s(C)$.  Since $|S_1^{G}(x)\cap V(D)|=1$, let $y$ be the unique neighbor of $x$ in $D$. For any $z\in S_1^{G_{\vis}}(x)$, by \eqref{eq:midpoint}
\begin{equation*}
    f(z)-f(x)=f(T_{xy}(z))-f(y)<0.
\end{equation*}
Since $T_{xy}$ is a bijection, we have $f(y)$ is also a local strict maximum of $f\vert_{V(D)}$. Thus $s(D)=y$ and $s(C)\sim_G s(D)$. Then $s$ is an embedding.

Finally, if $C\ne D$ and $s(C)\sim_G s(D)$, then an edge
of $G$ joins $C$ and $D$, so $C\sim_{G^\prime}D$ by the definition of $G^\prime$.
\end{proof}

\begin{proof}[Proof of Theorem~\ref{thm:HypercubeBundle}]
By Theorem~\ref{thm:G_vis}, every connected component of $G_{\vis}$ is isomorphic to $H_r$ for $r\geq 1$. If $G$ doesn't have any collapsed edge, then $G=G_{\vis}$ and $G^\prime$ is a single vertex. Now assume $G$ has at least one collapsed edge.

Since $G$ is connected, the quotient graph $G^\prime$ is connected.  Let $C,D\in V(G^\prime)$ be distinct visible components. If $C\sim_{G^\prime}D$,  Proposition~\ref{prop:at-most-one-neighbour-in-other-visible-component} implies that every vertex in $C$ has exactly one neighbour in $D$. Besides, there exists no collapsed edge join two vertices lying the same connected component of $G_{\vis}$. Thus $G^\prime$ is a $(d-r)$-regular graph.

For every oriented edge $CD$ of $G^\prime$, define
\[
 \sigma_{CD}:V(C)\to V(D)
\]
by setting $\sigma_{CD}(u)$ to be the unique vertex of $D$ adjacent to $u$.  By
Proposition~\ref{prop:Sigma_CDIsomorphism}, $\sigma_{CD}$ is a graph isomorphism and by uniqueness, $\sigma_{DC}=\sigma_{CD}^{-1}$.

Choose, for every $C\in V(G^\prime)$, a graph isomorphism $ \iota_C:H_r\to C$. For an oriented edge $CD$ of $G^\prime$, set
\[
 \tau_{CD}:=\iota_{D}^{-1}\circ \sigma_{CD}\circ \iota_C\in {\rm Aut}(H_r).
\]
Then $\tau_{DC}=\tau_{CD}^{-1}$.  Consider the $H_r$-bundle
$G^\prime\square_\tau H_r$ in the sense of Definition~\ref{def:GraphBundle}.  Define
\[
 \Psi:V(G^\prime\square_\tau H_r)\to V(G),
 \quad
 \Psi(C,a)=\iota_C(a).
\]
This map is bijective.  If $a\sim b$ in $H_r$, then $\Psi(C,a)\sim\Psi(C,b)$ is a visible
edge in $C$.  If $C\sim_{G^\prime} D$, then
\[
 \Psi(D,\tau_{CD}a)=\sigma_{CD}(\iota_C(a)),
\]
so $\Psi(C,a)$ is adjacent to $\Psi(D,\tau_{CD}a)$ by a collapsed edge.  Conversely,
every edge of $G$ is either a visible edge inside a visible component, or a collapsed edge
between two distinct visible components.  Therefore $\Psi$ is a graph isomorphism.  Hence
$G$ is an $H_r$-bundle over $G^\prime$.

It remains to prove the curvature and spectral assertions for $G^\prime$.  Let $h:V(G^\prime)\to\mathbb R$
and lift it to $G$ by setting $ \widetilde h(x):=h(C_x)$, where $C_x$ is the visible component containing $x$. Then
\begin{equation*}
    L_G\widetilde{h}(x)=\sum_{y\sim x}\bigl(\widetilde{h}(y)-\widetilde{h}(x)\bigr)=\sum_{xy\in E_{\col}}\bigl(\widetilde{h}(y)-\widetilde{h}(x)\bigr)=\sum_{D\sim_{G^\prime}C_x}\bigl(h(D)-h(C_x)\bigr)=L_{G^\prime}h(C_x).
\end{equation*}
By \eqref{eq:Gamma} and \eqref{eq:Gamma2}, we have
\[
 \Gamma_G(\widetilde h)(x)=\Gamma_{G^\prime}(h)(C_x),\quad \Gamma_{2,G}(\widetilde h)(x)=\Gamma_{2,G^\prime}(h)(C_x).
\]
Since $G$ satisfies $\CD(2,\infty)$,
\[
 \Gamma_{2,G^\prime}(h)(C_x)=\Gamma_{2,G}(\widetilde h)(x)
 \ge 2\Gamma_G(\widetilde h)(x)=2\Gamma_{G^\prime}(h)(C_x).
\]
Thus $G^\prime$ satisfies $\CD(2,\infty)$.

Theorem~\ref{thm:LichnerowiczEstimate} gives $\lambda_1(G^\prime)\ge2$.  Suppose, for contradiction, that
$\lambda_1(G^\prime)=2$.  Let $0\ne h\in E_2(G^\prime)$ be nonconstant.  Then the lifted function
$\widetilde h$ satisfies $L_G\widetilde h=-2\widetilde h$, so $\widetilde h\in E_2(G)$.
Since $G^\prime$ is connected and $h$ is nonconstant, there is an edge $C\sim_{G^\prime} D$ with
$h(C)\ne h(D)$.  For any $u\in V(C)$, the edge
$u\sigma_{CD}(u)$ is collapsed in $G$, but
\[
 \widetilde h(u)=h(C)\ne h(D)=\widetilde h(\sigma_{CD}(u)),
\]
contradicting the definition of a collapsed edge.  Hence $\lambda_1(G^\prime)>2$.

Finally, Theorem~\ref{thm:visible-quotient-embeds} gives a graph embedding
$s:G^\prime\hookrightarrow G$ with $s(C)\in V(C)$ for every visible component $C$.  Equivalently,
under the bundle identification above, $s$ is a section of the projection
$G^\prime\square_\tau H_r\to G^\prime$.  This completes the proof.
\end{proof}
\begin{example}
\label{ex:noncartesian-k4-h2-cd2-lambda2}
Consider the base graph $K_4$ and fiber graph $H_2$. Denote vertex sets of $K_4$ and $H_2$ by
\[
V(K_4)=\{0,1,2,3\},\quad
V(H_2)=\mathbb F_2^2.
\]
Let $s:\mathbb F_2^2\to \mathbb F_2^2:
s(a,b)=(b,a)$. Define $\sigma$ by
\[
\sigma_{0i}=\sigma_{i0}=\operatorname{id},\ (i=1,2,3)\text{ and } \sigma_{ij}=\sigma_{ji}=s,\ (1\le i<j\le 3).
\]

Let $X=K_4 \square_\sigma H_2$. $X$ is a $5$-regular graph. We verify \(\CD(2,\infty)\). There are two local types. If \(s(u)=u\), i.e.
\(u\in\{00,11\}\), the curvature matrix is
\[
A_\infty=
\begin{pmatrix}
5&-1&-1&0&0\\
-1&5&-1&0&0\\
-1&-1&5&0&0\\
0&0&0&2&0\\
0&0&0&0&2
\end{pmatrix}.
\]
Its eigenvalues are $2,2,3,6,6$.
If \(s(u)\ne u\), i.e. \(u\in\{01,10\}\), then the local curvature matrix is
\[
A_\infty=2I_5.
\]
Thus every vertex satisfies $A_\infty\ge 2I_5$. Therefore $X$ is $\CD(2,\infty)$.

The Laplacian characteristic polynomial is
\[
\chi_L(t)
=
t(t-2)(t-4)^7(t-6)^3(t-8)^4.
\]
Therefore $\lambda_1(X)=2$ and $X$ is Lichnerowicz-sharp. $X_{\vis}$ is exactly four copies of $H_2$.

Now we prove that $X=K_4\square_\sigma H_2$ does not admit a nontrivial
Cartesian product decomposition. 
\begin{proof}
     Suppose, to the contrary, that
$X\cong A\square B$, where $A$ and $B$ are connected graphs with at least two
vertices.  Since $X$ is connected and $5$-regular, both $A$ and $B$ are
connected regular graphs, say of degrees $d_A$ and $d_B$, with
\[
d_A+d_B=5.
\]
Moreover, $|V(A)||V(B)|=|V(X)|=16$.  After interchanging the two factors if
necessary, there are only two cases.

First suppose that $|V(A)|=2$ and $|V(B)|=8$.  Then $A\cong K_2$.  If
$\mu_1,\ldots,\mu_8$ are the Laplacian eigenvalues of $B$, counted with
multiplicity, then the Laplacian eigenvalues of $K_2\square B$ are
\[
\mu_1,\ldots,\mu_8,\quad
\mu_1+2,\ldots,\mu_8+2.
\]
Since $B$ is connected, $0$ is a simple eigenvalue of $B$.  The characteristic
polynomial of $X$ above shows that $2$ is a simple eigenvalue of $X$.  The copy
of $0+2$ already accounts for this eigenvalue, so $2$ is not an eigenvalue of
$B$.  Consequently, the multiplicity $7$ of the eigenvalue $4$ in $X$ must
come entirely from the eigenvalue $4$ of $B$.  Thus the spectrum of $B$ would
be $\{0,4^{(7)}\}$, and $K_2\square B$ would have the eigenvalue $6$ with
multiplicity $7$.  This contradicts the fact that $6$ has multiplicity $3$ in
$X$.

It remains to consider $|V(A)|=|V(B)|=4$.  A connected regular graph on four
vertices has degree $2$ or $3$: in the former case it is $C_4$, and in the
latter it is $K_4$.  Since $d_A+d_B=5$, after interchanging the factors we
must have $A\cong C_4$ and $B\cong K_4$.  However, the eigenvalue $2$ has
multiplicity $2$ in $C_4\square K_4$, whereas it has multiplicity $1$ in
$X$.  This is again a contradiction.  Therefore $X$ has no nontrivial
Cartesian product decomposition.
\end{proof}
\end{example}

\section{Lichnerowicz-sharp hypercube bundles}
By Theorem~\ref{thm:HypercubeBundle}, every connected \(d\)-regular
Lichnerowicz-sharp graph admits the structure of a hypercube bundle over a
graph satisfying \(\CD(2,\infty)\) and \(\lambda_1>2\). Let $G^\prime$ be a $\CD(2,\infty)$ graph with $\lambda_1(G^\prime)>2$. Fix a fiber graph $H_r$ with $r\geq 1$. A natural question is which conditions on $\sigma=\{\sigma_{xy}\in\operatorname{Aut}(H_r):xy\in E(G^\prime)\}$ ensure that $G^\prime\square_\sigma H_r$ is a Lichnerowicz-sharp graph. In this section we
study the converse direction.

Let $G^\prime\square_\sigma F$ be any graph bundle. Let $\gamma=y_0\cdots y_n$ be any walk of $G^\prime$. Define $\sigma_{\gamma}$ as $\sigma_{y_{n-1}y_n}\circ \cdots\circ \sigma_{y_0y_1}$. For a base point $x_0\in V(G^\prime)$, let
 \begin{equation}
 \label{equ:PathGroup}
     \Gamma_{x_0}:=\{\sigma_\gamma: \text{$\gamma$ is a closed walk based at $x_0$}\},
 \end{equation}
and $G^\prime_{\mathrm{hor}}$ denote the horizontal subgraph of $G^\prime\square_\sigma F$ defined by
   \begin{equation}\label{eq:horizontal-cover-def-complete}
        G^\prime_{\mathrm{hor}}:=(V(G^\prime\square_\sigma F),\{(x,u)\sim(y,\sigma_{xy}(u))\text{ for $x\sim_{G^\prime} y$}\}),
    \end{equation}

We first consider, in a general setting, when $G^\prime\square_\sigma H_r$ has a $2$-eigenvector. Recall from \eqref{equ:DirectSum}, we have the orthogonal decomposition of $\mathbb{R}^{V(H_r)}$
 \begin{equation*}
 \mathbb{R}^{V(H_r)}=\bigoplus_{k=0}^r{W_k}(H_r),
 \end{equation*}
where $W_k(H_r)=\{\chi_S:S\subseteq \{1,\cdots,r\},\card{S}=k\}$ and $\chi_S(u)=(-1)^{\sum_{i\in S}u_i}$.
 \begin{proposition}
 \label{prop:EigenspaceOfBundle}
    Let $G^\prime$ be a connected graph with $\lambda_1(G^\prime)>2$ and $X=G^\prime\square_\sigma H_r$. Take a base point $x_0\in V(G^\prime)$. Let
      \begin{equation*}
         W_1(H_r)^{\Gamma_{x_0}}=\{f\in W_1(H_r): \text{$f\circ \sigma_\gamma=f$ for any $\sigma_\gamma\in \Gamma_{x_0}$}\}.
     \end{equation*}
    Then the restriction map $F\mapsto F|_{\{x_0\}\times H_r}$ induces an isomorphism
    \begin{equation*}
        E_2(X)\cong W_1(H_r)^{\Gamma_{x_0}}.
    \end{equation*}
    Moreover, for any $F\in E_2(X)$ and any edge $xy$ of $G^\prime$, $F(y,\sigma_{xy}(u))=F(x,u)$.
 \end{proposition}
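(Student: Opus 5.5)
The plan is to decompose $\mathbb{R}^{V(X)}$ fiberwise along the hypercube harmonics and use the spectral gap of the base to kill everything except horizontally constant components in the degree-one harmonic. On $X=G'\square_\sigma H_r$ the Laplacian splits as $L_X=L_{\mathrm{fib}}+L_{\mathrm{hor}}$. Here $L_{\mathrm{fib}}F(x,u)=L_{H_r}F(x,\cdot)(u)$, and $L_{\mathrm{hor}}F(x,u)=\sum_{y\sim x}\bigl(F(y,\sigma_{xy}u)-F(x,u)\bigr)$ is the Laplacian of $G'_{\hor}$. Every automorphism of $H_r$ commutes with $L_{H_r}$ and preserves each $W_k(H_r)$ by \eqref{equ:HypercubeAutomorphism}. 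Hence the subspaces
\[
\mathcal{W}_k:=\{F:\ F(x,\cdot)\in W_k(H_r)\ \text{for all }x\}
\]
are invariant under both $L_{\mathrm{fib}}$ and $L_{\mathrm{hor}}$, and $L_{\mathrm{fib}}=-2k$ on $\mathcal{W}_k$. So $E_2(X)=\bigoplus_k\{F\in\mathcal{W}_k:\ L_{\mathrm{hor}}F=(2k-2)F\}$.

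The next step is to show that $-L_{\mathrm{hor}}$ is positive semidefinite on each $\mathcal{W}_k$. For $k\ge 2$, a solution would require $-L_{\mathrm{hor}}F=(2-2k)F$ with $2-2k<0$, which is impossible unless $F=0$. For $k=0$, functions in $\mathcal{W}_0$ are constant on fibers, i.e. lifts of functions $h$ on $G'$, and $L_{\mathrm{hor}}$ acts as $L_{G'}$. We would need $L_{G'}h=-2h$, which is excluded by $\lambda_1(G')>2$ (and $h=$ const is not a $2$-eigenvector). For $k=1$ we need $L_{\mathrm{hor}}F=0$ with $F\in\mathcal{W}_1$. Since $-L_{\mathrm{hor}}$ is a graph Laplacian, $\langle -L_{\mathrm{hor}}F,F\rangle=\sum_{\text{hor edges}}(\cdot)^2=0$. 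This gives $F(y,\sigma_{xy}u)=F(x,u)$ for every edge $xy$, which is the "moreover" statement. Conversely, any such $F$ lies in $E_2(X)$.

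It remains to identify the horizontally invariant $F\in\mathcal{W}_1$ with $W_1(H_r)^{\Gamma_{x_0}}$. The restriction map $F\mapsto g:=F(x_0,\cdot)$ lands in $W_1$, and following a closed walk $\gamma$ gives $g\circ\sigma_\gamma=g$. It is injective because $G'$ is connected, so transport along paths determines $F$ on every fiber. For surjectivity, given $g\in W_1^{\Gamma_{x_0}}$, define $F(x,\sigma_\gamma u)=g(u)$ for any walk $\gamma$ from $x_0$ to $x$. This is well defined precisely by $\Gamma_{x_0}$-invariance, since two walks differ by a closed walk after composing one with the reverse of the other, using $\sigma_{yx}=\sigma_{xy}^{-1}$. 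Moreover $F(x,\cdot)=g\circ\sigma_\gamma^{-1}\in W_1$ because automorphisms preserve $W_1$. The main point requiring care is this well-definedness, together with checking that $\mathcal{W}_k$ is preserved by $L_{\mathrm{hor}}$; both follow from the automorphism description, and the rest is routine.
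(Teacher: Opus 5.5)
Your proposal is correct and follows essentially the same route as the paper. Both arguments decompose fiberwise into the harmonics $W_k(H_r)$ and use the automorphism description to show $L_{\mathrm{hor}}$ preserves each $W_k$. Both rule out $k\ge 2$ by negativity of $L_{\mathrm{hor}}$ and $k=0$ by $\lambda_1(G^\prime)>2$, then obtain horizontal invariance from the energy identity for $k=1$ and identify the result with $W_1(H_r)^{\Gamma_{x_0}}$ by transport along paths. You also state explicitly that the restriction map is injective, which the paper leaves implicit.
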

 \begin{proof}
      Let
\begin{equation*}
    L_{\mathrm{hor}}f(x,u)=\sum_{y\sim x}\bigl(f(y,\sigma_{xy}(u))-f(x,u)\bigr),\quad L_{\mathrm{fib}}f(x,u)=\sum_{v\sim u}\bigl(f(x,v)-f(x,u)\bigr).
\end{equation*}
Then $L_X$ can be written as $L_X=L_{\mathrm{hor}}+L_{\mathrm{fib}}$. For any function $F$ on $X$, define
\begin{equation*}
    F_x(u):=F(x,u)\in \mathbb{R}^{V(H_r)}.
\end{equation*}

For $\tau\in \operatorname{Aut}(H_r)$, by \eqref{equ:HypercubeAutomorphism}, we can choose $a\in \mathbb{F}_2^r$ and a permutation $\pi\in S_r$ with
\begin{equation*}
        \tau(x)=a+\bigl(x_{\pi^{-1}(1)},\cdots,x_{\pi^{-1}(r)}\bigr).
\end{equation*}
So
\begin{equation*}
    \chi_S\circ \tau=(-1)^{\sum_{i\in S}a_i}\chi_{\pi^{-1}(S)}\in W_k(H_r).
\end{equation*}
Hence $L_{\mathrm{hor}}(W_k(H_r))\subseteq W_k(H_r)$. For $F\in E_2(X)$, decompose it as
\[
 F=\sum_{k=0}^r F^{(k)},
 \quad F^{(k)}_x\in W_k(H_r).
\]
Then
\begin{equation*}
    L_XF=\sum_{k=0}^r\bigl(L_{\mathrm{hor}}F^{(k)}+L_{\mathrm{fib}}F^{(k)}\bigr)=\sum_{k=0}^rL_{\mathrm{hor}}F^{(k)}-\sum_{k=0}^r2kF^{(k)}.
\end{equation*}
The equation $L_XF=-2F$ gives
\begin{equation*}
    L_{\mathrm{hor}}F^{(k)}=2(k-1)F^{(k)}.
\end{equation*}
We have for any $f\in \mathbb{R}^{V(X)}$,
\begin{equation}
\label{equ:LhorEnergy}
     \langle f,L_{\mathrm{hor}}f\rangle
 =-\frac12\sum_{xy\in E(G^{\prime})}\sum_{u\in V(H_r)}
 \bigl(f(y,\sigma_{xy}(u))-f(x,u)\bigr)^2\le 0.
\end{equation}
Thus $F^{(k)}=0$ for $k\ge2$.  For $k=0$, $F^{(0)}$ is constant on every fiber $\{x\}\times H_r$ and satisfies
$L_{G^{\prime}}F^{(0)}=-2F^{(0)}$.  Since $\lambda_1(G^{\prime})>2$, we conclude $F^{(0)}=0$.
Therefore $ F=F^{(1)}$ and $ L_{\mathrm{hor}}F=0$. \eqref{equ:LhorEnergy} gives
\begin{equation}
\label{equ:LhorEqual0}
    F(y,\sigma_{xy}(u))=F(x,u)
\end{equation}
for every edge $xy$ of $G^{\prime}$ and every $u\in H_r$.

Let \(f=F_{x_0}\in W_1(H_r)\). Iterating \eqref{equ:LhorEqual0} along a closed walk
\(\gamma\) based at \(x_0\) gives
\[
    f(\sigma_\gamma(u))=f(u),
    \quad
    \forall u\in V(H_r).
\]
Thus \(f\in W_1(H_r)^{\Gamma_{x_0}}\).

Conversely, if \(f\in W_1(H_r)^{\Gamma_{x_0}}\), choose for every \(x\in V(G^\prime)\) a path
\(p_x\) from \(x_0\) to \(x\), and define
\[
    F(x,u)=f(\sigma_{p_x}^{-1}(u)).
\]
The \(\Gamma_{x_0}\)-invariance of \(f\) makes this definition independent of the chosen path.
Moreover \(F_x\in W_1(H_r)\) for every \(x\), and \(F\) satisfies $F(y,\sigma_{xy}(u))=F(x,u)$.
Hence $L_{\mathrm{hor}}F=0$ and $L_{\mathrm{fib}}F=-2F$. So \(F\in E_2(X)\). This finishes the proof .
 \end{proof}
 \begin{theorem}
 \label{thm:Bundle2Eigenvector}
    Let $G^\prime$ be a connected graph with $\lambda_1(G^\prime)>2$. Let $X=G^\prime\square_\sigma H_r$. The following conditions are equivalent.
    \begin{enumerate}[label=\textup{(\roman*)}]
        \item $X$ admits a section;
        \item For any base point $x_0\in V(G^\prime)$, there exists $u_0\in V(H_r)$ such that for any $\sigma_\gamma\in \Gamma_{x_0}$, $\sigma_{\gamma}(u_0)=u_0$.
        \item There exists $F\in E_2(X)$ such that $F\bigl(x,u\bigr)\neq F\bigl(x,v\bigr)$ for $x\in V(G^\prime)$ and $u\sim_{H_r} v$.
    \end{enumerate}
 \end{theorem}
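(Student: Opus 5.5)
We prove (i)$\Leftrightarrow$(ii) and (ii)$\Leftrightarrow$(iii). Throughout we use Proposition~\ref{prop:EigenspaceOfBundle}: $E_2(X)\cong W_1(H_r)^{\Gamma_{x_0}}$ via restriction, and every $F\in E_2(X)$ is horizontally constant, i.e.\ $F(y,\sigma_{xy}(u))=F(x,u)$. Since the base graph is connected, conjugating by the transport along a path shows that condition (ii) at one base point implies it at every base point. So we fix $x_0$.

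For (i)$\Rightarrow$(ii), write a section as $s(x)=(x,\phi(x))$. Adjacency of $s(x)$ and $s(y)$ for $x\sim y$ must come from a horizontal edge, since the first coordinates differ. Hence $\phi(y)=\sigma_{xy}(\phi(x))$. Iterating along a closed walk $\gamma$ at $x_0$ gives $\sigma_\gamma(\phi(x_0))=\phi(x_0)$, so $u_0=\phi(x_0)$ works.

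For (ii)$\Rightarrow$(i), define $\phi(x)=\sigma_{p_x}(u_0)$ for any walk $p_x$ from $x_0$ to $x$. This is independent of the walk, because two walks differ by a closed walk, whose holonomy fixes $u_0$. Appending the edge $xy$ to $p_x$ gives $\phi(y)=\sigma_{xy}(\phi(x))$, so $x\mapsto(x,\phi(x))$ is a section.

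For (ii)$\Rightarrow$(iii), we look for $f\in W_1(H_r)^{\Gamma_{x_0}}$ with $f(u)\neq f(v)$ for every edge $uv$ of $H_r$. Once we have it, the corresponding $F$ satisfies $F_x=f\circ\sigma_{p_x}^{-1}$. Since $\sigma_{p_x}^{-1}$ is an automorphism, $F_x$ also separates every fiber edge. The natural candidate is the distance from the fixed vertex:
\[
f(u)=\sum_{i=1}^r(-1)^{u_i+(u_0)_i}=r-2\,d_{H_r}(u,u_0).
\]
This lies in $W_1(H_r)$. It is $\Gamma_{x_0}$-invariant because every $\sigma_\gamma$ is an isometry fixing $u_0$, so $d(\sigma_\gamma u,u_0)=d(u,u_0)$. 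Along an edge of $H_r$ the distance to $u_0$ changes by exactly $1$, so $f$ changes by $\pm2$.

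For (iii)$\Rightarrow$(ii), which is the main step, take $F$ as in (iii) and set $f=F_{x_0}\in W_1(H_r)^{\Gamma_{x_0}}$. The function $f$ separates every edge of $H_r$. By Lemma~\ref{lem:hypercube-unique-strict-local-maximum}, $f$ therefore has a unique global maximum $u_0$. Each $\sigma_\gamma\in\Gamma_{x_0}$ satisfies $f\circ\sigma_\gamma=f$, so $\sigma_\gamma$ permutes the maximizers of $f$. Since the maximizer is unique, $\sigma_\gamma(u_0)=u_0$, which is (ii) at $x_0$ and hence at every base point. The essential input here is precisely Lemma~\ref{lem:hypercube-unique-strict-local-maximum}; the other steps are bookkeeping of holonomy along walks.
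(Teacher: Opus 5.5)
Your proposal is correct and follows essentially the same route as the paper: walk-holonomy bookkeeping for (i)$\Leftrightarrow$(ii), the distance-to-the-fixed-vertex function for (ii)$\Rightarrow$(iii), and the unique-maximum Lemma~\ref{lem:hypercube-unique-strict-local-maximum} for (iii)$\Rightarrow$(ii). Your $f=r-2d_{H_r}(\cdot,u_0)$ is just $-2$ times the paper's $Q(x_0,\cdot)$, transported to the other fibres; your remark that (ii) at one base point propagates to all base points by conjugation is also correct, whereas the paper simply runs each argument at an arbitrary base point.
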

 \begin{proof}
    Suppose that $s$ is a section. Since $\pi_1\circ s=\operatorname{id}$, $s(x)$ has the following form
\[
 s(x)=(x,u_x), \quad u_x\in V(H_r).
\]
Since $s$ is a graph homomorphism, for every edge $xy$ of $G^\prime$, $(x,u_x)\sim_{X} (y,u_y)$. This gives $u_y=\sigma_{xy}(u_x)$. Let $\gamma=x_0\cdots x_n$ with $x_n=x_0$ be any closed walk based at $x_0$. By induction, $u_{i+1}=\sigma_{x_ix_{i+1}}(u_i)=\sigma_{x_0\cdots x_{i+1}}(u_0)$. Since $s(x_0)=s(x_n)$, we have $\sigma_{\gamma}(u_0)=u_0$. Thus \textup{(i)} implies \textup{(ii)}.

Conversely, assume \textup{(ii)}.  Let $u_0\in V(H_r)$ be a vertex fixed by $\sigma_{\gamma}$ for every closed walk $\gamma$ based at $x_0$.   For each vertex $x\in V(G^\prime)$, choose a path $p_x$ from $x_0$ to $x$ and define $ u_x:=\sigma_{p_x}(u_0)$. This is independent of the chosen path. Let $\widetilde{p}_x$ be another path join $x_0$ and $x$. Then
\begin{equation*}
    u_0=\sigma_{p_x*\widetilde{p}_x^{-1}}(u_0)=\sigma_{\widetilde{p}_x}^{-1}\bigl(\sigma_{p_x}(u_0)\bigr).
\end{equation*}
Thus $\sigma_{p_x}(u_0)=\sigma_{\widetilde{p}_x}(u_0)$. For any edge $xy$,
\begin{equation}
\label{equ:uy}
    u_y=\sigma_{p_y}(u_0)=\sigma_{p_x*xy}(u_0)=\sigma_{xy}(u_x).
\end{equation}
Thus $ s(x):=(x,u_x)$ defines a section.  Hence \textup{(ii)} implies \textup{(i)}.

Next we prove that \textup{(ii)} implies \textup{(iii)}.  Let $u_x=\sigma_{p_x}(u_0)$ where $p_x$ is any path from $x_0$ to $x$. Define
\[
 Q(x,u):=d(u,u_x)-\frac{r}{2}.
\]
For $(y,\sigma_{xy}(u))\sim_{X} (x,u)$, by \eqref{equ:uy} and isomorphism preserving distance, we have
\begin{equation*}
    Q(y,\sigma_{xy}(u))=d(\sigma_{xy}(u),\sigma_{xy}(u_x))-\frac{r}{2}=d(u,u_x)-\frac{r}{2}.
\end{equation*}
On the fibre $\{x\}\times H_r$, the function $Q(x,\cdot)$ belongs to \(W_1(H_r)\), and hence
$L_{\mathrm{fib}}Q=-2Q$. The equality above gives $L_{\mathrm{hor}}Q=0$, so $L_XQ=-2Q$.
If $d(u,u_x)=d(v,u_x)$, then $u\not\sim v$. Thus if $uv$ is an edge, we have
$|Q(x,u)-Q(x,v)|\geq 1$. This proves \textup{(iii)}.

It remains to prove that \textup{(iii)} implies \textup{(ii)}.

Fix the base point $x_0$.  Write $ F_{x_0}(u)=\sum_{i=1}^r a_i(-1)^{u_i}$.
Since $F_{x_0}(u)\neq F_{x_0}(v)$ for $u\sim_{H_r} v$, $a_i\ne0$ for all $i$. Let $u_0$ be a maximum of $F_{x_0}(u)$. For any closed walk $\gamma$ based at $x_0$, Proposition~\ref{prop:EigenspaceOfBundle} gives
\[
 F_{x_0}(\sigma_\gamma(u))=F_{x_0}(u),
 \quad\text{for all }u\in V(H_r).
\]
 By Lemma~\ref{lem:hypercube-unique-strict-local-maximum}, the maximum is unique. We conclude $\sigma_{\gamma}(u_0)=u_0$. Then $u_0$ is fixed by $\sigma_{\gamma}$ for any closed walk based at $x_0$. Thus \textup{(iii)} implies \textup{(ii)}.
 \end{proof}
\begin{remark}
    \label{rem:NotNecessaryCondition}
\begin{enumerate}[label=\textup{(\roman*)}]
   \item For \textup{(i)}$\Leftrightarrow$\textup{(ii)}, we do not need the condition $\lambda_1(G^\prime)>2$. But to make sure \textup{(iii)}$\Leftrightarrow$\textup{(ii)}, this condition is necessary.

   \item \textup{(iii)} and Proposition~\ref{prop:EigenspaceOfBundle} imply visible edges of $G^\prime\square_\sigma H_r$ are $(x,u)\sim (x,v)$ and its collapsed edges are $(x,u)\sim (y,\sigma_{xy}(u))$ for $x\sim_{G^\prime} y$ and $u\sim_{H_r}v$.
\end{enumerate}
\end{remark}

Proposition~\ref{prop:EigenspaceOfBundle} fully characterizes the $2$-eigenspace of a hypercube bundle over the base graph with $\lambda_1>2$ and Theorem~\ref{thm:Bundle2Eigenvector} ensures the existence of $2$-eigenvector if the bundle admits a section. However, it is more subtle to determine when the bundle
satisfies $\CD(2,\infty)$. In the remaining part of this section, we study
$\CD(2,\infty)$ bundles over some particular base graphs.
\subsection{Direct sum decomposition of curvature matrix}

We consider the curvature matrix of $G^\prime\square_\sigma F$.
The curvature matrix can be canonically decomposed into the direct sum of the
horizontal and fiber components. For a vertex
$P=(x,u)\in V(G^\prime\square_\sigma F)$, write
\begin{equation*}
    \mathcal{H}_P=\{(y,\sigma_{xy}(u)):y\sim_{G^\prime}x\},\quad \mathcal{F}_P=\{(x,v):v\sim_F u\}.
\end{equation*}
\begin{proposition}[Curvature matrix direct-sum decomposition]
\label{prop:DirectSumofCurvatureMatrix}
Let $F$ be a graph, let $X=G^\prime\square_\sigma F$, and let $G^\prime_{\mathrm{hor}}$ be the horizontal subgraph of $X$. Assume that for all triangles $xyzx$ of $G^\prime$ and $u\in V(F)$, $u\not\sim_F \sigma_{xyzx}(u)$.
For every vertex $P=(x,u)\in V(X)$, the curvature matrix $A_{\infty}^X(P)$ decomposes as
\begin{equation}\label{eq:horizontal-fibre-direct-sum-complete}
A_\infty^{X}(P)
=
A_\infty^{F}(u)\oplus A_\infty^{G^\prime_{\mathrm{hor}}}(P).
\end{equation}
\end{proposition}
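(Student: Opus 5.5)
The plan is to compute every entry of $A_\infty^X(P)$ directly from Definition~\ref{def:curMatrix}. We order $S_1^X(P)=\mathcal F_P\cup\mathcal H_P$ (fibre neighbours first). For each type of pair of neighbours, and for each diagonal entry, we compare the local data $\varepsilon,\omega,t,\Omega$ and the degrees with the corresponding data in $F$ at $u$ and in $G'_{\hor}$ at $P$. Note that $\deg_X(P)=\deg_F(u)+\deg_{G'}(x)$ and $\deg_{G'_{\hor}}(P)=\deg_{G'}(x)$. The degree terms therefore have to be redistributed, and we expect them to cancel against extra square contributions.

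\textbf{Step 1 (adjacency among neighbours).} Two fibre neighbours $(x,v),(x,v')$ are adjacent in $X$ iff $v\sim_F v'$, so their $\varepsilon$ agrees with $F$. A fibre neighbour $(x,v)$ and a horizontal neighbour $(y,\sigma_{xy}u)$ are adjacent only if $(x,v)\sim(y,\sigma_{xy}v)$ equals that vertex, i.e.\ $v=u$, which is impossible. Hence mixed $\varepsilon=0$. Two horizontal neighbours $(y,\sigma_{xy}u),(z,\sigma_{xz}u)$ are adjacent iff $y\sim z$ and $\sigma_{yz}\sigma_{xy}u=\sigma_{xz}u$. This is exactly adjacency in $G'_{\hor}$, since a fibre edge would require $y=z$.

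\textbf{Step 2 ($S_2$ and $\omega$).} Next we classify $S_2^X(P)$. Vertices $(x,w)$ with $d_F(u,w)=2$ have back-neighbours only in $\mathcal F_P$. The point to check is that no horizontal neighbour is adjacent to $(x,w)$ with $w\neq u$. Vertices $(y,w)$ with $y\sim x$ and $w\ne\sigma_{xy}u$ have back-neighbours $(x,\sigma_{xy}^{-1}w)$ if $\sigma_{xy}^{-1}w\sim u$, and possibly $(y,\sigma_{xy}u)$ if $w\sim\sigma_{xy}u$. Since $\sigma_{xy}$ is an automorphism these two conditions coincide. Moreover, any other horizontal back-neighbour $(z,\sigma_{xz}u)$ would force $z\sim y$ with $\sigma_{zy}\sigma_{xz}u=w\sim\sigma_{xy}u$, i.e.\ $u\sim_F\sigma_{xyzx}(u)$. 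The triangle hypothesis rules this out. So such vertices are exact squares with $d^-=2$, each joining one fibre neighbour to one horizontal neighbour. Finally, vertices $(z,\cdot)$ at distance two in the base direction have only horizontal back-neighbours. Again the triangle hypothesis prevents an extra fibre back-neighbour $(x,v)$, where $v$ would have to be adjacent to $u$ through a triangle holonomy. Note that vertices $(y,w)$ with $y\sim x$ may also lie in $S_2$ via two horizontal neighbours $y',y''$ with $y\sim y',y''$ only through non-triangular routes. We check that these contribute to $\omega$ of $G'_{\hor}$ exactly, since they are horizontal vertices of $G'_{\hor}$ at distance two.

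It follows that the fibre-fibre $\omega_{ij}$ equals $\omega^F$, and the horizontal-horizontal $\omega$ equals $\omega^{G'_{\hor}}$. For a mixed pair, $\omega=1/2$, coming from the unique square, so the mixed entry is $1-0-2\cdot\tfrac12=0$. This yields the block structure.

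\textbf{Step 3 (diagonal).} For a fibre neighbour $i$, we have $t_i=t_i^F$ and $\Omega_i=\Omega_i^F+\deg_{G'}(x)/2$. Then
\[
3-\tfrac{\deg_F u+\deg_F v+2\deg_{G'}x}{2}+\tfrac52t_i+2\Omega_i
\]
equals the $F$-diagonal entry. Symmetrically, for a horizontal neighbour we get $\Omega_i=\Omega^{G'_{\hor}}_i+\deg_F(u)/2$, and the two $\deg_F$ contributions in the degree term cancel it. Here we use $\deg_F(\sigma_{xy}u)=\deg_F(u)$.

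The main obstacle is Step 2: showing carefully, via the hypothesis $u\not\sim_F\sigma_{xyzx}(u)$, that no vertex of $S_2^X(P)$ mixes a horizontal back-neighbour with unexpected extra ones. We must also show that every vertex of $S_2^{G'_{\hor}}(P)$ remains in $S_2^X(P)$ rather than becoming adjacent to $P$ through a fibre edge, which would require $y=x$.
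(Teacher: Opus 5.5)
Your proposal is correct and is essentially the paper's proof. Both compare $A_\infty^X(P)$ entry by entry through Definition~\ref{def:curMatrix}, use the triangle-holonomy hypothesis to show that $(y,\sigma_{xy}(v))$ is the unique common neighbour of each mixed pair (an exact square) and that vertices of $S_2^X(P)$ reached horizontally acquire no fibre back-neighbours, and then let the extra $\tfrac12$-contributions to $\Omega$ cancel the surplus degree terms on the diagonal. The only difference is bookkeeping: you first sort the vertices of $S_2^X(P)$ by type, whereas the paper proceeds block by block (mixed, fibre--fibre, horizontal--horizontal).
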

\begin{proof}
Write
\[
    W_v=(x,v)\quad (v\sim_F u),\quad
    Q_y=(y,\sigma_{xy}(u))\quad (y\sim_{G^\prime}x).
\]

First consider $W_v$ and $Q_y$. We have $W_v\not\sim Q_y$. Consider the vertex
$R_{y,v}:=(y,\sigma_{xy}(v))$. This vertex belongs to $S_2^X(P)$ and is adjacent
to both $W_v$ and $Q_y$. We claim that it is the unique common neighbour $W_v, Q_y$ in
$S_2^X(P)$. Let $(z,w)\in S_2^X(P)$ be adjacent to both $W_v$ and $Q_y$. If
$(z,w)\neq R_{y,v}$, then both $(z,w)\sim W_v$ and $(z,w)\sim Q_y$ must be horizontal edges. Hence $x,y,z,x$
is a triangle in $G^\prime$ and
\[
   \sigma_{yz}\sigma_{xy}(u)=w=\sigma_{xz}(v).
\]
Thus $\sigma_{xyzx}(u)=v$, contradicting the assumption.
Moreover, if $Q_z\in \mathcal{H}_P\cap S_1^X(R_{y,v})$, then either $z=y$, or
the same closed-walk argument gives a contradiction. If
$W_w\in \mathcal{F}_P\cap S_1^X(R_{y,v})$, then $w=v$. Therefore
$S_1^X(P)\cap S_1^X(R_{y,v})=\{Q_y,W_v\}$.
Therefore
\begin{equation*}
    \varepsilon^X_{Q_y,W_v}=0,\quad \omega^X_{Q_y,W_v}=\frac12.
\end{equation*}

Next consider distinct vertices $W_v,W_w\in\mathcal{F}_P$. Their adjacency
relation in $X$ is exactly the adjacency relation of $v,w$ in $F$. A vertex in
$S_2^X(P)$ adjacent to both $W_v$ and $W_w$ cannot be joined horizontally to one
of them. Hence the common neighbours in
$S_2^X(P)$ are exactly the vertices $(x,a)$ with
$a\in S_2^F(u)$ adjacent to both $v$ and $w$. For such a vertex,
no horizontal neighbour of $P$ is adjacent to $(x,a)$. Hence
$d_P^{-,X}(x,a)=d_u^{-,F}(a)$ and
\begin{equation*}
    \varepsilon^X_{W_v,W_w}=\varepsilon^F_{v,w},\quad \omega^X_{W_v,W_w}=\omega^F_{v,w}.
\end{equation*}
For the diagonal entry, the preceding computation with the horizontal neighbours
gives
\begin{equation*}
    t_{W_v}^X=t_v^F,\quad \Omega_{W_v}^X=\Omega_v^F+\frac{\deg_{G^\prime_{\hor}}(P)}{2}.
\end{equation*}
Since $\deg_X(P)=\deg_F(u)+\deg_{G^\prime_{\hor}}(P)$ and
$\deg_X(W_v)=\deg_F(v)+\deg_{G^\prime_{\hor}}(P)$, \eqref{CKLP-off} and
\eqref{CKLP-diag} show that the $\mathcal{F}_P$-$\mathcal{F}_P$ block and the
$\mathcal{F}_P$-$\mathcal{H}_P$ block are $A_\infty^{F}(u)$ and the zero matrix,
respectively.

Finally consider the $\mathcal{H}_P$-$\mathcal{H}_P$ block. For $Q_y,Q_z\in\mathcal H_P$
with $y\ne z$, the vertex $Q_y$ is adjacent to $Q_z$ in $X$ if and only if $Q_y$
and $Q_z$ are adjacent in $G^\prime_{\mathrm{hor}}$. Let
$R=(w,a)\in S_2^X(P)$ be adjacent to both $Q_y$ and $Q_z$. If $w=y$, then
$a=\sigma_{xy}(b)$ for some $b\sim_F u$, and the adjacency $R\sim Q_z$ gives
$\sigma_{xzyx}(u)=b$, contradicting the assumption. Similarly, $w\neq z$.
Thus both adjacencies $R\sim Q_y$ and $R\sim Q_z$ are horizontal, so $R$ is also
a common neighbour of $Q_y$ and $Q_z$ in $G^\prime_{\mathrm{hor}}$. The converse

We also compare the values of $d_P^-$. Let
$R=(w,a)\in S_2^X(P)$ be a common neighbour of $Q_y$ and $Q_z$. If
$R$ were adjacent to a neighbour $W_b\in\mathcal{F}_P$, then
$a=\sigma_{xw}(b)$ for some $b\sim_F u$ and the horizontal adjacency
$R\sim Q_y$ would give $\sigma_{xywx}(u)=b$, a contradiction. If $R$ were
joined by a fiber edge to a neighbour $Q_w\in\mathcal{H}_P$, then
$a\sim_F\sigma_{xw}(u)$.  Writing $b=\sigma_{wx}(a)$ again gives
$b\sim_F u$ and $\sigma_{xywx}(u)=b$, a contradiction. Therefore
\begin{equation*}
     S_1^X(P)\cap S_1^X(R)=S_1^{G^\prime_{\mathrm{hor}}}(P)\cap S_1^{G^\prime_{\mathrm{hor}}}(R)
\end{equation*}
for every such common neighbour $R$, and
\begin{equation*}
    \varepsilon^X_{Q_y,Q_z}=\varepsilon^{G^\prime_{\mathrm{hor}}}_{Q_y,Q_z},
\quad
\omega^X_{Q_y,Q_z}=\omega^{G^\prime_{\mathrm{hor}}}_{Q_y,Q_z}.
\end{equation*}

Again by \eqref{CKLP-off}, we have
$(A_\infty^X(P))_{Q_y,Q_z}=\bigl(A_\infty^{G^\prime_{\mathrm{hor}}}(P)\bigr)_{Q_y,Q_z}$.
Now consider the diagonal entry $(A_\infty^X(P))_{Q_y,Q_y}$. We have
\begin{equation*}
    \begin{aligned}
        t_{Q_y}^X
        &=\sum_{Q\in S_1^X(P)\setminus\{Q_y\}}\varepsilon^X_{Q_y,Q}
          =\sum_{Q\in\mathcal{H}_P\setminus\{Q_y\}}\varepsilon^X_{Q_y,Q}
          =t_{Q_y}^{G^\prime_{\hor}},\\
        \Omega_{Q_y}^X
        &=\sum_{Q\in S_1^X(P)\setminus\{Q_y\}}\omega^X_{Q_y,Q}=\sum_{Q\in\mathcal{F}_P}\omega^X_{Q_y,Q}
          +\sum_{Q\in\mathcal{H}_P\setminus\{Q_y\}}\omega^X_{Q_y,Q}
          =\frac{\deg_F(u)}{2}+\Omega_{Q_y}^{G^\prime_{\hor}}.
    \end{aligned}
\end{equation*}

Since $\deg_X(P)=\deg_{G^\prime_{\hor}}(P)+\deg_F(u),
    \deg_X(Q_y)=\deg_{G^\prime_{\hor}}(Q_y)+\deg_F(u)$, \eqref{CKLP-diag} gives
\begin{equation*}
    \begin{aligned}
        (A_\infty^X(P))_{Q_y,Q_y}
&=3-\frac{\deg_X(P)+\deg_X(Q_y)}{2}
  +\frac{5}{2}t_{Q_y}^X+2\Omega_{Q_y}^X\\
&=3-\frac{\deg_{G^\prime_{\hor}}(P)+\deg_{G^\prime_{\hor}}(Q_y)}{2}
  +\frac{5}{2}t_{Q_y}^{G^\prime_{\hor}}
  +2\Omega_{Q_y}^{G^\prime_{\hor}}
  =(A_\infty^{G^\prime_{\mathrm{hor}}}(P))_{Q_y,Q_y}.
    \end{aligned}
\end{equation*}
\end{proof}
\begin{lemma}
    \label{lem:uNotAdjacentToSigmau}
    Let $G^\prime$ be a connected graph. Assume $G^\prime\square_\sigma H_r$ admits a section.  Then for any closed walk $\gamma$ of $G^\prime$ and every $u\in V(H_r)$, we have $u\not\sim \sigma_{\gamma}(u)$.
\end{lemma}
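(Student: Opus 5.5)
The key is that a section forces every holonomy $\sigma_\gamma$ to fix a point, and a fixed point pins down the parity of $\sigma_\gamma$'s action. By Theorem~\ref{thm:Bundle2Eigenvector}, (i)$\Rightarrow$(ii) does not need $\lambda_1(G^\prime)>2$ (Remark~\ref{rem:NotNecessaryCondition}). So the existence of a section $s(x)=(x,u_x)$ gives $u_y=\sigma_{xy}(u_x)$ along every edge. In particular, for a closed walk $\gamma$ based at any vertex $x$, we get $\sigma_\gamma(u_x)=u_x$. Indeed, iterating $u_{x_{i+1}}=\sigma_{x_ix_{i+1}}(u_{x_i})$ along $\gamma$ returns to $u_x$.

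Next I use the automorphism description \eqref{equ:HypercubeAutomorphism} and write $\tau:=\sigma_\gamma$ as $\tau(v)=a+P_\pi v$, where $P_\pi$ is the coordinate permutation. The fixed-point equation $\tau(u_x)=u_x$ gives $a=u_x+P_\pi u_x$. Hence $\tau(u)=u_x+P_\pi(u+u_x)$, i.e. $\tau(u)+u_x=P_\pi(u+u_x)$: $\tau$ is a pure coordinate permutation in coordinates centered at $u_x$.

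Now compute the distance:
\[
d\bigl(u,\tau(u)\bigr)=\bigl|(u+u_x)+P_\pi(u+u_x)\bigr|,
\]
which is the Hamming weight of $w+P_\pi w$ with $w=u+u_x$. Since a permutation preserves weight, $|P_\pi w|=|w|$. Therefore $|w+P_\pi w|=2|w|-2|w\cap P_\pi w|$ is even. In particular it is never $1$, so $u\not\sim\sigma_\gamma(u)$ for every $u$.

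There is no real obstacle here. The only point requiring care is that a closed walk is based at an arbitrary vertex $x$, not just at a fixed $x_0$. This is handled because the section provides a fixed point $u_x$ of the holonomy at every base vertex.
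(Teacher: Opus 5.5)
Your proof is correct and follows the paper's route: the section gives, at each base vertex $x$, a point $u_x$ fixed by every holonomy $\sigma_\gamma$ of a closed walk based at $x$, and a parity argument then rules out $\sigma_\gamma(u)\sim u$. The only difference is the parity step. The paper skips the coordinate description and notes that $d(\sigma_\gamma(u),u_x)=d(u,u_x)$, while adjacent vertices of the bipartite graph $H_r$ have distances to $u_x$ differing by exactly one, so its argument works for any bipartite fibre. You instead conjugate $\sigma_\gamma$ to the coordinate permutation $P_\pi$ and check that the Hamming weight of $w+P_\pi w$ is even.
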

\begin{proof}
    Assume $G^\prime\square_\sigma H_r$ admits a section. Let $\gamma=x,\cdots,x_n=x$ be a closed walk in $G^\prime$. By Theorem~\ref{thm:Bundle2Eigenvector} and Remark \ref{rem:NotNecessaryCondition}, there exists a vertex $u_0\in V(H_r)$ such that $\sigma_\gamma(u_0)=u_0$ for every closed walk $\gamma$ based at $x\in V(G^\prime)$. Then
    \begin{equation*}
    d_{H_r}(\sigma_\gamma(v),u_0)= d_{H_r}(\sigma_\gamma(v),\sigma_\gamma(u_0))=d_{H_r}(v,u_0).
    \end{equation*}
    But for any $w\sim v$, $|d_{H_r}(u_0,w)-d_{H_r}(u_0,v)|=1$. Hence $ d_{H_r}(\sigma_\gamma(v),v)\ne 1$
for every $v\in V(H_r)$ and every closed walk $\gamma$ based at $x$ in $G^\prime$.
\end{proof}

From Theorem~\ref{thm:visible-quotient-embeds}, every connected regular Lichnerowicz-sharp graph admits a section. By Lemma~\ref{lem:uNotAdjacentToSigmau}, we can apply Proposition~\ref{prop:DirectSumofCurvatureMatrix} to the
regular Lichnerowicz-sharp graphs and get the following theorem.

\begin{theorem}
\label{thm:curvature-direct-sum-complete}
Let $G$ be a connected, $d$-regular Lichnerowicz-sharp graph.  For every vertex $P$,
the curvature matrix $A_{\infty}^G(P)$ decomposes into
\begin{equation}\label{eq:curvature-direct-sum-complete}
A_\infty^G(P)=A^{G_{\vis}}_{\infty}(P)\oplus A^{G_{\col}}_{\infty}(P).
\end{equation}
If one of the two neighbour sets is empty, the corresponding block is interpreted
as the empty block.
\end{theorem}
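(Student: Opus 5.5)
The plan is to transport Proposition~\ref{prop:DirectSumofCurvatureMatrix} from the abstract bundle $G^\prime\square_\sigma H_r$ to $G$ itself, using the isomorphism built in the proof of Theorem~\ref{thm:HypercubeBundle}. First I will dispose of the degenerate case. If $G$ has no collapsed edge, then $G=G_{\vis}$ and $A^G_\infty(P)=A^{G_{\vis}}_\infty(P)$ trivially, with an empty collapsed block. (By Lemma~\ref{lem:OneVisibleEdge}, the visible neighbour set is never empty.) Otherwise, Theorem~\ref{thm:HypercubeBundle} provides a connected base $G^\prime$ (the quotient by visible components) and transition maps $\tau_{CD}=\iota_D^{-1}\sigma_{CD}\iota_C\in\operatorname{Aut}(H_r)$. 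It also provides an isomorphism $\Psi:G^\prime\square_\tau H_r\to G$ under which fiber edges correspond exactly to visible edges and horizontal edges correspond exactly to collapsed edges. So $\Psi$ carries the fiber subgraph $\bigsqcup_C\{C\}\times H_r$ onto $G_{\vis}$, and the horizontal subgraph $G^\prime_{\hor}$ onto $G_{\col}$.

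Next I will verify the hypothesis of Proposition~\ref{prop:DirectSumofCurvatureMatrix}. By Theorem~\ref{thm:visible-quotient-embeds} (equivalently the last paragraph of the proof of Theorem~\ref{thm:HypercubeBundle}), $G^\prime\square_\tau H_r$ admits a section. Lemma~\ref{lem:uNotAdjacentToSigmau} then gives $u\not\sim\tau_\gamma(u)$ for every closed walk $\gamma$, in particular for every triangle $CDEC$ in $G^\prime$. Hence Proposition~\ref{prop:DirectSumofCurvatureMatrix} applies at every vertex $(C,u)$ and gives
\[
A^{X}_\infty(C,u)=A^{H_r}_\infty(u)\oplus A^{G^\prime_{\hor}}_\infty(C,u).
\]
Pulling back along $\Psi$, which preserves curvature matrices because it is a graph isomorphism, identifies $\mathcal F_P$ with $S_1^{G_{\vis}}(P)$ and $\mathcal H_P$ with $S_1^{G_{\col}}(P)$.

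It then remains to match each block with the curvature matrix computed intrinsically in $G_{\vis}$ and in $G_{\col}$. For the fiber block, the component of $G_{\vis}$ containing $P$ is $\iota_C(H_r)$. The curvature matrix depends only on $B_2(P)$ in the relevant graph, and $B_2^{G_{\vis}}(P)$ lies inside this component, which is isomorphic to $H_r$ via $\iota_C$. So $A^{H_r}_\infty(u)=A^{G_{\vis}}_\infty(P)$; both equal $2I_r$ by Example~\ref{ex:hypercube_cur_matrix}. For the horizontal block, $\Psi$ maps $G^\prime_{\hor}$ isomorphically onto $G_{\col}$, so $A^{G^\prime_{\hor}}_\infty(C,u)=A^{G_{\col}}_\infty(P)$. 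This gives \eqref{eq:curvature-direct-sum-complete}.

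I expect the main care to be needed at the identification step. One has to check that the abstract horizontal subgraph, defined via the transition maps, is exactly $\Psi^{-1}(G_{\col})$. This holds because every collapsed edge joins distinct visible components through $\sigma_{CD}$, by Proposition~\ref{prop:at-most-one-neighbour-in-other-visible-component}. One also has to check that the labeling of neighbours is consistent in both blocks. Neither point should be a real obstacle.
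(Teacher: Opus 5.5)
Your proposal is correct and follows the paper's own proof. You obtain a section from Theorem~\ref{thm:visible-quotient-embeds}, use Lemma~\ref{lem:uNotAdjacentToSigmau} to verify the triangle hypothesis of Proposition~\ref{prop:DirectSumofCurvatureMatrix}, and then use the isomorphism $\Psi$ to identify the fiber and horizontal blocks with $A_\infty^{G_{\vis}}(P)$ and $A_\infty^{G_{\col}}(P)$. Your separate treatment of the case with no collapsed edges, and your remark that curvature matrices depend only on $B_2(P)$, simply spell out details the paper leaves implicit.
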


\begin{proof}
By Theorem~\ref{thm:HypercubeBundle}, \(G\) is isomorphic to a hypercube bundle
\(G^\prime\square_\sigma H_r\) and admits a section. Under this
identification, the fiber edges are precisely the visible edges, while the
edges of $G^\prime_{\hor}$ are precisely the collapsed edges. Hence for a vertex \(P=(x,u)\in V(G)\), the fiber block
\(A_\infty^{H_r}(u)\) in Proposition~\ref{prop:DirectSumofCurvatureMatrix}
is \(A_\infty^{G_{\vis}}(P)\), and the horizontal block
\(A_\infty^{G^\prime_{\mathrm{hor}}}(P)\) is \(A_\infty^{G_{\col}}(P)\). The
decomposition \eqref{eq:curvature-direct-sum-complete} follows.
\end{proof}

\subsection{Cartesian product}
When we take $\sigma_{xy}=\operatorname{id}_{H_r}$ for every edge $xy$ of $G^\prime$, we get the Cartesian product $G^\prime\square H_r$ of $G^\prime$ and $H_r$, that is the trivial bundle. The canonical embedding $G^\prime\to G^\prime\square H_r, x\to (x,0)$ gives a section of $G^\prime\square H_r$. Then by Lemma~\ref{lem:uNotAdjacentToSigmau} and Propositon~\ref{prop:DirectSumofCurvatureMatrix} gives the following the theorem
\begin{theorem}
  For any $(x,u)\in V(G^\prime\square H_r)$,  The curvature matrix $A_{\infty}^{G^\prime\square H_r}(x,u)$ has the following direct sum decomposition
  \begin{equation}
    \label{equ:DirectDecompositonOfCartesianProduct}
    A_{\infty}^{G^\prime\square H_r}(x,u)=A_{\infty}^{G^\prime}(x)\oplus A_{\infty}^{H_r}(u).
  \end{equation}
\end{theorem}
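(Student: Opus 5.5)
The plan is to read this off Proposition~\ref{prop:DirectSumofCurvatureMatrix} with $F=H_r$ and all transition maps trivial, $\sigma_{xy}=\operatorname{id}_{H_r}$. Two things need checking: the triangle hypothesis of that proposition, and the identification of the horizontal subgraph $G^\prime_{\hor}$ with a copy of $G^\prime$.

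For the hypothesis, I would use the trivial bundle's section $s(x)=(x,0)$, which is a graph homomorphism with $\pi_1\circ s=\operatorname{id}$. Lemma~\ref{lem:uNotAdjacentToSigmau} then gives $u\not\sim\sigma_\gamma(u)$ for every closed walk $\gamma$, in particular for every triangle $xyzx$. More directly, $\sigma_{xyzx}=\operatorname{id}$, so $\sigma_{xyzx}(u)=u$, and $u\not\sim u$ because $H_r$ is simple. Either way, Proposition~\ref{prop:DirectSumofCurvatureMatrix} applies and gives
\[
A_\infty^{G^\prime\square H_r}(x,u)=A_\infty^{H_r}(u)\oplus A_\infty^{G^\prime_{\hor}}(x,u).
\]

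For the identification, when all $\sigma_{xy}$ are the identity, the horizontal edges are exactly $(x,u)\sim(y,u)$ with $x\sim_{G^\prime}y$. So $G^\prime_{\hor}$ is the disjoint union of the layers $G^\prime\times\{u\}$, each isomorphic to $G^\prime$ via $(y,u)\mapsto y$. The curvature matrix $A_\infty(P)$ depends only on the ball $B_2(P)$ in the graph concerned, through $\varepsilon_{ij}$, $\omega_{ij}$, $d^-$ and the degrees. Since the connected component of $(x,u)$ in $G^\prime_{\hor}$ is the layer $G^\prime\times\{u\}$, this ball is the image of $B_2^{G^\prime}(x)$. Hence $A_\infty^{G^\prime_{\hor}}(x,u)=A_\infty^{G^\prime}(x)$ under the labelling $Q_y=(y,u)\leftrightarrow y$. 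Substituting gives \eqref{equ:DirectDecompositonOfCartesianProduct}, up to the order of the summands, which is only a relabelling of the neighbours.

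The main obstacle, if any, is bookkeeping: confirming that taking a component does not alter the degree terms in \eqref{CKLP-diag}. It does not, because degrees within a connected component equal degrees in the whole graph $G^\prime_{\hor}$. Everything else is a direct specialization of the earlier proposition.
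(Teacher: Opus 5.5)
Your proposal is correct and follows the paper's own argument: the paper also views $G^\prime\square H_r$ as the trivial bundle, uses the section $x\mapsto(x,0)$ together with Lemma~\ref{lem:uNotAdjacentToSigmau} to verify the triangle hypothesis, and then invokes Proposition~\ref{prop:DirectSumofCurvatureMatrix}. Your direct observation that $\sigma_{xyzx}=\operatorname{id}$, and your explicit identification of $G^\prime_{\hor}$ with disjoint layers isomorphic to $G^\prime$, fill in steps the paper leaves implicit.
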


Since both $G^\prime$ and $H_r$ are $\CD(2,\infty)$, so is $G^\prime\square H_r$. By Theorem~\ref{thm:Bundle2Eigenvector}, $2$ is an eigenvalue of $G^\prime\square H_r$. By \eqref{equ:LichnerowiczEstimate}, we know $\lambda_1(G^\prime\square H_r)=2$. Thus Cartesian product is always a Lichnerowicz-sharp graph once $G^\prime$ is $\CD(2,\infty)$. 

\begin{figure}[htpb]
    \centering
    \begin{tikzpicture}[
  x={(1cm,0cm)},
  y={(0cm,1cm)},
  z={(0.62cm,0.38cm)},
  vertex/.style={circle, fill=black, inner sep=1.4pt},
  fiberedge/.style={red, line width=0.9pt, line cap=round},
  bundleedge/.style={black, line width=0.55pt, line cap=round}
]

\coordinate (a0) at (-1.35,  1.05, 0.00);
\coordinate (b0) at ( 1.35,  1.05, 0.00);
\coordinate (d0) at (-1.35, -1.05, 0.00);
\coordinate (c0) at ( 1.35, -1.05, 0.00);

\coordinate (fiberdir) at (0.78, 0.48, 0.90);
\coordinate (a1) at ($(a0)+(fiberdir)$);
\coordinate (b1) at ($(b0)+(fiberdir)$);
\coordinate (c1) at ($(c0)+(fiberdir)$);
\coordinate (d1) at ($(d0)+(fiberdir)$);

\foreach \u/\v in {a0/b0,b0/c0,c0/a0,a0/d0,b0/d0,c0/d0,a1/b1,b1/c1,c1/a1,a1/d1,b1/d1,c1/d1} {
  \draw[bundleedge] (\u) -- (\v);
}

\foreach \u/\v in {a0/a1,b0/b1,c0/c1,d0/d1} {
  \draw[fiberedge] (\u) -- (\v);
}

\foreach \v in {a0,a1,b0,b1,c0,c1,d0,d1} {
  \node[vertex] at (\v) {};
}

\end{tikzpicture}
    \caption{$K_2\square K_4$}
    \label{fig:K2-cartesian-K4}
\end{figure}

We can use the spectral information of $G^\prime\square_\sigma H_r$ to determine whether it is a trivial bundle. Before giving the theorem, we give the definition of isomorphism of two $F$-bundles over a base graph $G^\prime$.
\begin{definition}[{\cite[Defintation 3]{LL2024}}]
    Let $G^\prime\square_{\sigma_1}F$ and $G^\prime\square_{\sigma_2}F$ be two $F$-bundles over a base graph $G^\prime$. We say they are $F$-bundle isomorphic if there exists a graph isomorphism $\Psi: G^\prime\square_{\sigma_1}F\to G^\prime\square_{\sigma_2}F$ such that $\pi\circ \Psi=\pi$, where $\pi$ is the projection of $G^\prime\square_{\sigma_i}F$ to $G^\prime$ for $i=1,2$.
\end{definition}
We first give a splitting criterion for hypercube bundles over base graph with \(\lambda_1>2\).
\begin{theorem}
    \label{thm:BundleSplit}
     Let \(G^\prime\) be a connected graph with \(\lambda_1(G^\prime)>2\), and let \(X=G^\prime\square_{\sigma}H_r\). For \(1\leq m\leq r\), \(X\) is \(H_r\)-bundle isomorphic to \(B\square H_m\), where \(B\) is an \(H_{r-m}\)-bundle over \(G^\prime\), if and only if there exist \(m\) linearly independent \(2\)-eigenvectors \(f_1,\ldots,f_m\) such that $f_i(V(X))\subseteq \{-1,1\}$ for $1\leq i\leq m$.
\end{theorem}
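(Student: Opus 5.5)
The plan is to reduce both directions to Proposition~\ref{prop:EigenspaceOfBundle}, which identifies $E_2(X)$ with $W_1(H_r)^{\Gamma_{x_0}}$ via restriction to the fiber over $x_0$. Throughout, $2$-eigenvectors are horizontally parallel: $F(y,\sigma_{xy}(u))=F(x,u)$.

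\emph{Forward direction.} Suppose $\Psi: X\to B\square H_m$ is an $H_r$-bundle isomorphism, where $B=G^\prime\square_\beta H_{r-m}$.

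1. I view $B\square H_m$ as $G^\prime\square_{\beta\times\mathrm{id}}(H_{r-m}\square H_m)$. Here $H_{r-m}\square H_m\cong H_r$, and the transition maps $\beta_{xy}\times \mathrm{id}$ fix the last $m$ coordinates.
2. The coordinate characters $\chi_j$ for $j$ in the $H_m$-factor are then $\Gamma_{x_0}$-invariant elements of $W_1$ with values in $\{-1,1\}$.
3. Proposition~\ref{prop:EigenspaceOfBundle} extends each such $\chi_j$ to a $2$-eigenvector that is parallel along horizontal edges, so it still takes only the values $\pm1$.
4. Composing with $\Psi$ gives $m$ linearly independent $\{-1,1\}$-valued $2$-eigenvectors $f_1,\dots,f_m$ of $X$.

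\emph{Converse direction.} Let $f_1,\dots,f_m\in E_2(X)$ be $\{-1,1\}$-valued and linearly independent.

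1. Restricting to any fiber $\{x\}\times H_r$ gives an element of $W_1(H_r)$ with values in $\{\pm1\}$.
2. By Lemma~\ref{lem:Binary2Eigenvector}, $f_i(x,\cdot)=\epsilon_i(x)\chi_{c_i(x)}$ for a sign $\epsilon_i(x)$ and a coordinate $c_i(x)$.
3. The restriction map in Proposition~\ref{prop:EigenspaceOfBundle} is injective. So linear independence on $X$ gives linear independence of the restrictions at $x_0$, and hence the coordinates $c_1(x_0),\dots,c_m(x_0)$ are distinct. The same holds at every $x$, since every fiber restriction map is an isomorphism (change the base point).
4. Parallelism gives $f_i(x,\cdot)=f_i(y,\cdot)\circ\sigma_{xy}$. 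Writing $\sigma_{xy}(u)=a+P_\pi u$ as in \eqref{equ:HypercubeAutomorphism}, this means $\sigma_{xy}$ maps coordinate $c_i(x)$ to coordinate $c_i(y)$ and flips it exactly when $\epsilon_i(x)\neq\epsilon_i(y)$.

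\emph{Gauge change.} For each $x$, I choose $\tau_x\in\operatorname{Aut}(H_r)$ that sends coordinate $c_i(x)$ to coordinate $r-m+i$ and composes with the translation making $\epsilon_i(x)\chi_{c_i(x)}\circ\tau_x^{-1}=\chi_{r-m+i}$. This is possible because the $c_i(x)$ are distinct; the remaining $r-m$ coordinates are sent to the first $r-m$ in any fixed order. Define $\Psi(x,u)=(x,\tau_x(u))$. This is an $H_r$-bundle isomorphism from $X$ onto $G^\prime\square_{\sigma^\prime}H_r$ with $\sigma^\prime_{xy}=\tau_y\sigma_{xy}\tau_x^{-1}$. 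In the new gauge, $f_i\circ\Psi^{-1}(x,u)=\chi_{r-m+i}(u)$ for every $x$. Parallelism then says $\chi_{r-m+i}\circ\sigma^\prime_{xy}=\chi_{r-m+i}$ for all $i$.

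\emph{Main obstacle.} The remaining step is to show that an automorphism of $H_r$ preserving each of the last $m$ characters splits as $\beta_{xy}\times\mathrm{id}_{H_m}$. Since $\chi_j\circ\sigma=\pm\chi_{\pi^{-1}(j)}$, invariance of $\chi_{r-m+i}$ forces $\pi$ to fix $r-m+i$ and the translation $a$ to vanish in that coordinate. Consequently $\pi$ preserves $\{1,\dots,r-m\}$, and $\sigma^\prime_{xy}=\beta_{xy}\times\mathrm{id}$ with $\beta_{xy}\in\operatorname{Aut}(H_{r-m})$ satisfying $\beta_{yx}=\beta_{xy}^{-1}$. This yields $G^\prime\square_{\sigma^\prime}H_r=(G^\prime\square_\beta H_{r-m})\square H_m$, as required. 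This bookkeeping, together with carrying the consistent signs through the gauge change, is where I expect the care to be needed.
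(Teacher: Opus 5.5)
Your proposal is correct and follows the paper's proof essentially step by step. In the converse, you use Lemma~\ref{lem:Binary2Eigenvector} to get $f_i(x,\cdot)=\pm\chi_{n_i}$ with distinct $n_i$, make the same fibrewise gauge change $\theta_x$ with $\chi_{r-m+i}\circ\theta_x=f_i(x,\cdot)$ and $\tau_{xy}=\theta_y\sigma_{xy}\theta_x^{-1}$, and note, as the paper does, that $\chi_{r-m+i}\circ\tau_{xy}=\chi_{r-m+i}$ forces $\tau_{xy}$ to fix the last $m$ coordinates. The only cosmetic difference is in the forward direction: the paper simply defines $f_i(x,v,w)=\chi_i(w)$ and checks it directly, instead of going through $\Gamma_{x_0}$-invariance and the extension in Proposition~\ref{prop:EigenspaceOfBundle}.
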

\begin{proof}
Suppose first that
\[
    X\cong B\square H_m,
    \quad
    B=G^\prime\square_{\rho}H_{r-m},
\]
as an \(H_r\)-bundle over \(G^\prime\). Write a vertex of \(X\) as \((x,v,w)\), with
\(x\in V(G^\prime)\), \(v\in V(H_{r-m})\), and \(w\in V(H_m)\). For
\(1\leq i\leq m\), define $f_i(x,v,w)=\chi_i(w)$. Then \(L_Xf_i=-2f_i\). This
gives \(m\) linearly independent \(2\)-eigenvectors on \(X\) taking values in
\(\{-1,1\}\).

Conversely, assume that such \(f_1,\ldots,f_m\) exist. For every
\(x\in V(G^\prime)\), Proposition~\ref{prop:EigenspaceOfBundle} gives
\(f_i(x,\cdot)\in W_1(H_r)\). Since \(f_i(V(X))\subseteq\{-1,1\}\),
Lemma~\ref{lem:Binary2Eigenvector} implies that \(f_i(x,\cdot)\) is of the
form \(\pm\chi_{n_i}\) on every fibre. By Proposition~\ref{prop:EigenspaceOfBundle}
, the functions \(f_1(x,\cdot),\ldots,f_m(x,\cdot)\) are
linearly independent on $\{x\}\times H_r$ and thus, $n_i$ are pairwise distinct. 

For each \(x\in V(G^\prime)\), choose an automorphism
\(\theta_x\in\operatorname{Aut}(H_r)\) such that
\[
    \chi_{r-m+i}\circ\theta_x=f_i(x,\cdot),
    \quad 1\leq i\leq m.
\]
This is possible by the preceding paragraph and the description
\eqref{equ:HypercubeAutomorphism} of hypercube automorphisms. Then
\[
    \Theta:X\to G^\prime\square_{\tau}H_r,\quad
    \Theta(x,u)=(x,\theta_x(u)),
\]
is an \(H_r\)-bundle isomorphism, where
$
    \tau_{xy}=\theta_y\sigma_{xy}\theta_x^{-1}
$.
For every edge \(xy\) of \(G^\prime\), Proposition~\ref{prop:EigenspaceOfBundle}
gives $ f_i(y,\sigma_{xy}(u))=f_i(x,u)$.
Hence, for \(1\leq i\leq m\),
\[
    \chi_{r-m+i}\circ\tau_{xy}
    =
    \chi_{r-m+i}.
\]
Again using \eqref{equ:HypercubeAutomorphism}, we conclude that
\(\tau_{xy}\) fixes the last \(m\) coordinates and acts only on
the first \(r-m\) coordinates. Hence there exists
\(\rho_{xy}\in\operatorname{Aut}(H_{r-m})\) such that
\[
    \tau_{xy}(v,w)=(\rho_{xy}(v),w),
    \quad
    v\in V(H_{r-m}),\ w\in V(H_m).
\]
Therefore
\[
    G^\prime\square_{\tau}H_r
    \cong
    (G^\prime\square_{\rho}H_{r-m})\square H_m.
\]
Taking \(B=G^\prime\square_{\rho}H_{r-m}\), we get
\[
    X\cong B\square H_m
\]
as an \(H_r\)-bundle over \(G^\prime\).
\end{proof}

\begin{theorem}
\label{thm:TrivialBundle}
   Let $G^\prime$ be a connected graph with $\lambda_1(G^\prime)>2$ and $X=G^\prime\square_\sigma H_r$. The following conditions are equivalent:

    \begin{enumerate}[label=\textup{(\roman*)}]
        \item $X$ is an $H_r$-bundle isomorphic to $G^\prime\square H_r$;
        \item there exists $f\in E_2(X)$ such that $f\vert_{\{x\}\times H_r}$ is injective for every $x\in V(G^\prime)$;
        \item for any base point $x_0\in V(G^\prime)$, the group $\Gamma_{x_0}$ is trivial;
        \item $m_2(X)=r$.
    \end{enumerate}
\end{theorem}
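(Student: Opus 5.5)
The plan is to prove the cycle (i)$\Rightarrow$(iv)$\Rightarrow$(iii)$\Rightarrow$(i), and separately (i)$\Leftrightarrow$(ii). The main tool throughout is Proposition~\ref{prop:EigenspaceOfBundle}, which gives the isomorphism $E_2(X)\cong W_1(H_r)^{\Gamma_{x_0}}$.

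For (i)$\Rightarrow$(iv), an $H_r$-bundle isomorphism $\Psi$ with $\pi\circ\Psi=\pi$ is in particular a graph isomorphism, so $m_2(X)=m_2(G'\square H_r)$. For the trivial bundle every $\sigma_{xy}$ is the identity, so $\Gamma_{x_0}$ is trivial. Then Proposition~\ref{prop:EigenspaceOfBundle} gives $m_2=\dim W_1(H_r)=r$.

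For (iv)$\Rightarrow$(iii), the hypothesis $m_2(X)=r$ together with Proposition~\ref{prop:EigenspaceOfBundle} gives $W_1(H_r)^{\Gamma_{x_0}}=W_1(H_r)$. So every $\chi_i$ is fixed by every $\sigma_\gamma\in\Gamma_{x_0}$. Write $\sigma_\gamma(u)=a+\pi(u)$ as in \eqref{equ:HypercubeAutomorphism}. Then $\chi_i\circ\sigma_\gamma=(-1)^{a_i}\chi_{\pi^{-1}(i)}$, and by linear independence of the characters this forces $\pi=\mathrm{id}$ and $a=0$. Hence $\sigma_\gamma=\mathrm{id}$. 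Since $x_0$ is arbitrary, and in any case the holonomy groups at different base points are conjugate via paths, (iii) holds for every base point.

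For (iii)$\Rightarrow$(i), fix $x_0$ and paths $p_x$ from $x_0$ to $x$. Define $\theta_x=\sigma_{p_x}^{-1}$; this is independent of the path chosen because $\Gamma_{x_0}$ is trivial. Set $\Theta(x,u)=(x,\theta_x(u))$. Then $\tau_{xy}=\theta_y\sigma_{xy}\theta_x^{-1}=\sigma_{p_y}^{-1}\sigma_{xy}\sigma_{p_x}=\sigma_{p_x*xy*p_y^{-1}}$, which is the holonomy of a closed walk, suitably conjugated. It is therefore the identity, and $\Theta$ is a bundle isomorphism onto $G'\square H_r$, exactly as in the proof of Theorem~\ref{thm:BundleSplit}.

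For (i)$\Rightarrow$(ii), transport the eigenfunction $F(x,u)=\sum_i 2^{i}(-1)^{u_i}$ on $G'\square H_r$. It lies in $W_1$ on each fibre and is constant along horizontal edges, so it is a $2$-eigenfunction. It is injective on each fibre because binary expansions with signs are distinct. Its pullback under the bundle isomorphism (which maps fibres to fibres) gives (ii).

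For (ii)$\Rightarrow$(iii), let $f$ be as in (ii). By Proposition~\ref{prop:EigenspaceOfBundle}, $f_{x_0}=f|_{\{x_0\}\times H_r}$ satisfies $f_{x_0}\circ\sigma_\gamma=f_{x_0}$ for every $\sigma_\gamma\in\Gamma_{x_0}$. Injectivity of $f_{x_0}$ then forces $\sigma_\gamma(u)=u$ for all $u$. So $\Gamma_{x_0}$ is trivial for every $x_0$.

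The only mildly delicate step is the path-independence and base-point bookkeeping in (iii)$\Rightarrow$(i). That bookkeeping is the same as in Theorem~\ref{thm:Bundle2Eigenvector}, so I would mirror that argument.
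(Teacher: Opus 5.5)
Your proof is correct, but it closes the cycle differently from the paper. The paper runs (i)$\Rightarrow$(ii)$\Rightarrow$(iii)$\Rightarrow$(iv)$\Rightarrow$(i). For the last step it takes the $r$ eigenfunctions $F_i$ with $F_i|_{\{x_0\}\times H_r}=\chi_i$, notes that they are $\{-1,1\}$-valued and linearly independent, and invokes Theorem~\ref{thm:BundleSplit} with $m=r$. The complementary factor is then an $H_0$-bundle, i.e.\ $G'$ itself.

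You instead go from (iv) to (iii) by observing that if an automorphism $u\mapsto a+\pi(u)$ fixes every $\chi_i$, it must be the identity. You then prove (iii)$\Rightarrow$(i) by the explicit gauge change $\theta_x=\sigma_{p_x}^{-1}$. Here $\tau_{xy}=\sigma_{p_x*xy*p_y^{-1}}$ is exactly the holonomy of a closed walk at $x_0$, so no conjugation is needed, and it is therefore the identity.

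Your route is more elementary, and it separates the two ingredients cleanly. The equivalence (i)$\Leftrightarrow$(iii) is a purely combinatorial fact about flat bundles: it needs neither $\lambda_1(G')>2$ nor anything specific to hypercubes. The spectral hypothesis enters only through Proposition~\ref{prop:EigenspaceOfBundle}, which ties (ii) and (iv) to the holonomy.

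The paper's route avoids a separate trivialization argument by reusing the splitting machinery, whose proof contains essentially the same gauge change $\theta_x$. This keeps it uniform with Theorem~\ref{thm:BundleSplit} and the later splitting results, but it is more roundabout for this particular statement.
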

\begin{proof}
Assume that $X$ is an $H_r$-bundle isomorphic to $G^\prime\square H_r$. Let $\Psi: X\to G^\prime\square H_r$ be the $H_r$-bundle isomorphism. Consider the $2$-eigenfunction $f$ on $G^\prime\square H_r$ defined by
\[
    f(x,u)=\sum_{i=1}^r 2^{i-1}(-1)^{u_i}.
\]
Then $f\circ \Psi$ is a $2$-eigenfunction of $X$ and its restriction to $\{x\}\times H_r$ is injective for every $x\in V(G^\prime)$. This proves \textup{(i)} implies \textup{(ii)}.

Next, we prove \textup{(ii)} implies \textup{(iii)}. Let \(f\in E_2(X)\) be such that
each restriction \(f|_{\{x\}\times H_r}\) is injective. In particular $h(u):=f(x_0,u)$ is injective on \(V(H_r)\) for any fixed base point $x_0$. By Proposition~\ref{prop:EigenspaceOfBundle}, for every $\sigma_{\gamma}\in \Gamma_{x_0}$, $h\circ\sigma_\gamma=h$. Since \(h\) is injective, this implies $\sigma_\gamma=\operatorname{id}_{H_r}$. Since \(x_0\) was arbitrary, \(\Gamma_{x_0}\) is trivial for every base point \(x_0\).

If \textup{(iii)} holds, then Proposition~\ref{prop:EigenspaceOfBundle} gives
\begin{equation*}
    m_2(X)=\dim W_1(H_r)^{\Gamma_{x_0}}=\dim W_1(H_r)=r.
\end{equation*}
This proves \textup{(iii)} implies \textup{(iv)}.

It remains to prove \textup{(iv)} implies \textup{(i)}. By Proposition~\ref{prop:EigenspaceOfBundle},
\[
    \dim W_1(H_r)^{\Gamma_{x_0}}
    =
    m_2(X)
    =
    r.
\]
Since \(\dim W_1(H_r)=r\), we have \(W_1(H_r)^{\Gamma_{x_0}}=W_1(H_r)\).
For \(1\leq i\leq r\), let \(F_i\in E_2(X)\) be the eigenfunction whose
restriction to \(\{x_0\}\times H_r\) is \(\chi_i\). Along any path \(p_x\) from \(x_0\) to \(x\), Proposition~\ref{prop:EigenspaceOfBundle}
also gives
\[
    F_i(x,u)=\chi_i(\sigma_{p_x}^{-1}(u)).
\]
Thus the functions \(F_i\) take values in \(\{-1,1\}\) on every fibre and remain
linearly independent. Applying
Theorem~\ref{thm:BundleSplit} with \(m=r\), we get
\[
    X\cong B\square H_r
\]
as an \(H_r\)-bundle over \(G^\prime\), where \(B\) is an \(H_0\)-bundle over
\(G^\prime\). Since \(H_0\) is the one-vertex graph, \(B\cong G^\prime\).
Thus \(X\) is \(H_r\)-bundle isomorphic to \(G^\prime\square H_r\). This proves
\textup{(iv)} implies \textup{(i)}.
\end{proof}
\begin{remark}
Let $G$ be a regular Lichnerowicz-sharp graph and $H_r$ be one connected component of $G_{\vis}$. Applying \textup{(iv)} in Theorem~\ref{thm:TrivialBundle} to $G$, we know if $G$ is not isomorphic to the Cartesian product of $G^\prime$ and $H_r$, then the multiplicity of the $2$-eigenvalue is strictly smaller than the degree $r$ of $G_{\vis}$.
\end{remark}
\subsection{Locally cubical graph}
We call a graph \(G\) locally cubical if for any two distinct neighbours
\(y,z\in S_1(x)\), the triple \((x;y,z)\) satisfies exact square incidence. When $G$ is locally cubical, it is automatically regular.
\begin{lemma}
    Let $G$ be a connected locally cubical graph. Then $G$ is regular.
\end{lemma}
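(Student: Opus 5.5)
The plan is to show that adjacent vertices have equal degree; connectivity then gives regularity. This mirrors the transport argument of Lemma~\ref{lem:visible-star-transfer} and Lemma~\ref{lem:TransportMap}, but uses local cubicality at every vertex rather than only along visible edges.

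Fix an edge $xy$ and assume $\deg(x)\geq 2$. The case where one endpoint has degree $1$ is treated separately at the end. For each $z\in S_1(x)\setminus\{y\}$, apply exact square incidence to the triple $(x;y,z)$. It gives a unique completion $w_z\in S_2(x)$ with $w_z\sim y$, $w_z\sim z$, and $S_1(x)\cap S_1(w_z)=\{y,z\}$. Since $w_z\notin S_1(x)\cup\{x\}$, we have $w_z\in S_1(y)\setminus\{x\}$. This defines a map
\[
T:S_1(x)\setminus\{y\}\to S_1(y)\setminus\{x\},\qquad z\mapsto w_z.
\]

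$T$ is injective. If $w_z=w_{z'}$ with $z\neq z'$, then $\{y,z,z'\}\subseteq S_1(x)\cap S_1(w_z)$, which contradicts condition (iii) of exact square incidence. Hence $\deg(x)-1\leq \deg(y)-1$.

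For the reverse inequality, run the same construction at $y$: if $\deg(y)\geq 2$, exact square incidence for $(y;x,u)$ with $u\in S_1(y)\setminus\{x\}$ gives an injection $S_1(y)\setminus\{x\}\to S_1(x)\setminus\{y\}$. Therefore $\deg(x)=\deg(y)$.

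It remains to handle degree $1$. If $\deg(x)=1$ and $\deg(y)\geq 2$, then the triple $(y;x,u)$ requires a $w\sim x$ with $w\neq y$, which is impossible. So an endpoint of degree $1$ forces the other endpoint to have degree $1$ as well, and then $G\cong K_2$. In all cases, adjacent vertices have equal degree, and since $G$ is connected, the degree is constant.

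The only delicate point is checking that $w_z$ really lies in $S_1(y)\setminus\{x\}$, that is, $w_z\neq x$. This holds because $w_z\in S_2(x)$. Together with the degree-$1$ edge case, this is the main thing to get right; the rest follows immediately from the definition.
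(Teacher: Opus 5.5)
Your proof is correct and matches the paper's argument: the paper also sends each $z\in S_1(x)\setminus\{y\}$ to the exact square completion of $(x;y,z)$, notes that it lies in $S_1(y)\setminus\{x\}$, and uses condition (iii) to get injectivity. The only difference is that the paper phrases this as a contradiction with $\deg(x)>\deg(y)$ instead of two symmetric injections. Your separate degree-$1$ discussion is harmless but unnecessary, because the same injection already rules out $\deg(x)\geq 2$ with $\deg(y)=1$ (its target set would be empty).
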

\begin{proof}
Suppose, for contradiction, there exists an edge $xy$ such that $\deg x > \deg y$. For each $z\in S_1(x)\setminus\{y\}$, the triple
$(x;y,z)$ satisfies exact square incidence. Let $T(z)$ be its unique exact
square completion. Then $T(z)\sim y$, and $T(z)\neq x$, so
$T(z)\in S_1(y)\setminus\{x\}$. Moreover, $S_1(x)\cap S_1(T(z))=\{y,z\}$.
We claim the map
\[
    T:S_1(x)\setminus\{y\}\to S_1(y)\setminus\{x\}
\]
is injective. If $T(z_1)=T(z_2)$, then
$\{y,z_1\}=S_1(x)\cap S_1(T(z_1))
=S_1(x)\cap S_1(T(z_2))=\{y,z_2\}$, and so $z_1=z_2$. Therefore
$\deg(x)\leq \deg(y)$. This is a contradiction. Since $G$ is connected, $G$ is regular.
\end{proof}
\begin{remark}
    \label{rem:LocallyCubical}
    When connected graph $G$ is locally cubical, direct computation gives for any $x$, $\varepsilon_{ij}=0$ and $\omega_{ij}=\frac{1}{2}$. Thus $A_\infty(x)=2I_{\deg G}$.
\end{remark}

Next, we characterize locally cubical graphs through the curvature matrix.
\begin{theorem}
\label{thm:CharacterizationOfLocallyCubical}
    Let $G$ be a connected graph. If, for every $x\in V(G)$, $A_\infty(x)=2I_{\deg(x)}$, then $G$ is a regular locally cubical graph.
\end{theorem}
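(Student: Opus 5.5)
The plan is to read off the local structure at each vertex $x$ directly from the entries of $A_\infty(x)$, using the formulas \eqref{CKLP-off}, \eqref{CKLP-diag} and \eqref{CKLP-row}. I will use the hypothesis $A_\infty=2I$ at $x$ and, crucially, also at its neighbours.

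\emph{Step 1: no triangles, and regularity.} The off-diagonal entries vanish, so $1-2\varepsilon_{ij}-2\omega_{ij}=0$, that is, $\varepsilon_{ij}+\omega_{ij}=\tfrac12$. Since $\omega_{ij}\ge0$, the case $\varepsilon_{ij}=1$ is impossible. Hence $\varepsilon_{ij}=0$ and $\omega_{ij}=\tfrac12$ for all $i\ne j$. In particular every $t_i=0$. Applying $A_\infty(x)\one=2\one$ together with \eqref{CKLP-row} gives $\tfrac12 t_i+\tfrac{\deg(x)-\deg(y_i)}2=0$, so $\deg(y_i)=\deg(x)$. By connectivity, $G$ is $d$-regular. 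It also follows that $y_i\not\sim y_j$, and every neighbour of $y_i$ other than $x$ lies in $S_2(x)$.

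\emph{Step 2: every vertex of $S_2(x)$ has back-degree at least $2$.} Suppose $z\in S_2(x)$ has $d_x^-(z)=1$, with $S_1(x)\cap S_1(z)=\{y_i\}$. Look at the curvature matrix at $y_i$ and the pair of neighbours $x,z$ of $y_i$. By Step 1 applied at $y_i$, we have $\omega^{(y_i)}_{x,z}=\tfrac12>0$, so $x$ and $z$ must have a common neighbour other than $y_i$. But the common neighbours of $x$ and $z$ are exactly $S_1(x)\cap S_1(z)=\{y_i\}$, which is a contradiction. Hence $k_z:=d_x^-(z)\ge2$ for every $z\in S_2(x)$.

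\emph{Step 3: counting.} I compute $\sum_{i<j}\omega_{ij}$ in two ways. Directly from Step 1 it equals $\tfrac12\binom d2$. Grouping by $z\in S_2(x)$, each such $z$ contributes $\binom{k_z}{2}\cdot\tfrac1{k_z}=\tfrac{k_z-1}2$, so $\sum_{z\in S_2(x)}(k_z-1)=\binom d2$. On the other hand, counting edges between $S_1(x)$ and $S_2(x)$ using Step 1 gives $\sum_z k_z=d(d-1)$. Subtracting yields $|S_2(x)|=\binom d2$, and therefore $\sum_z(k_z-2)=0$. Combined with $k_z\ge2$ from Step 2, this forces $k_z=2$ for all $z$.

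\emph{Step 4: conclusion.} Now $\omega_{ij}=\tfrac12\cdot\#\{z\in S_2(x): z\sim y_i,\,z\sim y_j\}=\tfrac12$. So each pair $y_i,y_j$ has exactly one common neighbour $w$ in $S_2(x)$, and $S_1(x)\cap S_1(w)=\{y_i,y_j\}$ because $k_w=2$. Together with $y_i\not\sim y_j$, this is exactly exact square incidence for $(x;y_i,y_j)$. Hence $G$ is locally cubical, and it is regular by Step 1.

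The main obstacle is Step 2. The constraints at $x$ alone only give $\sum_z(k_z-2)=0$, which does not exclude mixing back-degree $1$ with back-degree $\ge3$. The key idea is to use the hypothesis at the neighbour $y_i$ to rule out back-degree $1$.
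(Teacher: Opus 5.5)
Your proof is correct and follows essentially the same route as the paper. Vanishing off-diagonal entries force $\varepsilon_{ij}=0$ and $\omega_{ij}=\tfrac12$, which gives regularity. A double count then yields $|S_2(x)|=\binom d2$ with average back-degree $2$, and back-degree $1$ is excluded using $\omega^{y}_{x,z}=\tfrac12$ at the neighbour $y$. The only cosmetic differences are that you obtain $\deg(y_i)=\deg(x)$ from the row-sum identity \eqref{CKLP-row} instead of the diagonal entry \eqref{CKLP-diag}, and that you exclude back-degree $1$ before doing the count rather than after.
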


\begin{proof}
Fix a vertex \(x\in V(G)\), and write $S_1(x)=\{y_1,\dots,y_d\}$.

Since \(A_\infty(x)=2I_{\deg(x)}\), by \eqref{CKLP-off}, $ 1-2\varepsilon_{ij}-2\omega_{ij}=0$. If \(\varepsilon_{ij}=1\), then
\[
    1-2\varepsilon_{ij}-2\omega_{ij}
    =
    -1-2\omega_{ij}<0,
\]
which is impossible. Therefore $\varepsilon_{ij}=0$ and $\omega_{ij}=\frac12$.
Thus, for every pair of distinct neighbours \(y_i,y_j\in S_1(x)\), we have $y_i\not\sim y_j$. In particular, $G[S_1(x)]$ has no edge.

We next show that adjacent vertices have the same degree. \eqref{CKLP-diag} gives
\[
    (A_\infty(x))_{ii}
    =
    3-\frac{\deg(x)+\deg(y_i)}2+\frac52t_i+2\Omega_i.
\]
Since \(t_i=0\) and $\omega_{ij}=\frac{1}{2}$ for every $i\neq j$, we have $\Omega_i=\frac{d-1}{2}$.
Using \((A_\infty(x))_{ii}=2\), we obtain
\[
    2
    =
    3-\frac{d+\deg(y_i)}2+d-1
    =
    2+\frac{d-\deg(y_i)}2.
\]
Hence $\deg(y_i)=\deg(x)$. Because \(G\) is connected, it follows that \(G\) is regular. Write the degree of $G$ as $d$.

It remains to prove the exact square incidence. If \(d\leq 1\), then there are no two distinct
neighbours at any vertex, so exact square incidence automatically holds. Now assume \(d\geq 2\).

Fix again a vertex \(x\). Since \(G\) is \(d\)-regular and \(G[S_1(x)]\) has no edges, every
\(y_i\in S_1(x)\) has exactly \(d-1\) neighbours in \(S_2(x)\). Hence
\begin{equation}
\label{equ:SumOfBackDegree}
        \sum_{z\in S_2(x)} d^{-}_x(z)=d(d-1).
\end{equation}
On the other hand, since \(\omega_{ij}=\frac12\) for every \(i<j\), we have $\sum_{i<j}\omega_{ij}=\binom d2\frac12$.
Therefore
\[
    \sum_{i<j}\omega_{ij}
    =
    \sum_{z\in S_2(x)}
    \binom{d_{x}^{-}(z)}{2}\frac1{d_x^{-}(z)}
    =
    \sum_{z\in S_2(x)}
    \frac{d_x^{-}(z)-1}{2}.
\]
Hence $\sum_{z\in S_2(x)}(d_x^{-}(z)-1)=\binom d2$.
Subtracting this from \eqref{equ:SumOfBackDegree},
we get $\card{S_2(x)}=\binom{d}{2}$. Thus the average value of \(d_x^{-}(z)\) over \(z\in S_2(x)\) is
\begin{equation}
\label{equ:AverageOfBackDegree}
      \frac{\sum_{z\in S_2(x)}d_x^{-}(z)}{|S_2(x)|}
    =
    \frac{d(d-1)}{\binom d2}
    =
    2.
\end{equation}

We now prove that every \(z\in S_2(x)\) satisfies \(d_x^{-}(z)\geq 2\). Suppose not. Since
\(z\in S_2(x)\), we have \(d_x^{-}(z)\geq 1\). Thus assume there exists $z_0$ with $d_x^{-}(z_0)=1$. Write $S_1(x)\cap S_1(z_0)=\{y\}$. Since $A_\infty(y)=2I_{\deg(y)}$, \eqref{CKLP-off} at the vertex \(y\) gives $\omega^{\,y}_{x,z_0}=\frac12$.
In particular, there exists a vertex \(b\in S_2(y)\) such that $b\sim x,b\sim z_0$. Thus $b\in S_1(x)\cap S_1(z_0)$.
Since \(b\in S_2(y)\), we have \(b\neq y\). This contradicts $S_1(x)\cap S_1(z_0)=\{y\}$.
Therefore $d_x^{-}(z)\geq 2$, for every $z\in S_2(x)$. By \eqref{equ:AverageOfBackDegree}, we conclude $d_x^{-}(z)=2$ for every $z\in S_2(x)$.

Finally, take two distinct neighbours \(y_i,y_j\in S_1(x)\). Since $\omega_{ij}=\frac{1}{2}$ and every $z\in S_2(x)$ has $d_x^{-}(z)=2$,  there is exactly one such vertex \(w\in S_2(x)\) with $w\sim y_i,w\sim y_j$. Moreover,
\[
S_1(x)\cap S_1(w)=\{y_i,y_j\}.
\]
Thus every triple $(x;y_i,y_j)$ satisfies exact square incidence and \(G\) is locally cubical.
\end{proof}

The next theorem determines all $G^\prime\square_\sigma H_r$ satisfying $\CD(2,\infty)$ when $G^\prime$ is locally cubical.
\begin{theorem}
    Let $G^\prime$ be a connected locally cubical graph. $G^\prime\square_\sigma H_r$ satisfies $\CD(2,\infty)$ if and only if $G^\prime\square_\sigma H_r$ itself is locally cubical.
\end{theorem}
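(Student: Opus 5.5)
The plan is to reduce both directions to the curvature matrix, using the direct-sum decomposition of Proposition~\ref{prop:DirectSumofCurvatureMatrix} together with the characterization in Theorem~\ref{thm:CharacterizationOfLocallyCubical}.

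Write $X=G^\prime\square_\sigma H_r$. The ``if'' direction is immediate. If $X$ is locally cubical, Remark~\ref{rem:LocallyCubical} gives $A_\infty^X(P)=2I$ at every vertex, so $\mathcal K_\infty\equiv 2$ and $X$ satisfies $\CD(2,\infty)$.

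For the ``only if'' direction, I first note that a locally cubical $G^\prime$ has no triangles, because condition (i) of exact square incidence says neighbours of $x$ are pairwise nonadjacent. So the triangle hypothesis of Proposition~\ref{prop:DirectSumofCurvatureMatrix} holds vacuously, and for every $P=(x,u)$
\[
A_\infty^X(P)=A_\infty^{H_r}(u)\oplus A_\infty^{G^\prime_{\hor}}(P)=2I_r\oplus A_\infty^{G^\prime_{\hor}}(P).
\]
It then suffices to show that $\CD(2,\infty)$ forces $A_\infty^{G^\prime_{\hor}}(P)=2I$. Once that holds, $A_\infty^X(P)=2I$ at every vertex, and Theorem~\ref{thm:CharacterizationOfLocallyCubical} yields that $X$ is locally cubical.

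The core step is to compute the horizontal block explicitly. Let $d^\prime$ be the degree of $G^\prime$, which is regular by the lemma on locally cubical graphs. The horizontal neighbours of $P$ are $Q_y=(y,\sigma_{xy}(u))$ for $y\sim x$. For $y\neq z$, local cubicality of $G^\prime$ gives a unique $w$ with $S_1(x)\cap S_1(w)=\{y,z\}$, and the only candidates for common horizontal neighbours of $Q_y,Q_z$ in $S_2(P)$ lie over $w$. These are $(w,\sigma_{yw}\sigma_{xy}(u))$ and $(w,\sigma_{zw}\sigma_{xz}(u))$, and they coincide exactly when the holonomy $\sigma_{xywzx}$ of the square fixes $u$. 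In that case the common vertex has back-degree $2$ in $G^\prime_{\hor}$, because its base $w$ has only the two back-neighbours $y,z$ over $x$. Thus $\varepsilon=0$, and $\omega_{yz}$ equals $1/2$ or $0$ according to whether the square holonomy fixes $u$.

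Call the pair $\{y,z\}$ a \emph{defect} if the holonomy does not fix $u$, and let $N_y$ be the number of defects containing $y$. Then \eqref{CKLP-off} and \eqref{CKLP-diag} give
\[
(A_\infty^{G^\prime_{\hor}})_{yy}=3-d^\prime+2\cdot\tfrac{d^\prime-1-N_y}{2}=2-N_y,
\]
and off-diagonal entries equal to $1$ on defects and $0$ otherwise. Hence $A_\infty^{G^\prime_{\hor}}(P)=2I-L_{\mathrm{def}}$, where $L_{\mathrm{def}}$ is the Laplacian of the defect graph on $S_1^{G^\prime_{\hor}}(P)$. Since $L_{\mathrm{def}}\succeq0$ has a positive eigenvalue whenever the defect graph has an edge, the condition $A_\infty^X(P)\succeq 2I$ forces the defect graph to be empty. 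Therefore $A_\infty^{G^\prime_{\hor}}(P)=2I$.

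The main obstacle is the bookkeeping in this computation. I need to check that no other vertices of $S_2(P)$ contribute, for instance a vertex over $w$ reached through fiber edges, or one lying over some other vertex of $S_2^{G^\prime}(x)$. The same kind of check is needed to confirm that back-degrees computed in $G^\prime_{\hor}$ agree with those in $X$; the proof of Proposition~\ref{prop:DirectSumofCurvatureMatrix} already contains exactly such arguments. I would reuse them here, with triangle-freeness of $G^\prime$ and exact square incidence in $G^\prime$ excluding the alternatives.
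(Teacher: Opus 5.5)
Your proposal is correct and follows essentially the paper's route: the direct-sum decomposition of Proposition~\ref{prop:DirectSumofCurvatureMatrix} (triangle-freeness of $G^\prime$ makes its hypothesis vacuous), then showing every square holonomy fixes $u$ so that the horizontal block equals $2I$, and finally Theorem~\ref{thm:CharacterizationOfLocallyCubical}. The only difference is that the paper uses just the diagonal entry, which is at most $1$ whenever a defect exists (matching your $2-N_y$), so your identification $A_\infty^{G^\prime_{\hor}}(P)=2I-L_{\mathrm{def}}$ is a slightly more explicit form of the same argument. The bookkeeping you flag as the main obstacle is already handled by the proposition's equality of blocks, and inside $G^\prime_{\hor}$ there are no fiber edges to check.
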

\begin{proof}
Let $X=G^\prime\square_\sigma H_r$. When $X$ is locally cubical, by the computation in Remark~\ref{rem:LocallyCubical}, $X$ is $\CD(2,\infty)$.

Now assume $X$ is $\CD(2,\infty)$. Let $d$ denote the degree of $G^\prime$. Since $G^\prime$ is triangle-free, by Proposition~\ref{prop:DirectSumofCurvatureMatrix}, for any $P=(x,u)\in V(X)$, we have
\begin{equation*}
    A_\infty^X(P)=A_\infty^{G^\prime_{\hor}}(P)\oplus A_\infty^{H_r}(u).
\end{equation*}
Since $X$ is $\CD(2,\infty)$, $A_\infty^{G^\prime_{\hor}}(P)\succeq 2I_{d}$.

Let $Q=(y,\sigma_{xy}(u)), W=(z,\sigma_{xz}(u))$ be two distinct neighbours of $P$ with $x\sim y, x\sim z$.  Let $w$ be the exact square completion for $(x;y,z)$ and
\begin{equation*}
    R_y:=(w,\sigma_{yw}\sigma_{xy}(u)),\ R_z:=(w,\sigma_{zw}\sigma_{xz}(u)).
\end{equation*}
Since $S_1^{G^\prime}(x)\cap S_1^{G^\prime}(w)=\{y,z\}$, the only possible vertices adjacent to both $P$ and $R_y$ in $G^\prime_{\hor}$ are $Q$ and $W$. If $R_y=R_z$, then $d_{P,G^\prime_{\hor}}^{-}(R_y)=2$. If $R_y\ne R_z$, then $d_{P,G^\prime_{\hor}}^{-}(R_y)=1$. We claim that \(R_y=R_z\) for all $Q,W$. Assume, for contradiction, there exists a pair $Q,W$ such that $R_y\ne R_z$. Then $d_{P,G^\prime_{\hor}}^{-}(R_y)=1$.

 Since $G^\prime$ is triangle-free, so is $G^\prime_{\hor}$.  Then $Q$ has,
besides \(P\), exactly $d-1$ neighbors in $S_2^{G^\prime_{\hor}}(P)$. Thus
\[
\sum_{\substack{\xi\in S_2^{G^\prime_{\hor}}(P)\\ \xi\sim Q}}
\frac1{d_{P,G^\prime_{\hor}}^{-}(\xi)}
\ge
1+\frac{d-2}{2}
=
\frac d2.
\]
By \eqref{CKLP-diag}, we obtain
\[
\begin{aligned}
(A_\infty^{G^\prime_{\hor}}(P))_{QQ}
&=
3-d+2(d-1)-
2\sum_{\substack{\xi\in S_2^{G^\prime_{\hor}}(P)\\ \xi\sim Q}}
\frac1{d_{P,G^\prime_{\hor}}^{-}(\xi)}\leq 1.
\end{aligned}
\]
Since $X$ is $\CD(2,\infty)$, we have $(A_\infty^{G^\prime_{\hor}}(P))_{QQ}\geq 2$. It is a contradiction. Therefore $R_y=R_z$. Then $R_y$ is the exact square completion for $(P;Q,W)$. Thus the horizontal graph \(G^\prime_{\hor}\) is locally cubical at \(P\), and
hence \(A_\infty^{G^\prime_{\hor}}(P)=2I_d\). Since \(A_\infty^{H_r}(u)=2I_r\),
\eqref{eq:horizontal-fibre-direct-sum-complete} gives \(A_\infty^X(P)=2I_{d+r}\). By
Theorem~\ref{thm:CharacterizationOfLocallyCubical}, \(X\) is locally cubical.
\end{proof}

\begin{example}
\label{ex:H-2n-plus-5-quotient} Consider $H_{2n+5}$ for $n\geq 1$. We write a vertex as
\[
x=(x_1,x_2,\ldots,x_{2n},u),
\quad
u=(u_1,\ldots,u_5)\in\mathbb F_2^5.
\]
Define a graph automorphism $\gamma:\mathbb F_2^{2n+5}\to \mathbb F_2^{2n+5}$ by
\[
\gamma(x_1,x_2,x_3,x_4,\ldots,x_{2n-1},x_{2n},u)
=
(x_2,x_1,x_4,x_3,\ldots,x_{2n},x_{2n-1},u+\mathbf 1),
\]
where $\mathbf 1=(1,1,1,1,1)\in\mathbb F_2^5$. Define the quotient graph $G_n:=H_{2n+5}/\langle\gamma\rangle $. We denote the orbit of \(x\in V(H_{2n+5})\) by \([x]\).

Since \(\gamma^2=\mathrm{id}\), the group \(\langle\gamma\rangle\) has order \(2\).
Moreover, \(\gamma\) has no fixed point. Hence
\[
|V(G_n)|
=
\frac{|V(H_{2n+5})|}{2}
=
2^{2n+4}.
\]

Since $d_{H_{2n+5}}(x,\gamma x)\ge 5$,  the quotient map $\pi:H_{2n+5}\to G_n$ restricts to an isomorphism from \(B_2(x)\) onto \(B_2([x])\). Since the hypercube is locally cubical, the same is true for \(G_n\). Thus $G_n$ is $\CD(2,\infty)$. Since both $G_n$ and $H_{2n+5}$ are $2n+5$-regular graph, but $\card{V(G_n)}\neq  \card{V(H_{2n+5})}$, the two graphs are not isomorphic.

We now compute the first nonzero Laplacian eigenvalue. Any $2$-eigenfunction of $G_n$ can be lifted to a $2$-eigenfunction  of $H_{2n+5}$. Conversely, every $2$-eigenfunction $f$ with $f\circ\gamma=f$ gives a $2$-eigenfunction of $G_n$. Thus
\begin{equation*}
    E_2(G_n)\cong \{f:f\in E_2(H_{2n+5}),\  f\circ \gamma=f\}.
\end{equation*}
Let $f=\sum_{i=1}^{2n+5}a_i\chi_i$. Then
\begin{equation*}
    f\circ \gamma=\sum_{k=1}^n\bigl(a_{2k-1}\chi_{2k}+a_{2k}\chi_{2k-1}\bigr)-\sum_{i=1}^5a_{2n+i}\chi_{2n+i}.
\end{equation*}
Therefore
\begin{equation}
\label{equ:EigenspaceOfG_n}
    \{f:f\in E_2(H_{2n+5}),\  f\circ \gamma=f\}=\operatorname{span}
\left\{
\chi_1+\chi_2,\,
\chi_3+\chi_4,\,
\ldots,\,
\chi_{2n-1}+\chi_{2n}
\right\}.
\end{equation}
Thus $m_2(G_n)=n\geq 1$. Hence $\lambda_1(G_n)\leq 2$. Then \eqref{equ:LichnerowiczEstimate} gives $\lambda_1(G_n)=2$. So $G_n$ is Lichnerowicz-sharp.

For $[x]\in V(G_n)$, by \eqref{equ:EigenspaceOfG_n}, the collapsed edges of $G_n$ are $[x]\sim[x+e_i]$ for $2n+1\leq i\leq 2n+5$ and the visible edges of $G_n$ are $[x]\sim [x+e_i]$ for $1\leq i\leq 2n$. Hence $(G_n)_{\vis}$ is the disjoint union of $2^{4}$ copies of $H_{2n}$ and $(G_n)^\prime$ is the quotient graph $\mathbb{F}_2^5/\langle u\sim u+\mathbf{1} \rangle$. $\mathbb{F}_2^5/\langle u\sim u+\mathbf{1} \rangle$ is also locally cubical. 

At last, we prove that $G_n$ does not admit a nontrivial Cartesian product
decomposition. 
\begin{proof}
    Suppose, to the contrary, that
\[
G_n\cong A\square B,
\]
where $A$ and $B$ are connected graphs with at least two vertices.  Since
$G_n$ is regular, both $A$ and $B$ are regular. We view $G_n$ as a trivla $B$-bundle over $A$. By Propostion~\ref{prop:DirectSumofCurvatureMatrix},
\begin{equation*}
    A^{G_n}_\infty(a,b)=A^{A}_\infty(a)\oplus A^B_\infty(b).
\end{equation*}
We get $A^{A}_\infty(a)=2I_{\deg(A)}, A^B_\infty(b)=2I_{\deg(B)}$. By Theorem~\ref{thm:CharacterizationOfLocallyCubical}, both $A$ and $B$ are locally cubical.

The Lichnerowicz estimate therefore gives
$\lambda_1(A),\lambda_1(B)\geq2$, and the product formula for the Laplacian
gives
\[
E_2(G_n)
=\bigl(E_2(A)\otimes\mathbf 1_B\bigr)
 \oplus
 \bigl(\mathbf 1_A\otimes E_2(B)\bigr).
\]
It follows that an edge $(a,b)\sim (a^\prime,b)$ of $A\square B$ is visible if
and only if the corresponding edge $a\sim a^\prime$ of $A$ is visible, and similarly for
$B$.  Consequently, if $A'$ and $B'$ denote the quotient graphs of $A$ and
$B$ by their visible components, then
\begin{equation}
\label{eq:visible-quotient-product-Gn}
(G_n)'\cong A'\square B'.
\end{equation}
Here if $\dim(E_2(A))=0$, we set $A^\prime=A$. It is similar for $B$.

We claim that there exists no non-trivial Cartesian product for $(G_n)'=\mathbb F_2^5/\langle u\sim u+\mathbf1\rangle$.  Indeed, its Laplacian eigenfunctions lift to those
characters $\chi_S$ of $H_5$ which satisfy
$\chi_S(u+\mathbf1)=\chi_S(u)$, or equivalently $|S|$ is even.  Hence
\[
\operatorname{Spec}\bigl(-L_{(G_n)'}\bigr)
=\{0^{(1)},4^{(10)},8^{(5)}\}.
\]
If $(G_n)'\cong C\square D$ were a nontrivial Cartesian product, then, since
$|(G_n)'|=16$ and $(G_n)'$ is connected and $5$-regular, the orders of the
two connected regular factors would be either $2$ and $8$, or $4$ and $4$.
In the first case the two-vertex factor is $K_2$, so the product has the
Laplacian eigenvalue $2$.  In the second case the factor degrees must be $2$
and $3$, and hence the factors are $C_4$ and $K_4$; this product again has
the eigenvalue $2$.  Both cases contradict the displayed spectrum, proving
the claim.

By \eqref{eq:visible-quotient-product-Gn}, one of $A'$ and $B'$ must therefore
be a single vertex; assume that $A'$ is a single vertex.  Thus $A_{\vis}$ is
connected.  Since $A$ has at least two vertices, $E_2(A)\neq0$ and hence
$\lambda_1(A)=2$.  Theorem~\ref{thm:G_vis} now shows that the unique visible
component of $A$ is a hypercube $H_s$ for some $s\geq1$.  Since collapsed
edges can only join distinct visible components, there are no further edges
in $A$, and therefore $A\cong H_s$.  In particular, $G_n$ has a Cartesian
factor $K_2$.

It remains to rule this out.  If $G_n\cong K_2\square C$, the function which
is $1$ on one $C$-layer and $-1$ on the other is a $2$-eigenfunction of
$G_n$ taking values in $\{-1,1\}$.  Its lift to $H_{2n+5}$ has the same
properties and is invariant under $\gamma$.  By
Lemma~\ref{lem:Binary2Eigenvector}, this lift must be $\pm\chi_i$ for some
$i$.  However, $\gamma$ interchanges $\chi_{2k-1}$ and $\chi_{2k}$ for
$1\leq k\leq n$, while
\[
\chi_i\circ\gamma=-\chi_i,
\qquad 2n+1\leq i\leq2n+5.
\]
Thus no $\pm\chi_i$ is invariant under $\gamma$, a contradiction.  We
conclude that $G_n$ admits no nontrivial Cartesian product decomposition.
\end{proof}
\end{example}

\subsection{Complete graph}
In this subsection, we study some hypercube bundles over complete graphs $K_n$. Recall from Theorem~\ref{thm:HypercubeBundle} that every connected regular Lichnerowicz-sharp graph admits a section.

We first consider the case of \(K_3\). The case is rigid: every Lichnerowicz-sharp hypercube bundle over \(K_3\) is trivial.

\begin{theorem}
\label{thm:K3-twist-CD2-product}
Let $X=K_3 \square_\sigma H_r$. Then $X$ is Lichnerowicz-sharp if and only if it is isomorphic to $K_3\square H_r$.
\end{theorem}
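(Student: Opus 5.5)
The \emph{if} direction is immediate from the discussion of Cartesian products: $K_3$ satisfies $\CD(2,\infty)$ (its curvature is $\frac{5}{2}$ or so; in any case $A_\infty^{K_3}\succeq 2I_2$ by \eqref{CKLP-off}--\eqref{CKLP-diag}) and $\lambda_1(K_3)=3>2$. Hence $K_3\square H_r$ satisfies $\CD(2,\infty)$ by \eqref{equ:DirectDecompositonOfCartesianProduct}, and $\lambda_1=2$ by Theorem~\ref{thm:Bundle2Eigenvector}.

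For the \emph{only if} direction, assume $X=K_3\square_\sigma H_r$ is Lichnerowicz-sharp. Since $X$ is regular, Theorem~\ref{thm:HypercubeBundle} shows $X$ admits a section. By Theorem~\ref{thm:Bundle2Eigenvector}, the holonomy $\tau:=\sigma_{0120}$ around the triangle fixes some $u_0$. After re-gauging the fibres (replace $\sigma$ by an $H_r$-bundle isomorphic $\sigma'$ with $\sigma'_{01}=\sigma'_{12}=\mathrm{id}$), we may assume $\sigma_{01}=\sigma_{12}=\mathrm{id}$ and $\sigma_{20}=\tau$ with $\tau(u_0)=u_0$. Translating, take $u_0=0$, so $\tau$ is a coordinate permutation $\pi$. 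By Theorem~\ref{thm:TrivialBundle}, since $\Gamma_{0}$ is generated by $\tau$, it suffices to show $\tau=\mathrm{id}$. Lemma~\ref{lem:uNotAdjacentToSigmau} gives $u\not\sim\tau(u)$, so Proposition~\ref{prop:DirectSumofCurvatureMatrix} applies:
\[
A_\infty^X(P)=A_\infty^{H_r}(u)\oplus A_\infty^{(K_3)_{\hor}}(P).
\]
The condition $\CD(2,\infty)$ therefore reduces to $A_\infty^{(K_3)_{\hor}}(P)\succeq 2I_2$ at every $P$.

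The main step is to compute the $2\times2$ horizontal block. The horizontal graph is a disjoint union of cycles: the orbit of $(0,u)$ under going around the triangle has length $3k$, where $k$ is the $\tau$-orbit length of $u$. If $\tau(u)=u$, the component through $(x,u)$ is a triangle, and the block is $A_\infty^{K_3}$. If $\tau(u)\neq u$, the component is a cycle $C_{3k}$ with $k\ge2$. Moreover, $C_{3k}$ has $t_i=0$, and $\Omega_i$ comes from at most one second-neighbour, so the diagonal entry from \eqref{CKLP-diag} is at most $3-2+0+2\cdot\frac12=2$ only when a common second neighbour exists (the case $3k=4$, impossible), and otherwise it equals $1<2$. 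For $C_6$ the two neighbours have distinct second neighbours with $d^-=1$, so $\Omega_i=0$, giving diagonal entry $1$.

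Hence every $u$ must be fixed by $\tau$, so $\tau=\mathrm{id}$, $\Gamma_{x_0}$ is trivial, and Theorem~\ref{thm:TrivialBundle} (iii)$\Rightarrow$(i) yields $X\cong K_3\square H_r$. The expected obstacle is justifying carefully that the horizontal-block entries computed inside $(K_3)_{\hor}$ match the cycle values; this relies on the hypothesis of Proposition~\ref{prop:DirectSumofCurvatureMatrix}, which is already verified. Alternatively, one can avoid the gauge by working directly with $\tau=\sigma_{xyzx}$ at each vertex.
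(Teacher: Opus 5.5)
Your argument fails at the step ``Since $X$ is regular, Theorem~\ref{thm:HypercubeBundle} shows $X$ admits a section.'' That theorem gives a section of one specific bundle structure on $X$: the one whose fibres are the components of $X_{\vis}$ and whose base is the quotient $G'$. It says nothing about an arbitrary presentation $X=K_3\square_\sigma H_r$, and a graph can carry several hypercube-bundle structures. Without a section of the \emph{given} bundle, you get neither a fixed point of $\Gamma_0$ from Theorem~\ref{thm:Bundle2Eigenvector} nor $u\not\sim\tau(u)$ from Lemma~\ref{lem:uNotAdjacentToSigmau}. So the direct-sum decomposition is unavailable.

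This cannot be repaired, because the statement is false as written for every $r\ge2$. Take $\sigma_{01}=\sigma_{12}=\operatorname{id}$ and $\sigma_{20}(u)=u+e_1$. Coordinates $2,\dots,r$ are untouched. The $H_1$-bundle over $K_3$ twisted by the flip is $C_6$ together with its three long diagonals, i.e.\ $K_{3,3}$. Hence $X\cong K_{3,3}\square H_{r-1}$. Since $A_\infty^{K_{3,3}}=3I_3-\tfrac13J_3\succeq2I_3$ and $\lambda_1(K_{3,3})=3$, this graph satisfies $\CD(2,\infty)$ with $\lambda_1=2$; it is one of the graphs in Theorem~\ref{tm:Bundle-structure2}\textup{(iii)}. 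Yet it is bipartite, so it is not isomorphic to $K_3\square H_r$. In this example:
\begin{itemize}
\item $\Gamma_0=\{\operatorname{id},\,u\mapsto u+e_1\}$ has no fixed point;
\item $u\sim\tau(u)$ for every $u$;
\item the fibre edges in direction $1$ are collapsed.
\end{itemize}
The section promised by Theorem~\ref{thm:HypercubeBundle} lives on the canonical structure over the base $K_{3,3}$ with fibre $H_{r-1}$.

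The paper's own proof makes the same move: it quotes ``every connected regular Lichnerowicz-sharp graph admits a section'' and then applies Theorem~\ref{thm:Bundle2Eigenvector} to the given $\sigma$. So the result needs the extra hypothesis that $K_3\square_\sigma H_r$ itself admits a section. Since $\lambda_1(K_3)=3>2$, Theorem~\ref{thm:Bundle2Eigenvector} and Proposition~\ref{prop:EigenspaceOfBundle} show this is equivalent to the fibres being exactly the visible components. That is precisely the situation in which the result is used in the proof of Theorem~\ref{tm:Bundle-structure2}\textup{(iii)}.

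Under that hypothesis, the rest of your proof is correct and is essentially the paper's argument. The horizontal component through $(x,u)$, with $u$ not fixed by the holonomy, is a cycle $C_{3k}$ with $k\ge2$. There $t_Q=0$ and $\Omega_Q=0$, so the diagonal entry is $3-2=1<2$. The paper picks a specific moved neighbour $v_0$ of the fixed point $u_0$, while you treat all non-fixed $u$ at once, which is slightly cleaner. Your re-gauging and the translation to $u_0=0$ are harmless but unnecessary.
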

\begin{proof}
When $X\cong K_3\square H_r$, it is a Lichnerowicz-sharp graph. 

Suppose $X$ is Lichnerowicz-sharp. Write $V(K_3)=\{0,1,2\}$. By Theorem~\ref{thm:Bundle2Eigenvector}, we can choose $u_0\in H_r$ which is a common fixed point of $\Gamma_0$.
Set $\gamma=0120$. By assumption, $\sigma_\gamma(u_0)=u_0$. We show that $\sigma_\gamma=\operatorname{id}_{H_r}$.

By Theorem~\ref{thm:curvature-direct-sum-complete}, for every vertex $(x,u)\in V(X)$, 
\begin{equation*}
    A_\infty^X(x,u)=A_\infty^{(K_3)_{\hor}}(x,u)\oplus A_\infty^{H_r}(u).
\end{equation*}
Since $X$ is Lichnerowicz-sharp, $A_\infty^{(K_3)_{\hor}}(x,u)\succeq 2I_2$.

Assume, for contradiction, that \(\sigma_\gamma\ne \operatorname{id}_{H_r}\). Since \(\sigma_\gamma\)
fixes \(u_0\), by \eqref{equ:HypercubeAutomorphism},
there exists a neighbour \(v_0\sim u_0\) such that $\sigma_\gamma(v_0)\ne v_0$. Put
\begin{equation*}
    P=(0,v_0),\quad Q=(1,\sigma_{01}(v_0)),\quad W=(2,\sigma_{02}(v_0)).
\end{equation*}
Since $\sigma_\gamma(v_0)\ne v_0$, $Q\not\sim W$ in $(K_3)_{\hor}$. Thus $t_Q^{(K_3)_{\hor}}=0$. Because there exists no common neighbour of $Q$ and $W$ in $S_2^{(K_3)_{\hor}}(P)$, $\omega_{Q,W}^{(K_3)_{\hor}}=0$. By \eqref{CKLP-diag}, we have
\[
(A_\infty^{(K_3)_{\hor}}(P))_{QQ}
=
3-2=1.
\]
Since $X$ is $\CD(2,\infty)$, we have $(A_\infty^{(K_3)_{\hor}}(P))_{QQ}\geq 2$. It is a contradiction. Therefore $\sigma_\gamma=\operatorname{id}_{H_r}$. Since every closed walk in \(K_3\) based at \(0\) is generated, up to
backtracking, by the triangle \(0120\) and its inverse, this implies that
\(\Gamma_0\) is trivial. Then by Theorem~\ref{thm:TrivialBundle}, we have $X\cong K_3\square H_r$.
\end{proof}
For \(H_1\)-bundles over \(K_n\), every Lichnerowicz-sharp example is trivial.
Indeed, a Lichnerowicz-sharp hypercube bundle admits a section, whereas the
unique nontrivial automorphism of \(H_1\) has no fixed point.

Since $\lambda_1(K_2)=2$, Theorem~\ref{thm:HypercubeBundle} implies that
$K_2$ cannot occur as the base graph of a connected regular Lichnerowicz-sharp graph. However, for $n\geq 4$ and $r\geq 2$, we can construct non-trivial $H_r$-bundles over $K_n$ which are Lichnerowicz-sharp.
\begin{example}
Fix \(r\geq 2\). Let \(V(K_n)=\{0,1,\ldots,n-1\}\), and let
\(\tau\in\operatorname{Aut}(H_r)\) be the automorphism interchanging the first two coordinates:
\[
    \tau(u_1,u_2,u_3,\ldots,u_r)=(u_2,u_1,u_3,\ldots,u_r).
\]
For every oriented edge $ij$, define $\sigma_{ij}=\tau$. Set \(X_{n,r}:=K_n\square_\sigma H_r\). Since
\(\tau^2=\operatorname{id}_{H_r}\), every $\sigma_\gamma\in \Gamma_0$ is either
\(\operatorname{id}_{H_r}\) or \(\tau\). The triangle \(0120\) gives
\[
    \sigma_{0120}=\sigma_{20}\sigma_{12}\sigma_{01}=\tau^3=\tau,
\]
so \(\Gamma_0=\{\operatorname{id}_{H_r},\tau\}\). Hence \(\Gamma_0\) is
nontrivial. Note that every \(u\in\mathbb F_2^r\) with \(u_1=u_2\) is fixed by every element of \(\Gamma_0\).

We first show that \(X_{n,r}\) is \(\CD(2,\infty)\). For every triangle
\(xyzx\) of \(K_n\), we have \(\sigma_{xyzx}=\tau\). Since
\(d_{H_r}(u,\tau(u))\) is either \(0\) or \(2\), the hypothesis of
Proposition~\ref{prop:DirectSumofCurvatureMatrix} holds. Thus
\[
    A_\infty^{X_{n,r}}(P)
    =
    A_\infty^{(K_n)_{\hor}}(P)\oplus A_\infty^{H_r}(u),
\]
where $P=(i,u)\in V(X_{n,r})$.

Now we check $A_\infty^{(K_n)_{\hor}}(P)$. For each fixed point \(u\) of
\(\tau\), the corresponding connected component of \((K_n)_{\hor}\) is \(K_n\). A direct
computation gives
\[
    A_\infty^{(K_n)_{\hor}}(P)=\frac{3n}{2}I_{n-1}-J_{n-1}\succeq 2I_{n-1},
\]
where \(J_{n-1}\) denotes the all-one matrix. For $u$ with $u\neq\tau(u)$, let $Q=(j,\tau(u)), W=(k,\tau(u))$ be two distinct neighbours of \(P=(i,u)\) in $(K_n)_{\hor}$. We have $W\not\sim Q$. The common neighbours of $Q$ and $W$ in $S_2^{(K_n)_{\hor}}(P)$ are $(l,u)$ with $l\ne i,j,k$. Every $(l,u)$ has $d_{P,(K_n)_{\hor}}^{-}(l,u)=n-2$. Thus $\omega_{Q,W}^{(K_n)_{\hor}}=\frac{n-3}{n-2}$. 
Hence
\[
    (A_\infty^{(K_n)_{\hor}}(P))_{QQ}=n-2,
    \quad
    (A_\infty^{(K_n)_{\hor}}(P))_{QW}=-\frac{n-4}{n-2}.
\]
The eigenvalues of this matrix are \(2\) and \(\frac{n(n-3)}{n-2}\). Since
\(n\geq 4\), \((K_n)_{\hor}\) is \(\CD(2,\infty)\). Therefore \(X_{n,r}\) is
\(\CD(2,\infty)\).

Since \(\Gamma_0\) has a common fixed point, Theorem~\ref{thm:Bundle2Eigenvector}
implies that \(X_{n,r}\) admits a \(2\)-eigenfunction. By \eqref{equ:LichnerowiczEstimate}, \(\lambda_1(X_{n,r})=2\).
Hence \(X_{n,r}\) is Lichnerowicz-sharp.

Finally, since \(\Gamma_0\) is nontrivial, Theorem~\ref{thm:TrivialBundle}
implies that \(X_{n,r}\) is not \(H_r\)-bundle isomorphic to
\(K_n\square H_r\). Thus \(X_{n,r}\) is a nontrivial Lichnerowicz-sharp
hypercube bundle over \(K_n\).
\begin{figure}[htpb]
    \centering
     \scalebox{0.65}{\begin{tikzpicture}[
  x={(1cm,0cm)},
  y={(0cm,1cm)},
  z={(0.56cm,0.34cm)},
  vertex/.style={circle, fill=black, inner sep=1.4pt},
  fiberedge/.style={red, line width=0.9pt, line cap=round},
  bundleedge/.style={black, line width=0.55pt,
    dash pattern=on 5pt off 3pt, line cap=round},
  solidbundleedge/.style={blue, line width=0.9pt, line cap=round}
]


\def\outer{0.50}
\def\inner{0.25}
\def\baseR{3.05}

\foreach \f/\ang in {0/90,1/162,2/234,3/306,4/18} {
  \foreach \p/\dx/\dy in {
    000/-\outer/-\outer,
    100/ \outer/-\outer,
    110/ \outer/ \outer,
    010/-\outer/ \outer,
    001/-\inner/-\inner,
    101/ \inner/-\inner,
    111/ \inner/ \inner,
    011/-\inner/ \inner
  } {
    \coordinate (v-\f-\p) at ({\baseR*cos(\ang)+\dx},{\baseR*sin(\ang)+\dy},0);
  }
}

\foreach \a/\b in {0/1,0/2,0/3,0/4,1/2,1/3,1/4,2/3,2/4,3/4} {
  \foreach \p/\tp in {000/000,100/010,010/100,110/110,001/001,101/011,011/101,111/111} {
    \draw[bundleedge] (v-\a-\p) -- (v-\b-\tp);
  }
}

\foreach \a/\b in {0/1,0/2,0/3,0/4,1/2,1/3,1/4,2/3,2/4,3/4} {
  \draw[solidbundleedge] (v-\a-000) -- (v-\b-000);
}

\foreach \f in {0,1,2,3,4} {
  \foreach \u/\v in {000/100,010/110,001/101,011/111,000/010,100/110,001/011,101/111,000/001,100/101,010/011,110/111} {
    \draw[fiberedge] (v-\f-\u) -- (v-\f-\v);
  }
}

\foreach \f in {0,1,2,3,4} {
  \foreach \p in {000,100,010,110,001,101,011,111} {
    \node[vertex] at (v-\f-\p) {};
  }
}

\end{tikzpicture}}
    \caption{$X_{5,3}$}
\end{figure}
\end{example}

\section{Application}
\subsection{Some rigidity theorems}
\label{subsection:Some rigidity theorems}
We now prove the three rigidity statements summarized in
Theorem~\ref{tm:Bundle-structure2}. Let \(G\) be a connected \(d\)-regular
Lichnerowicz-sharp graph. We consider successively the cases
\(m_2(G)\geq d-1\), \(m_2(G)=d-2\), and \(m_2(G)=d-3\).

\begin{proof}[Proof of \ref{item1: d-1} of Theorem~\ref{tm:Bundle-structure2}]
  Let $r$ denote the degree of $G_{\vis}$. By Proposition~\ref{prop:EigenspaceOfBundle}, we have $m_2(G)\leq r$. Since $m_2(G)\geq d-1$, we get $r\geq d-1$. If $r=d$, $G$ doesn't have any collapsed edge. Hence by Theorem~\ref{thm:HypercubeRigidity}, $G$ is a $d$-dimensional hypercube. 

  Now assume $r=d-1$. Then $G^\prime$ is a $1$-regular graph, that is, $G^\prime\cong K_2$. However by Theorem~\ref{thm:HypercubeBundle}, we have $\lambda_1(G^\prime)>2$, which is a contradiction. Thus $G$ is a $d$-dimensional hypercube.
\end{proof}

Now we consider the case where \(m_2(G)=d-2\). 

\begin{proof}[Proof of \ref{item2: d-2} of Theorem~\ref{tm:Bundle-structure2}]
    Let $r$ denote the degree of $G_{\vis}$. By Proposition~\ref{prop:EigenspaceOfBundle}, we have $m_2(G)\leq r$. Since $m_2(G)=d-2$, we get $r\geq d-2$. If $r=d$, then $G$ is a $d$-dimensional hypercube. Then \(m_2(G)=d\), which contradicts the assumption. If $r=d-1$, then $G^\prime$ is a $1$-regular graph, that is, $G^\prime\cong K_2$. But by Theorem~\ref{thm:HypercubeBundle}, we have $\lambda_1(G^\prime)>2$, which is a contradiction. Thus $r=d-2$. 
    
    Since $m_2(G)=d-2=r$, $G^\prime$ is a $2$-regular graph. Since $G^\prime$ is connected, we have $G^\prime\cong C_n$ for some $n\geq 3$. By Theorem~\ref{thm:HypercubeBundle}, we have $\lambda_1(G^\prime)>2$. Since $\lambda_1(C_n)=2-2\cos(2\pi/n)$, we have $n=3$. By Theorem~\ref{thm:TrivialBundle} \textup{(iv)}, $G$ is isomorphic to $H_{d-2}\square K_3$. 
\end{proof}

Before proving \ref{item3: d-3} of Theorem~\ref{tm:Bundle-structure2}, we need the following classification of connected $3$-regular graphs satisfying $\CD(2,\infty)$ and $\lambda_1>2$.
\begin{lemma}
    \label{lem:3RegularGraph}
    Let $G$ be a connected $3$-regular graph satisfying $\CD(2,\infty)$ and $\lambda_1(G)>2$. Then $G\cong K_4$ or $G\cong K_{3,3}$.
\end{lemma}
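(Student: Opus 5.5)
The plan is a local case analysis of the curvature matrix $A_\infty(x)$ at a single vertex $x$, which pins down the $2$-ball $B_2(x)$, followed by a global completion step. Fix $x$ with $S_1(x)=\{y_1,y_2,y_3\}$. Since $G$ is $3$-regular, \eqref{CKLP-diag} reduces to $(A_\infty(x))_{ii}=\frac52 t_i+2\Omega_i$, and \eqref{CKLP-row} gives $(A_\infty(x)\one)_i=2+\frac12 t_i$. We use $A_\infty(x)\succeq 2I_3$ through its diagonal entries and through Rayleigh quotients of well-chosen test vectors. We split according to the number of edges in $G[S_1(x)]$.

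\emph{Case 1: $G[S_1(x)]$ has at least two edges.} Then $x$ together with its neighbours spans at least $K_4$ minus an edge. If all three edges are present, $B_1(x)\cong K_4$ is a $3$-regular component, so connectedness gives $G\cong K_4$. If exactly two edges are present (a path $y_1y_2y_3$), then $y_1$ and $y_3$ each have exactly one neighbour outside $B_1(x)$, while $y_2$ has none. We then compute the entries $\omega_{13}$ and $\Omega_1,\Omega_3$, which depend on whether the outer neighbours of $y_1$ and $y_3$ coincide. We aim to exhibit a vector, for instance $(1,-c,1)$ or $(1,0,-1)$, with Rayleigh quotient below $2$, which contradicts $\CD(2,\infty)$.

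\emph{Case 2: exactly one edge $y_1y_2$.} The same strategy applies. The vertex $y_3$ has two neighbours in $S_2(x)$ and $y_1,y_2$ have one each, so there are only finitely many incidence patterns. In each pattern we aim to show that $\lambda_{\min}(A_\infty(x))<2$, using a vector of the form $(1,1,c)$ or $(1,-1,0)$ and optimizing over $c$. I expect this exhaustive check of the triangle configurations to be the main obstacle: the off-diagonal entry $-1-2\omega_{12}$ is strongly negative, but the diagonal entries grow with $t_i$, so the test vector must be chosen pattern by pattern.

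\emph{Case 3: $x$ lies in no triangle.} Each $y_i$ has two neighbours in $S_2(x)$, so $\Omega_i=2-\sum_{z\in S_2(x),\,z\sim y_i}1/d_x^-(z)$. The condition $(A_\infty(x))_{ii}\ge 2$ forces $\sum 1/d_x^-(z)\le 1$. Hence every $z\in S_2(x)$ has $d_x^-(z)\ge2$, and the six edges from $S_1(x)$ to $S_2(x)$ are distributed among vertices of back-degree $2$ or $3$.

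If some $z$ has $d_x^-(z)=3$, then the remaining three edges cannot be grouped into back-degree-$2$ vertices without leaving a vertex of back-degree $1$, so a second vertex $z'$ with $d_x^-(z')=3$ exists. The induced subgraph on $\{x,z,z'\}\cup S_1(x)$ is then a $3$-regular component isomorphic to $K_{3,3}$, and connectedness gives $G\cong K_{3,3}$.

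Otherwise every $z$ has $d_x^-(z)=2$ and $|S_2(x)|=3$. A counting argument (if some pair $\{y_i,y_j\}$ had two completions, the third neighbour could not be paired) shows that the three back-pairs are distinct, so $x$ is locally cubical. If every vertex falls into this subcase, $G$ is locally cubical. We then show that a connected cubic locally cubical graph is $H_3$: each $z_k\in S_2(x)$ has one further neighbour $w_k$, and exact square incidence at $y_1$ applied to the pair $(z,z')$ of its outer neighbours forces $w_1=w_2=w_3$, which closes up $H_3$. This contradicts $\lambda_1(H_3)=2<\lambda_1(G)$.

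Combining the cases, every vertex falls into a configuration yielding $K_4$ or $K_{3,3}$, or else all vertices are locally cubical, which is excluded. Hence $G\cong K_4$ or $G\cong K_{3,3}$. Both graphs do satisfy the hypotheses: they are $\CD(2,\infty)$, with $\lambda_1(K_4)=4$ and $\lambda_1(K_{3,3})=3$.
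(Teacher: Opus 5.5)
Your Case~3 analysis is sound: the diagonal bound $\sum_{z\sim y_i}1/d_x^-(z)\le 1$, the $K_{3,3}$ closure and the locally cubical $\Rightarrow H_3$ step all work. But Case~2 has a genuine gap, and the final combination step fails with it. You use $\lambda_1(G)>2$ only to exclude $H_3$. So Cases~1--3 would prove that every connected cubic $\CD(2,\infty)$ graph is $K_4$, $K_{3,3}$ or $H_3$, and that is false.

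The triangular prism $K_3\square K_2$ (the $d=3$ instance of $H_{d-2}\square K_3$ in the paper) satisfies $\CD(2,\infty)$. Every vertex has exactly one edge in its neighbourhood, so it sits in your Case~2. The two outer neighbours of $y_3$ complete squares with $y_1$ and $y_2$ respectively, each with back-degree $2$, and
\[
A_\infty(x)=\begin{pmatrix}7/2&-1&0\\-1&7/2&0\\0&0&2\end{pmatrix},\qquad \lambda_{\min}\bigl(A_\infty(x)\bigr)=2 .
\]
So no test vector $(1,1,c)$ or $(1,-1,0)$ gives a Rayleigh quotient below $2$. Since $\lambda_1(K_3\square K_2)=2$, this graph can only be excluded through the spectral hypothesis, which needs a global identification step in Case~2. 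One way to close it:
\begin{enumerate}[label=\textup{(\arabic*)}]
\item The diagonal entry $(A_\infty(x))_{33}=4-2\sum_{z\in S_2(x),\,z\sim y_3}1/d_x^-(z)\ge2$ forces both outer neighbours $a,b$ of $y_3$ to have back-degree $2$. This leaves only the prism pattern $a\sim y_1$, $b\sim y_2$.
\item If $a\not\sim b$, then $a$ is triangle-free and $y_2\in S_2(a)$ has back-degree $1$ at $a$, which your own Case~3 bound rules out.
\item Hence $a\sim b$, the six vertices close up to a component $K_3\square K_2$, and $\lambda_1=2$ gives the contradiction.
\end{enumerate}

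Case~1 has a smaller problem of the same kind. Take the path $y_1y_2y_3$ with the outer neighbours of $y_1$ and $y_3$ equal to a common vertex $z$. Then
\[
A_\infty(x)-2I=\begin{pmatrix}3/2&-1&0\\-1&3&-1\\0&-1&3/2\end{pmatrix}
\]
is positive definite, with leading minors $3/2,\ 7/2,\ 15/4$. So no vector at $x$ works, and you must move to $y_1$. There $z$ has outer neighbours $y_3$, of back-degree $3$, and the third neighbour $w$ of $z$, of back-degree $1$. This gives $(A_\infty(y_1))_{zz}=4/3<2$. When the two outer neighbours differ, $(1,0,-1)$ is an eigenvector with eigenvalue $3/2$, so that subcase is fine.

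For comparison, the paper avoids this case analysis entirely. It cites the classification of cubic $\CD(0,\infty)$ graphs as prisms and M\"obius ladders. It then discards every prism, including $K_3\square K_2$, because $\lambda_1\le 2$, and discards $M_n$ for $n\ge4$ the same way.
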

\begin{proof}
Since $G$ satisfies $\CD(2,\infty)$, it also satisfies $\CD(0,\infty)$. By the classification of connected $3$-regular graphs with nonnegative Bakry--\'Emery curvature
\cite[Theorem~1.1]{CKLLS}, $G$ is either a prism graph or a M\"obius ladder.
\begin{figure}[htpb]
    \centering
    \begin{tikzpicture}[
  vertex/.style={circle, fill=black, inner sep=1.7pt},
  edge/.style={black, line width=0.55pt, line cap=round},
  omitted/.style={black, line width=0.55pt, dash pattern=on 5pt off 6pt, line cap=round}
]

\def\ytop{1.0}
\def\ybot{0.0}
\def\xzero{0.0}
\def\xone{1.3}
\def\xtwo{2.6}
\def\xthree{5.8}
\def\xfour{7.1}

\foreach \x in {\xzero,\xone,\xtwo,\xthree,\xfour} {
  \draw[edge] (\x,\ybot) -- (\x,\ytop);
}
\draw[edge] (\xzero,\ytop) -- (\xone,\ytop) -- (\xtwo,\ytop);
\draw[edge] (\xzero,\ybot) -- (\xone,\ybot) -- (\xtwo,\ybot);
\draw[omitted] (\xtwo,\ytop) -- (4.0,\ytop);
\draw[omitted] (4.4,\ytop) -- (\xthree,\ytop);
\draw[omitted] (\xtwo,\ybot) -- (4.0,\ybot);
\draw[omitted] (4.4,\ybot) -- (\xthree,\ybot);
\draw[edge] (\xthree,\ytop) -- (\xfour,\ytop);
\draw[edge] (\xthree,\ybot) -- (\xfour,\ybot);
\draw[edge] (\xzero,\ytop) to[out=18,in=162] (\xfour,\ytop);
\draw[edge] (\xzero,\ybot) to[out=-18,in=-162] (\xfour,\ybot);

\foreach \x in {\xzero,\xone,\xtwo,\xthree,\xfour} {
  \node[vertex] at (\x,\ytop) {};
  \node[vertex] at (\x,\ybot) {};
}

\node at (3.55,-0.95) {The prism graph \(C_n\square K_2\)};

\begin{scope}[yshift=-3.2cm]
  \foreach \x in {\xzero,\xone,\xtwo,\xthree,\xfour} {
    \draw[edge] (\x,\ybot) -- (\x,\ytop);
  }
  \draw[edge] (\xzero,\ytop) -- (\xone,\ytop) -- (\xtwo,\ytop);
  \draw[edge] (\xzero,\ybot) -- (\xone,\ybot) -- (\xtwo,\ybot);
  \draw[omitted] (\xtwo,\ytop) -- (4.0,\ytop);
  \draw[omitted] (4.4,\ytop) -- (\xthree,\ytop);
  \draw[omitted] (\xtwo,\ybot) -- (4.0,\ybot);
  \draw[omitted] (4.4,\ybot) -- (\xthree,\ybot);
  \draw[edge] (\xthree,\ytop) -- (\xfour,\ytop);
  \draw[edge] (\xthree,\ybot) -- (\xfour,\ybot);
  \draw[edge] (\xzero,\ytop) -- (\xfour,\ybot);
  \draw[edge] (\xzero,\ybot) -- (\xfour,\ytop);

  \foreach \x in {\xzero,\xone,\xtwo,\xthree,\xfour} {
    \node[vertex] at (\x,\ytop) {};
    \node[vertex] at (\x,\ybot) {};
  }

  \node at (3.55,-0.95) {The M\"obius ladder \(M_n\)};
\end{scope}

\end{tikzpicture}
    \caption{The prism graph \(C_n\square K_2\) and the M\"obius ladder \(M_n\).}
    \label{fig:prism-mobius}
\end{figure}

If $G$ is a prism graph, then $G\cong C_n\square K_2$ for some $n\geq 3$. Then
\[
    \lambda_1(C_n\square K_2)
    =
    \min\left\{2,\ 2-2\cos\frac{2\pi}{n}\right\}
    \leq 2.
\]
This contradicts $\lambda_1(G)>2$. 

Therefore $G$ is a M\"obius ladder. Write it as $M_n=\operatorname{Cay}\bigl(\mathbb{Z}_{2n};\{\pm 1,n\}\bigr)$, with $n\geq 2$. The nonzero Laplacian eigenvalues of $M_n$ are
\[
    3-\left(2\cos\frac{\pi k}{n}+(-1)^k\right),
    \quad k=1,\ldots,2n-1.
\]
For $n\geq 4$, taking $k=2$ gives
\[
    \lambda_1(M_n)\leq 2-2\cos\frac{2\pi}{n}\leq 2,
\]
which contradicts $\lambda_1(G)>2$. Hence $n=2$ or $n=3$. Since $M_2\cong K_4$ and $M_3\cong K_{3,3}$,
we conclude that $G\cong K_4$ or $G\cong K_{3,3}$.
\end{proof}

\begin{proof}[Proof of \ref{item3: d-3} of Theorem~\ref{tm:Bundle-structure2}]
Let $r$ denote the degree of $G_{\vis}$. By Proposition~\ref{prop:EigenspaceOfBundle},
\[
    d-3=m_2(G)\leq r.
\]
Hence $r\geq d-3$.

If $r=d$, then $G$ has no collapsed edge. By Theorem~\ref{thm:HypercubeRigidity}, we have $G\cong H_d$, and hence the multiplicity of eigenvalue $2$ is $d$, a contradiction. If $r=d-1$, then $G^\prime$ is a connected $1$-regular graph, so $G^\prime\cong K_2$. This contradicts Theorem~\ref{thm:HypercubeBundle}, which gives $\lambda_1(G^\prime)>2$ whenever collapsed edges exist.

It remains to exclude the case $r=d-2$. In this case, \(G^\prime\) is a connected $2$-regular graph, so \(G^\prime\cong C_n\) for some \(n\geq 3\). By Theorem~\ref{thm:HypercubeBundle}, we have \(\lambda_1(G^\prime)>2\). Since $ \lambda_1(C_n)=2-2\cos\frac{2\pi}{n}$, we get \(n=3\), and hence \(G^\prime\cong K_3\). By Theorem~\ref{thm:K3-twist-CD2-product}, \(G\cong K_3\square H_{d-2}\). Therefore the multiplicity of eigenvalue \(2\) is \(d-2\), again a contradiction.

Thus \(r=d-3\). Then \(G^\prime\) is a connected $3$-regular graph. By Theorem~\ref{thm:HypercubeBundle}, \(G^\prime\) satisfies \(\CD(2,\infty)\) and \(\lambda_1(G^\prime)>2\). Applying Lemma~\ref{lem:3RegularGraph}, we have $ G^\prime\cong K_4$ or $ G^\prime\cong K_{3,3}$.
Moreover $m_2(G)=d-3=r$. By Theorem~\ref{thm:TrivialBundle} \textup{(iv)}, \(G\) is isomorphic to the Cartesian product of \(G^\prime\) and \(H_r\). Hence $ G\cong H_{d-3}\square K_4$ or  $G\cong H_{d-3}\square K_{3,3}$.
\end{proof}

\subsection{Irregular case}
Our main goal is to extend part~\ref{item1: d-1} of
Theorem~\ref{tm:Bundle-structure2} to graphs that are not
necessarily regular, and to obtain a corresponding rigidity theorem.

We begin by recalling the following extension result for \(2\)-eigenfunctions, due to \cite{LMP}.
\begin{lemma}[{\cite[Lemmas 3.5 and 3.6]{LMP}}]\label{lem:one-ball}
Let \(G\) satisfy \(\operatorname{CD}(2,\infty)\). Assume that \(\lambda_{\deg(x)}=2\) for some vertex $x$. If \(f:B_1(x)\to\mathbb{R}\) satisfies
\(
Lf(x)=-2 f(x),
\)
then there exists a unique \(2\)-eigenfunction \(\phi\in E_2\) such that \(\phi|_{B_1(x)}=f\).
\end{lemma}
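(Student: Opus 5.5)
We will treat this as a dimension count for the restriction map on the eigenspace $E_2$. Let
\[
  V_x:=\{f:B_1(x)\to\R:\ Lf(x)=-2f(x)\},
  \qquad
  R:E_2\to V_x,\quad R(\phi)=\phi|_{B_1(x)}.
\]
This map is well defined, since $Lf(x)$ only involves the values of $f$ on $B_1(x)$. The plan is to show three things: $\dim V_x=\deg(x)$, $\dim E_2\ge \deg(x)$, and $R$ is injective. Together these give that $R$ is an isomorphism, which is exactly the existence and uniqueness claim.

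\emph{Dimension of $V_x$.} The condition $Lf(x)+2f(x)=0$ reads
\[
  \sum_{y\sim x}f(y)+(2-\deg(x))f(x)=0 .
\]
This is a single linear equation on $\R^{B_1(x)}\cong\R^{\deg(x)+1}$. It is a nonzero functional, since the coefficients of the neighbours equal $1$ and $\deg(x)\geq1$ by connectedness (we may assume $G$ has at least two vertices). Hence $\dim V_x=\deg(x)$.

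\emph{Dimension of $E_2$.} By Theorem~\ref{thm:LichnerowiczEstimate} we have $\lambda_1\ge2$. The hypothesis $\lambda_{\deg(x)}=2$ then forces $\lambda_1=\cdots=\lambda_{\deg(x)}=2$, so $\dim E_2\ge\deg(x)$.

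\emph{Injectivity.} Let $\phi\in E_2$ with $\phi|_{B_1(x)}=0$. By Lemma~\ref{lem:LMP-sharp}, $\Gamma_2(\phi)=2\Gamma(\phi)$ at every vertex. So the midpoint formula \eqref{eq:midpoint} of Lemma~\ref{lem:LMP-midpoint} holds at every vertex $w$ and every $z\in S_2(w)$. We show $\phi\equiv0$ by induction on $k=d(x,z)$, exactly as in the proof of Lemma~\ref{lem:OneVisibleEdge}. Suppose $\phi$ vanishes on $B_k(x)$ with $k\ge1$, and take $z\in S_{k+1}(x)$. Choose $w\in S_{k-1}(x)$ on a geodesic from $x$ to $z$, so that $d(w,z)=2$. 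All common neighbours of $w$ and $z$ lie in $S_k(x)$. Then \eqref{eq:midpoint} gives
\[
  \phi(z)=\frac{2}{d_w^-(z)}\sum_{y\in S_1(w)\cap S_1(z)}\phi(y)-\phi(w)=0 .
\]
Since $G$ is connected (and finite by Theorem~\ref{thm:MyersBound}), $\phi\equiv0$. Hence $R$ is injective.

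It follows that $\deg(x)\le\dim E_2\le\dim V_x=\deg(x)$. So $R$ is a bijection, which yields existence and uniqueness of $\phi$ (and incidentally $m_2=\deg(x)$). The only delicate point is the injectivity step. There one must check that the midpoint identity is available at every vertex $w$, not just at $x$. This is exactly what Lemma~\ref{lem:LMP-sharp} provides for global eigenfunctions in $E_2$, so the propagation goes through without further work.
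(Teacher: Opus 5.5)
Your proof is correct and is essentially the argument behind the cited \cite[Lemmas~3.5 and~3.6]{LMP}: the restriction $E_2\to V_x$ is injective, and $\dim V_x=\deg(x)\le m_2$ then forces it to be an isomorphism. Your injectivity step, which propagates \eqref{eq:midpoint} outward, works; it is the same induction the paper runs in Lemmas~\ref{lem:OneVisibleEdge} and~\ref{lem:distance}. It can be shortened, though: $\phi|_{B_1(x)}=0$ gives $\Gamma(\phi)(x)=0$, so $\Gamma(\phi)\equiv0$ by the constancy in Lemma~\ref{lem:LMP-sharp}, hence $\phi$ is constant on the connected graph and therefore $\phi\equiv0$.
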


The following lemma is essentially contained in \cite[Proof of Lemma 3.7]{LMP}.
\begin{lemma}\label{lem:distance}
Let $G$ satisfy $\CD(2,\infty)$. Assume that $\lambda_{\mathrm{deg}(o)}=2$ for some vertex $o$. Then the function 
\[
  h_o(\cdot)=d(o,\cdot)-\frac{\mathrm{deg}(o)}{2}
\]
belongs to $E_2(G)$, and $\Gamma h_o\equiv r/2$.
\end{lemma}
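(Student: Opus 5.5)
The plan is to build the $2$-eigenfunction locally with Lemma~\ref{lem:one-ball} and then show, by induction on distance from $o$ using the midpoint formula, that it coincides with $h_o$ everywhere. Throughout, write $D=\deg(o)$; the constant $r$ in the statement is read as $D$.

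First I define $f:B_1(o)\to\mathbb R$ by $f(o)=-D/2$ and $f(y)=1-D/2$ for $y\in S_1(o)$, so $f=h_o|_{B_1(o)}$. Then
\[
Lf(o)=\sum_{y\sim o}\bigl(f(y)-f(o)\bigr)=D=-2f(o).
\]
Since $\lambda_D=2$ by hypothesis, Lemma~\ref{lem:one-ball} gives a unique $\phi\in E_2$ with $\phi|_{B_1(o)}=f$. By Lemma~\ref{lem:LMP-sharp}, $\Gamma_2(\phi)=2\Gamma(\phi)$ at every vertex, so the midpoint formula \eqref{eq:midpoint} of Lemma~\ref{lem:LMP-midpoint} holds for $\phi$ at every vertex $x$.

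Next I prove $\phi=h_o$ by induction on $k=d(o,\cdot)$, following the same pattern as the proof of Lemma~\ref{lem:OneVisibleEdge}. The cases $k=0,1$ hold by construction. Assume $\phi(v)=d(o,v)-D/2$ for all $v$ with $d(o,v)\le k$, where $k\ge1$, and take $z\in S_{k+1}(o)$. Choose a geodesic from $o$ to $z$, and let $x\in S_{k-1}(o)$ be the vertex on it at distance $2$ from $z$. Every common neighbour $y$ of $x$ and $z$ satisfies $d(o,y)\le k$ (as $y\sim x$) and $d(o,y)\ge k$ (as $y\sim z$), so $y\in S_k(o)$ and $\phi(y)=k-D/2$ by the inductive hypothesis. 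Applying \eqref{eq:midpoint} at $x$ with $z\in S_2(x)$ gives
\[
\phi(z)=\frac{2}{d_x^-(z)}\sum_{y\in S_1(x)\cap S_1(z)}\phi(y)-\phi(x)
=2\Bigl(k-\frac D2\Bigr)-\Bigl(k-1-\frac D2\Bigr)=k+1-\frac D2 .
\]
This closes the induction, so $h_o=\phi\in E_2(G)$.

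Finally, Lemma~\ref{lem:LMP-sharp} says $\Gamma(h_o)$ is constant on $V$, so it suffices to evaluate it at $o$. There, by \eqref{eq:Gamma_second},
\[
2\Gamma(h_o)(o)=\sum_{y\sim o}1=D,
\]
hence $\Gamma h_o\equiv D/2=r/2$.

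The main point needing care is the inductive step. One has to verify that every common neighbour of $x$ and $z$ lies in $S_k(o)$, so that the inductive hypothesis determines all terms on the right of \eqref{eq:midpoint}, and that the midpoint identity is available at every vertex $x$, not only at $o$. The triangle-inequality argument above handles the first point. The second follows because sharpness $\Gamma_2(\phi)=2\Gamma(\phi)$ holds globally for any $\phi\in E_2$ by Lemma~\ref{lem:LMP-sharp}.
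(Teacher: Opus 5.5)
Your proposal is correct and follows the paper's proof essentially line by line. It uses the same local function on $B_1(o)$ extended by Lemma~\ref{lem:one-ball}, the same induction on distance via the midpoint formula \eqref{eq:midpoint}, and the same evaluation of the constant $\Gamma(h_o)$ at $o$; your explicit triangle-inequality check that every common neighbour of $x$ and $z$ lies in $S_k(o)$ supplies a detail the paper leaves implicit.
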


\begin{proof}
Let $r:=\mathrm{deg}(o)$. On $B_1(o)$ define $f(o)=-r/2$ and $f(y)=1-r/2$ for $y\sim o$. Then we check that
\[
  Lf(o)=\sum_{y\sim o}(f(y)-f(o))
   =r=-2 f(o).
\]
Then Lemma \ref{lem:one-ball} gives a unique $2$-eigenfunction $\varphi$ such that \(\varphi|_{B_1(o)}=f\). We prove $\varphi\equiv h_o$ by induction. By construction, we have $\varphi\vert_{B_1(o)}=h_o\vert_{B_1(o)}.$
We assume $\varphi\vert_{B_k(o)}=h_o\vert_{B_k(o)}.$ Then for $z\in S_{k+1}(o) $ and $x\in S_{k-1}(o)$ with $d(x,z)=2,$ by Lemma \ref{lem:LMP-midpoint}, we have
\begin{equation*}
\begin{aligned}
     \varphi(z)=&-\varphi(x)+  \frac{2}{d_x^{-}(z)}
  \sum_{y\in S_1(x)\cap S_1(z)}\varphi(y)\\
  =&-h_o(x)+  \frac{2}{d_x^{-}(z)}
  \sum_{y\in S_1(x)\cap S_1(z)}h_o(y)=h_o(z).
\end{aligned}
\end{equation*}
Thus \(\varphi=h_o\) on \(B_{k+1}(o)\). By induction, \(\varphi\equiv h_o\), and hence \(h_o\in E_2(G)\).
Finally, by
Lemma \ref{lem:LMP-sharp}, $\Gamma h_o$ is constant, and hence \(\Gamma (h_o)(x)=\Gamma(h_o)(o)=r/2\) for any $x$. 
\end{proof}
A key observation is that the spectral condition in Lemma \ref{lem:distance}
already forces regularity.

\begin{theorem}\label{thm:local-regularity}
Let \(G\) be a connected graph satisfying \(\CD(2,\infty)\). If
\(\lambda_{\deg(o)}=2\) for some vertex \(o\in V\), then \(G\) is regular.
\end{theorem}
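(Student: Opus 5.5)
The plan is to use the distance function $h_o$ from Lemma~\ref{lem:distance} as a global coordinate and read off degree information from it. Write $r:=\deg(o)$.

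\textbf{Step 1: level structure.} By Lemma~\ref{lem:distance}, $h_o\in E_2(G)$ and $\Gamma h_o\equiv r/2$. Edge differences of $h_o$ lie in $\{-1,0,1\}$. So $2\Gamma h_o(x)$ counts the neighbours of $x$ lying in a different sphere $S_{k\pm1}(o)$, where $k=d(o,x)$. Hence every vertex has exactly $r$ non-horizontal neighbours. Next write $Lh_o(x)=-2h_o(x)=r-2k$ as (outward neighbours) minus (inward neighbours). Combined with the previous count, $x\in S_k(o)$ has exactly $r-k$ neighbours in $S_{k+1}(o)$ and exactly $k$ neighbours in $S_{k-1}(o)$. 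Consequently $\deg(x)=r+s(x)$, where $s(x)$ is the number of horizontal neighbours of $x$ inside $S_k(o)$. Also the eccentricity of $o$ is exactly $r$. So regularity is equivalent to the absence of horizontal edges.

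\textbf{Step 2: $o$ has minimal degree.} By Lemma~\ref{lem:one-ball}, restriction identifies $E_2$ with $\{f:B_1(o)\to\R : Lf(o)=-2f(o)\}$, which has dimension $r$; hence $m_2=r$. For an arbitrary vertex $x$, the propagation argument in the proofs of Lemma~\ref{lem:OneVisibleEdge} and Lemma~\ref{lem:distance} uses only the midpoint formula \eqref{eq:midpoint}, valid for every $f\in E_2$ by Lemma~\ref{lem:LMP-sharp}. It shows that $f\in E_2$ is determined by $f|_{B_1(x)}$, and this restriction satisfies $Lf(x)=-2f(x)$. Therefore $r=m_2\le\deg(x)$, i.e.\ $\deg(o)=\min\deg$, and $s\ge0$ everywhere is consistent with this.

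\textbf{Step 3: exclude horizontal edges (main obstacle).} Apply Lemma~\ref{lem:CKLP} to $h_o$ at every $x$, namely $(A_\infty(x)-2I)(\nabla h_o)_x=0$. Pair this with $\one$ using \eqref{CKLP-row}, exactly as in Proposition~\ref{prop:no-visible-triangle}:
\[
\sum_{y\sim x}\Bigl(t_y+\deg(x)-\deg(y)\Bigr)\bigl(h_o(y)-h_o(x)\bigr)=0 .
\]
Here $t_y$ is the number of common neighbours of $x$ and $y$. I would first choose $x$ among the vertices with $s(x)>0$ at the \emph{largest} level $k$, and among those, one of maximal degree. Outward neighbours $y$ then satisfy $s(y)=0$, so $\deg(y)=r\le\deg(x)$, and their weights $t_y+\deg(x)-\deg(y)$ are $\ge0$. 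A horizontal neighbour $z$ of $x$ together with its own structure should produce a strictly positive outward contribution. Inward neighbours must then be controlled, either by a symmetric extremal choice at the \emph{smallest} level with $s>0$, or by using the midpoint formula \eqref{eq:midpoint} between a horizontal pair $x\sim z$ and their common inward and outward neighbours to force a triangle with a non-horizontal edge. That triangle would make some $t_y>0$ on one side only, giving the contradiction.

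If the sign bookkeeping fails, a fallback is the following. The antipodal vertices at level $r$ have in-degree $r$. Comparing the diagonal entries \eqref{CKLP-diag} at a vertex with $s(x)>0$ against the requirement $A_\infty(x)\succeq 2I$, with $\Omega$ bounded via the back-degree counts of Step~1, should again contradict $s(x)>0$. Once $s\equiv0$, Step~1 gives $\deg\equiv r$.
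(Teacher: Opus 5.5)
Steps 1 and 2 are correct (Step 2 is redundant, since Step 1 already gives $\deg(x)=r+s(x)\ge r$). Step 3, however, carries the whole content of the theorem, and it is a list of hopes rather than an argument. The strategy it describes also cannot work.

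Every ingredient you use there follows from $h_o\in E_2$, $\Gamma(h_o)\equiv r/2$ and $\CD(2,\infty)$ alone: the level structure of $h_o$, the identity from pairing $(A_\infty(x)-2I)(\nabla h_o)_x=0$ with $\one$, and the diagonal bounds from $A_\infty(x)\succeq 2I$. All of these hold for $K_4\setminus e$ rooted at a degree-$2$ vertex $o$, so $r=2$. There $h_o=d(o,\cdot)-1$ is a $2$-eigenfunction with $\Gamma(h_o)\equiv 1$, and the graph is $\CD(2,\infty)$. Yet the two degree-$3$ vertices $y,z$ form a horizontal edge in $S_1(o)$.

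At $x=y$, level $1$ is simultaneously the largest and the smallest level with $s>0$. Your identity just balances as $2=2$. The inward neighbour $o$ and the outward neighbour each have weight $t_w+\deg(y)-\deg(w)=1+1=2$, because the horizontal edge $yz$ lies in a triangle on each side. So no extremal choice, sign bookkeeping, or diagonal-entry estimate built on $h_o$ can exclude horizontal edges. What separates this example from the theorem is only $m_2=1<\deg(o)$. The hypothesis $\lambda_{\deg(o)}=2$ must therefore be used through eigenfunctions other than $h_o$.

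The missing idea is to apply Lemma~\ref{lem:one-ball} to a different boundary datum. Suppose some $y\in S_1(o)$ had $\deg(y)>r$. Step 1 then gives a neighbour $z\in S_1(o)$ of $y$. Set $f(o)=0$, $f(y)=1$, $f(z)=-1$, and $f=0$ on $S_1(o)\setminus\{y,z\}$. This satisfies $Lf(o)=-2f(o)$, so it extends to some $\phi\in E_2$. Then $2\Gamma(\phi)(o)=2$, whereas the edges $yo$ and $yz$ alone give $2\Gamma(\phi)(y)\ge 1+4=5$. This contradicts the constancy of $\Gamma(\phi)$ from Lemma~\ref{lem:LMP-sharp}.

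Hence every neighbour of $o$ has degree $r$. Since $\lambda_r=2$ is a global condition, the same argument runs at each such neighbour, and connectedness gives regularity. This is the paper's proof.
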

\begin{proof}
Let \(r:=\deg(o)\). By Lemma \ref{lem:distance}, the function
\(h_o=d(o,\cdot)-r/2
\)
belongs to \(E_2\), and \(\Gamma(h_o)\equiv r/2\). For every vertex
\(v\in V\), we have
\begin{equation}\label{eq:Gamma_distance}
  r
  =
  2\Gamma(h_o)(v)
  =
  \#\{u\sim v: d(o,u)\neq d(o,v)\}
  \leq \deg(v).
\end{equation}
Thus \(r=\delta\) is the minimum degree of \(G\).

Suppose that there exists a neighbor \(y\sim o\) such that \(\deg(y)>r\).
By \eqref{eq:Gamma_distance}, the strict inequality \(\deg(y)>r\) implies that \(y\) has a
neighbor \(z\in S_1(o)\). Define \(f:B_1(o)\to\mathbb{R}\) by
\[
  f(o)=0,
  \quad
  f(y)=1,
  \quad
  f(z)=-1,
  \quad
  f(u)=0
  \quad \text{for } u\in S_1(o)\setminus\{y,z\}.
\]
Then \(Lf(o)=0=-2f(o)\). By Lemma \ref{lem:one-ball}, there exists
\(\phi\in E_2\) such that \(\phi|_{B_1(o)}=f\). By Lemma
\ref{lem:LMP-sharp}, \(\Gamma(\phi)\) is constant. However,
\(2\Gamma(\phi)(o)=2,\)
whereas the two edges \(y\sim o\) and \(y\sim z\) alone give
\[
  2\Gamma(\phi)(y)
  \geq
  \bigl(\phi(y)-\phi(o)\bigr)^2
  +
  \bigl(\phi(y)-\phi(z)\bigr)^2
  =
  1+4
  =
  5.
\]
This contradicts the constancy of \(\Gamma(\phi)\). Hence every neighbor of
\(o\) has degree \(r\).

Now let \(x\) be any neighbor of \(o\). Since \(\deg(x)=r\) and
\(\lambda_r=2\), the same argument applied with \(x\) in place of \(o\)
shows that every neighbor of \(x\) also has degree \(r\). By connectedness,
all vertices of \(G\) have degree \(r\). Therefore \(G\) is regular.
\end{proof}

Next, let us recall the following multiplicity upper bound.
\begin{lemma}[{\cite[Theorem~3.8]{LMP}}]\label{lem:mult}
If $G$ is a satisfies $\CD(2,\infty)$, then
\(
     m_2(G)\le \delta.
\)
where $\delta$ is the minimum degree of $G$.
\end{lemma}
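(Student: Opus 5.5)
The plan is to follow the approach of Liu--M\"unch--Peyerimhoff. Fix a vertex $x$ of minimum degree $\delta$, write $S_1(x)=\{y_1,\dots,y_\delta\}$, and consider the restriction map
\[
R:E_2\to\R^{B_1(x)},\qquad \phi\mapsto\phi|_{B_1(x)} .
\]
We show that $R$ is injective and that its image has dimension at most $\delta$; together these give $m_2(G)=\dim E_2\le\delta$.

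\emph{Injectivity.} Suppose $\phi\in E_2$ vanishes on $B_1(x)$. By Lemma~\ref{lem:LMP-sharp}, $\Gamma_2(\phi)=2\Gamma(\phi)$ at every vertex, so the midpoint formula \eqref{eq:midpoint} holds at every vertex. We then propagate outward layer by layer, exactly as in the proofs of Lemma~\ref{lem:OneVisibleEdge} and Lemma~\ref{lem:distance}. Assume $\phi$ vanishes on $B_k(x)$. Any $z\in S_{k+1}(x)$ has a vertex $w\in S_{k-1}(x)$ with $d(w,z)=2$, and every common neighbour of $w$ and $z$ lies in $S_k(x)$. Applying \eqref{eq:midpoint} to the pair $w,z$ gives $\phi(z)=0$. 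Hence $\phi\equiv0$ and $R$ is injective.

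\emph{Dimension of the image.} Every $\phi\in E_2$ satisfies $L\phi(x)=-2\phi(x)$. This says $\phi(x)$ is determined by the values on $S_1(x)$, namely $(\deg(x)-2)\phi(x)=\sum_i\phi(y_i)$. When $\delta\neq2$, the image therefore lies in a subspace of dimension $\delta$ parametrized by $(\phi(y_i))_i$, and we are done.

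\emph{The case $\delta=2$.} Here the relation above instead forces $\sum_i\phi(y_i)=0$, leaving $\phi(x)$ free; the image then sits in a space of dimension $(\delta-1)+1=\delta$, so the bound still holds.

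The main obstacle is the propagation step: one must check that every vertex of $S_{k+1}(x)$ has a witness at distance $2$ in $S_{k-1}(x)$, with all common neighbours lying in $S_k(x)$. This is immediate from geodesics, since the midpoint formula is applied at the vertex $w\in S_{k-1}(x)$, where Lemma~\ref{lem:LMP-midpoint} is valid because equality $\Gamma_2=2\Gamma$ holds everywhere.
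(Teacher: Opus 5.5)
Your argument is correct and is essentially the Liu--M\"unch--Peyerimhoff proof that the paper cites without reproducing. They too show that restriction $E_2\to\R^{B_1(x)}$ is injective by propagating the midpoint formula outward, and land in the $\delta$-dimensional kernel of the single constraint $L\phi(x)=-2\phi(x)$; the same mechanism underlies Lemma~\ref{lem:one-ball}. Your case split at $\delta=2$ is unnecessary: $g\mapsto Lg(x)+2g(x)$ is a nonzero linear functional on $\R^{B_1(x)}$ for every $\delta$, so its kernel always has dimension $\delta$.
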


Now we can prove the following theorem
\begin{theorem}[Optimal spectral rigidity]\label{thm:Delta-1Rigidity}
Let $G$ be a connected graph satisfying $\CD(2,\infty)$, and
let $\Delta=\max_x\deg(x)$ be the maximum degree of $G$.  If $\lambda_{\Delta-1}=2$, then $ G\cong H_\Delta$.
\end{theorem}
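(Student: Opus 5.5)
The plan is to reduce to the regular case and then invoke part \ref{item1: d-1} of Theorem~\ref{tm:Bundle-structure2}. The bridge is Theorem~\ref{thm:local-regularity}, combined with the multiplicity bound of Lemma~\ref{lem:mult}.

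First I dispose of the trivial case $\Delta=1$. A connected graph with maximum degree $1$ is $K_2\cong H_1$, so there is nothing to prove. Assume from now on $\Delta\geq 2$. By Theorem~\ref{thm:MyersBound} the graph $G$ is finite. By the Lichnerowicz estimate \eqref{equ:LichnerowiczEstimate}, $\lambda_1\geq 2$. Together with $\lambda_{\Delta-1}=2$ and $\Delta-1\geq 1$, this gives
\[
\lambda_1=\cdots=\lambda_{\Delta-1}=2 .
\]
Hence $m_2(G)\geq \Delta-1$, and $G$ is Lichnerowicz-sharp with $K=2$.

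Next I locate the minimum degree $\delta$. Lemma~\ref{lem:mult} gives $\Delta-1\leq m_2(G)\leq \delta\leq \Delta$, so $\delta\in\{\Delta-1,\Delta\}$. Suppose $\delta=\Delta-1$, and pick a vertex $o$ with $\deg(o)=\delta=\Delta-1$. Then $\lambda_{\deg(o)}=\lambda_{\Delta-1}=2$. Theorem~\ref{thm:local-regularity} then forces $G$ to be regular, which contradicts $\delta<\Delta$. Therefore $\delta=\Delta$, and $G$ is $\Delta$-regular.

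Finally, $G$ is a connected $\Delta$-regular Lichnerowicz-sharp graph with $m_2(G)\geq \Delta-1$. Part \ref{item1: d-1} of Theorem~\ref{tm:Bundle-structure2}, applied with $d=\Delta$, yields $G\cong H_\Delta$.

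The only genuinely delicate step is the regularity argument. I expect it to go through cleanly, because Theorem~\ref{thm:local-regularity} needs only one vertex $o$ satisfying $\lambda_{\deg(o)}=2$, and a minimum-degree vertex supplies exactly that once $\delta=\Delta-1$. The remaining care is bookkeeping: the indexing $\lambda_{\Delta-1}$ must refer to a nonzero eigenvalue (hence the separate treatment of $\Delta=1$), and Lemma~\ref{lem:mult} must be applied to a finite graph, which Theorem~\ref{thm:MyersBound} guarantees.
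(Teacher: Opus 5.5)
Your proposal is correct and follows essentially the same argument as the paper: the Lichnerowicz estimate together with Lemma~\ref{lem:mult} gives $\Delta-1\le m_2(G)\le\delta$, Theorem~\ref{thm:local-regularity} rules out $\delta=\Delta-1$, and part~\ref{item1: d-1} of Theorem~\ref{tm:Bundle-structure2} finishes. Your separate treatment of $\Delta=1$ is harmless extra bookkeeping, since in that case $\lambda_{\Delta-1}=\lambda_0=0$ and the hypothesis is never satisfied.
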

\begin{proof}
By Lichnerowicz estimate \eqref{eq:Lich}, $\lambda_1\ge 2$, so $m_2(G)\ge \Delta-1$.  By Lemma \ref{lem:mult},
\[
  \Delta-1\le m_2(G)\le \delta.
\]
If $\delta=\Delta-1$, then Theorem \ref{thm:local-regularity} implies that $G$ is \((\Delta-1)\)-regular, contradicting the definition of $\Delta$.
Hence, one must have $\delta=\Delta$, i.e. $G$ is
$\Delta$-regular. Part~\ref{item1: d-1} of
Theorem~\ref{tm:Bundle-structure2} applies.
\end{proof}
\begin{remark}
    The hypothesis in Theorem~\ref{thm:Delta-1Rigidity} cannot be
weakened from $\lambda_\delta=2$ to $\lambda_{\delta-1}=2$. For $\delta\geq 2$, the
graph $G:=H_{\delta-2}\square\left(K_4\setminus e\right)$ has minimum degree $\delta$. Here, we use the notation that $H_0$ is the one-vertex graph.  Recall that $K_4\setminus e$ satisfies $\CD(2,\infty)$ \cite[Example 3.2]{LMP}. Hence, $G$ satisfies $\CD(2,\infty)$. Noticing that
\[
        \Spec(L_{K_4\setminus e})=\{0,2,4,4\},
\]
we have $\lambda_{\delta-1}(G)=2$ and $G$ is not a hypercube. 
\begin{figure}[htpb]
    \centering
    \begin{tikzpicture}[
  vertex/.style={circle, fill=black, inner sep=1.6pt},
  edge/.style={black, line width=0.55pt, line cap=round}
]

\coordinate (t) at (0,1.25);
\coordinate (b) at (0,-1.25);
\coordinate (l) at (-1.35,0);
\coordinate (r) at (1.35,0);

\foreach \u/\v in {l/t,t/r,r/b,b/l,l/r} {
  \draw[edge] (\u) -- (\v);
}

\foreach \v in {t,b,l,r} {
  \node[vertex] at (\v) {};
}

\end{tikzpicture}
    \caption{$K_4\setminus e$}
\end{figure}
\end{remark}

\subsection{Splitting theorem}
Now we give the proof of Theorem~\ref{thm:K_2Factor}.

\begin{theorem}
    \label{thm:K_2Factor}
    Let $G=G^\prime\square_\sigma H_r$ be a hypercube bundle over a connected graph
    $G^\prime$ with $\lambda_1(G^\prime)>2$, where \(r\geq1\). Put
    \(M=m_2(G)\). If \(M>\frac r2\), then
    \[
        G\cong B\square H_{2M-r}
    \]
    for some connected graph \(B\), where \(B\) is an \(H_{2(r-M)}\)-bundle over
    \(G^\prime\). Moreover, \(m_2(B)=r-M\).
\end{theorem}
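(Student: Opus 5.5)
The plan is to turn the multiplicity condition into a statement about the holonomy group acting on the fibre, as in the proof of Lemma~\ref{lem:large-holonomy-invariant-coordinate}, and then split with Theorem~\ref{thm:BundleSplit}.

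\emph{Step 1: orbit count.} Fix a base point $x_0$. By Proposition~\ref{prop:EigenspaceOfBundle}, $M=m_2(G)=\dim W_1(H_r)^{\Gamma_{x_0}}$. By \eqref{equ:HypercubeAutomorphism}, $\Gamma_{x_0}$ acts on $W_1(H_r)$ by signed permutations of $\chi_1,\dots,\chi_r$. Decompose $\{1,\dots,r\}$ into orbits $\mathcal O$ of the underlying permutation action. Each $U_{\mathcal O}=\operatorname{span}\{\chi_i:i\in\mathcal O\}$ is invariant and satisfies $\dim U_{\mathcal O}^{\Gamma_{x_0}}\le1$, so $M$ equals the number of orbits with a nonzero invariant vector. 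Let $s$ be the number of such orbits that are singletons; these are exactly the indices $i$ with $\chi_i\circ\sigma_\gamma=\chi_i$ for all $\gamma$. Every other invariant-carrying orbit has size at least $2$, so
\[
r\ \ge\ s+2(M-s),
\qquad\text{hence}\qquad s\ \ge\ 2M-r\ \ge\ 1 .
\]

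\emph{Step 2: splitting.} Put $m:=2M-r$ and choose $m$ of these fixed indices $i_1,\dots,i_m$. As in the proof of Theorem~\ref{thm:TrivialBundle}, transport each $\chi_{i_j}$ along paths to get $F_j(x,u)=\chi_{i_j}(\sigma_{p_x}^{-1}(u))$. Each $F_j$ is a $2$-eigenfunction of $G$, it takes values in $\{-1,1\}$, and the $F_j$ are linearly independent because their restrictions to the fibre over $x_0$ are. Theorem~\ref{thm:BundleSplit} then gives an $H_r$-bundle isomorphism
\[
G\ \cong\ B\square H_m,
\qquad B=G'\square_\rho H_{r-m},\quad r-m=2(r-M).
\]
From the proof of that theorem we also keep that the conjugated transition maps are $\tau_{xy}=\rho_{xy}\times\mathrm{id}_{H_m}$. $B$ is connected since $G$ is.

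\emph{Step 3: computing $m_2(B)$.} I expect this to be the only delicate step: one must check that the holonomy of the new bundle splits compatibly with the coordinates. After the conjugation, the holonomy group of $G'\square_\tau H_r$ at $x_0$ is $\{\rho_\gamma\times\mathrm{id}\}$, where $\rho_\gamma$ ranges over the holonomy group $\Gamma^B_{x_0}$ of $B$. Hence
\[
W_1(H_r)^{\Gamma}=W_1(H_{r-m})^{\Gamma^B_{x_0}}\oplus W_1(H_m).
\]
Applying Proposition~\ref{prop:EigenspaceOfBundle} to both $G$ and $B$ (both are bundles over $G'$, and $\lambda_1(G')>2$) gives $M=m_2(B)+m$. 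Therefore
\[
m_2(B)=M-(2M-r)=r-M .
\]
The argument is independent of the choice of $x_0$, since the dimension of the invariant space does not depend on the base point.
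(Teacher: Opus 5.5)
Your proof is correct, but it is organized differently from the paper's. The paper peels off one $K_2$ factor at a time. By induction on $0\le k\le 2M-r$, it uses $m_2(B_k)=M-k>\frac{r-k}{2}$ and invokes Lemma~\ref{lem:large-holonomy-invariant-coordinate} as a black box to get a single $\{-1,1\}$-valued $2$-eigenfunction of $B_k$. It extends the $\pm1$ property from one fibre to all fibres by propagating along edges of $G'$. It then applies Theorem~\ref{thm:BundleSplit} with $m=1$ to get $B_k\cong B_{k+1}\square K_2$, and tracks the multiplicity with the Cartesian product formula $m_2(B_{k+1})=m_2(B_k)-1$.

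You instead sharpen the orbit count from the proof of Lemma~\ref{lem:large-holonomy-invariant-coordinate}, obtaining $s\ge 2M-r$ coordinates fixed by $\Gamma_{x_0}$ all at once. You then split them off with a single application of Theorem~\ref{thm:BundleSplit}, building the $\{-1,1\}$-valued eigenfunctions by transport.

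Your one-shot version gives a more explicit picture: the split factor consists exactly of the holonomy-fixed coordinates. It even shows that $G$ splits off $H_s$, with $s$ possibly larger than $2M-r$, and it avoids re-checking the hypotheses at each step. The paper's induction needs only the one-coordinate lemma and the $m=1$ case of the splitting theorem.

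Your Step~3 is less delicate than you expect. Since $B$ is connected, the eigenvalue $2$ of $B\square H_m$ arises only as $2+0$ or $0+2$, so $m_2(B\square H_m)=m_2(B)+m$ directly; this is how the paper computes it. Your holonomy computation is also valid, including the degenerate case $M=r$, where $B=G'$ and $m_2(G')=0$ because $\lambda_1(G')>2$.
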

\begin{proof}[Proof of Theorem~\ref{thm:K_2Factor}]
By Proposition~\ref{prop:EigenspaceOfBundle}, we have $M\leq r$. Set \(N=2M-r\).  Then \(1\leq N\leq r\). We prove by induction that, for each
\(0\leq k\leq N\), there exists a connected graph \(B_k\) such that
\[
    G\cong B_k\square H_k,
\]
where \(B_k\) is an \(H_{r-k}\)-bundle over \(G^\prime\) and $m_2(B_k)=M-k$.

The case \(k=0\) holds with \(B_0=G\). Suppose this holds for some \(k<N\). Then $  M-k>\frac{r-k}{2}$. Proposition~\ref{prop:EigenspaceOfBundle} and
Lemma~\ref{lem:large-holonomy-invariant-coordinate} give a function $  F_k\in\ker(L_{B_k}+2I)$  with \(F_k(\{x\} \times V(H_{r-k}))\subseteq\{-1,1\}\) for some $x\in V(G^\prime)$. For each neighbor \(y\) of \(x\) in \(G^\prime\), since $F_k(y,\sigma_{xy}(u))=F_k(x,u)$, we also have $F_k(\{y\} \times V(H_{r-k}))\subseteq\{-1,1\}$. Since $G^\prime$ is connected, we conclude that \(F_k(V(B_k))\subseteq\{-1,1\}\). Theorem~\ref{thm:BundleSplit} with \(m=1\) then gives
\[
    B_k\cong B_{k+1}\square K_2,
\]
where \(B_{k+1}\) is a connected \(H_{r-k-1}\)-bundle over \(G^\prime\). By the
spectral formula for Cartesian products,
\[
    m_2(B_{k+1})=m_2(B_k)-1=M-k-1.
\]
This proves the induction.

Taking \(k=N\), we
obtain \(G\cong B_N\square H_N\). Since
\[
    N=2M-r,\quad r-N=2(r-M),
\]
the graph \(B_N\) is an \(H_{2(r-M)}\)-bundle over \(G^\prime\), and
\[
    m_2(B_N)=M-N=r-M.
\]
Taking \(B=B_N\) proves the theorem.
\end{proof}
Here the condition $M>\frac{r}{2}$ is sharp. Consider $G_n$ defined in Example~\ref{ex:H-2n-plus-5-quotient}. It satisfies $m_2(G_n)=n$ and $r=2n$, but it admits no nontrivial Cartesian product. Hence it can't split off any $H_1$ factor.

Having established the general splitting theorem for hypercube bundles, we now
apply it to regular Lichnerowicz-sharp graphs to prove
Theorem~\ref{thm:Lichnerowicz-sharpK_2Factor}.
\begin{proof}[Proof of Theorem~\ref{thm:Lichnerowicz-sharpK_2Factor}]
    Let $r$ denote the degree of $G_{\vis}$. If $r=d$, then by Theorem~\ref{thm:HypercubeRigidity}, $G\cong H_d\cong H_{2m-1}\square H_{d-2m+1}$. If $r<d$, then $m_2(G)\geq \frac{d}{2}>\frac{r}{2}$. Hence Theorem~\ref{thm:K_2Factor} gives 
    \begin{equation*}
    G\cong \widetilde{B}\square H_{2m_2(G)-r} \cong B\square H_{d-2m+1},
    \end{equation*}
where $B=\widetilde{B}\square H_{2m_2(G)+2m-r-d-1}$. 

Since \(G\) is \(d\)-regular and connected,
it follows that \(B\) is \((2m-1)\)-regular and connected. Since $B\square H_{d-2m+1}$ is $\CD(2,\infty)$, the direct decomposition \eqref{equ:DirectDecompositonOfCartesianProduct} for $B\square H_{d-2m+1}$ gives $B$ is also $\CD(2,\infty)$. Since $m_2(G)=m_2(B)+m_2(H_{d-2m+1})$, $m_2(B)\geq m-1$.
\end{proof}

In particular, when $m=1$ and $d\geq 2$, by the preceding theorem, we have $G\cong B\square H_{d-1}$ where $B$ is a connected $1$-regular graph. Hence $B\cong K_2$ and $G\cong H_d$. When $d=1$, we also have $G\cong K_2$. Thus we recover part~\ref{item1: d-1} of Theorem~\ref{tm:Bundle-structure2}.

\section{Acknowledgments}
This work was supported by the Scientific Research Innovation Capability Support Project for Young Faculty (Grant No. SRICSPYF-ZY2025160) and the National Natural Science Foundation of China (Grant No. 12431004).

\bibliography{rigid.bib}
\end{document}